\documentclass[a4paper,12pt]{article} 

\setlength{\textwidth}{16cm}
\setlength{\textheight}{24cm}
\setlength{\oddsidemargin}{0.0cm}
\setlength{\evensidemargin}{0.0cm}
\setlength{\topmargin}{-1.0cm}
\setlength{\baselineskip}{10mm}
\usepackage{amsmath,amssymb,amsthm}
\usepackage[usenames]{color}
\usepackage{graphicx}
\usepackage{amsfonts}
\usepackage{comment}

\newtheorem{thm}{Theorem}[section]
\newtheorem{prop}[thm]{Proposition}
\newtheorem{lmm}[thm]{Lemma}
\newtheorem{defn}[thm]{Definition}
\newtheorem{remark}[thm]{Remark}

\newtheorem{cry}[thm]{Corollary}

\newtheorem{ntn}{Notation}[section]

\numberwithin{equation}{section}

\newcommand{\pr}{\noindent{\bf [Proof]} \qquad}
\newcommand{\prend}{\hfill \qed \medskip}

\newcommand{\Ker}{{\rm Ker}}
\newcommand{\Comm}{{\rm Comm}}
\newcommand{\LCM}{{\rm LCM}}
\newcommand{\rank}{{\rm rank}} 
\newcommand{\1}{{\bf 1}} 

\newcommand{\wt}{{\rm wt}}
\newcommand{\Aut}{{\rm Aut}}

\newcommand{\CF}{{\cal F}} 
 
\newcommand{\CG}{{\cal G}} 
\newcommand{\CH}{{\cal H}}

\newcommand{\bC}{{\mathbb C}} 
\newcommand{\bZ}{{\mathbb Z}} 
\newcommand{\bN}{{\mathbb N}} 
\newcommand{\bQ}{{\mathbb Q}} 
 
\newcommand{\bR}{{\mathbb R}}

\definecolor{dark}{rgb}{0.1,0.1,0.1}
\definecolor{skyblue}{rgb}{0.5,0.5,1}
\definecolor{fadegreen}{rgb}{0.9,1,0.9}

\begin{document}

\title{Orbifold construction and Lorentzian construction of Leech lattice vertex operator algebra}

\author{
\begin{tabular}{c}
Naoki Chigira$^{(1)}$, 
Ching Hung Lam$^{(2)}$
\footnote{Partially supported by grant AS-IA-107-M02 of Academia Sinica  and MoST grant  107-2115-M-001 -003-MY3  of Taiwan}  \
and Masahiko Miyamoto$^{(3)}$
\footnote{Partially supported 
by the Grants-in-Aids
for Scientific Research, No.18K18708, The Ministry of Education,
Science and Culture, Japan.}
\cr 
{\small $(1)$ Department of Mathematics, Kumamoto University} \cr
{\small $(2),(3)$ Institute of Mathematics, Academia Sinica } \cr
{\small $(3)$ Institute of Mathematics, University of Tsukuba }
\end{tabular}
}
\date{}

\maketitle

\begin{abstract}
We generalize Conway-Sloane's constructions of the Leech lattice from Niemeier lattices
using Lorentzian lattice to holomorphic vertex operator algebras (VOA) of central charge $24$. It provides a tool for analyzing the structures and relations among holomorphic VOAs related by orbifold construction. In particular, we are able to get some  useful information about certain lattice subVOAs associated with the Cartan subalgebra of the weight one Lie algebra $V_1$. We also obtain  a relatively elementary proof that any strongly regular holomorphic VOA of central charge $24$ with $V_1\neq 0$ can be constructed directly by a single orbifold construction from the Leech lattice VOA without using a dimension formula. 
\end{abstract}

\section{Introduction}
As a recent development in the research of vertex operator algebras (shortly VOAs), the classification 
of strongly regular holomorphic vertex operator algebras of central 
charge $24$ is accomplished except for the uniqueness of holomorphic VOAs of moonshine type. 
As a result, there exist exactly 70 strongly regular holomorphic VOAs $V$ 
of CFT-type with central charge $24$ and non-zero weight one space 
$V_1\not=0$  
and the possible Lie algebra structures for their weight one subspaces have been given 
in Schellekens' list \cite{Schel}. 
Since their constructions and the uniqueness proofs were  done case by case,  a simplified 
construction and proof of uniqueness is somehow expected. 
As an answer to this problem, Scheithauer and M\"{o}ller \cite{MoScheit} have 
recently announced the existence 
of direct orbifold constructions from VOAs in Schellekens' list to 
the Leech lattice VOA $V_{\Lambda}$ by using their dimension formula. 
As a consequence, there are also direct orbifold constructions from 
$V_{\Lambda}$ to the $69$ holomorphic VOAs with semisimple weight one Lie algebras by using the reverse automorphisms. They called these $69$ (isomorphism classes of) automorphisms ``generalized deep holes 
of Leech lattice".  
In \cite{ELMS}, another proof that any holomorphic VOA of central charge $24$ with $V_1\neq 0$ can be constructed by a single orbifold construction from the Leech lattice is also given. Moreover, a relatively simpler proof for the Schellekens'
 list is obtained.

The main purpose in this paper is to try to understand the relations among holomorphic VOAs of central charge $24$. Although we can construct holomorphic VOAs from the Leech lattice VOAs directly by 
orbifold constructions, 
it is usually not easy to analyze their structures except for those obtained by inner automorphisms. Therefore, it may still be worthwhile to study relations among holomorphic VOAs before we arrive at the Leech lattice VOAs from holomorphic VOAs by 
inner automorphisms. 
The theory of strongly regular VOA (or vertex algebra) shares several 
parallel properties with lattice theory. For example, if we have an even lattice $E$, 
we can construct a vertex algebra $V_E$ called a lattice vertex 
algebra (abbreviated as VA), (see \cite{FLM}). 
A classification of unimodular positive definite even lattices 
has been given by Niemeier \cite{Nie} and then  Conway and Sloane \cite{ConSl} later
gave a unified construction from Leech lattice to the other 23 Niemeier lattices.  
In this paper, following their method using Lorentzian lattice and 
isotropic elements, 
we will give a relatively elementary proof for the existence of generalized deep holes 
without using Schellekens' list nor the dimension formula, 
but with the help of the knowledge about the Conway group $Co_0$, the isometry group of the Leech lattice $\Lambda$. Especially, 
we will get useful information about some lattice subVOAs associated with the Cartan subalgebra of the weight one Lie algebra $V_1$. 
In 2004, Dong and Mason \cite{DM2} showed that if $V$ is a strongly 
regular holomorphic VOA of CFT-type with central charge $24$ and $V_1\not=0$ 
and $V\not\cong V_{\Lambda}$, then $V_1$ is a semisimple Lie algebra, say 
$V_1=\oplus_{j=1}^t \CG_j$. 
\medskip

As one of our results, we will prove the following theorem using Lorentzian lattice and some arguments in lattice theory. 

\begin{thm}\label{Main}
Let $V$ be a strongly regular holomorphic VOA of CFT-type with central charge $24$. Assume that $V_1\cong \oplus_{j=1}^t \CG_j \not=0$ is semisimple, where $\CG_j, j=1, \dots,t,$ are  simple Lie algebras. Let $\alpha=\sum_{j=1}^t \rho_j/h_j^{\vee}\in V_1$  be the sum of Weyl vectors $\rho_j$ of $\CG_j$ divided by the dual Coxeter numbers $h_j^{\vee}$. We call $\alpha$ W-element. 
Then the orbifold construction $\tilde{V}(g)$ from $V$ by using an inner automorphism $g=\exp(2\pi i\alpha(0))$  gives the Leech lattice VOA $V_{\Lambda}$, where $Y(\alpha,z)=\sum \alpha(n)z^{-n-1}$ is a vertex operator of $\alpha\in V$. 
\end{thm}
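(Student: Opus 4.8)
Write $\tilde V:=\tilde V(g)$. The plan is to reduce everything to one assertion: \emph{$\tilde V_1$ is an abelian Lie algebra.} Once this is proved the theorem follows from the result of Dong--Mason quoted above, since a strongly regular holomorphic VOA of CFT-type and central charge $24$ whose weight one space is abelian and non-zero must be $V_\Lambda$; and $\tilde V_1$ contains the Cartan subalgebra $\CH$ of $V_1$ (because $\alpha\in\CH$ acts as $0$ on $\CH$, so $g$ fixes $\CH$ pointwise and $\CH\subseteq V_1^{g}\subseteq\tilde V_1$), which is non-zero. First I would record that $g=\exp(2\pi i\,\alpha(0))$ is an inner automorphism of finite order $N$, so by the general theory of cyclic orbifolds $\tilde V$ is again a strongly regular holomorphic VOA of CFT-type with central charge $24$; thus the reduction is legitimate.

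Next I would make the weight one space of $\tilde V$ explicit. Since $g$ is inner, every $g^i$-twisted $V$-module $V(g^i)$ is produced from $V$ by Li's $\Delta$-operator: $V(g^i)$ is $V$ as a vector space, with the conformal weight of a homogeneous $\alpha(0)$-eigenvector $v$ of eigenvalue $\lambda$ shifted to $\wt(v)+i\lambda+\tfrac12 i^2\langle\alpha,\alpha\rangle$, and with $g$ acting on $V(g^i)$ by the corresponding root of unity. Hence $\tilde V_1=V_1^{g}\oplus\bigoplus_{i=1}^{N-1}\big(V(g^i)\big)_1^{g}$, and I would analyze the summands in turn. For the untwisted part I would show $V_1^{g}=\CH$: for a root $\beta$ of the simple component $\CG_j$, using that the invariant form on $\CG_j$ equals $k_j$ times the normalized form and the standard bound $1\le(\rho_j,\beta^{\vee})\le h_j^{\vee}-1$, the $\alpha(0)$-eigenvalue on the corresponding root vector equals $(\rho_j,\beta^{\vee})/(r_\beta k_j h_j^{\vee})$ with $r_\beta\in\{1,2,3\}$, which lies in $(0,1)$ and so is never an integer; hence no root vector of $V_1$ is $g$-fixed. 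For the twisted sectors, a state of old weight $w$ and $\alpha(0)$-eigenvalue $\lambda$ contributes to $(V(g^i))_1$ exactly when $w+i\lambda+\tfrac12 i^2\langle\alpha,\alpha\rangle=1$; the value of $\langle\alpha,\alpha\rangle$ I would compute from the strange formula of Freudenthal--de Vries, $\langle\rho_j,\rho_j\rangle=h_j^{\vee}\dim\CG_j/12$, together with the fact that $h_j^{\vee}/k_j$ is independent of $j$ (the ``affine structure'' of $V$). This pins down the finitely many sectors that can contribute and sharply restricts the admissible states in each.

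The hard part is to show that the weight one space assembled in this way is abelian, i.e.\ that no root survives in, or is created by, any twisted sector. For this I would pass to the Lorentzian-lattice picture developed in the earlier sections: the data consisting of $\CH$, the root lattice of $V_1$ rescaled by the levels, and the W-element $\alpha$ is encoded by an isotropic vector of a Lorentzian lattice, and the orbifold by $g$ corresponds exactly to passing to the associated quotient lattice --- the VOA counterpart of Conway--Sloane's ``holy construction'' of the Leech lattice from a Niemeier lattice together with a deep hole. Concretely, one checks that $\langle\alpha,\alpha\rangle$ is tuned precisely to the deep-hole value, so that a would-be non-central weight one vector in a twisted sector would force a norm-$2$ vector in the quotient lattice; verifying that none occurs, and that the resulting rank-$24$ positive-definite even unimodular lattice --- necessarily a Niemeier lattice --- has no roots and hence equals $\Lambda$, is exactly where the detailed knowledge of the Conway group $Co_0$ enters. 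Combining this with the reduction in the first paragraph yields $\tilde V\cong V_\Lambda$.
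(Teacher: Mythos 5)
Your overall framework is the right one and matches the paper's: reduce to showing that $\tilde V(g)_1$ is abelian (then Dong--Mason forces $\tilde V(g)\cong V_\Lambda$), compute $V_1^{\langle g\rangle}=\CH$ from the bound $-1<\langle\rho_j/h_j^\vee,\beta\rangle<1$ on roots, describe the twisted sectors via Li's $\Delta$-operator, and bring in the Lorentzian construction and $Co_0$. But the step you label ``the hard part'' is where the actual proof lives, and your treatment of it contains a genuine gap. You claim that a non-central weight-one vector in a twisted sector ``would force a norm-$2$ vector in the quotient lattice,'' i.e.\ that the problem reduces to the Conway--Sloane holy-construction computation. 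That reduction is only valid when $V$ itself is a Niemeier lattice VOA (Proposition \ref{Niemeier} in the paper). For a general holomorphic $V$ the multiplicity spaces $X^{(s)}=X(\tfrac{s\alpha}{2K})$ are not modules over a lattice VOA of full rank, so there is no quotient lattice whose norm-$2$ vectors detect the weight-one states of the twisted sectors; the character-theoretic control one has is much weaker.

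Concretely, the weight estimate coming from the $\Delta$-operator (the paper's Theorem \ref{Thm:5.6} and Remark \ref{ginverse}) only shows $T^m_1\subseteq\widehat\CH^{\perp}$ for the sectors with $(m,N)=1$, plus the analogous statement for the dual automorphism $h=\exp(2\pi iN\alpha(0)/K)$. The sectors $T^m$ with $1<(m,N)<N$ are not controlled by any direct weight estimate, and excluding roots there occupies Sections 6 and 7 of the paper: a double induction over the partial order $>\!\!>$ (the classes $\CF^1\subseteq\CF^2$), the proof that the reverse automorphism has an eigenvalue of order $R$ on $\tilde\CH$ via the classification of rank-$\le 2$ abelian subgroups of $Co_0$ and their frame shapes (Theorem \ref{KeyThmD}), the reduction to the case where $N_0$ or $K_0$ is $1$ or prime (Theorem \ref{KeyThmF}), and finally the elimination of the residual possibilities $V_1\cong D_{9,2}+A_{7,1}$, $C_{8,1}+F_{4,1}^2$, $E_{7,2}+B_{5,1}+F_{4,1}$, $E_{8,2}+B_{8,1}$ by explicit Weyl-vector inequalities. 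None of this is supplied or replaced by your one-sentence appeal to $Co_0$, so as written the proposal does not constitute a proof.
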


Although this theorem has already been proved  in \cite{ELMS} and \cite{MoScheit} by using dimensional formula,  
our main aim is to analyze structural relations among holomorphic VOAs 
in the process of the proof. 

During the proof of Theorem 1.1, we will also obtain the following results. 

\begin{prop}\label{genus}
Under the assumptions in Theorem 1.1, set $N=|g|$ and let $\langle \alpha,\alpha\rangle=\frac{2K_0}{N_0}$ with $(K_0,N_0)=1$. Then $R=N/N_0$ is an integer. 
Suppose that $\tilde{V}(g)\cong V_{\Lambda}$ and $(R,N_0)=1$ or $(R,K_0)=1$. Then there is an $s\in \bQ$ such that an orbifold construction $\tilde{V}(g_s)$ from $V$ by using 
$g_s=\exp(2\pi is\alpha(0))$ satisfies the following conditions: \\
(1) $\rank(V_1)=\rank(\tilde{V}(g_s)_1)$; \\
(2) Coxeter numbers of simple Lie subalgebras of $\tilde{V}(g_s)_1$ are less than or equal to $R$; and \\
(3) $\widetilde{\tilde{V}(g_s)}(g_s)\cong V$. \\
In particular, $\tilde{V}(g_s)\otimes V_{\Pi_{1,1}}\cong V\otimes V_{\Pi_{1,1}}$, which 
means that $\tilde{V}(g_s)$ and $V$ belong to the same genus as defined in 
\cite{Mw}. 
\end{prop}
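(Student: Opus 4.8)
The plan is to exploit the ``reverse orbifold'' phenomenon together with control of the order of the automorphism under rescaling. First I would recall the standard fact that, since $V$ is holomorphic and $g=\exp(2\pi i\alpha(0))$ is inner, the orbifold construction $\tilde V(g)$ is again holomorphic of central charge $24$, and there is a reverse automorphism $\tilde g$ of $\tilde V(g)$ of the same order $N$ with $\widetilde{\tilde V(g)}(\tilde g)\cong V$. Under the hypothesis $\tilde V(g)\cong V_\Lambda$, this reverse automorphism must itself be (conjugate to) an inner automorphism of $V_\Lambda$, i.e. of the form $\exp(2\pi i\beta(0))$ for some $\beta$ in the (abelian) Cartan $\mathfrak h\subset (V_\Lambda)_1$, since $(V_\Lambda)_1$ is abelian; in particular $\beta$ lies in $\Lambda\otimes_\bZ\bQ$ up to the lattice and $\langle\beta,\beta\rangle$ is controlled. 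The point of introducing $s$ is that for $s\in\bZ$ the automorphisms $g_s=\exp(2\pi i s\alpha(0))$ and the corresponding $\tilde g_s$ generate the same cyclic pictures, but a clever choice of $s$ coprime to the relevant part of $N$ lets us pass to a power of $g$ that realizes the ``optimal'' dual pair — this is exactly the Conway–Sloane trick of choosing a suitable isotropic vector in the Lorentzian lattice, transported to the VOA setting.

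Next I would set up the numerics. Writing $\langle\alpha,\alpha\rangle=2K_0/N_0$ with $(K_0,N_0)=1$, the conformal weight of the $g$-twisted module, hence the condition that $\tilde V(g)$ be of CFT-type with integral grading, forces congruences that show $N_0\mid N$ and $R=N/N_0\in\bZ$; this is where the hypothesis on $\langle\alpha,\alpha\rangle$ and the structure of W-elements (the $\rho_j/h_j^\vee$ have controlled norms via the strange formula $\langle\rho_j,\rho_j\rangle/(2h_j^\vee)=\dim\CG_j/24$) enter. Under the coprimality assumption $(R,N_0)=1$ or $(R,K_0)=1$, one can solve a congruence for $s\in\bQ$ (with denominator dividing $N$) so that $g_s$ has order exactly $R$ and $s\alpha$ represents, modulo $\Lambda$ inside the Leech picture, a vector whose associated coset has the property that the fixed-point subVOA $(V)^{g_s}$ has the same rank as $V_1$. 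Concretely, $s$ is chosen so that $s\alpha$ is integral against the coweight lattice in each factor — giving $\rank(\tilde V(g_s)_1)=\rank(V_1)$ for (1) — and so that the order $R$ bounds the Coxeter numbers appearing in $\tilde V(g_s)_1$ for (2), using that a simple factor of Coxeter number $h$ contributes a twisted sector of weight involving $1/h$ and cannot survive under a $g_s$ of order $R<h$.

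For (3), the equality $\widetilde{\tilde V(g_s)}(g_s)\cong V$ should follow from the reverse-orbifold identity applied to $g_s$: since $\tilde V(g_s)\cong V_\Lambda$ (which I would deduce from $\tilde V(g)\cong V_\Lambda$ together with the fact that $g$ and $g_s$ differ by passing to/from powers and that orbifolding $V_\Lambda$ by an inner automorphism and its inverse returns $V_\Lambda$), the automorphism $g_s$ acting on $\tilde V(g_s)$ is conjugate to its own ``reverse,'' and a second orbifold undoes the first. Finally, the genus statement $\tilde V(g_s)\otimes V_{\Pi_{1,1}}\cong V\otimes V_{\Pi_{1,1}}$ is then a formal consequence: two holomorphic $c=24$ VOAs related by an order-$R$ cyclic orbifold and its reverse become isomorphic after tensoring with the rank-two hyperbolic lattice VOA $V_{\Pi_{1,1}}$, which is the VOA incarnation of Conway--Sloane's Lorentzian construction and is precisely the definition of lying in the same genus in \cite{Mw}.

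The main obstacle I expect is step two: producing the rational scalar $s$ with all three properties simultaneously. Getting $\rank(V_1)=\rank(\tilde V(g_s)_1)$ and the Coxeter-number bound $h_j\le R$ to hold together requires a careful choice of $s$ modulo $N$ using the coprimality hypothesis — essentially a CRT argument in the group $\bZ/N$ — and one must check that the weight-one Lie algebra of the $g_s$-orbifold does not acquire ``new'' large-Coxeter-number factors from the twisted sectors. Verifying the rank equality amounts to showing the Cartan subalgebra $\mathfrak h\subset V_1$ is fixed by $g_s$ and that no extra Cartan directions appear in the twisted part, which uses that $\alpha\in\mathfrak h$ and that twisted sectors for an inner automorphism contribute weight-one vectors only from specific coset shifts; controlling this is the technical heart of the argument.
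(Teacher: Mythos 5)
There is a genuine gap. The heart of the proposition is the explicit construction of $s$, and you never produce it. In the case $(R,K_0)=1$ the paper takes integers $a,b$ with $aK_0+bR=1$ and sets $s=bN_0R/K_0$, so $s\alpha=bN\alpha/K_0$; the whole point of this choice is that $\langle s\alpha,s\alpha\rangle=\frac{2b^2R^2N_0}{K_0}$ with $(b^2R^2N_0,K_0)=1$ and $|g_s|=K_0$, so that the analogue of $R$ for $g_s$ (the gcd of the order with the numerator $K$) equals $1$. That one arithmetic fact is what delivers both (1) (via Theorem \ref{Lalpha}: $\Comm(M(\CH),\tilde{V}(g_s))=U(0)$, so no new Cartan directions appear) and (3) (via Proposition \ref{R=1}: the double construction $(V^{[s\alpha]})^{[s\alpha]}\cong V$ holds exactly when this new $R$ is $1$). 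Your ``CRT argument'' gestures at a congruence but never isolates this condition; moreover you assert $|g_s|=R$, whereas the working choice gives $|g_s|=K_0$. You also silently need Lemma \ref{Dual} ($V^{[N\alpha/K]}\cong V^{[\alpha]}$) and the commutativity of iterated constructions (Propositions \ref{abcommute}, \ref{abLindep}), which is how the paper shows that the rescaled construction still lands in the right place.

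Your route to (3) is based on a false intermediate claim: you assert $\tilde{V}(g_s)\cong V_\Lambda$, deduced from $\tilde{V}(g)\cong V_\Lambda$. This contradicts conclusion (1) whenever $\rank(V_1)<24$, since $\rank((V_\Lambda)_1)=24$; indeed $\tilde{V}(g_s)$ is deliberately \emph{not} the Leech lattice VOA but a holomorphic VOA of the same rank as $V$ with small Coxeter numbers (this is exactly how it is used later, e.g.\ in Corollary \ref{No3C}). Note also that (3) is not the statement that the reverse automorphism undoes the orbifold (which is automatic); it asserts that orbifolding $\tilde{V}(g_s)$ again by the \emph{same} $g_s$ returns $V$, and this is true only because the new $R$ equals $1$. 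Finally, (2) is obtained in the paper not from the order of $g_s$ but from the fact that $g^{N_0}$, of order $R$, acts on $\tilde{V}(g_s)_1$ with fixed-point subalgebra exactly $\CH$, whence the Coxeter numbers are bounded by $R$ by the cited exercise in Kac; your heuristic that large-Coxeter factors ``cannot survive'' is not a substitute for this argument.
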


\begin{prop}[see Proposition \ref{ToLeech}] 
Let $V$ be a strongly regular holomorphic VOA of central charge $24$ with $V_1\not=0$. 
Then $V$ is realizable over $\bQ$.
\end{prop}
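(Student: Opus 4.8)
\medskip\noindent
The plan is to realize $V$ by Galois descent from a construction over a cyclotomic field. Recall first that for an even lattice $E$ defined over $\bZ$ the lattice vertex algebra $V_E=M(1)\otimes\bC[\hat E]$ carries a natural $\bQ$-form $V_E^{\bQ}$: the two-cocycle defining the central extension $\hat E$ may be chosen $\{\pm1\}$-valued, so that $\bC[\hat E]$ and the Heisenberg part are both defined over $\bQ$ and all structure constants of the vertex operators are rational. In particular $V_\Lambda$ and $V_{\Pi_{1,1}}$ are realizable over $\bQ$, and realizability over $\bQ$ is inherited by tensor products and by the fixed subalgebra of any finite group of automorphisms that is itself defined over $\bQ$.

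By Theorem~\ref{Main} we have $\tilde V(g)\cong V_\Lambda$ for the inner automorphism $g=\exp(2\pi i\alpha(0))$; since a holomorphic cyclic orbifold comes in a dual pair, $V$ is recovered from $\tilde V(g)$ by the reverse orbifold construction (cf.\ condition~(3) of Proposition~\ref{genus}, and \cite{ELMS, MoScheit}), so that $V\cong\widetilde{V_\Lambda}(\hat g)$ for some finite-order automorphism $\hat g\in\Aut(V_\Lambda)$, say of order $N$. Replacing $\hat g$ by a conjugate — which does not change $\widetilde{V_\Lambda}(\hat g)$ up to isomorphism — we may assume $\hat g$ is a \emph{standard} automorphism of $V_\Lambda$: the standard lift of an element of $Co_0=O(\Lambda)$ composed with an inner automorphism $\exp(2\pi ih(0))$ with $h\in\Lambda\otimes\bQ$. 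With respect to the natural monomial $\bQ$-basis of $V_\Lambda^{\bQ}$ (Heisenberg monomials tensored with $\{e^\beta:\beta\in\Lambda\}$), the lattice part of such a $\hat g$ is defined over $\bQ$ and the inner part is diagonal with eigenvalues $N$-th roots of unity; hence $\hat g$ restricts to a $\bQ(\zeta_N)$-rational automorphism of $V_\Lambda^{\bQ}\otimes_\bQ\bQ(\zeta_N)$, where $\zeta_N$ is a primitive $N$-th root of unity.

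Fixing an embedding $\bQ(\zeta_N)\hookrightarrow\bC$, the Galois group $\Gamma=\mathrm{Gal}(\bQ(\zeta_N)/\bQ)$ acts semilinearly on $V_\Lambda=V_\Lambda^{\bQ}\otimes\bC$, fixing its vertex algebra structure, and carries $\hat g$ to another standard automorphism of the same order $N$. The heart of the matter — and the main obstacle — is to realize the orbifold $\widetilde{V_\Lambda}(\hat g)=\bigoplus_{k=0}^{N-1}V_\Lambda(\hat g^k)^{(k)}$ over a cyclotomic field in a $\Gamma$-equivariant fashion: one builds the twisted modules $V_\Lambda(\hat g^k)$ from their explicit twisted-lattice models (twisted Heisenberg Fock spaces tensored with group algebras of suitable coset lattices), chooses the normalisations of the intertwining operators among them and the conformal-weight shifts that select the summands $V_\Lambda(\hat g^k)^{(k)}$, all simultaneously and compatibly with $\Gamma$, and then verifies that each $\tau\in\Gamma$ carries the resulting complex VOA onto itself. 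Here one uses that $\widetilde{V_\Lambda}(\hat g)$ (with the standard normalisation) depends only on the cyclic group $\langle\hat g\rangle$, together with the classification of strongly regular holomorphic VOAs of central charge $24$ with $V_1\neq0$: the Galois conjugate $\tau(V)$ is again such a VOA, with weight-one Lie algebra isomorphic to $V_1$ and with the same levels, hence $\tau(V)\cong V$; what remains is the cohomological step of assembling the isomorphisms $V\to\tau(V)$ into a descent datum, which the explicitness of the Leech-lattice twisted modules and of $\hat g$ makes possible. (Alternatively, the Lorentzian construction of $V$ developed in this paper can be carried out directly over $\bQ$, the only scalars intervening being roots of unity, which are then removed by the same descent.) Granting this, $\widetilde{V_\Lambda}(\hat g)^{\Gamma}$ is a $\bQ$-form of $V$, so $V$ is realizable over $\bQ$. \prend
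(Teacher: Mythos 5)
There is a genuine gap at exactly the point you flag as ``the heart of the matter'': the descent datum. Knowing that each Galois conjugate $\tau(V)$ is isomorphic to $V$ does not produce a $\bQ$-form; you must choose isomorphisms $\varphi_\tau\colon V\to\tau(V)$ satisfying the cocycle condition $\varphi_{\sigma\tau}=\sigma(\varphi_\tau)\circ\varphi_\sigma$, i.e.\ trivialize a class in the nonabelian $H^1(\Gamma,\Aut(V))$, and your proof simply asserts that the ``explicitness of the Leech-lattice twisted modules'' makes this possible. Since that assertion is the entire content of the statement, the argument is incomplete. Two further problems: (i) you invoke Theorem \ref{Main} and the classification of holomorphic VOAs of central charge $24$, but in this paper the rationality statement is proved \emph{before} Theorem \ref{Main} and is used in its proof (the character of $T^m$ depending only on $(m,N)$), so your route is circular within the paper's logic and also against its stated aim of avoiding Schellekens' list; (ii) closure of rationality under passing to fixed subalgebras does not address the twisted sectors $V_\Lambda(\hat g^k)^{(k)}$ and the intertwining operators among them, which is where all the nontrivial scalars live.

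The paper avoids descent entirely. By Corollary \ref{ToLeech}, iterated orbifold constructions by W-elements take $V$ to $V_\Lambda$ in finitely many steps; one then inducts backwards along this chain. Assuming $\tilde V=V^{[\alpha]}$ has a $\bQ$-form $\tilde V^{\bQ}$, the reverse automorphism $\tilde g=\hat\tau\exp(\delta(0))$ is shown to be \emph{already rational}: $\tau$ permutes roots (or lies in $Co_0$ acting on $V_\Lambda^{\bQ}$), and $\widehat\alpha$ can be taken in $\tilde V^{\bQ}$ because $e^{N_0\widehat\alpha}$ lies in a rational lattice subVOA. Then the $\tilde g^m$-twisted modules of $\tilde V^{\bQ}$ are realizable over $\bQ$ by \cite{DLM}, and $V$, being their sum, inherits a $\bQ$-form. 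No roots of unity ever need to be adjoined and removed, which is precisely the step your argument leaves open.
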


Our method is a generalization of Lorentzian constructions of the Leech lattice from Niemeier lattices, that is, we will use the Lorentzian lattice VA $V_{\Pi_{1,1}}$ 
of central charge $2$ and an isotropic VA.   
We also analyze the weight one subspaces of the irreducible twisted modules using the $\Delta$-operator defined by Li \cite{Li}. 
Except for several results in \cite{DM1,DM2},  the strange formula of  Freudenthal and De Vries (see Propositions \ref{prop:2.2}, \ref{prop:2.3} and Theorem \ref{thm:2.4}) and some knowledge of the Conway group $Co_0$, our method should be quite elementary. 
The key observation is that if $g=\exp(2\pi i \alpha(0))$ for some semisimple element $\alpha\in V_1$, then the structure of the irreducible $g$-twisted module $V^{(-\alpha)}$ defined by Li's $\Delta$-operator (see Proposition \ref{Delta_twist} and \S \ref{Sec:3.1}) is essentially determined by the action of $Y(\alpha,z)$ on $V$. By analyzing the actions of the subVOA $M(\alpha)$ generated by $\alpha$ and its commutant subVOA, we can obtain a lot of useful information about the structure of the twisted module $V^{(-\alpha)}$ and its character from the Lorentzian construction.
In \S 3, we first review some facts about the VOA $\tilde{V}(g)$ obtained by an orbifold construction from $V$ associated with an inner automorphism 
$g=\exp(2\pi i\alpha(0))$. We also describe a commutant VA of an isotropic VA 
$M((\alpha,1,\frac{\langle \alpha,\alpha\rangle}{2}))$ in a tensor product vertex algebra $V\otimes V_{\Pi_{1,1}}$ and consider 
its factor VA $V^{[\alpha]}$ divided by some ideal given by $\rho\to 0$. We call it a Lorentzian construction.   
In addition, we  study some relations between $\tilde{V}(g)$ 
and $V^{[\alpha]}$ and then identify them. 
This makes it easy to trace the change from $V$ to $\tilde{V}(g)$. 
One application of the Lorentzian construction is that if $\langle \alpha,\alpha\rangle=\frac{2K}{N}$ with 
$K,N\in \bZ$ and $N=|g|$, then we can also construct a holomorphic VOA $\bar{V}$ 
from $V$ by an automorphism $\exp(2\pi i\frac{N\alpha(0)}{K})$, which is isomorphic to $\tilde{V}(g)$. We also obtain a very precise relation between some lattice subVOAs in $V$ and $\tilde{V}(g)$ (cf. Theorem \ref{Lalpha}). 

In \S 4, we study the case when $\alpha$ is a $W$-element $\sum_{i=1}^t \frac{\rho_i}{h^{\vee}_i}$. 
As an application, we will show that if we repeat the orbifold constructions 
from $V$ by inner automorphisms using W-elements, then we can 
arrive at the Leech lattice VOA $V_{\Lambda}$ in finite steps. 
Therefore, we know that $V$ is realizable over $\bQ$.  We also give a proof for Proposition \ref{genus}. In \S 5, we study the irreducible $g$-twisted modules and $h$-twisted modules, where $g=\exp(2\pi i \alpha(0))$ and $h=\exp(2\pi i N\alpha(0)/K)$. We also prove 
one of our key theorems that the weight one subspace of 
$g$-twisted $V$-module $T^1$ is annihilated by $\alpha$. 
In \S 6, by setting $\langle\alpha,\alpha\rangle=\frac{2K_0}{N_0}$ with 
$(K_0,N_0)=1$ and $|g|=RN_0$, we will study the action of Conway group $Co_0$ and show that $\tilde{V}\cong V_{\Lambda}$
if $K_0$ or $N_0$ is $1$ or a prime number.  
In \S 7, we will prove Theorem \ref{Main} for the remaining cases.

\section{Preliminary results}

Throughout this paper, $(V,Y,\1,\omega)$ denotes a strongly regular holomorphic VOA (over $\bC$) of CFT-type with central charge $24$ and non-zero weight one space 
$V_1$.  By the definition of VOA, the weight one space $V_1$ has a Lie algebra structure defined by $0$-product $v(0)u$ and an invariant bilinear symmetric form 
$\langle \cdot,\cdot\rangle$ on $V_1$ given by $1$-product $u(1)v=\langle u,v\rangle\1$ for $v,u\in V_1$, where 
$Y(u,z)=\sum_{n\in \bZ} u(n) z^{-n-1}$ denotes a vertex operator of $u$ \cite{B}.  

For any positive integers $m,n$, we use $(m,n)$ to denote the g.c.d of $m$ and $n$.

\subsection{Structures of the weight one space $V_1$}

Our starting point is based on the following two results by 
Dong and Mason \cite{DM1,DM2}: 

\begin{prop}\label{prop:2.2}
Let $V$ be a strongly regular holomorphic VOA of CFT-type with central 
charge $24$ and $V_1\not=0$. Then,  \\
(1) $V_1$ is a semisimple Lie algebra or $\dim V_1=24$ and $V\cong V_{\Lambda}$, 
\\
(2) ${\rm rank}V_1\leq 24$ and $\dim V_1\geq 24$. 
In particular, $\rank(V_1)\geq 4$.\\
(3) If ${\rm rank}V_1=24$, then $V$ is isomorphic to a Niemeier lattice VOA. \\

\end{prop}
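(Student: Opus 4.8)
The plan is to reduce all three statements to the interplay between the subVOA $U$ generated by $V_1$ and its commutant $C=\mathrm{Com}_V(U)$ in $V$, together with Zhu's modular invariance for the character of $V$. The first input I would use is the structure theory of $V_1$ due to Dong and Mason, which I would simply quote: for a strongly regular VOA of CFT-type the affine Lie algebra attached to $V_1$ acts integrably on $V$, so that $V_1$ is reductive, each simple ideal $\CG_j$ generates a simple affine subVOA $L_{\hat{\CG}_j}(k_j,0)$ at a positive integer level $k_j$, and the center of $V_1$ generates a Heisenberg subVOA. Writing $\mathfrak h$ for a Cartan subalgebra of $V_1$, the form $\langle\cdot,\cdot\rangle$ is nondegenerate on $\mathfrak h$ (it is nondegenerate on $V_1$ since $V$ is self-dual and CFT-type, and the center is $\langle\cdot,\cdot\rangle$-orthogonal to the semisimple part), so $\mathfrak h$ generates a Heisenberg subVOA of rank $\rank V_1$ whose conformal vector $\omega_{\mathfrak h}$ has central charge $\rank V_1$. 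Since $\omega-\omega_{\mathfrak h}$ is the conformal vector of the commutant of that Heisenberg subVOA and has nonnegative central charge $24-\rank V_1$, I immediately get $\rank V_1\le 24$; this is the easy half of (2), and it shows that when $\rank V_1=24$ the commutant of the rank-$24$ Heisenberg subVOA has central charge $0$.

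Next I would settle (3) and the dichotomy in (1) by recognizing $V$ as a lattice VOA. When $\rank V_1=24$, the central-charge-$0$ commutant of the rank-$24$ Heisenberg $M(\mathfrak h)$ is CFT-type and $C_2$-cofinite, hence equals $\bC\1$; therefore $V=\bigoplus_\lambda M(\mathfrak h,\lambda)$ as an $M(\mathfrak h)$-module, the charge set is a positive-definite even lattice $L$ of rank $24$ (closure of products forces integrality and evenness, lower-boundedness of $V$ forces positive-definiteness), and holomorphy of $V$ forces $L$ unimodular, so $L$ is a Niemeier lattice and $V\cong V_L$. For (1), suppose $V_1$ is not semisimple; then its center $\mathfrak z$ is nonzero, and decomposing $V$ over $M(\mathfrak h)$ as $V=\bigoplus_{\lambda}M(\mathfrak h,\lambda)\otimes\Omega_\lambda$ with $\Omega=\bigoplus_\lambda\Omega_\lambda$ the commutant, one sees $\Omega_1=0$ (any element of $\Omega_1$ lies in $V_1$ and centralizes $\mathfrak h$, hence lies in $\mathfrak h\subseteq M(\mathfrak h)$). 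The step I expect to be the main obstacle is ruling out a nontrivial $\Omega$ here, i.e. proving $\rank V_1=24$ in this case: this is exactly where genuine modular-form input is needed, and I would invoke the effective-central-charge estimates of Dong and Mason rather than reprove them. Granting $\rank V_1=24$, the previous paragraph gives $V\cong V_L$ with $L$ Niemeier, and $(V_L)_1$ fails to be semisimple precisely when the roots of $L$ do not span, which among the $24$ Niemeier lattices occurs only for the Leech lattice $\Lambda$ (which has no roots); hence $V\cong V_\Lambda$ and $\dim V_1=24$.

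Finally, for the bound $\dim V_1\ge 24$ in (2): by Zhu's theorem the character of $V$ equals $j(\tau)+(\dim V_1-744)$, and decomposing $V$ over $U$ into finitely many tensor products of integrable highest-weight modules and matching the modular transformation laws yields the Schellekens-type relation forcing $\dim V_1-24$ to be a nonnegative rational multiple of $24$. Again I would quote Dong and Mason for this rather than reproduce the modular computation, which is the second genuinely nonelementary ingredient (alongside the step flagged above). Once $\dim V_1\ge 24$ is in hand, the last clause $\rank V_1\ge 4$ is pure finite-dimensional Lie theory: if $V_1$ were semisimple of rank at most $3$ then, running through the simple Lie algebras, $\dim V_1\le 21$ (the maximum, $21$, being attained only by $\mathfrak{so}_7$ and $\mathfrak{sp}_6$), contradicting $\dim V_1\ge 24$; and if $V_1$ is not semisimple then by (1) it is abelian of dimension $24$, hence of rank $24$.
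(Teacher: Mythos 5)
The paper does not prove this proposition at all: it is stated as a quoted result of Dong and Mason \cite{DM1,DM2} (the text immediately before it says ``Our starting point is based on the following two results by Dong and Mason''), so there is no in-paper argument to compare yours against. Your sketch is essentially a reconstruction of the Dong--Mason proof, and it is sound in outline, with the two genuinely modular-theoretic inputs correctly identified and deferred to \cite{DM1,DM2}; the only clause with content beyond those references is ``$\rank(V_1)\geq 4$'', and your finite-dimensional Lie-theory argument for it (a semisimple Lie algebra of rank at most $3$ has dimension at most $21<24$, and the non-semisimple case forces rank $24$) is exactly what the paper intends. One caveat: your derivation of $\rank V_1\le 24$ from ``the commutant of $M(\mathfrak h)$ has nonnegative central charge $24-\rank V_1$'' is not justified as stated, since a simple $C_2$-cofinite CFT-type VOA can have negative central charge (e.g.\ triplet algebras); the correct Dong--Mason argument bounds the rank by the \emph{effective} central charge $\tilde c$, which satisfies $\ell\le\tilde c\le c$ and equals $24$ for a holomorphic VOA of central charge $24$. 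Since you invoke their effective-central-charge machinery elsewhere anyway, this is an imprecision rather than a gap, but the rank bound should be routed through $\tilde c$ rather than through the central charge of the commutant.
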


\begin{prop}\label{prop:2.3}
Assume that $V_1\cong \oplus_{j=1}^t \CG_{j,k_j}$ is semisimple, where  $\CG_{j,k_j}$ 
are simple Lie algebras  with the dual Coxeter number $h^{\vee}_j$ and level $k_j$ for $j=1, \dots, t$.  Then 
\begin{equation}
\frac{h_j^{\vee}}{k_j}=\frac{\dim V_1-24}{24}. 
\end{equation}
\end{prop}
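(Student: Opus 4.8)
\medskip
\noindent\emph{Proof proposal.}
The plan is to read off the identity from Zhu-type modular invariance of a Cartan-refined character of $V$. First recall the normalizations: for strongly regular $V$ of CFT type the invariant form $\langle\cdot,\cdot\rangle$ on $V_1$ is positive definite, and the level $k_j$ of $\CG_j$ is by definition the ratio for which $\langle\cdot,\cdot\rangle|_{\CG_j}$ equals $k_j$ times the form $(\cdot,\cdot)_j$ normalized so that a highest root of $\CG_j$ has square length $2$; by the integrability theorem of Dong and Mason \cite{DM1,DM2} each $k_j$ is in fact a positive integer (and $V_1$ generates $\bigotimes_j L_{\CG_j}(k_j,0)$, although we shall not need this). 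Write $c_j=\frac{k_j\dim\CG_j}{k_j+h^{\vee}_j}$ for the associated Sugawara central charge.

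Fix a simple ideal $\CG_j$ and a coroot $h$ of $\CG_j$ with $(h,h)_j=2$, and consider the graded trace $\Phi_j(\tau,z)=\Tr_V\,q^{L(0)-1}e^{2\pi i z\,h(0)}$. By Zhu's modular invariance for rational $C_2$-cofinite VOAs, refined to allow the insertion $e^{2\pi i z\,h(0)}$, the function $\Phi_j$ transforms as a Jacobi form; since $V$ is holomorphic the corresponding space of trace functions is one-dimensional, so $\Phi_j$ is a genuine weak Jacobi form, of weight $0$ and index $k_j$ (with respect to the basic form). Its specialization at $z=0$ is the modular function $\Phi_j(\tau,0)=\Tr_V q^{L(0)-1}=j(\tau)+\dim V_1-744$. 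On the other hand the $z$-expansion of $e^{2\pi i z\,h(0)}$ has vanishing linear term on $\Tr_V$ and quadratic term $-2\pi^2\Tr_V\!\big(h(0)^2 q^{L(0)-1}\big)$, whose $q^0$-coefficient is $-2\pi^2\Tr_{V_1}(h(0)^2)=-2\pi^2\kappa_{\CG_j}(h,h)=-8\pi^2 h^{\vee}_j$, using that $h(0)$ kills $\CG_i$ for $i\ne j$ and that the Killing form of $\CG_j$ equals $2h^{\vee}_j(\cdot,\cdot)_j$. Now I would expand a weight-zero weak Jacobi form of index $k_j$ in the generators $\phi_{0,1},\phi_{-2,1}$ of the ring of weak Jacobi forms: only the terms involving $\phi_{0,1}^{k_j}$ and $\phi_{0,1}^{k_j-1}\phi_{-2,1}$ contribute to the $z^2$-coefficient, their scalar coefficients are a weight-zero and a weight-two weakly holomorphic modular form, and the bound on the order of the pole at the cusp inherited from a VOA character forces these to be, respectively, $12^{-k_j}\bigl(j+\dim V_1-744\bigr)$ and a fixed multiple of $q\frac{d}{dq}j$. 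Hence the $q^0$-part of the $z^2$-coefficient of $\Phi_j$ is a universal affine-linear function of $\dim V_1$ times $k_j$; comparing it with $-8\pi^2 h^{\vee}_j$ shows that $h^{\vee}_j/k_j$ is one and the same affine-linear function of $\dim V_1$ for every index $j$ and every such $V$.

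It then remains only to identify that function, for which I would evaluate on two holomorphic lattice VOAs satisfying the hypotheses: $V_{E_8\oplus E_8\oplus E_8}$ has $\dim V_1=744$, $k_j=1$, $h^{\vee}_j=30$, while $V_{D_{24}^{+}}$ has $\dim V_1=1128$, $k_j=1$, $h^{\vee}_j=46$, and these two data points pin it down to $h^{\vee}_j/k_j=\frac{\dim V_1-24}{24}$. The step I expect to be the main obstacle is the Jacobi-form bookkeeping in the second paragraph: isolating the correct index, upgrading Zhu's modular covariance to honest modularity of a Jacobi form, and controlling the pole orders of the Taylor coefficients so that the two scalar modular coefficients are genuinely forced (this is where holomorphicity of $V$ and the strong-regularity hypotheses are used). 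I would finally record that, combined with the Freudenthal--De Vries strange formula $(\rho_j,\rho_j)=\tfrac{1}{12}h^{\vee}_j\dim\CG_j$, the identity just proved gives $\langle\alpha,\alpha\rangle=\sum_j k_j(\rho_j,\rho_j)/(h^{\vee}_j)^2=\frac{2\dim V_1}{\dim V_1-24}$ for the W-element $\alpha$, the form in which the proposition is used in the sequel; alternatively one may simply quote it from \cite{DM1,DM2}.
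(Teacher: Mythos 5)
The paper never proves Proposition~\ref{prop:2.3}: it is imported verbatim from Dong--Mason \cite{DM1,DM2} as one of the two ``starting points'' of \S 2, so there is no internal proof to measure you against. Your proposal is therefore a genuine alternative argument, and its skeleton is sound, but it is heavier machinery than the argument of \cite{DM2}, which works only with the weight-two one-point function $\Tr_V\, o(u[-1]u)\,q^{L(0)-1}$ for $u\in V_1$ --- essentially just the $z^2$-Taylor coefficient you extract --- and identifies it directly as the unique weakly holomorphic weight-two form $E_4^2E_6/\Delta$ with prescribed polar part, with no appeal to the structure theory of weak Jacobi forms. Two remarks on your version. First, the obstacle you flag (upgrading $\Tr_V\,q^{L(0)-1}e^{2\pi i z\,h(0)}$ to an honest Jacobi form of index $k_j$) is exactly what \cite{KM}, already cited in the paper, supplies for strongly regular $V$: the elliptic transformations come from Li's $\Delta$-operator applied to the coroot $h$ (whose zero-mode has integral eigenvalues on $V$ and $\langle h,h\rangle =2k_j$), and holomorphicity forces the resulting untwisted module back to $V$. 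Second, the phrase ``$f_1$ is a fixed multiple of $q\frac{d}{dq}j$'' is imprecise: matching the $q^{-1}$-coefficient via $1=\bigl(\tfrac{1}{12}(\phi_{0,1}-\phi_{-2,1})\bigr)^{k_j}$ gives polar coefficient $-k_j\,12^{-k_j}$ for $f_1$, so the multiple depends on $k_j$ --- but it carries precisely the factor $k_j$ your next sentence needs, so the conclusion that the $q^0z^2$-coefficient equals $k_j$ times a universal affine-linear function of $\dim V_1$ is correct, and your calibration on $V_{E_8^{\oplus 3}}$ and $V_{D_{24}^+}$ (which have distinct $\dim V_1$) legitimately pins that function down to $(\dim V_1-24)/24$.
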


From now on, we always assume that $V_1=\oplus_{j=1}^t \CG_{j,k_j}$ is semisimple. We also 
fix a set of simple roots of $\CG_{j,k_j}$ and 
define a Weyl vector $\rho_j$ as a half sum of all positive roots 
of $\CG_{j,k_j}$. The following result is well-known.

\begin{thm}[Strange formula of Freudenthal and De Vries] \label{thm:2.4}
\quad Let $\CG$ be a finite simple Lie algebra  and $\rho$ its Weyl vector. Then
$$  2h^{\vee} \dim \CG=24(\rho|\rho), $$
where we take the normalized Killing form $(\beta|\beta)=2$ 
for a long root $\beta$. 
\end{thm}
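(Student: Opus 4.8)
The plan is to deduce the identity from the Weyl--Kac denominator identity for the untwisted affine Lie algebra $\widehat\CG$ attached to $\CG$, realizing it as a special case of Macdonald's $\eta$-function identities (this gives a uniform, case-free argument; if one prefers, one may instead simply cite it as classical and skip the rest). Let $\delta$ be the imaginary root, $q=e^{-\delta}$, $\ell=\rank\CG$, and $\widehat\Delta^+$ the positive roots of $\widehat\CG$; recall $\widehat\Delta^+$ consists of $\alpha+n\delta$ $(\alpha\in\Delta,\,n\ge 1)$, of $\alpha\in\Delta^+$, and of $n\delta$ $(n\ge 1)$ with multiplicity $\ell$, so the total root multiplicity in each $\delta$-layer equals $\dim\CG$. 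With $\widehat W=W\ltimes t_{Q^{\vee}}$ the affine Weyl group and $\widehat\rho=h^{\vee}\Lambda_0+\rho$ the affine Weyl vector (of level $h^{\vee}$), the denominator identity reads
\[\sum_{w\in\widehat W}\epsilon(w)\,e^{w\widehat\rho}=e^{\widehat\rho}\prod_{\beta\in\widehat\Delta^+}\bigl(1-e^{-\beta}\bigr)^{\mathrm{mult}\,\beta}.\]

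First I would apply the principal specialization $e^{-\alpha_i}\mapsto q$ so that each side becomes a monomial in $q$ times a power of the Dedekind $\eta$. On the right, organizing the product by $\delta$-layers converts it into $\prod_{n\ge 1}(1-q^{n})^{\dim\CG}=q^{-\dim\CG/24}\,\eta(\tau)^{\dim\CG}$ up to an explicit monomial. On the left I would split the sum according to $W\ltimes t_{Q^{\vee}}$ and use the translation action $t_\beta(\widehat\rho)=h^{\vee}\Lambda_0+(\rho+h^{\vee}\beta)-\bigl((\rho|\beta)+\tfrac{h^{\vee}}{2}(\beta|\beta)\bigr)\delta$; completing the square in $\beta$ extracts a factor $q^{-(\rho|\rho)/2h^{\vee}}$ in front of a theta series supported on the coset $\tfrac{1}{h^{\vee}}\rho+Q^{\vee}$, which upon $W$-summation and Jacobi's triple product recombines into the same $\eta$-power (this recombination is exactly Macdonald's identity). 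Comparing the least power of $q$ occurring with nonzero coefficient on the two sides --- the short vector of the coset being $\tfrac{1}{h^{\vee}}\rho$ itself, since $\tfrac{1}{h^{\vee}}\rho$ lies in the closed fundamental alcove --- yields $\dfrac{(\rho|\rho)}{2h^{\vee}}=\dfrac{\dim\CG}{24}$, that is $2h^{\vee}\dim\CG=24(\rho|\rho)$ in the normalization $(\theta|\theta)=2$.

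The main difficulty will be bookkeeping rather than ideas: one must pin down the normalizations of $\widehat\rho$ (its $\delta$-component), of the $t_{Q^{\vee}}$-action, and of the specialization consistently enough that the comparison of $q$-exponents is rigorous --- this is precisely what Macdonald's affine denominator identities package. If one wants to avoid affine machinery altogether, the identity can be verified family by family against the classification: for each of $A_n,B_n,C_n,D_n,E_6,E_7,E_8,F_4,G_2$ the numbers $h^{\vee}$ and $\dim\CG$ are tabulated and $(\rho|\rho)$ is a short computation from the (co)root data, reducing everything to nine elementary identities (this is the original argument of Freudenthal and de Vries). As the formula is only an input here, any of these routes --- including a bare citation --- suffices.
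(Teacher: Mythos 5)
The paper offers no proof of this theorem: it is introduced with ``The following result is well-known'' and used as a black box, so your bare-citation option is in fact exactly what the authors do, and is all that the logic of the paper requires. Your sketched derivation via the principally specialized affine denominator identity is nonetheless the standard uniform proof (it is essentially Kac's derivation of the strange/very strange formula from Macdonald's identities), and the outline is sound: the right-hand side specializes to $q^{-\dim\CG/24}\eta(\tau)^{\dim\CG}$ because each $\delta$-layer of $\widehat\Delta^+$ carries total multiplicity $\dim\CG$, and the left-hand side produces $q^{-(\rho|\rho)/2h^{\vee}}$ times a theta-like series after completing the square in the translation part. The one step that deserves to be made explicit rather than waved at is the identification of the minimal $q$-exponent on the theta side: you need that $\rho/h^{\vee}$ is the unique shortest vector in $\bigcup_{w\in W}\bigl(w(\rho/h^{\vee})+Q^{\vee}\bigr)$, which follows from the fact that $(\rho|\alpha_i)>0$ for all simple roots and $(\rho|\theta)=h^{\vee}-1<h^{\vee}$, i.e.\ $\rho/h^{\vee}$ lies in the open fundamental alcove (compare Lemma \ref{inn-1} of the paper, which records the same inequality). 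With that point supplied, and the normalization $(\theta|\theta)=2$ fixed throughout, the comparison of lowest exponents gives $(\rho|\rho)/2h^{\vee}=\dim\CG/24$ as claimed; the family-by-family verification you mention is also a legitimate (if unenlightening) fallback.
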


If we express it by the inner products $\langle \cdot,\cdot\rangle$ given by $1$-product, 
we have $\langle \cdot,\cdot \rangle=k_j (\cdot|\cdot)$ on $\CG_{j,k_j}$ and 
\begin{equation}\label{eq:2.2}
 k_jh_j^{\vee}\dim \CG_{j,k_j}=12\langle \rho_j,\rho_j\rangle.
\end{equation}

\subsection{Standard lifts} 
Let $F$ be an even lattice and $V_F$ the lattice VOA associated with $F$. Let $h\in O(F)$ and 
let $\hat{F}=\{\pm e^a\mid a\in F\}$ be a central extension of $F$ by $\{\pm 1\}$, where $O(F)$ 
is the symmetry group of $F$ and the group structure of $\hat{F}$ is given by the relation $(\ast)$:  $e^ae^b=(-1)^{\langle a,b\rangle}e^{b}e^{a}$. 
Let $\hat{h}\in O(\hat{F})$ be a standard lift of $h$, that is,  
$\hat{h}(e^{a})=e^{a}$ for $a\in F^{<h>}=\{v\in F\mid h(v)=v\}$. 
We note that we can view $\hat{h}$ as an automorphism of a lattice VOA $V_F$ 
by the definition of lattice VOAs.  
Generally, $|\hat{h}|=|h|$ or $2|h|$. The precise order of $\hat{h}$ is given in \cite[Lemma 12.1]{Bo92} as follows: 
\begin{lmm}
If $|h|$ is odd, then $|\hat{h}|=|h|$. 
If $n=|h|$ is even, then $|h|=|\hat{h}|$ if and only if 
$\langle h^{(n/2)}(a), a\rangle
\in 2\bZ$ for all $a\in F$. 
\end{lmm}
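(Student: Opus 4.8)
The plan is to work directly from the defining relation $(\ast)$ of the central extension $\hat F$ and the normalization $\hat h(e^a)=e^a$ for $a\in F^{<h>}$. First I would observe that $\hat h^{|h|}$ is an automorphism of $\hat F$ that induces the identity on $F$, hence $\hat h^{|h|}(e^a)=\chi(a)e^a$ for some function $\chi\colon F\to\{\pm 1\}$; applying $\hat h^{|h|}$ to the product relation $e^ae^b=(-1)^{\langle a,b\rangle}e^be^a$ shows that $\chi$ is a homomorphism $F\to\{\pm 1\}$. So $\hat h^{|h|}$ is a ``diagonal'' automorphism, and $|\hat h|=|h|$ precisely when $\chi$ is trivial, otherwise $\hat h^{|h|}$ has order $2$ and $|\hat h|=2|h|$.

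Next I would pin down $\chi$ in terms of how the standard lift acts on a general $e^a$. Writing $n=|h|$, the natural guess is that a standard lift can be chosen so that $\hat h(e^a)=\epsilon(a)\,e^{h(a)}$ with $\epsilon(a)\in\{\pm1\}$, $\epsilon(a)=1$ for $a\in F^{<h>}$; iterating gives $\hat h^{n}(e^a)=\bigl(\prod_{i=0}^{n-1}\epsilon(h^i(a))\bigr)e^a$, so $\chi(a)=\prod_{i=0}^{n-1}\epsilon(h^i(a))$. The cocycle $\epsilon$ is governed by the commutator pairing, and the standard computation (this is the content of \cite[Lemma 12.1]{Bo92}) expresses $\chi(a)$ as $(-1)^{\langle\Phi(a),a\rangle}$ where $\Phi=\sum_{i=1}^{n-1} i\,h^i$ or a closely related ``averaging'' operator on the $h$-orbit of $a$. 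When $|h|$ is odd one checks that this sign is always $+1$ — essentially because the orbit sums pair up with the right parity — giving $|\hat h|=|h|$ unconditionally. When $n$ is even, the term $i=n/2$ survives modulo $2$ and one is left with $\chi(a)=(-1)^{\langle h^{n/2}(a),a\rangle}$, which is trivial for all $a$ exactly when $\langle h^{(n/2)}(a),a\rangle\in 2\bZ$ for all $a\in F$; this is the stated criterion.

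Concretely, the steps are: (i) show $\hat h^{|h|}$ is diagonal and its defining character $\chi$ is a homomorphism; (ii) choose the standard lift explicitly as a sign-twisted permutation of the $e^a$ and compute $\chi$ on an $h$-orbit; (iii) reduce the resulting product of signs modulo $2$, separating the odd and even cases; (iv) in the even case identify the surviving contribution with $\langle h^{n/2}(a),a\rangle \bmod 2$ and conclude. The main obstacle is step (ii)–(iii): keeping track of the $2$-cocycle defining $\hat F$ and of the sign ambiguities in the lift, so that the telescoping product over the orbit collapses to the single clean invariant. Since the paper is content to cite \cite[Lemma 12.1]{Bo92}, I would in practice quote that computation rather than redo the cocycle bookkeeping, and only verify the odd/even dichotomy and the final parity statement by hand.
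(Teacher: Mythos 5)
The paper offers no proof of this lemma at all --- it is quoted verbatim from \cite[Lemma 12.1]{Bo92} --- and your sketch is precisely the standard argument behind that result (diagonal automorphism $\hat h^{n}$, character $\chi$, product of signs over an $h$-orbit, parity reduction isolating the $i=n/2$ term), with the decisive cocycle computation deferred to the same source; so your approach matches what the paper relies on. One small caution: to see that $\chi$ defined by $\hat h^{n}(e^a)=\chi(a)e^a$ is a homomorphism you must apply $\hat h^{n}$ to the actual products $e^ae^b=\varepsilon(a,b)e^{a+b}$ of the double cover, not merely to the commutator relation $(\ast)$, which is preserved by an arbitrary diagonal sign change.
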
 

\begin{lmm}\label{Fixedpf} If $h$ acts on an even lattice $F$ fixed point freely, then $|\hat{h}|=|h|$. 
\end{lmm}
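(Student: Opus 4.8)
The plan is to reduce everything to the preceding lemma (Borcherds' computation of the order of a standard lift). If $|h|$ is odd, that lemma applies verbatim and gives $|\hat h|=|h|$, so the only case requiring work is $n:=|h|$ even. Set $m=n/2$; by the same lemma it then suffices to show that $\langle h^m(a),a\rangle\in 2\bZ$ for every $a\in F$.

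First I would observe that fixed-point-freeness forces the norm endomorphism $N:=\sum_{i=0}^{n-1}h^i$ to vanish identically on $F$: for each $a\in F$ the vector $N(a)$ lies in $F$ (as $F$ is $h$-stable) and satisfies $h(N(a))=N(a)$, hence $N(a)=0$ since $h$ has no nonzero fixed vector in $F$. Consequently $(1+h^m)(a)=-\sum_{0<i<n,\ i\ne m}h^i(a)$. Next I would use that $h$ is an isometry of order $n$, so $\langle h^i(a),a\rangle=\langle a,h^{-i}(a)\rangle=\langle h^{\,n-i}(a),a\rangle$; the bijection $i\mapsto n-i$ identifies the index set $\{m+1,\dots,n-1\}$ with $\{1,\dots,m-1\}$, so $\sum_{0<i<n,\ i\ne m}\langle h^i(a),a\rangle=2\sum_{j=1}^{m-1}\langle h^j(a),a\rangle\in 2\bZ$. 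Combining the two facts gives $\langle a,a\rangle+\langle h^m(a),a\rangle=\langle(1+h^m)(a),a\rangle\in 2\bZ$, and since $F$ is even $\langle a,a\rangle\in 2\bZ$, whence $\langle h^m(a),a\rangle\in 2\bZ$. Applying the previous lemma then yields $|\hat h|=|h|$.

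The argument is essentially formal once this reduction is made, so I do not anticipate a real obstacle. The only place the hypothesis is actually used is the vanishing of the norm map $N$ on $F$ — which is exactly what fixed-point-freeness provides — and the only thing one must handle with care is the parity bookkeeping under the pairing $i\leftrightarrow n-i$, together with the degenerate case $m=1$ (i.e. $n=2$), where the sum over $j$ is empty and the identity simply records that $h=-1$ on $F$, making $\langle h(a),a\rangle=-\langle a,a\rangle$ even at once.
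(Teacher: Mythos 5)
Your proof is correct and follows essentially the same route as the paper's: both use fixed-point-freeness to force $\sum_{i=0}^{n-1}h^i(a)=0$, pair the terms $h^i$ and $h^{n-i}$ to see that $\langle a,a\rangle+\langle h^{n/2}(a),a\rangle$ is even, and then invoke the preceding lemma of Borcherds. The only difference is that you spell out the vanishing of the norm map and the degenerate case $n=2$ a bit more explicitly than the paper does.
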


\pr
We may assume that $n=|h|$ is even. For any $a\in F$, since $\sum_{i=0}^{n-1}h^i(a)=0$, 
we have 
$0=\sum_{i=0}^{n-1} \langle h^i(a),a\rangle=\langle a,a\rangle+\langle h^{n/2}(a),a\rangle+\sum_{i=1}^{n/2-1}\langle h^{j}(a)+h^{n-j}(a),a\rangle$. 
Since $\langle a,a\rangle\in 2\bZ$ and $\langle h^{n-j}(a),a\rangle=\langle a,h^j(a)\rangle\in \bZ$, 
we have $\langle h^{n/2}(a),a\rangle\in 2\bZ$. By the above lemma, we have 
$|\hat{h}|=|h|$.
\prend

\section{Orbifold construction by an inner automorphism}\label{sec:3}
In this section, we review the orbifold construction associated with an inner automorphism using Li's $\Delta$-operator. We will also describe the orbifold construction by an inner automorphism 
using the language of Lorentzian  VA  and commutant of isotropic VA.

\subsection{Haisheng Li's $\Delta$-operator}\label{Sec:3.1}
A good way to express twisted modules associated with 
an inner automorphism is by Haisheng Li's $\Delta$-operator \cite{Li}. 
Before we explain it, let's recall the orbifold construction 
from a holomorphic VOA $V$ by an automorphism $g$ of finite order $N$. 

Let $(T^m,Y^{T^m})$ be a $g^m$-twisted module of $V$. Since $V$ is holomorphic, 
$(T^m,Y^{T^m})$ is uniquely determined \cite{DLM2}.  In order to get a new VOA by an orbifold construction, it is necessary that  
$$ T^{1,0}:=T^1_{\bZ}=\{\sum w^j\in T^1 \mid \wt(w^j)\in \bZ\} \not=0. $$
We somehow need to show 
that $T^{1,0}$ is a simple current module as a $V^{<g>}$-module. 
In this case, $T^{m,0}:=(T^{1,0})^{\boxtimes m}\subseteq 
T^m$ is also a simple current module of $V^{<g>}$. 
Then we have to show $T^{m,0}\subseteq T^m_{\bZ}$ and that 
$$\widetilde{V}(g)=\oplus_{m=0}^{N-1} T^{m,0}$$ 
has a canonical VOA structure. We note that if $V$ is strongly regular, then 
all above conditions hold \cite{CMi,EMS}. 

In general, the explicit construction of twisted modules is not well-understood, 
but for an inner automorphism $g=\exp(2\pi i\alpha(0))$ 
with a semisimple element $\alpha\in V_1$, a $g$-twisted module $T^1$ can be explicitly constructed  
by Haisheng Li's $\Delta$-operator:   
\begin{equation}\label{eq:3.1}
\Delta(\alpha,z)=z^{\alpha(0)}\exp(\sum_{j=1}^{\infty}
\frac{\alpha(j)}{-j}(-z)^{-j} ). 
\end{equation}

First we recall the following result from \cite{Li} (see also \cite{DLM}).

\begin{prop}[{\cite[Proposition 5.4]{Li}}]\label{Delta_twist}    
Let $V$ be a VOA. Let $\alpha \in V_1$ such that the automorphism 
$g=\exp(2\pi i \alpha(0))$ has finite order on $V$. 
Let $(M, Y_M)$ be a $V$-module and
define $(M^{(\alpha)}, Y_{M^{(\alpha)}}(\cdot, z)) $ as follows:
\[
\begin{split}
& M^{(\alpha)} =M \quad \text{ as a vector space;}\\
& Y_{M^{(\alpha)}} (a, z) = Y_M(\Delta(\alpha, z)a, z)\quad \text{ for any } a\in V.
\end{split}
\]
Then $(M^{(-\alpha)}, Y_{M^{(-\alpha)}}(\cdot, z))$ is a
$g$-twisted $V$-module.
Furthermore, if $M$ is irreducible, then so is $M^{(-\alpha)}$.
\end{prop}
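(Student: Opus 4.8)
\medskip
\noindent\textbf{Proof proposal.}\quad This is \cite[Proposition~5.4]{Li}; the plan is to check directly that $\big(M^{(-\alpha)},Y_{M^{(-\alpha)}}\big)$ satisfies the axioms of an (irreducible) $g$-twisted $V$-module. First I would record the facts that make the construction legal. Since $g=\exp(2\pi i\alpha(0))$ has finite order $N$ on $V$, the operator $\alpha(0)$ acts semisimply on $V$ with eigenvalues in $\tfrac1N\bZ$: if $\alpha(0)=S+\mathcal N$ were its Jordan decomposition, then $\exp(2\pi i N\alpha(0))=\mathrm{id}$ would force the unipotent $\exp(2\pi i N\mathcal N)$ to equal $\mathrm{id}$, hence $\mathcal N=0$. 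Writing $V=\bigoplus_r V^r$ with $\alpha(0)|_{V^r}=r$ (so $g|_{V^r}=e^{2\pi i r}$), the operator $\Delta(-\alpha,z)$ acts on $V^r$ as $z^{-r}$ times a polynomial in $z^{-1}$ with coefficients in $V$, because each $\alpha(j)$ with $j\ge1$ strictly lowers weight; hence $Y_{M^{(-\alpha)}}(v,z)=Y_M(\Delta(-\alpha,z)v,z)$ lies in $z^{-r}(\End M)[[z,z^{-1}]]$ and is lower-truncated on each vector of $M$, which is exactly the monodromy/truncation requirement for a $g$-twisted module. Also $\Delta(-\alpha,z)\1=\1$ (since $\alpha(j)\1=0$ for all $j\ge0$), so $Y_{M^{(-\alpha)}}(\1,z)=\mathrm{id}_M$.

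The engine of the proof is the conjugation identity
\[
\Delta(\alpha,z_1)\,Y_M(u,z_2)\,\Delta(\alpha,z_1)^{-1}=Y_M\big(\Delta(\alpha,z_1+z_2)\,u,\,z_2\big),\qquad u\in V
\]
(with $\Delta(\alpha,z_1)$ acting on $M$ and $z_1+z_2$ expanded for $|z_1|>|z_2|$), and likewise with $-\alpha$ in place of $\alpha$. I would prove it by a direct computation with generating series from the two commutator formulas $[\alpha(0),Y_M(u,z_2)]=Y_M(\alpha(0)u,z_2)$ and $[\alpha(j),Y_M(u,z_2)]=\sum_{i\ge0}\binom{j}{i}Y_M(\alpha(i)u,z_2)\,z_2^{\,j-i}$ for $j\ge1$: since all the $\alpha(i)$ with $i\ge1$ commute with one another and with $\alpha(0)$, the adjoint action of $\sum_{j\ge1}\tfrac{\alpha(j)}{-j}(-z_1)^{-j}$ on $Y_M(u,z_2)$ is again of the form $Y_M((\,\cdot\,)u,z_2)$ and exponentiates cleanly, and re-summing the binomial sums (using $\sum_{j\ge1}\tfrac{(-1)^{j+1}}{j}(z_2/z_1)^j=\log(1+z_2/z_1)$ and $\sum_{m\ge0}\binom{i+m-1}{m}x^m=(1-x)^{-i}$) produces exactly the $E$-part of $\Delta(\alpha,z_1+z_2)$, while the $z_1^{\alpha(0)}$-factor supplies the remaining $(z_1+z_2)^{\alpha(0)}$. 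Granting this, I would substitute $u\mapsto\Delta(-\alpha,z_1)u$ and $v\mapsto\Delta(-\alpha,z_2)v$ into the ordinary Jacobi identity for the $V$-module $M$; using the conjugation identity (for $M$, and for $V$ inside the iterate term) together with the standard $\delta$-function substitution rules, the integer-power $\delta$-functions of the untwisted identity become — precisely because of the $z^{-\alpha(0)}$-factors acting on $V^r$ — the fractional-power $\delta$-functions in the $g$-twisted Jacobi identity for $Y_{M^{(-\alpha)}}$. The remaining axioms come out along the way: a short computation gives $\Delta(-\alpha,z)L(-1)=L(-1)\Delta(-\alpha,z)+\tfrac{d}{dz}\Delta(-\alpha,z)$ on $V$, whence $Y_{M^{(-\alpha)}}(L(-1)v,z)=\tfrac{d}{dz}Y_{M^{(-\alpha)}}(v,z)$; and $\Delta(-\alpha,z)\omega=\omega-z^{-1}\alpha+\tfrac12 z^{-2}\langle\alpha,\alpha\rangle\1$ (using $L(1)\alpha=0$, valid since a strongly regular $V$ carries a nondegenerate invariant bilinear form), so the Virasoro zero mode of $M^{(-\alpha)}$ is $L(0)-\alpha(0)+\tfrac12\langle\alpha,\alpha\rangle$ — it commutes with the old $L(0)$, is bounded below, and has finite-dimensional graded pieces, as a $g$-twisted module requires.

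For irreducibility, observe that $\Delta(\alpha,z)$ and $\Delta(-\alpha,z)$ are mutually inverse (their exponents commute), so $\big(M^{(-\alpha)}\big)^{(\alpha)}=M$ with the same $Y$ and grading; more generally $(\,\cdot\,)^{(\alpha)}$ and $(\,\cdot\,)^{(-\alpha)}$ form a mutually inverse pair of operations. Hence a subspace $W\subseteq M$ is stable under all coefficient operators of $Y_{M^{(-\alpha)}}(V,z)$ if and only if it is stable under all coefficient operators of $Y_M(V,z)$ — each family of operators lies in the $\bC$-span of the other, because $\Delta(\pm\alpha,z)$ carries $V$ into formal series with coefficients in $V$ and the construction is invertible — so $M$ and $M^{(-\alpha)}$ have the same submodule lattice, and irreducibility of $M$ forces irreducibility of $M^{(-\alpha)}$.

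\medskip
\noindent\textbf{Main obstacle.}\quad The laborious and conceptually central part is the conjugation identity for $\Delta$ and the passage from the ordinary Jacobi identity to the $g$-twisted one: it is only formal manipulation of generating functions, but it demands careful bookkeeping of the binomial expansions of $(-z_1)^{-j}$ versus $(-(z_1+z_2))^{-j}$ and of the fractional powers produced by $z_1^{\pm\alpha(0)}$. Everything else — $\Delta(-\alpha,z)\1=\1$, the shape of $\Delta(-\alpha,z)\omega$, the $L(-1)$-compatibility, and the invertibility used for irreducibility — is short once that identity is in hand.
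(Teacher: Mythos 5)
The paper does not prove this statement at all; it is imported verbatim as \cite[Proposition 5.4]{Li}, so there is no in-paper argument to compare against. Your proposal is a correct reconstruction of Li's original proof: the conjugation identity $\Delta(\alpha,z_1)Y_M(u,z_2)\Delta(\alpha,z_1)^{-1}=Y_M(\Delta(\alpha,z_1+z_2)u,z_2)$ is exactly the engine Li uses to convert the untwisted Jacobi identity into the $g$-twisted one, and your treatment of the vacuum, the $L(-1)$-derivative property, the shift of the Virasoro zero mode, and the invertibility of $(\cdot)^{(\pm\alpha)}$ for irreducibility all match the standard argument (with the semisimplicity of $\alpha(0)$ and the termination of the exponential series relying, as you implicitly do, on the finite-dimensional graded pieces and CFT-type grading assumed throughout the paper).
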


Namely, we can take $V$ as a base vector space of $T^1$ and define 
the vertex operator $Y^{T^1}(v,z)$ of $v\in V$ on $V=T^1$ by 
\[    
Y^{T^1}(v,z):=Y(\Delta(-\alpha,z)v,z).
\]
Then $(V, Y^{T^1})$ has a $g$-twisted module structure. Since $V$ is holomorphic, 
a  $g$-twisted module is uniquely determined and so $(V,Y^{T^1})$ is the desired 
$g$-twisted module.  
Similarly, we can define a $g^m$-twisted module $T^m=(V,Y^{T^m})$ by 
\[
Y^{T^m}(v,z):=Y(\Delta(-m\alpha,z)v,z)
\]
on $V$. The important fact is that the structures of these twisted modules are basically determined by the actions of $Y(\alpha,z)=\sum_{j\in \bZ} \alpha(j)z^{-j-1}$.

In order to see their structures, we separate the actions of $Y(\alpha,z)$ and the actions of the subVA which commute with 
the actions of $Y(\alpha,z)$.

Throughout this paper, $M(\alpha)$ denotes a subVA generated by 
$\alpha\in V_1$. 
It is known that $M(\alpha)$ is isomorphic to a Heisenberg VA of rank one given by a Lie algebra relation 
$[\alpha(n), \alpha(m)]=n\delta_{m+n,0}\langle \alpha,\alpha\rangle$.  
For a subspace $\CH$ of a Cartan subalgebra of $V_1$, we similarly define $M(\CH)$ 
as a subVA of $V$ generated by $\CH$, which is a Heisenberg 
VA of rank $\dim(\CH)$. 
We often use $\CH$ to denote a Cartan subalgebra of $V_1$. 

\begin{ntn} 
Let $V$ be a VA and $W$ a subVA of $V$. The commutant of $W$ in $V$ is given  by  
\begin{equation}\label{eq:3.2}  
{\rm Comm}(W,V)=\{v\in V\mid u(m)v=0 \quad \text{ for all } u\in W, m\geq 0\}. 
\end{equation} 
Throughout this paper, we use the following notation. 
\begin{equation}\label{eq:3.3}
X={\rm Comm}(M(\alpha),V) \quad \text{ and } \quad U={\rm Comm}(M(\CH),V).
\end{equation}
\end{ntn}

One advantage of using a rank one Heisenberg algebra 
is that the commutant VOA $X$ coincides with $\{v\in V\mid  \alpha(n)v=0, n\geq 0\}$, which can be   checked easily. 

From now on, we assume that $g=\exp(2\pi i \alpha(0))$ has a finite order $N$. 
In this case, the eigenvalues of $\alpha(0)$ on $V$ are in $\frac{\bZ}{N}$.  
Let $I$ be the ideal of $\bZ$ generated by the eigenvalues of $N\alpha(0)$ on $V$. Then $I=S\bZ$ for some positive integer $S$. Clearly $(S,N)=1$.  We set $K= N\langle \alpha, \alpha\rangle /2\in \bC$.  

\begin{prop}\label{Prop:6}
Let $V$ be a strongly regular holomorphic VOA. Then    
${\rm Comm}(X,V)$ is isomorphic to a lattice VOA $V_{\bZ \frac{N\alpha}{S}}$. Moreover, $K\in \bQ$. Set $K=K_1/K_2$ with $K_1,K_2\in \bN$ such that $(K_1, K_2)=1$. Then $K_2|N $ and $S^2|K_1$.
Similarly, there is an even lattice $L$ such that 
${\rm Comm}(U,V)\cong V_L$. 
\end{prop}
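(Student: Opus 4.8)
The plan is to use the fact that $M(\alpha) \cong$ a rank one Heisenberg VA, together with the general structure theory of commutants of Heisenberg subVAs inside strongly regular VOAs, to pin down $\Comm(X,V)$ as a lattice VOA. First I would record the easy observation already noted in the text: since $M(\alpha)$ is the rank one Heisenberg generated by $\alpha$, the commutant $X = \Comm(M(\alpha),V) = \{v\in V\mid \alpha(n)v = 0,\ n\ge 0\}$, and in particular $X$ contains the commutant Virasoro element $\omega - \omega_{M(\alpha)}$. Then $V$ decomposes as a module for $M(\alpha)\otimes X$; because $V$ is strongly regular (hence $C_2$-cofinite, rational, self-dual), this is a finite direct sum of tensor products of irreducible modules, and each irreducible $M(\alpha)$-module appearing is a Fock space $M(\alpha,\lambda)$ with $\alpha(0)$ acting by a scalar $\lambda$. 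The set of such $\lambda$ is, by definition of $S$ and $N$, exactly $\frac{S}{N}\bZ$ intersected with the $\alpha(0)$-spectrum, and crucially $\frac{S}{N}\bZ$ is precisely the group generated by that spectrum. Next I would set $\beta = \frac{N\alpha}{S}$, so that $\langle\beta,\beta\rangle = \frac{N^2}{S^2}\langle\alpha,\alpha\rangle = \frac{2NK}{S^2}$, and argue that $\Comm(X,V) = \bigoplus_{\lambda} M(\alpha,\lambda)$ where $\lambda$ ranges over the subgroup $\bZ\beta$ of eigenvalues; the multiplicity-one property (simple current structure of the $M(\alpha)$-isotypic pieces, which follows from holomorphicity of $V$ and rationality of the Heisenberg) gives that each graded piece is one-dimensional over $M(\alpha)$, which is exactly the defining property of the lattice VA $V_{\bZ\beta}$. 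The grading being $\bZ$ forces $\langle\beta,\beta\rangle\in 2\bZ$, i.e. $V_{\bZ\beta}$ is an even lattice VOA, and one then reads off $\Comm(X,V)\cong V_{\bZ\frac{N\alpha}{S}}$.

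With this in hand the rationality of $K$ and the divisibility statements are bookkeeping. Since $\langle\beta,\beta\rangle = \frac{2NK}{S^2}\in 2\bZ$, writing $K = K_1/K_2$ in lowest terms gives $S^2 K_2 \mid N K_1$; because $(K_1,K_2)=1$ we get $K_2\mid N$ (hence $K\in\bQ$ in particular) and then $S^2\mid \frac{N}{K_2}K_1$. To upgrade this to $S^2\mid K_1$ I would use $(S,N)=1$, which was established right before the proposition: from $(S,N)=1$ and $K_2\mid N$ we get $(S,K_2)=1$, so $(S^2, \frac{N}{K_2})$ divides $(S^2,N)=1$, whence $S^2\mid K_1$. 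The condition $K_2\mid N$ says exactly that $N\langle\alpha,\alpha\rangle/2 = K$ has denominator dividing $N$, which is consistent with the eigenvalues of $\alpha(0)$ lying in $\frac1N\bZ$; I would double-check the direction of the argument here, but it is the routine part.

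For the last sentence, I would replace $M(\alpha)$ by the rank $r = \dim\CH$ Heisenberg VA $M(\CH)$ and repeat the argument verbatim: $U = \Comm(M(\CH),V) = \{v\mid \CH(n)v = 0,\ n\ge 0\}$, $V$ decomposes as an $M(\CH)\otimes U$-module into a finite sum of irreducibles, each $M(\CH)$-piece is a Fock space $M(\CH,\lambda)$ with $\lambda$ in the dual space of $\CH$, the set of such $\lambda$ generates a rank $r$ lattice $L$ (it spans $\CH^*$ and is discrete because only finitely many irreducibles occur and the spectrum is rational), multiplicity one gives that $\Comm(U,V) = \bigoplus_{\lambda\in L} M(\CH,\lambda) \cong V_L$, and the integrality of the grading forces $L$ to be even.

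The main obstacle I anticipate is justifying the multiplicity-one / simple-current claim: that in the $M(\alpha)\otimes X$-decomposition of $V$ each isotypic component $M(\alpha,\lambda)\otimes X_\lambda$ has $X_\lambda$ at most one-dimensional in each conformal weight — equivalently, that $\dim\{v\in V : \alpha(0)v = \lambda v,\ \alpha(n)v = 0\ (n>0),\ L(0)v = kv\}\le 1$ once we also project onto $X$. This is where holomorphicity of $V$ genuinely enters (a fusion-rules / quantum-dimension argument, as in \cite{CMi} or the theory used for orbifold constructions), and it is the step I would need to cite carefully rather than reprove. Everything downstream of it — identifying the sum with $V_{\bZ\beta}$, evenness, and the number-theoretic divisibilities — is then formal.
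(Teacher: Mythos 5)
Your arithmetic endgame (deducing $K\in\bQ$, $K_2\mid N$ and $S^2\mid K_1$ from $\frac{2KN}{S^2}\in 2\bZ$ together with $(S,N)=1$ and $(K_1,K_2)=1$) is exactly the paper's computation and is correct. The structural part, however, has a genuine gap at the one step that carries the content of the proposition: you \emph{assert} that $\Comm(X,V)=\bigoplus_{\lambda\in\bZ\beta}M(\alpha,\lambda)$ with $\beta=\frac{N\alpha}{S}$, but give no argument for why the index set is $\bZ\beta$ rather than some other rank-one lattice $\bZ t\alpha$. The set of momenta $\lambda$ whose Fock spaces occur in $V$ generates (essentially) the \emph{dual} lattice, while the set of $\lambda$ with $M(\alpha,\lambda)\subseteq\Comm(X,V)$ is the lattice itself; your phrase ``the subgroup $\bZ\beta$ of eigenvalues'' conflates the two, and nothing in your proposal ties the generator of the commutant's lattice to $S$. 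The paper closes this by a two-sided argument: writing $\Comm(X,V)=V_{\bZ t\alpha}$ (citing that a central charge one commutant properly containing $M(\alpha)$ in a strongly regular VOA is a rank-one lattice VOA), it uses evenness of $\bZ t\alpha$ to get $t\in\bQ$, then (a) decomposes $V$ into irreducible $V_{\bZ t\alpha}$-modules so that the $\alpha(0)$-spectrum lies in $\frac{1}{t}\bZ$, and (b) invokes holomorphicity via \cite{KM} to conclude that \emph{every} irreducible $V_{\bZ t\alpha}$-module occurs in $V$, so that all of $\frac{1}{t}\bZ$ is realized; together these force $\frac{1}{t}\bZ=\frac{S}{N}\bZ$, i.e.\ $t=N/S$. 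You correctly sense that holomorphicity must enter, but you attach it to the multiplicity-one claim, which is not where the identification of $S$ happens; multiplicity one is already packaged into the cited classification of central charge one extensions of the Heisenberg, which is the route the paper takes instead of your direct Fock-space analysis.

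Two smaller inaccuracies: the $M(\alpha)\otimes X$-decomposition of $V$ is an \emph{infinite} direct sum (infinitely many Fock spaces occur, cf.\ \eqref{eq:3.4}), and the Heisenberg VOA is not rational, so ``rationality of the Heisenberg'' cannot be the source of the simple-current structure. Also, for the final sentence about $\Comm(U,V)\cong V_L$, the paper again simply cites the structural fact ($\Comm(U,V)$ contains the full-rank Heisenberg $M(\CH)$ and is $C_2$-cofinite, hence is a lattice VOA); your sketch for that case inherits the same gap of determining the lattice, and the parenthetical ``only finitely many irreducibles occur'' is again false for the Heisenberg factor.
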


\pr
It is clear that the commutant ${\rm Comm}(X,V)$ is a VOA of central 
charge one and contains $M(\alpha)$ properly. So, it is a lattice VOA of rank one, 
say $V_{\bZ t\alpha}$. Since $\bZ t\alpha$ is an even lattice, $\langle t\alpha,t\alpha\rangle\in 2\bZ$. Also since $|g|=N$, $\langle t\alpha,N\alpha\rangle\in \bZ$ and so $t\in \bQ$, say $t=\frac{p}{q}$ with $p,q\in \bN$ such that 
$(p,q)=1$. Then the dual lattice of $\bZ \frac{p}{q} \alpha$ is 
$\bZ \frac{qN}{2pK} \alpha$ and $\langle \alpha, \frac{qN}{2pK} \alpha\rangle  = \frac{q}{p}$. Since $V$ is holomorphic, all values in $\frac{q}{p} \bZ $ will appear 
as eigenvalues of $\alpha(0)$ in $V$. Therefore, $\frac{q}{p} =\frac{S}{N}$ by 
the definition of $S$ and ${\rm Comm}(X,V)\cong V_{\bZ \frac{N\alpha}{S}}$.
Furthermore, $V_{\bZ \frac{N\alpha}{S}}$ is a
VOA and we have $\langle \frac{N\alpha}{S}, \frac{N\alpha}{S}\rangle = \frac{2KN^2}{NS^2}\in 2\bZ$ and thus, $K\frac{N}{S^2}\in \bN$. In particular, $K\in \bQ$, say $K = \frac{K_1}{
K_2}$ with $K_1,K_2\in \bN$ such that $(K_1,K_2) = 1$. Then we have $S^2K_2|K_1N$ and $S^2|K_1$ and $K_2|N$ as desired since $(S,N) = 1$ and $(K_2,K_1)=1$. 
Similarly, since ${\rm Comm}(U,V)$ contains $M(\CH)$ and it is $C_2$-cofinite,  it is also a lattice VOA.  
\prend

\begin{remark}\label{rem:3.3}
Set $\beta= \frac{\alpha}{S}$ and $h=\exp(2\pi i \beta(0))$. Then $g= h^S$. By the proof of Proposition \ref{Prop:6}, we know that the eigenvalues of $\beta(0)$ on $V$ are also in $\frac{\bZ}N$.  
Therefore, the order of $h$ also divides $N$ and thus is $N$. 
Recall that $(S,N)=1$; there are integers $k, \ell$ such that $kS+\ell N=1$. Then 
we have  $h= h^{kS+\ell N} =h^{kS}=g^k$ and $<g>=<h>$.  

Since we are mainly interested in the orbifold construction associated with $g$ and the resulting VOA depends only on the cyclic group $<g>$, by replacing $\alpha$ by $\beta=\alpha/S$ if necessary, we may assume that $S=1$ and ${\rm Comm}(X,V)\cong V_{\bZ N\alpha}$.   
\end{remark}

Although $X$-modules and $V_{\bZ N\alpha}$-modules are enough to consider 
the orbifold construction,  we will also 
explain the construction using $U$-modules and $V_{L}$-modules for the future notation. 

\begin{lmm}
$X$ is a strongly regular VOA. 
\end{lmm}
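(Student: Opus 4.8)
The plan is to show that $X = \mathrm{Comm}(M(\alpha), V)$ inherits strong regularity from $V$. Recall that strong regularity means: rational, $C_2$-cofinite, CFT-type, and self-contragredient (together with simplicity). First I would recall the general principle that the commutant of a rational $C_2$-cofinite subVOA inside a rational $C_2$-cofinite VOA is again rational and $C_2$-cofinite; here $M(\alpha)$ is a rank-one Heisenberg VOA, which is not rational by itself, so I would instead work with the larger subVOA $\mathrm{Comm}(X,V) \cong V_{\bZ N\alpha}$ (a rank-one lattice VOA, hence rational and $C_2$-cofinite by Dong's theorem) and use that $X = \mathrm{Comm}(V_{\bZ N\alpha}, V)$. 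Since $V_{\bZ N\alpha}$ and $X$ form a commuting pair (a dual pair in the sense of Frenkel–Zhu/Li), and $V$ is rational and $C_2$-cofinite, I would invoke the known results on commutants — e.g. the theorem that the commutant of a regular vertex operator subalgebra with a nice decomposition is regular (cf.\ Krauel–Miyamoto, Carnahan–Miyamoto) — to conclude $X$ is $C_2$-cofinite and rational.

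Next I would verify the CFT-type and self-contragredience. CFT-type ($X_0 = \bC\1$ and $X$ has no negative weights) is immediate: $X$ is a subVOA of $V$ containing $\1$ and sharing the conformal structure away from the Heisenberg direction, so $X_n = 0$ for $n < 0$ and $X_0 = V_0 \cap X = \bC\1$; one also checks $L(1)X_1 = 0$ (needed for the invariant form), which holds because $X_1 \subseteq V_1$ and $V$ is of CFT-type so $L(1)V_1 = 0$. Self-contragredience: since $V$ is holomorphic (hence self-contragredient) and $V$ decomposes as a module over the dual pair $V_{\bZ N\alpha} \otimes X$ as $\bigoplus_{\lambda} V_{\bZ N\alpha + \lambda} \otimes X_{[\lambda]}$ over the relevant coset, the contragredient of each $X_{[\lambda]}$ is identified with the component paired with it; taking $\lambda = 0$ gives $X' \cong X$. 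Simplicity of $X$ follows from $X_0 = \bC\1$ together with rationality/$C_2$-cofiniteness, or directly since a commutant with one-dimensional degree-zero part in a simple VOA is simple.

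The step I expect to be the main obstacle is establishing rationality of $X$ rigorously, since the commutant of a subVOA need not in general be rational, and the cleanest arguments in the literature require either that the subVOA be regular with a good module structure or that the pair $(V_{\bZ N\alpha}, X)$ be a genuine dual pair with $V$ completely reducible as a bimodule. I would handle this by first establishing, via the rank-one Heisenberg decomposition (the eigenspace decomposition of $\alpha(0)$, using that $g$ has finite order $N$ and $S = 1$ after the reduction in Remark~\ref{rem:3.3}), that $V = \bigoplus_{k \in \bZ} V_{\bZ N\alpha + \frac{k}{N}\alpha} \otimes X^{(k)}$ as a $V_{\bZ N\alpha} \otimes X$-module, where each $X^{(k)}$ is an irreducible $X$-module (a simple-current property coming from holomorphicity of $V$, cf.\ the discussion of $T^{m,0}$ being simple currents in \S\ref{Sec:3.1}). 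Given this explicit decomposition and that $V$ is rational, standard arguments (the "reverse" direction of regularity for the commutant, as in the cited literature) then yield that $X$ is rational and $C_2$-cofinite. Combined with CFT-type and self-contragredience from the previous paragraph, this gives that $X$ is strongly regular.
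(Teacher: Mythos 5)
Your proposal correctly isolates the real difficulty (rationality and $C_2$-cofiniteness of the commutant $X$), but the step you offer to close it does not actually exist as a theorem: there is no general result saying that the commutant of a regular subVOA, or the second member of a dual pair, is regular, and neither of the references you gesture at says this. Krauel--Miyamoto is about modular invariance of multivariable trace functions, and Carnahan--Miyamoto is about fixed-point subalgebras $V^A$ under finite (solvable) automorphism groups, not about commutants per se. So the sentence ``standard arguments (the reverse direction of regularity for the commutant) then yield that $X$ is rational and $C_2$-cofinite'' is precisely the gap.

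The missing idea, which is the entire content of the paper's two-line proof, is to realize $X\otimes V_{\bZ N\alpha}$ as a fixed-point subVOA. Since every irreducible $V_{\bZ N\alpha}$-module is a simple current, the decomposition $V=\bigoplus_{s} V_{\frac{s}{2K}\alpha+\bZ N\alpha}\otimes X^{(s)}$ is graded by a finite cyclic group, and the dual of that grading is a finite cyclic subgroup $A\leq \Aut(V)$ (essentially $\langle \exp(2\pi i\,c\,\alpha(0))\rangle$ for a suitable scalar) with $V^{A}=X\otimes V_{\bZ N\alpha}$. Then Miyamoto's cyclic-orbifold $C_2$-cofiniteness theorem and Carnahan--Miyamoto's rationality theorem apply to $V^{A}$, giving that $X\otimes V_{\bZ N\alpha}$ is strongly regular, and hence so is the tensor factor $X$. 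Your CFT-type, simplicity, and self-contragredience observations are fine but become unnecessary once this route is taken; without the fixed-point realization, however, your argument for the decisive properties does not go through.
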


\pr
Since all simple modules of $V_{\bZ N\alpha}$ are simple current, 
there is a cyclic automorphism group $A$ of $V$ with finite order such that 
the fixed point subVOA $V^A$ coincides with $X\otimes V_{\bZ N\alpha}$. 
Then by \cite{CMi} and \cite{M}, $V^A$ is strongly regular and so is $X$. 
\prend
\medskip

We will consider an alternative construction of $\tilde{V}(g)$ (or one with the same character) using the Lorentzian lattice vertex algebra $V_{\Pi_{1,1}}$. 
In order to compare their structures, we will decompose $V$ as a direct sum of $M(\alpha)$-modules. By Remark \ref{rem:3.3}, we may assume that ${\rm Comm}(X,V)\cong V_{\bZ N\alpha}$. Then the dual lattice $(\bZ N\alpha)^*$ of $\bZ N\alpha$ is $(1/2K) \bZ\alpha$ and we have 
\begin{equation}\label{eq:3.4}
V=\bigoplus_{s\in \bZ} M(\alpha)\otimes e^{\frac{s}{2K}\alpha}\otimes X(\frac{s\alpha}{2K}),  
\end{equation}
where $M(\alpha)\otimes e^{\frac{s}{2K}\alpha} \otimes X(\frac{s\alpha}{2K})$ is an eigenspace of $\alpha(0)$ with eigenvalue $\frac{s}{N}$ and 
$X(\frac{s\alpha}{2K})$ denotes the multiplicity of simple $M(\alpha)$-module $M(\alpha)\otimes e^{\frac{s}{2K}\alpha}$. 

\begin{remark}\label{Xs}
We may view $X(\frac{s\alpha}{2K})$ as an $X$-module and $X=X(0)$; note that  
$ X(\frac{s\alpha}{2K})\cong X(\frac{s\alpha}{2K}+mN\alpha)$ 
as $X$-modules for any $m\in \bZ$.  For simplicity, we denote 
$X(\frac{s\alpha}{2K})$ by $X^{(s)}$. Then $X^{(s)} \cong X^{(s+2mKN)}$ for any $m\in \bZ$. 
Since ${\rm Comm}(X,V)\cong V_{\bZ N\alpha}$, we also have the decomposition 
\begin{equation}\label{VNadec}
V=\bigoplus_{s=0}^{2KN-1} V_{\frac{s}{2K}\alpha+ \bZ N\alpha}\otimes X^{(s)}.
\end{equation}
Since $V$ is holomorphic,  $X^{(s)}\neq 0$ for any $0\leq s\leq 2KN-1$ \cite{KM}.
\end{remark}

We next study the structure of the $g^m$-twisted module $T^m$ by using Li's $\Delta$-operator 
and compute the corresponding degree operator $L^{T^m}(0)$ on $T^m$. 

Since $\alpha(0)\omega=0$, 
$\alpha(1)\omega=\alpha$, $\alpha(2)\omega=0$, $\alpha(3)\omega=0$, we have 
$$ 
\Delta(-m\alpha,z)\omega=z^0(\omega-m\alpha z^{-1}
+\frac{m^2}{2}\langle \alpha,\alpha\rangle z^{-2})
$$
and so the degree operator $L^{T^m}(0)$ on $M(\alpha)\otimes e^{\frac{s\alpha}{2K}}\otimes X^{(s)}$ is given by  
\begin{equation}\label{LTm}
L(0)-\frac{m(s-mK)}{N}.
 \end{equation}
In particular, for $m=1$, we have 
$$L^{T^1}(0):=L(0)-\frac{s-K}{N}.$$ 

\begin{prop}
The $g$-twisted module $T^1$ has a non-zero integer weight element  
if and only if $K=\frac{N\langle \alpha,\alpha\rangle}{2}$ is an integer. 
\end{prop}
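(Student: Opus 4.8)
The plan is to compute the minimal conformal weight appearing in $T^1$ within each graded piece and determine when an integer weight occurs. Recall from \eqref{eq:3.4} and \eqref{VNadec} that $V=\bigoplus_{s=0}^{2KN-1} V_{\frac{s}{2K}\alpha+\bZ N\alpha}\otimes X^{(s)}$, with all $X^{(s)}\neq 0$ since $V$ is holomorphic. On the summand $M(\alpha)\otimes e^{\frac{s}{2K}\alpha}\otimes X^{(s)}$ the operator $L^{T^1}(0)$ equals $L(0)-\frac{s-K}{N}$ by \eqref{LTm}. So the weights occurring in $T^1$ are of the form $w-\frac{s-K}{N}$ where $w$ runs over the $L(0)$-weights on that summand. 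The idea is that the set of such $L(0)$-weights, as $s$ and the internal grading vary, is controlled explicitly: on $V_{\frac{s}{2K}\alpha+\bZ N\alpha}$ the $\alpha$-part contributes weights $\frac{1}{2}\langle \frac{s}{2K}\alpha+mN\alpha,\frac{s}{2K}\alpha+mN\alpha\rangle=\frac{(s+2mKN)^2}{4KN}$ plus a nonnegative integer from the Heisenberg Fock space, while $X^{(s)}$ contributes its own weights.

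First I would observe that the smallest weight in $T^1$ is achieved by taking $s=0$ (or more precisely the residue class of $0$) together with the vacuum of $M(\alpha)$ and the lowest weight of $X^{(0)}=X$, which is $0$ since $\1\in X$. On that piece $L^{T^1}(0)$ takes the value $0-\frac{0-K}{N}=\frac{K}{N}$ on the vacuum-type vector $\1\otimes e^0\otimes \1$, and more generally $\frac{(2mKN)^2}{4KN}+j-\frac{0-K}{N}=mKN\cdot(\text{nonneg})+j+\frac{K}{N}$ type expressions. Next I would argue that \emph{every} weight occurring in $T^1$ lies in $\frac{K}{N}+\frac{1}{N}\bZ$ shifted by the fractional part coming from $s$: indeed, $w-\frac{s-K}{N}$ where $w\in \frac{s^2}{4KN}+\frac{1}{2N}\langle\cdot\rangle+\bZ_{\geq 0}+(\text{wt of }X^{(s)})$, and one checks that all these weights are congruent modulo $1$ to each other precisely up to the offset $\frac{K}{N}$ — the twisted module $T^1$ is graded, and its weights lie in a single coset of $\bZ$ in $\frac{1}{N}\bZ$ iff that coset is $\frac{K}{N}+\bZ$ being the integer coset, i.e. iff $K/N\in\bZ$. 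Actually the cleaner route: since $T^1$ is an irreducible $g$-twisted module, its weights lie in a coset $r+\bZ_{\geq 0}$ of $\bZ$ for a unique $r\in\bQ/\bZ$ (the conformal weight mod $1$), and I would identify $r$ by evaluating $L^{T^1}(0)$ on the image of $\1$, namely $\Delta(-\alpha,z)\1=\1$ gives weight $0$ under $L(0)$ but $L^{T^1}(0)\1 = L(0)\1 + \frac{K}{N}\1$... wait — more carefully, the vacuum $\1$ sits in the $s=0$ piece and the shift formula gives its $L^{T^1}(0)$-eigenvalue as $\frac{K}{N}$.

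So the argument reduces to: $T^1$ has a nonzero weight-$0$ vector $\iff$ the conformal weight $\rho(T^1)$, which I claim equals $\frac{K}{N} \bmod \bZ$ (via the computation above using $\Delta(-\alpha,z)\omega$ and the lowest piece), lies in $\bZ$, $\iff K/N\in\bZ$, $\iff K=\frac{N\langle\alpha,\alpha\rangle}{2}\in\bZ$ since $N$ is the order of $g$ and the eigenvalues of $\alpha(0)$ are in $\frac{1}{N}\bZ$ so this is consistent. The main obstacle I anticipate is establishing rigorously that the fractional part of every $T^1$-weight is exactly $\frac{K}{N}\bmod\bZ$ — i.e. that no summand $M(\alpha)\otimes e^{\frac{s}{2K}\alpha}\otimes X^{(s)}$ with $s\not\equiv 0$ secretly produces weights in a different coset. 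This is handled by noting that $T^1$ being a single irreducible $g$-twisted module forces all its weights into one $\bZ$-coset; then the value $\frac{K}{N}$ at $\1$ pins down that coset, and the "if" direction follows from exhibiting the explicit vector (e.g. when $K/N\in\bZ$, the vector $\1$ itself, lying in degree $K/N\in\bZ$, or if that is $0$, a suitable vector in a higher Heisenberg or $X^{(0)}$ layer), while the "only if" is the contrapositive: if $K/N\notin\bZ$, no weight is an integer, so in particular no nonzero integer-weight element exists.
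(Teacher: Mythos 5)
Your argument contains a genuine error at its core. You claim that, since $T^1$ is an irreducible $g$-twisted module, all of its weights lie in a single coset $r+\bZ$ of $\bZ$, and you then identify $r$ with $\tfrac{K}{N}\bmod \bZ$ via the vacuum vector. This is false: a $g$-twisted module for $|g|=N$ is $\tfrac{1}{N}\bZ$-graded, not $\bZ$-graded up to one overall shift. Concretely, by \eqref{LTm} the summand $M(\alpha)\otimes e^{\frac{s}{2K}\alpha}\otimes X^{(s)}$ of $T^1$ carries weights in $\bZ-\frac{s-K}{N}$, and as $s$ runs over $\bZ$ the coset $\frac{K-s}{N}+\bZ$ varies with $s\bmod N$; since $X^{(s)}\neq 0$ for every $s$, the weights of $T^1$ genuinely occupy many distinct $\bZ$-cosets. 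Your criterion ``integer weight occurs iff $K/N\in\bZ$'' is therefore wrong, and the final step where you pass from $K/N\in\bZ$ to $K\in\bZ$ is also a false equivalence (take $K=1$, $N=2$: then $K\in\bZ$ and the $s=1$ summand does contain integer weights, yet $K/N=\tfrac12\notin\bZ$). You arrive at the correct statement only because two incorrect claims are juxtaposed.

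The intended argument is much more direct and avoids any appeal to a single coset. An integer weight appears in $T^1$ exactly when some nonzero summand has $\frac{s-K}{N}\in\bZ$, because $L(0)$ takes only integer values on $V$. If this holds for some $s\in\bZ$, then $s-K\in N\bZ\subseteq\bZ$, hence $K\in\bZ$. Conversely, if $K\in\bZ$, take $s=K$: the summand indexed by $s=K$ is nonzero (all $X^{(s)}\neq 0$ since $V$ is holomorphic, as you correctly note) and on it $L^{T^1}(0)=L(0)$ has integer eigenvalues. You have all the ingredients for this — the decomposition \eqref{eq:3.4}, the shift formula, and the nonvanishing of the $X^{(s)}$ — but the detour through the conformal weight modulo $\bZ$ of the whole module derails the proof.
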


\pr 
If there is an integer weight element in $T^1$, then there is $s\in \bZ$ such 
that $\frac{s-K}{N}\in \bZ$ since $L(0)$ takes only integer values on $V$. 
In particular, $s-K\in \bZ$ and so $K\in \bZ$.  On the other hand, if 
$K$ is an integer, then for $s=K$, $L^{T^1}(0)$ has an integer eigenvalue. 
\prend

As we have seen, $K\in \bZ$ is a sufficient condition that we can execute an orbifold construction.  

\begin{remark} \label{lattice_twisted}
Note that $g= \exp(2\pi i \alpha(0))$ also acts on the lattice VOA $V_{\bZ N\alpha}$.  It is proved in \cite[Proposition 2.15]{DLM2} that for any irreducible $V_{\bZ N\alpha}$-module $V_{\frac{s}{2K}\alpha +\bZ N\alpha}$, the irreducible $g^m$-twisted module $(V_{\frac{s}{2K}\alpha+ \bZ N\alpha})^{(-m\alpha)}$ defined by $Y(\Delta(-m\alpha, z) \cdot, z)$ is isomorphic to $V_{\frac{s}{2K}\alpha -m\alpha+ \bZ N\alpha}$ . Therefore,  by \eqref{VNadec},  the $g^m$-twisted module $T^m$  can be decomposed as
\begin{equation}\label{twistmodule}
T^m= \bigoplus_{s=0}^{2KN-1} V_{\frac{s}{2K}\alpha -m\alpha + \bZ N\alpha}\otimes X^{(s)}
=\bigoplus_{s\in \bZ}M(\alpha)\otimes e^{\frac{s\alpha}{2K}-m\alpha}
\otimes X(\frac{s\alpha}{2K}).
\end{equation}
\end{remark}

\subsection{A construction using Lorentzian lattice VA}\label{Sec:Lorentzian}
In this subsection, we will discuss a factor VOA of a commutant VA using a Lorentzian lattice VA $V_{\Pi_{1,1}}$ and an isotropic VA, where $\Pi_{1,1}= \{(a,b)\mid a,b\in \bZ\}$ is a rank $2$ lattice with a Gram matrix $\begin{pmatrix} 0&-1 \cr -1&0\end{pmatrix}$. First we will recall a construction of the Leech lattice 
by using a Niemeier lattice and the Lorentzian lattice $\Pi_{1,1}$.  

\subsubsection{Niemeier lattice and Leech lattice}\label{SS:3.2.1}
Let $E$ be a Niemeier lattice and $E\not=\Lambda$.  
As it is well-known \cite[Chapter 26]{ConSl}, $E\oplus \Pi_{1,1}\cong \Lambda\oplus \Pi_{1,1}$.  
Let $\Gamma= \Gamma_1\perp \Gamma_2\perp \cdots \perp \Gamma_k$ be the decomposition of the root sublattice $\Gamma$ into irreducible components.  
Note that the irreducible components are of the type $A$, $D$ or $E$ and all the irreducible components have the same (dual)  Coxeter number $h$. 

Choose a fundamental system of roots for $\Gamma_i$ and let $\rho_i$ be the corresponding Weyl vector.  Set $\bar{\rho}= \sum \rho_i$. Then $\langle \bar{\rho}, \bar{\rho}\rangle =2h(h+1)$ and $\rho=(\bar{\rho},h, h+1) \in E\oplus \Pi_{1,1}$ 
is an isotropic element. In this case, $${\rho}^{\perp}/\bZ {\rho}\cong \Lambda,$$ 
where $ {\rho}^{\perp} = \{ (a, m,n) \in E\oplus \Pi_{1,1}\mid \langle 
(a, m,n), (\bar{\rho},h, h+1)\rangle=0\}$. 
\medskip

At the end of this section, we will consider the Niemeier lattice VOA $V_E$ and then 
translate the above arguments into the tensor product VA $V_E\otimes V_{\Pi_{1,1}}\cong V_{E\oplus \Pi_{1,1}}$. 
Then the above result can be written as   
\[
V_\Lambda \cong \mathrm{Comm}( M(\rho), V_E\otimes V_{\Pi_{1,1}})/ P, 
\]
where $P$ is a proper ideal of $\mathrm{Comm}( M(\rho), V_E\otimes V_{\Pi_{1,1}})$, which is defined as the kernel of 
the map  $\mathcal{R}: \mathrm{Comm}( M(\rho), V_E\otimes V_{\Pi_{1,1}}) \to \mathrm{Comm}( M(\rho), V_E\otimes V_{\Pi_{1,1}})$ such that $u \mapsto \lim_{\rho\to 0} u$ (cf. Remark \ref{rem:3.3.1}).

\subsubsection{Lorentzian lattice and the Lorentzian construction}\label{SS:3.2.2}
We discuss a generalization of the above construction to VOAs. 
Our setting is as follows: 

\medskip

Let $V$ be a VOA of CFT-type with $V_1\not=0$ and 
$\CH$ a subspace of a Cartan subalgebra of $V_1$ and $\alpha\in \CH$.    
Let $\Pi_{1,1}=\bZ r +\bZ q$ with $\langle r,r\rangle=\langle q,q\rangle=0$ and $\langle r,q\rangle= -1$.  Then the Lorentzian lattice VA $V_{\Pi_{1,1}}$ is given
by 
\[
V_{\Pi_{1,1}} =\bigoplus_{(m,n)\in \Pi_{1,1}}M(r, q)\otimes e^{(m,n)}
\]
with the formal elements $\{e^{(m,n)} \mid m,n\in \bZ\}$ and 
\[
V\otimes V_{\Pi_{1,1}} =\bigoplus_{m,n\in \bZ}\bigoplus_{s\in \bZ} M(\alpha, r,q)\otimes e^{\frac{s}{2K}\alpha}\otimes X^{(s)}\otimes e^{(m,n)}. 
\]
To simplify the notation, we denote $e^{a\alpha}\otimes e^{(m,n)}$ by $e^{(a\alpha,m,n)}$.

Set 
$\rho=(\alpha,1, \frac{\langle \alpha,\alpha\rangle}{2})
\in \CH\oplus \bC(\Pi_{1,1}) \subseteq (V\otimes V_{\Pi_{1,1}})_1$, 
which is an isotropic element and 
set $J={\rm Comm}( M(\rho), V\otimes V_{\Pi_{1,1}})$. 
Since $\Comm(J,V\otimes V_{\Pi_{1,1}})$ contains $M(\rho)$ and has central 
charge one, $\Comm(J,V\otimes V_{\Pi_{1,1}})\cong V_{\bZ N\rho}$ for 
$N\in \bC$, where $V_{\bZ 0\rho}$ denotes $M(\rho)$ if $N=0$.  
We are interested in the case $N\not=0$. In this case, 
since $e^{N\rho}=e^{(N\alpha,N,N\frac{\langle \alpha,\alpha\rangle}{2})}
\in V\otimes V_{\Pi_{1,1}}$, we have $N\in \bZ$ and $\langle \alpha,\alpha\rangle=\frac{2K}{N}$ for some 
$K\in \bZ$. Furthermore, since $X\subseteq J$, $e^{N\alpha}\in \Comm(X,V)$, 
where $X={\rm Comm}(M(\alpha),V)$. 

Set $P=\bC[\rho(-j):j>0]J+(\1-e^{N\rho})_{-1}J$. Note also that $e^{0\rho}=\1$.

The main aim of this section is to prove the following theorem. 

\begin{thm} \label{cosetVOA}
$J/P$ is a vertex operator algebra. We denote it by $V^{[\alpha]}$.
If $V$ is a strongly regular holomorphic VOA and we choose $\alpha$ 
as one in Remark \ref{rem:3.3}, then $V^{[\alpha]}\cong \tilde{V}(g)$ 
as an $X\otimes M(\alpha)$-module, where $g=\exp(2\pi i\alpha(0))$. 
\end{thm}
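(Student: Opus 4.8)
The plan is to prove the two assertions of Theorem \ref{cosetVOA} in order: first that $J/P$ carries a VOA structure, and then that as an $X\otimes M(\alpha)$-module it is isomorphic to $\tilde V(g)$. For the first part, I would follow the template set by the Niemeier/Leech case of \S\ref{SS:3.2.1}. The commutant $J=\mathrm{Comm}(M(\rho),V\otimes V_{\Pi_{1,1}})$ is a vertex subalgebra of $V\otimes V_{\Pi_{1,1}}$ containing $\rho$ in its center-ish role (since $\rho$ is isotropic, $\rho(0)$ acts on $J$ and $\rho(n)J=0$ for $n>0$). The subspace $P=\bC[\rho(-j):j>0]J+(\1-e^{N\rho})_{-1}J$ should be checked to be a $\rho(0)$-stable ideal (in the vertex-algebra sense) of $J$, using that $e^{N\rho}$ is a unit-like element — more precisely, $(e^{N\rho})_{-1}$ is invertible on the relevant graded pieces — so that modding out by $(\1-e^{N\rho})_{-1}J$ identifies $X^{(s)}$-type components shifted by $N\rho$, and modding out by $\bC[\rho(-j):j>0]J$ kills the Heisenberg directions generated by $\rho$. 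The grading by $\rho(0)$-eigenvalue descends, and on $J/P$ the operator $\rho(0)$ acts as $0$, so $L(0)$-eigenvalues become integral; one then verifies the VOA axioms (in particular that the conformal vector is the image of $\omega_{V\otimes V_{\Pi_{1,1}}}$ minus the Heisenberg piece $\omega_{M(\rho)}$) survive the quotient. This is essentially the "$\rho\to 0$" limit $\mathcal R$ described in the excerpt.

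For the second and more substantive part, the strategy is to exhibit an explicit $X\otimes M(\alpha)$-module isomorphism between $V^{[\alpha]}$ and $\tilde V(g)=\bigoplus_{m=0}^{N-1}T^{m,0}$. I would decompose $J$ explicitly. Writing out $V\otimes V_{\Pi_{1,1}}=\bigoplus_{m,n,s}M(\alpha,r,q)\otimes e^{\frac{s}{2K}\alpha}\otimes X^{(s)}\otimes e^{(m,n)}$, the commutant condition $\rho(k)(\,\cdot\,)=0$ for $k\ge 0$ picks out, after removing the $M(\rho)$ Heisenberg factor, exactly the components where the $\rho(0)$-eigenvalue vanishes; with $\rho=(\alpha,1,\frac{\langle\alpha,\alpha\rangle}{2})$ and $\langle\alpha,\alpha\rangle=2K/N$, the eigenvalue of $\rho(0)$ on $e^{\frac{s}{2K}\alpha}\otimes e^{(m,n)}$ is a linear form in $s,m,n$; setting it to zero and then further quotienting by $e^{N\rho}\sim\1$ leaves one free parameter that I would match against the index $m$ in $T^{m,0}$ and the index $s$ in the decomposition \eqref{twistmodule} of $T^m$. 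Concretely, I expect $V^{[\alpha]}\cong\bigoplus_{m=0}^{N-1}\bigoplus_{s}M(\alpha)\otimes e^{\frac{s\alpha}{2K}-m\alpha}\otimes X(\frac{s\alpha}{2K})$ after the identification, which is precisely the right-hand side of \eqref{twistmodule} summed over $m$, i.e. $\bigoplus_m T^{m,0}=\tilde V(g)$; the Heisenberg factor $M(\alpha)$ inside $V^{[\alpha]}$ arises as the diagonal residue of $M(\alpha,r,q)$ modulo $M(\rho)$. The key computational input is Remark \ref{lattice_twisted} (the identification of twisted $V_{\bZ N\alpha}$-modules via $\Delta(-m\alpha,z)$ with lattice cosets), together with Proposition \ref{Delta_twist} and Proposition \ref{Prop:6} ($S=1$, $\mathrm{Comm}(X,V)\cong V_{\bZ N\alpha}$).

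The main obstacle, I expect, is not the bookkeeping of the decomposition but verifying that the $X\otimes M(\alpha)$-module structure (not merely the graded vector-space structure) is preserved — that is, that the vertex operators on $J/P$ obtained by restricting those of $V\otimes V_{\Pi_{1,1}}$ and passing to the $\rho\to 0$ limit agree with the twisted vertex operators $Y^{T^m}(v,z)=Y(\Delta(-m\alpha,z)v,z)$ under the proposed identification. This amounts to showing that the exponential factor $\exp\!\big(\sum_{j\ge1}\frac{\alpha(j)}{-j}(-z)^{-j}\big)z^{\alpha(0)}$ coming from Li's $\Delta$-operator is exactly reproduced by the Heisenberg/lattice field associated to the $r,q$-directions of $\Pi_{1,1}$ once the $\rho$-direction is contracted — essentially a rank-one computation with the free-boson vertex operators $e^{a\alpha}$, $e^{(m,n)}$ and their normal-ordered products. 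I would handle this by choosing compatible cocycles for $\widehat{\bZ N\alpha}$ and $\widehat{\Pi_{1,1}}$, reducing to checking the claim on the lattice subVOA $V_{\bZ N\alpha}\otimes V_{\Pi_{1,1}}$ where everything is explicit, and then tensoring with the (inert) $X^{(s)}$-multiplicity spaces. Since $V$ is strongly regular and holomorphic, uniqueness of the $g$-twisted module (\cite{DLM2}) lets me upgrade an isomorphism of $X\otimes M(\alpha)$-modules plus a matching of characters to the stated conclusion, so it suffices to nail down the module structure and compute the character of $V^{[\alpha]}$, which is forced by the graded decomposition above.
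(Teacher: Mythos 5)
Your overall strategy is the one the paper itself follows: show that $P$ is a proper ideal of $J$ (Lemma \ref{idealP}, using Lemma \ref{ZJ} that $V_{\bZ N\rho}$ is central in $J$), observe that the weights of the quotient are bounded below so that $J/P$ is a VOA (Theorem \ref{tVd}), decompose $J$ explicitly (Lemma \ref{CommLoren}), and then identify the quotient with $\bigoplus_{m=0}^{N-1}T^{m,0}$ by comparing the action of $X$, of the modes $\alpha(n)$, and of the $M(\alpha)$-singular vectors $e^{\frac{s\alpha}{2K}}\otimes v$ with the twisted operators $Y(\Delta(-m\alpha,z)\cdot,z)$; the paper closes the argument exactly as you suggest, via the simple-current property of $T^{1,0}$, the fact that $J$ is the vertex subalgebra generated by $T^{1,0}$, and uniqueness of twisted modules. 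Your worry in the last paragraph is also resolved the way you guess: $\Delta(-m\alpha,z)$ leaves $X$ untouched, shifts only $\alpha(0)$ by $-2mK/N$, and rescales the singular vectors by $z^{-ms/N}$, and all three effects are reproduced by tensoring with $e^{(m,n)}$.

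There is, however, one concrete error, and it sits at the crux of the theorem. With $\rho=(\alpha,1,K/N)$, the eigenvalue of $\rho(0)$ on $e^{\frac{s}{2K}\alpha}\otimes e^{(m,n)}$ is $\frac{s-mK-nN}{N}$, so the commutant condition forces $s=mK+nN$, i.e.\ $s\equiv mK \pmod N$ --- which is precisely the integrality condition $\frac{s-mK}{N}\in\bZ$ selecting $T^{m,0}$ inside $T^m$ (cf.\ \eqref{LTm} and \eqref{eq:3.6}). Your displayed formula $\bigoplus_{m=0}^{N-1}\bigoplus_{s}M(\alpha)\otimes e^{\frac{s\alpha}{2K}-m\alpha}\otimes X(\frac{s\alpha}{2K})$ with $s$ unrestricted is the decomposition of $\bigoplus_{m}T^{m}$, not of $\bigoplus_m T^{m,0}=\tilde V(g)$; these have different characters, so the identification you assert in the same sentence is inconsistent with the formula, and a literal execution of that step would fail. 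The fix is to restrict to $s\in mK+N\bZ$ as in Lemma \ref{CommLoren}. A smaller imprecision: since $\rho$ is isotropic, $M(\rho)$ has no conformal vector, so ``$\omega_{V\otimes V_{\Pi_{1,1}}}$ minus $\omega_{M(\rho)}$'' is not meaningful; indeed $\omega_{V\otimes V_{\Pi_{1,1}}}\notin J$ because $\rho(1)\omega=\rho\neq 0$, and the conformal vector of $J/P$ is $\omega_{M(\widehat\alpha)}+\omega_X$ read off from the decomposition in Theorem \ref{tVd}.
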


By a direct calculation, 
\[
\rho^\perp =\{ (a\alpha, m,n)\mid \langle \rho, (a\alpha, m,n)\rangle =0\} =\bC\rho+\bC\widehat{\alpha},
\]
where $\widehat{\alpha}= (\alpha,0, \frac{2K}N)$.  Since  
$\langle (\frac{(mK+nN)\alpha}{2K},m,n),(\alpha,1,\frac{K}{N})\rangle=0$, it is easy to check that 
$$\Ker( \rho(0))=\bigoplus_{m,n\in \bZ} M(\rho, \widehat{\alpha})\otimes X^{(mK+nN)}\otimes 
e^{(\frac{(mK+nN)\alpha}{2K},m,n)}.$$

Furthermore, for $n\geq 1$, we have 
$\rho(n)X^{(s)}e^{\bZ r+\bZ q}=0$.  
Hence, we have: 

\begin{lmm}\label{CommLoren}
$\displaystyle J={\rm Comm}(M(\rho), V\otimes V_{\Pi_{1,1}})
=\bigoplus_{m,n\in \bZ} M(\rho, \widehat{\alpha})\otimes 
X^{(mK+nN)} \otimes e^{(\frac{(mK+nN)\alpha}{2K},m,n)}.
$ 
\end{lmm}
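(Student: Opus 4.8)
The plan is to compute $J = \mathrm{Comm}(M(\rho), V\otimes V_{\Pi_{1,1}})$ directly from the explicit decomposition of $V\otimes V_{\Pi_{1,1}}$ given just above the lemma, using the one-dimensional Heisenberg structure of $M(\rho)$. The key point, as noted in the excerpt, is that for a rank one Heisenberg VA generated by $\rho$ we have $\mathrm{Comm}(M(\rho), W) = \{w\in W \mid \rho(n)w = 0 \text{ for all } n\geq 0\}$, so it suffices to intersect $\Ker(\rho(0))$ with $\bigcap_{n\geq 1}\Ker(\rho(n))$. The first step is to record the action of $\rho(n)$ on a typical summand $M(\alpha,r,q)\otimes e^{(\frac{s}{2K}\alpha,m,n)}\otimes X^{(s)}$: since $\rho \in \CH\oplus \bC(\Pi_{1,1})$ lies in the Heisenberg part, $\rho(0)$ acts as the scalar $\langle \rho, (\frac{s}{2K}\alpha, m,n)\rangle$ on the $e^{(\cdot)}$-component and annihilates everything else, while for $n\geq 1$ the operator $\rho(n)$ only differentiates the $M(\alpha,r,q)$-factor and acts as zero on the group-algebra part $X^{(s)}e^{\bZ r+\bZ q}$, as the excerpt already states.

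Second, I would impose $\rho(0)w = 0$. Using $\rho = (\alpha, 1, \frac{\langle\alpha,\alpha\rangle}{2})$ and $\langle\alpha,\alpha\rangle = \frac{2K}{N}$, the pairing $\langle \rho, (\frac{s}{2K}\alpha, m, n)\rangle = 0$ becomes a linear Diophantine condition relating $s$ to $m,n$; solving it gives $s = mK + nN$ (this is exactly the computation already flagged via the identity $\langle(\frac{(mK+nN)\alpha}{2K},m,n),(\alpha,1,\frac{K}{N})\rangle = 0$). Hence $\Ker(\rho(0)) = \bigoplus_{m,n\in\bZ} M(\rho,\widehat{\alpha})\otimes X^{(mK+nN)}\otimes e^{(\frac{(mK+nN)\alpha}{2K},m,n)}$, where one checks that $M(\alpha,r,q)$ decomposes as $M(\rho)\otimes M(\widehat{\alpha})$ since $\rho^\perp = \bC\rho + \bC\widehat{\alpha}$ and $\langle\rho,\rho\rangle = 0$, $\langle\rho,\widehat{\alpha}\rangle\neq 0$ — this is the standard splitting of a rank two Heisenberg into a rank one hyperbolic piece plus its (rank one) complement.

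Third, I would verify that on $\Ker(\rho(0))$ the higher modes $\rho(n)$, $n\geq 1$, automatically act as zero: on the $M(\widehat{\alpha})$-part this is because $[\rho(n),\widehat{\alpha}(m)] = n\delta_{n+m,0}\langle\rho,\widehat{\alpha}\rangle$ never contributes for $n\geq 1$ acting on lowest-weight vectors, and $[\rho(n),\rho(m)] = 0$ since $\langle\rho,\rho\rangle = 0$; on the $X^{(s)}\otimes e^{(\cdot)}$-part it is the statement $\rho(n)X^{(s)}e^{\bZ r+\bZ q} = 0$ already established. Therefore $\bigcap_{n\geq 1}\Ker(\rho(n)) \supseteq \Ker(\rho(0))$, so the commutant is exactly $\Ker(\rho(0))$, which is the claimed formula.

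The main obstacle is bookkeeping rather than anything conceptual: one must be careful that the decomposition $M(\alpha, r, q) = M(\rho)\otimes M(\widehat{\alpha})$ is genuinely a tensor decomposition of Heisenberg VAs (which requires checking that $\rho,\widehat{\alpha}$ span a nondegenerate-on-the-quotient subspace and that their modes generate commuting copies after the hyperbolic rescaling), and that the formal exponential $e^{(\frac{(mK+nN)\alpha}{2K},m,n)}$ is the correct representative, i.e. that every coset in $\rho^\perp/\bZ N\rho$ relevant here is hit exactly once as $(m,n)$ ranges over $\bZ^2$ — equivalently that $\widehat{\alpha}$ together with $\rho$ parametrizes $\rho^\perp$ without redundancy. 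Once the lattice arithmetic $s = mK+nN$ and the vanishing $\rho(n)X^{(s)}e^{\bZ r + \bZ q} = 0$ for $n\geq 1$ are in hand, the identification of $J$ is immediate.
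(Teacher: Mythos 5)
Your overall strategy is the same as the paper's: the proof of Lemma \ref{CommLoren} there is exactly the computation of $\Ker(\rho(0))$ via $\langle(\frac{(mK+nN)\alpha}{2K},m,n),\rho\rangle=0$, followed by the observation that $\rho(n)$ with $n\geq 1$ kills $X^{(s)}\otimes e^{\bZ r+\bZ q}$ and commutes with the surviving creation operators. However, one of your intermediate claims is false and, taken at face value, would break your own third step. You assert $\langle\rho,\widehat{\alpha}\rangle\neq 0$ and invoke a ``hyperbolic'' splitting, but with the Gram matrix $\begin{pmatrix}0&-1\\-1&0\end{pmatrix}$ one gets
$\langle\rho,\widehat{\alpha}\rangle=\langle\alpha,\alpha\rangle-1\cdot\tfrac{2K}{N}-\tfrac{K}{N}\cdot 0=0$,
which is forced anyway by your own statement that $\widehat{\alpha}\in\rho^{\perp}$. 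This vanishing is not a technicality: it is precisely because $\langle\rho,\rho\rangle=\langle\rho,\widehat{\alpha}\rangle=0$ that every mode $\rho(n)$, $n\geq 1$, commutes with all the creation operators $\rho(-j)$ and $\widehat{\alpha}(-j)$, hence annihilates all of $M(\rho,\widehat{\alpha})\otimes X^{(s)}\otimes e^{(\frac{(mK+nN)\alpha}{2K},m,n)}$ and not just the lowest-weight vectors. If $\langle\rho,\widehat{\alpha}\rangle$ were nonzero as you claim, then $\rho(n)\widehat{\alpha}(-n)v=n\langle\rho,\widehat{\alpha}\rangle v\neq 0$ for a lowest-weight vector $v$, so the $M(\widehat{\alpha})$-descendants would \emph{not} lie in the commutant and the formula of the lemma would be wrong.

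A second, smaller slip: $M(\alpha,r,q)$ is a rank-three Heisenberg, so it cannot equal $M(\rho)\otimes M(\widehat{\alpha})$. The correct statement is that only the rank-two piece $M(\rho,\widehat{\alpha})$ attached to $\rho^{\perp}=\bC\rho+\bC\widehat{\alpha}$ survives inside the commutant, while the third direction (any complement of $\rho^{\perp}$, which pairs nontrivially with $\rho$) is removed because its negative modes do not commute with $\rho(n)$. With these two corrections your argument coincides with the paper's.
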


We note that the eigenvalues of $N\widehat{\alpha}(0)$ on $J$ are 
in $\bZ R$, where $R=(K,N)$. Then 
$$\Comm(J,V\otimes V_{\Pi_{1,1}})=V_{\bZ N \rho}=\bigoplus_{n\in \bZ}M(\rho)
\otimes e^{(nN\alpha, nN, nK)}$$
is contained in $J$.

\begin{lmm}\label{ZJ}
For any $v\in J$, we have   
$e^{N\rho}_mv=0$ for all $m\geq 0$ and  
$e^{N\rho}_mv\subseteq \sum_{j=1}^{\infty} \rho(-j)J$ for $m\leq -2$. 
In particular, $V_{\bZ N\rho}$ is in the center of $J$. 
\end{lmm}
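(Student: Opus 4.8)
The plan is to work inside the lattice VA $V\otimes V_{\Pi_{1,1}}$ and to exploit that $e^{N\rho}=e^{(N\alpha,N,N\langle\alpha,\alpha\rangle/2)}$ lies in the commutant $J=\mathrm{Comm}(M(\rho),V\otimes V_{\Pi_{1,1}})$, together with the fact that $N\rho$ is an isotropic vector of norm $\langle N\rho,N\rho\rangle = N^2\langle\rho,\rho\rangle = 0$. First I would use the general formula for products of lattice elements in a lattice VA: for $a,b$ in the lattice and a suitable choice of cocycle, $Y(e^a,z)e^b = \varepsilon(a,b)\, z^{\langle a,b\rangle} e^{a+b}\exp(\sum_{j\ge 1}\frac{a(-j)}{j}z^j)\exp(-\sum_{j\ge 1}\frac{a(j)}{j}z^{-j})$, so that the $m$-th mode $e^a_m e^b$ is a combination of $\langle a,b\rangle + \bZ$-graded pieces; more precisely $e^a_m e^b = 0$ whenever $m \ge -\langle a,b\rangle$, and the leading term (at $m=-\langle a,b\rangle -1$) is $\pm e^{a+b}$. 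Applying this with $a=N\rho$: for any homogeneous $v\in J$, by Lemma \ref{CommLoren} $v$ lies in a summand $M(\rho,\widehat\alpha)\otimes X^{(mK+nN)}\otimes e^{(\frac{(mK+nN)\alpha}{2K},m,n)}$, and one computes $\langle N\rho, (\frac{(mK+nN)\alpha}{2K},m,n)\rangle = 0$ directly (the same computation already used to describe $\Ker(\rho(0))$). Hence $\langle N\rho, v\rangle = 0$ in the relevant pairing, so $e^{N\rho}_m v = 0$ for all $m \ge 0$, giving the first claim.

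For the second claim, the surviving modes $e^{N\rho}_m v$ with $m\le -1$: the $m=-1$ mode produces $e^{(N+\tfrac{mK+nN}{2K})\alpha\cdot(\dots)}$-type terms — concretely a translate by $N\rho$ inside $J$ — while modes $m\le -2$ produce the same kind of translate but multiplied by positive-mode creation operators coming from the $\exp(\sum_{j\ge1}\frac{(N\rho)(-j)}{j}z^j)$ factor in $Y(e^{N\rho},z)$. Since $N\rho = (N\alpha, N, NK/N\cdot\tfrac12\langle\alpha,\alpha\rangle)$ is a scalar multiple of $\rho$, each creation operator $(N\rho)(-j) = N\rho(-j)$, so every term with $m\le -2$ visibly lies in $\sum_{j\ge1}\rho(-j)(V\otimes V_{\Pi_{1,1}})$; I would then check this intersects $J$ correctly, i.e. that the result lies in $\sum_{j\ge 1}\rho(-j)J$ rather than merely in $\sum_{j\ge1}\rho(-j)(V\otimes V_{\Pi_{1,1}})$. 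This is where a small amount of care is needed: one argues that $\rho(-j)w \in J$ together with $w$ being forced (by the grading/eigenvalue bookkeeping for $\rho(0)$ and $\widehat\alpha(0)$, since $\rho(j)$ commutes with $\widehat\alpha(0)$ and shifts $\rho(0)$-weight trivially) to itself lie in $J$ — because the translate $e^{N\rho}\cdot$ followed by stripping the $\rho(-j)$ factors lands back among the summands of Lemma \ref{CommLoren}.

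Finally, for the ``in particular'' statement that $V_{\bZ N\rho}$ is central in $J$: $V_{\bZ N\rho} = \bigoplus_{n} M(\rho)\otimes e^{(nN\alpha,nN,nK)}$ is generated as a VA by $\rho$ itself and by $e^{N\rho}$ and $e^{-N\rho}$. That $\rho$ acts centrally on $J$ is immediate from the definition of $J$ as a commutant (all $\rho(m)$, $m\ge0$, annihilate $J$, and $\rho(m)$ for $m<0$ commute with everything since $\rho$ is isotropic, $[\rho(m),\rho(n)]=m\delta_{m+n,0}\langle\rho,\rho\rangle=0$). For $e^{\pm N\rho}$: centrality in a vertex algebra means $[Y(e^{N\rho},z_1),Y(v,z_2)]=0$ for $v\in J$, which by the commutator formula and locality follows once we know all the singular (i.e. $m\ge 0$) modes $e^{N\rho}_m v$ vanish and, symmetrically, all $v_m e^{N\rho}$ vanish for $m\ge 0$ — the latter by skew-symmetry $v_m e^{N\rho} = -\sum_{j\ge0}\frac{(-1)^{m+j+1}}{j!}L(-1)^j (e^{N\rho}_{m+j}v)$ reduces to the first claim. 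I expect the main obstacle to be the bookkeeping in the second claim: verifying that the translate produced by $e^{N\rho}_m$ with $m\le -2$ genuinely lands in $\sum_{j\ge1}\rho(-j)J$ and not just in the larger space $\sum_{j\ge1}\rho(-j)(V\otimes V_{\Pi_{1,1}})$, which requires identifying the coefficient of each $\rho(-j_1)\cdots\rho(-j_k)$ monomial as an element of $J$ via Lemma \ref{CommLoren}. Everything else is a direct application of the standard vertex-operator product formula for lattice VAs plus the isotropy $\langle N\rho, v\rangle = 0$.
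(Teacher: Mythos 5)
Your proof is correct and follows essentially the same route as the paper: both arguments expand $Y(e^{N\rho},z)v$ using the lattice-VA formula, use that $\rho(m)v=0$ for $m\ge 0$ on $J$ and that $\langle N\rho,(\tfrac{(mK+nN)\alpha}{2K},m,n)\rangle=0$, and observe that the translate $\xi\otimes e^{(s\alpha,a,b)+N\rho}$ again lies in $J$ by Lemma \ref{CommLoren}, so the $m\le -2$ modes land in $\sum_{j\ge1}\rho(-j)J$. The only cosmetic difference is that the paper packages the vanishing for $m\ge 0$ as a weight count (using $\wt(e^{N\rho})=0$) rather than reading it off the explicit mode expansion as you do.
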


\pr 
We first note that $\wt(e^{N\rho}_mv)=\wt(v)-m-1$, since 
$\wt(e^{N\rho})=0$. 
We may assume $v=\xi\otimes e^{(s\alpha,a,b)}\in J$ with $\xi\in X(s\alpha)\otimes M(\alpha,\Pi_{1,1})$, 
because they span $J$. 
By the definition of vertex operators,
\[
\begin{split}
e^{N\rho}_mv & \in \bC[\rho(-j):j\in \bN]\bC[\rho(j):j\in \bN](\xi\otimes e^{(s\alpha,a,b)+N\rho})\\
& \in \bC[\rho(-j):j\in \bN](\xi\otimes e^{(s\alpha,a,b)+N\rho}).
\end{split}
\]
Since $\rho(0)v=0$, we have $\langle (s\alpha,m,n),\rho\rangle=0$. Furthermore, we have  $\wt(\xi\otimes e^{(s\alpha,m,n)+N\rho})=\wt(v)$ since 
$\wt(e^{N\rho})=0$.   
If $\wt(e^{N\rho}_mv)<\wt(v)$, the only possibility is $e^{N\rho}_mv=0$. If $\wt(e^{N\rho}_mv)>\wt(v)$, i.e. $m\leq -2$, then $e^{N\rho}_mv\in \sum_{j=1}^{\infty} \rho(-j)J$. 
\prend

\medskip

\begin{lmm}\label{idealP}
Set $P=\bC[\rho(-j):j>0]J+(\1-e^{N\rho})_{-1}J$. Then $P$ is a proper ideal of $J$. 
\end{lmm}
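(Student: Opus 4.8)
The plan is to verify directly that $P=\bC[\rho(-j):j>0]J+(\1-e^{N\rho})_{-1}J$ is a two-sided ideal of the vertex algebra $J$, and then to check properness by producing an element of $J$ not in $P$. For the ideal property, I would first observe that $J$ is generated (as a VA) by $X$ together with $\rho$, $\widehat{\alpha}$, and the ground states $e^{(\frac{(mK+nN)\alpha}{2K},m,n)}$ of the pieces appearing in Lemma \ref{CommLoren}; so it suffices to show that $a_n P\subseteq P$ and $p_n a\subseteq P$ for $a\in J$, $p\in P$, $n\in\bZ$. The first summand $\bC[\rho(-j):j>0]J$ is handled using the commutation relations of the Heisenberg generators $\rho(j)$ with the vertex operators $Y(a,z)$: since $\rho(0)$ annihilates $J$ and $\rho$ is isotropic, moving any $\rho(-j)$ ($j>0$) past a mode $a_n$ either leaves it in place or replaces it by a bracket $[\rho(-j),a_n]$ which, by the standard formula, is a sum of modes of $\rho(k)a$; because $\rho$ is isotropic the dangerous term $\rho(-j)_{\bullet}\rho(-j)$ does not contribute a central scalar, and one checks $\rho(k)a\in J$ for all $k\ge 0$ and $\rho(-j)J\subseteq$ first summand, so closure holds. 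The skew-symmetry relation $Y(a,z)b = e^{zL(-1)}Y(b,-z)a$ reduces left multiplication $p_n a$ to right multiplication, so I only need one side.

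For the second summand, the key point is Lemma \ref{ZJ}: $e^{N\rho}$ is central in $J$, meaning $e^{N\rho}_m v = 0$ for $m\ge 0$ and $e^{N\rho}_m v\in\sum_{j\ge1}\rho(-j)J$ for $m\le -2$, while $e^{N\rho}_{-1}$ is a well-defined operator on $J$. Thus $(\1-e^{N\rho})_{-1}=\mathrm{id}-e^{N\rho}_{-1}$ commutes with every mode $a_n$ modulo the first summand $\bC[\rho(-j):j>0]J$: indeed $a_n (e^{N\rho}_{-1} v) - e^{N\rho}_{-1}(a_n v)$ is a sum, via the Borcherds/Jacobi identity, of terms $(e^{N\rho}_{i}a)_{n-1-i}v$ with $i\ge 0$, each of which lands in the first summand because $e^{N\rho}_i a\in\sum_{j\ge1}\rho(-j)J$ for $i=-1$ is not needed here — for $i\ge 0$ we instead use centrality $e^{N\rho}_i a=0$ when $i\ge 0$, except we must be careful that the Jacobi identity mixes $e^{N\rho}_i a$ for $i\ge 0$ only. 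So $a_n(\1-e^{N\rho})_{-1}v\equiv (\1-e^{N\rho})_{-1}(a_n v)\pmod{\bC[\rho(-j):j>0]J}$, giving $a_n P\subseteq P$.

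Finally, properness: I would show $\1\notin P$. Grade $J$ by the eigenvalue of $\widehat{\alpha}(0)$ (equivalently by the pair $(m,n)$, or just by $m$), so that $e^{(\frac{(mK+nN)\alpha}{2K},m,n)}$ has a nonzero $(m,n)$-degree unless $m=n=0$; the element $\1$ sits in degree $(0,0)$. Both $\rho(-j)J$ and $(\1-e^{N\rho})_{-1}J=J-e^{N\rho}_{-1}J$ have the property that, after projecting to the degree-$(0,0)$ component and applying the map $\mathcal R:u\mapsto\lim_{\rho\to 0}u$ that kills all positive Heisenberg modes of $\rho$ (equivalently, projecting $M(\rho)$ onto its vacuum line), the first summand maps to $0$ and $(\1-e^{N\rho})_{-1}v$ maps to $v_{(0,0)}-$(shift), so the composite $J\to J\to$ (something) annihilates $P$ but sends $\1$ to a nonzero vector; hence $\1\notin P$ and $P\neq J$. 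The main obstacle I expect is the bookkeeping in the second paragraph: carefully tracking which terms of the Jacobi identity for $a_n$ and $e^{N\rho}_{-1}$ survive and showing the correction terms all lie in $\bC[\rho(-j):j>0]J$ rather than just in $J$, i.e. getting the centrality statement of Lemma \ref{ZJ} to do exactly the right amount of work with the $m=-2$ boundary case; everything else is a routine, if slightly tedious, application of standard VA identities.
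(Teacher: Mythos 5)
Your proposal is correct and follows essentially the same route as the paper: both arguments rest on the facts that $\rho(i)J=0$ and $e^{N\rho}_iJ=0$ for $i\ge 0$ (the latter from Lemma \ref{ZJ}), so that $\rho(-j)$ and $(\1-e^{N\rho})_{-1}$ commute with all modes of elements of $J$, and both see properness via the specialization $\rho\to 0$ of Remark \ref{rem:3.3.1}. The only real difference is in the other half of the ideal property: the paper expands $(\rho(-j)u)_m w$ and $((\1-e^{N\rho})_{-1}u)_m w$ directly with the iterate formula, using Lemma \ref{ZJ} to push the $e^{N\rho}_{-1-i}$-terms into $\sum_{j\ge 1}\rho(-j)J$, whereas you appeal to skew-symmetry, which additionally requires the (routine, but unstated) check that $P$ is $L(-1)$-stable.
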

 
\pr
Since $\rho(-j)$ and $(\1-e^{N\rho})_{-1}$ commute with  the other operators in $J$, we have 
$u_mP\subseteq P$ for any $u\in J$ and $m\in \bZ$. For $w\in J$, 
\[
\begin{split}
(\rho(-j)u)_mw=&\sum_{i=0}^{\infty}\binom{-j}{i}\{ \rho(-j-i)u_{m+i}w-(-1)^{-j}u_{m-j-i}\rho(i)w \} \\ 
=&\sum\binom{-j}{i}\rho(-j-i)u_{m+i}w \in P   \quad \text{ and } \\
((\1-e^{N\rho})_{-1}u)_m w=&(\1-e^{N\rho})_{-1}(u_mw)+
\sum_{i=1}^{\infty}\binom{-1}{i}\{ e^{N\rho}_{(-1-i)}u_{m+i}w+u_{m-j-i}e^{N\rho}_iw \} \\
=&(\1-e^{N\rho})_{-1}(u_mw)+\sum_{i=1}^{\infty}\binom{-1}{i} e^{N\rho}_{(-1-i)}(u_{m+i}w) \in P. 
\end{split}
\]
It is easy to see that $\mathbf{1}\notin P$ and hence it is proper.
\prend 

\begin{remark}\label{rem:3.3.1}
 Since $\lim_{\rho\to 0}e^{N\rho}=\1$, we can view $P$ as the kernel of 
 the map  $\mathcal{R}: J \to J$ such that $u \mapsto \lim_{\rho\to 0} u$. 
\end{remark}

\begin{defn}\label{Valpha}
Define $V^{[\alpha]}=J/P$. 
We call $V^{[\alpha]}$ a VOA constructed by $\alpha$.
\end{defn}

\begin{lmm}\label{Dual}
Under the above setting, if 
$\langle\alpha,\alpha\rangle=\frac{2K}{N}$, then 
$V^{[N\alpha/K]}\cong V^{[\alpha]}$. 
\end{lmm}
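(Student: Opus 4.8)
The plan is to exhibit a direct isomorphism between the two constructions by matching their defining data, rather than recomputing everything from scratch. Write $\alpha'=N\alpha/K$ and $N'=|g'|$ where $g'=\exp(2\pi i\alpha'(0))$. First I would show that $g'$ generates the same cyclic group as $g=\exp(2\pi i\alpha(0))$: since $\langle\alpha,\alpha\rangle=2K/N$ we have $\langle\alpha',\alpha'\rangle=2N/K$, and the eigenvalues of $\alpha'(0)=\frac{N}{K}\alpha(0)$ on $V$ are $\frac{N}{K}$ times the eigenvalues of $\alpha(0)$, which lie in $\frac1N\bZ$; after passing to the $S=1$ normalization of Remark \ref{rem:3.3} for both $\alpha$ and $\alpha'$ one checks $\langle g\rangle=\langle g'\rangle$ and that $K'=N'\langle\alpha',\alpha'\rangle/2$ is again an integer, so the Lorentzian construction $V^{[\alpha']}$ is defined. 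The key numerical point is the symmetry $K\leftrightarrow N$ under $\alpha\leftrightarrow\alpha'$: the roles of $K$ and $N$ in the two constructions are interchanged, and $R=(K,N)=(N,K)$ is unchanged.

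Next I would identify the two commutant algebras $J=\Comm(M(\rho),V\otimes V_{\Pi_{1,1}})$ and $J'=\Comm(M(\rho'),V\otimes V_{\Pi_{1,1}})$, where $\rho'=(\alpha',1,\frac{\langle\alpha',\alpha'\rangle}{2})=(N\alpha/K,1,N/K)$. The decomposition in Lemma \ref{CommLoren} expresses $J$ as $\bigoplus_{m,n} M(\rho,\widehat\alpha)\otimes X^{(mK+nN)}\otimes e^{(\frac{(mK+nN)\alpha}{2K},m,n)}$, and the analogous formula holds for $J'$ with $K,N$ swapped and $\widehat\alpha'=(\alpha',0,2K/N)=(N\alpha/K,0,2K/N)$. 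The plan is to produce a change of coordinates on the rank-two lattice $\bZ r+\bZ q$ inside $\Pi_{1,1}$ — i.e. an isometry of $\Pi_{1,1}$, or more precisely a lattice automorphism of $\Lambda\oplus\Pi_{1,1}$-type — that sends $\rho\mapsto\rho'$ (or $\rho\mapsto\rho'$ up to the $V_{\bZ N\rho}$-action) and hence carries $J$ isomorphically onto $J'$. Concretely, the sublattice of $\CH\oplus\bC\Pi_{1,1}$ spanned by the exponents appearing in $J$ versus $J'$ should coincide after the substitution $(m,n)\mapsto(n,m)$ together with rescaling of $\alpha$; I would verify this by comparing the two sets of exponent triples $\{(\frac{(mK+nN)\alpha}{2K},m,n)\}$ and checking that the map $e^{(\frac{(mK+nN)\alpha}{2K},m,n)}\mapsto e^{(\frac{(mK+nN)\alpha}{2K},n,m)}$ extends to a VA isomorphism $V\otimes V_{\Pi_{1,1}}\to V\otimes V_{\Pi_{1,1}}$ intertwining the two Heisenberg subalgebras and respecting the $X^{(\cdot)}$-grading.

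Finally I would descend to the quotients: the ideal $P=\bC[\rho(-j):j>0]J+(\1-e^{N\rho})_{-1}J$ is, by Remark \ref{rem:3.3.1}, the kernel of $\mathcal R:u\mapsto\lim_{\rho\to0}u$, and the analogous description holds for $P'\subseteq J'$ with $\rho'$ in place of $\rho$ and $e^{N'\rho'}$ in place of $e^{N\rho}$. Since the isomorphism $J\to J'$ constructed above sends $M(\rho)$ to $M(\rho')$ and $e^{N\rho}\mapsto e^{N'\rho'}$ (here one uses $N\rho$ and $N'\rho'$ generate the same rank-one sublattice — this is where the identity $R=(K,N)$ and the normalizations enter), it maps $P$ onto $P'$, hence induces an isomorphism $V^{[\alpha]}=J/P\xrightarrow{\ \sim\ }J'/P'=V^{[\alpha'/\,]}$, i.e. $V^{[N\alpha/K]}\cong V^{[\alpha]}$. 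I expect the main obstacle to be the careful bookkeeping in matching the generators $e^{N\rho}$ and $e^{N'\rho'}$ of $\Comm(J,V\otimes V_{\Pi_{1,1}})=V_{\bZ N\rho}$ versus $\Comm(J',V\otimes V_{\Pi_{1,1}})=V_{\bZ N'\rho'}$: one must check that these two rank-one lattices $\bZ N\rho$ and $\bZ N'\rho'$ are genuinely interchanged by the coordinate swap (not merely commensurable), and track the cocycle/sign issues from the central extension $\widehat F$ when transporting the vertex operators; everything else reduces to the lattice computation that the exponent sets coincide under $(m,n)\leftrightarrow(n,m)$.
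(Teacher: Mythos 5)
Your proposal is correct and takes essentially the same route as the paper: the paper's two-line proof observes that the rescaled isotropic vector $\frac{N}{K}\rho=(\frac{N\alpha}{K},\frac{N}{K},1)$ generates the same Heisenberg subalgebra, hence the same commutant $J$ and ideal $P$, and that the switch $r\leftrightarrow q$ of $\Pi_{1,1}$ induces an automorphism $\phi$ of $V\otimes V_{\Pi_{1,1}}$ sending it to $\rho_{N\alpha/K}=(\frac{N\alpha}{K},1,\frac{N}{K})$ --- which is precisely your coordinate swap $(m,n)\mapsto(n,m)$ combined with rescaling. The bookkeeping you flag as the main obstacle (that $\bZ N\rho$ is carried onto $\bZ K\rho'$, so the roles of $K$ and $N$ are exactly interchanged and $P$ maps onto $P'$) comes for free once $\phi$ is in hand.
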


\pr 
Clearly, an isotropic element 
$\frac{N}{K}\rho=(\frac{N\alpha}{K}, \frac{N}{K}, 1)$ defines the same commutant 
VOA $J$ and an ideal $P$ of $J$. On the other hand, the switching   $ r\leftrightarrow q$ of $\Pi_{1,1}=\bZ r+\bZ q$ induces an 
automorphism $\phi$ of $V\otimes V_{\Pi_{1,1}}$. By this automorphism, 
$\phi(\frac{N}{K}\rho)=(\frac{N\alpha}{K},1, \frac{N}{K})$ and so we have 
$V^{[\frac{N\alpha}{K}]}\cong V^{[\alpha]}$. 
\prend

We will next derive several useful expressions of $V^{[\alpha]}$. 
First we note that 
\[
(\frac{(mK+nN)\alpha}{2K},m,n)=m(\alpha,1,\frac{K}{N})+\frac{-mK+nN}{2K}(\alpha,0,\frac{2K}{N})=
m\rho+(\frac{-mK+nN}{2K})\widehat{\alpha}.
\]
Then we can decompose ${\rm Comm}(M(\rho), V\otimes V_{\Pi_{1,1}})$ into 
\begin{equation}\label{eq:3.8}
{\rm Comm}(M(\rho), V\otimes V_{\Pi_{1,1}})\\ \cong  
\bigoplus_{m=0}^{N-1}V_{m\rho +\bZ N \rho} \otimes \left (\bigoplus_{n\in \bZ}M(\widehat{\alpha})\otimes e^{(\frac{-mK+nN}{2K})\widehat{\alpha}}\otimes X^{(mK+nN)} \right ) 
\end{equation}

Since $\langle \rho, \widehat{\alpha}\rangle =0$ and 
$\rho$ is an isotropic element,  we have 
\[
\langle (\frac{(mK+nN)\alpha}{2K},m,n), (\frac{(mK+nN)\alpha}{2K},m,n)\rangle =\langle (\frac{-mK+nN}{2K})\widehat{\alpha}, (\frac{-mK+nN}{2K})\widehat{\alpha}\rangle.
\]
Therefore, as a graded vector space, we have the following result: 

\begin{thm}\label{tVd}  As a module of $V_{\bZ N \widehat{\alpha}} \otimes X$, 
\begin{equation}\label{eq:4.1}
\begin{split}
V^{[\alpha]} \cong &\bigoplus_{m=0}^{N-1}\bigoplus_{n\in \bZ}M(\widehat{\alpha})\otimes 
X^{(mK+nN)}\otimes e^{(-\frac{m\alpha}{2}+\frac{nN\alpha}{2K}, 0, n-\frac{mK}{N})}\\
\cong &\bigoplus_{m=0}^{N-1}\bigoplus_{n\in \bZ}M(\widehat{\alpha})\otimes e^{(\frac{-mK+nN}{2K})\widehat{\alpha}}\otimes X^{(mK+nN)}
\end{split} 
\end{equation}
where $\widehat{\alpha}=(\alpha,0,\frac{2K}{N})\in \bC \alpha\oplus \bC \Pi_{1,1}$
and $X^{(mK+nN)}=X(\frac{mK+nN}{2K}\alpha)$.  
\end{thm}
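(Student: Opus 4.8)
The plan is to track how the isotropic vector $\rho=(\alpha,1,\frac{K}{N})$ and its orthogonal complement partner $\widehat\alpha=(\alpha,0,\frac{2K}{N})$ govern the coset $J=\mathrm{Comm}(M(\rho),V\otimes V_{\Pi_{1,1}})$, and then quotient out the "$\rho\to 0$" ideal $P$. The statement of Theorem \ref{tVd} is really a bookkeeping consequence of Lemma \ref{CommLoren} together with the reparametrization $(\frac{(mK+nN)\alpha}{2K},m,n)=m\rho+\frac{-mK+nN}{2K}\widehat\alpha$ already displayed just before equation \eqref{eq:3.8}. So the first step is to take Lemma \ref{CommLoren}, which expresses $J$ as $\bigoplus_{m,n\in\bZ}M(\rho,\widehat\alpha)\otimes X^{(mK+nN)}\otimes e^{(\frac{(mK+nN)\alpha}{2K},m,n)}$, and use the decomposition $\mathbb Z$ into residues mod $N$: write $m=m'+\ell N$ with $0\le m'\le N-1$, $\ell\in\bZ$. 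Shifting $n$ accordingly absorbs the $\ell N\rho$ part into the $V_{\bZ N\rho}$-module structure, giving the splitting \eqref{eq:3.8}: $J\cong\bigoplus_{m=0}^{N-1}V_{m\rho+\bZ N\rho}\otimes\big(\bigoplus_{n\in\bZ}M(\widehat\alpha)\otimes e^{(\frac{-mK+nN}{2K})\widehat\alpha}\otimes X^{(mK+nN)}\big)$.

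The second step is to pass to the quotient $J/P$. By Lemma \ref{idealP}, $P=\bC[\rho(-j):j>0]J+(\1-e^{N\rho})_{-1}J$ is a proper ideal; by Remark \ref{rem:3.3.1}, $\mathcal R:u\mapsto\lim_{\rho\to 0}u$ has kernel exactly $P$. The effect of modding out $\bC[\rho(-j):j>0]J$ is to kill the $M(\rho)$-Heisenberg oscillators, i.e. to replace each $V_{m\rho+\bZ N\rho}$ factor by its lowest-weight line $\bC e^{m\rho}$ (up to the $\bZ N\rho$-translates), and the effect of $(\1-e^{N\rho})_{-1}J$ is to identify $e^{m\rho}$ with $e^{(m+N)\rho}$, collapsing the $V_{\bZ N\rho}$-torsor to a single point in each residue class $m\bmod N$. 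Concretely, $\mathcal R$ sends $\xi\otimes e^{(\frac{(mK+nN)\alpha}{2K},m,n)}$ to $\xi\otimes e^{(\frac{-mK+nN}{2K})\widehat\alpha}$, because $\lim_{\rho\to 0}$ of $e^{m\rho}$ is $\1$. Hence $V^{[\alpha]}=J/P\cong\bigoplus_{m=0}^{N-1}\bigoplus_{n\in\bZ}M(\widehat\alpha)\otimes e^{(\frac{-mK+nN}{2K})\widehat\alpha}\otimes X^{(mK+nN)}$, which is the second line of \eqref{eq:4.1}. For the first line one just rewrites the exponent: $(\frac{-mK+nN}{2K})\widehat\alpha=(\frac{-mK+nN}{2K})(\alpha,0,\frac{2K}{N})=(-\frac{m\alpha}{2}+\frac{nN\alpha}{2K},\,0,\,n-\frac{mK}{N})$, which is a trivial substitution.

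The third, and genuinely substantive, step is to justify that this is an isomorphism \emph{of $V_{\bZ N\widehat\alpha}\otimes X$-modules}, not merely of graded vector spaces. Here I would observe that $\widehat\alpha$ lies in $\rho^\perp$ and $\langle\rho,\widehat\alpha\rangle=0$, so $M(\widehat\alpha)$ and $X$ both sit inside $J$ and commute with $M(\rho)$; therefore their action descends to $J/P$, and on each summand the $M(\widehat\alpha)$-action is the obvious lattice-Heisenberg action on $M(\widehat\alpha)\otimes e^{(\cdots)\widehat\alpha}$ while the $X$-action is the given $X$-module structure on $X^{(mK+nN)}$. The vertex operators $e^{\pm N\widehat\alpha}$ act as the $V_{\bZ N\widehat\alpha}$-module intertwiners $X^{(mK+nN)}\to X^{(mK+(n\pm?)N)}$ coming from fusion, exactly as in the decomposition \eqref{VNadec} of $V$ itself; the quadratic-form computation $\langle(\frac{(mK+nN)\alpha}{2K},m,n),\,\text{same}\rangle=\langle(\frac{-mK+nN}{2K})\widehat\alpha,\,\text{same}\rangle$ displayed before the theorem is what guarantees the conformal weights match after the quotient. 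I expect the main obstacle to be nothing deep but rather the careful check that the ideal $P$ meets each graded piece $M(\rho,\widehat\alpha)\otimes X^{(mK+nN)}\otimes e^{(\cdots)}$ in precisely the span of vectors involving a positive $\rho(-j)$ mode or an $e^{N\rho}$-translate difference — i.e. that $\mathcal R$ restricted to each piece has the expected kernel and image — so that no accidental extra identifications or surviving states appear; this is where one must use that $e^{N\rho}\in V_{\bZ N\rho}$ is central in $J$ (Lemma \ref{ZJ}) so that the two generating families of $P$ do not interfere. Everything else is the residue-class reindexing and the exponent rewriting, both routine.
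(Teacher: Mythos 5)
Your proposal is correct and follows essentially the same route as the paper: Lemma \ref{CommLoren} plus the reparametrization $(\frac{(mK+nN)\alpha}{2K},m,n)=m\rho+\frac{-mK+nN}{2K}\widehat{\alpha}$ yielding \eqref{eq:3.8}, then the quotient by $P$ (killing the $\rho(-j)$ oscillators and identifying $e^{m\rho}$ with $e^{(m+N)\rho}$), with the quadratic-form identity guaranteeing the weights match. Your third step, spelling out why the $V_{\bZ N\widehat{\alpha}}\otimes X$-module structure descends, is a detail the paper leaves implicit but is consistent with its argument.
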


Since $X^{(2mNK+t)}=X^{(t)}$ and $\wt(e^{s\alpha})\geq 0$ for $m\in \bZ$ and $s\in \bQ$, the total set of conformal weights of 
$X^{(t)}\otimes e^{s\alpha}$ has a minimal value. 
Therefore, $V^{[\alpha]}$ is a vertex operator algebra. 

\begin{remark}\label{rem:3.10} 
Set $\frac{K}{N}=\frac{K_0}{N_0}$ with $(N_0,K_0)=1$. 
Then $-m'K+n'N=-mK+nN$ if and only if 
there is a $t\in \bZ$ such that $m'=m+tN_0$ and $n'=n+tK_0$. Therefore, the element $\frac{-mK+nN}{2K}\widehat{\alpha}$ does not determine the  
pair $(m,n)$ uniquely if $R=\frac{N}{N_0}\not=1$. As 
$X^{(mN_0K-mK_0N)}=X^{(0)}={\rm Comm}(M(\alpha),V)\subseteq V^{<g>}$, we also know that  $e^{\frac{(mN_0K+mK_0N)\alpha}{2K}}
=e^{mN_0\alpha}\in {\rm Comm}(X^{(0)},V^{[\alpha]})$ for all $m\in \bZ$. 
It is also easy to check the converse. Therefore, we have 
\[
{\rm Comm}(M(\alpha),V^{[\alpha]} )
=\bigoplus_{m=0}^{R-1} X^{(2mRN_0K_0)}\quad \text{ and } \quad 
{\rm Comm}(X^{(0)},V^{[\alpha]})\cong V_{\bZ N_0\widehat{\alpha}}.
\] 
\end{remark}

\begin{prop}\label{R=1}
If $R=1$, then $(V^{[\alpha]})^{[\alpha]}\cong V$. 
\end{prop}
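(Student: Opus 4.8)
The plan is to show that the Lorentzian construction, when $R=1$, is an involution on the level of the data $(\alpha, V)$, so that applying it twice returns $V$. Recall that $R=(K,N)=1$ means the pair $(m,n)$ in Theorem \ref{tVd} is uniquely determined by $-mK+nN$, equivalently by the $X$-module label $X^{(mK+nN)}$ together with the $M(\widehat\alpha)$-charge. The first step is to pin down the relevant data for $V^{[\alpha]}$: from Remark \ref{rem:3.3.1} and Theorem \ref{cosetVOA}, $V^{[\alpha]}\cong\tilde V(g)$, and from Remark \ref{rem:3.10} with $R=1$ one has $\mathrm{Comm}(M(\alpha),V^{[\alpha]})=X^{(0)}=X$ and $\mathrm{Comm}(X,V^{[\alpha]})\cong V_{\bZ N\widehat\alpha}$. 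So when we run the construction a second time, the relevant Heisenberg element is $\widehat\alpha=(\alpha,0,\tfrac{2K}{N})\in V^{[\alpha]}_1$, with $\langle\widehat\alpha,\widehat\alpha\rangle=\langle\alpha,\alpha\rangle\cdot\tfrac{(2K/N)^2}{\text{?}}$ — more precisely $\langle\widehat\alpha,\widehat\alpha\rangle=2\cdot\tfrac{2K}{N}\cdot(-1)\cdot 0+\cdots$; I would compute this directly from the Gram matrix of $\Pi_{1,1}$, getting $\langle\widehat\alpha,\widehat\alpha\rangle=\tfrac{2K}{N}$ again (the same value as $\langle\alpha,\alpha\rangle$, as forced by $\langle\rho,\rho\rangle=0$ and $\langle\rho,\widehat\alpha\rangle=0$). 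Thus $N$, $K$, and $R$ are unchanged under the construction.

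The second and main step is to exhibit the explicit isomorphism. I would take a fresh hyperbolic plane $\Pi_{1,1}'=\bZ r'+\bZ q'$ and form $V^{[\alpha]}\otimes V_{\Pi_{1,1}'}$, set $\rho'=(\widehat\alpha,1,\tfrac{K}{N})$, and write out $\mathrm{Comm}(M(\rho'),V^{[\alpha]}\otimes V_{\Pi_{1,1}'})/P'$ using the decomposition \eqref{eq:4.1} of $V^{[\alpha]}$ as a $V_{\bZ N\widehat\alpha}\otimes X$-module. Since each isotypic piece of $V^{[\alpha]}$ under $V_{\bZ N\widehat\alpha}$ is $X^{(mK+nN)}$ with multiplicity one and with a prescribed $\widehat\alpha$-charge, the second Lorentzian construction simply re-indexes: the double-coset label $(m',n')$ for $(V^{[\alpha]})^{[\alpha]}$ together with the $X$-module $X^{(m'K+n'N)}$ should match, after the substitution, the original decomposition \eqref{eq:3.4}/\eqref{VNadec} of $V$ as an $M(\alpha)\otimes X$-module. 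Concretely, the new Heisenberg lattice $\bZ N\widehat{\widehat\alpha}$ where $\widehat{\widehat\alpha}=(\widehat\alpha,0,\tfrac{2K}{N})$ inside $V^{[\alpha]}\otimes V_{\Pi_{1,1}'}$ should be identified — via the switch $r'\leftrightarrow q'$ of Lemma \ref{Dual} combined with the original embedding $V\hookrightarrow V\otimes V_{\Pi_{1,1}}$ — with $\bZ N\alpha\subseteq V$, and the $X$-module labels transform by $s\mapsto s$ up to the identification $X^{(s)}\cong X^{(s+2KN)}$. Bijectivity of this relabeling is exactly where $R=1$ is used: it guarantees the map $(m,n)\mapsto -mK+nN$ is injective modulo the period lattice, so no collapsing or multiplicity-doubling occurs.

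The third step is to upgrade the graded-vector-space / $X\otimes M(\alpha)$-module isomorphism to a VOA isomorphism. Here I would invoke the same mechanism as in Theorem \ref{cosetVOA}: both $(V^{[\alpha]})^{[\alpha]}$ and $V$ are strongly regular holomorphic VOAs of central charge $24$ (holomorphy of $V^{[\alpha]}\cong\tilde V(g)$ is known, and orbifolds of strongly regular holomorphic VOAs are again such), and a holomorphic VOA is determined up to isomorphism by its structure as a module over the fixed-point subVOA $X\otimes M(\alpha)$ together with the matching of conformal weights and fusion data — all of which are preserved by the construction. Alternatively, and perhaps more cleanly, I would identify $(V^{[\alpha]})^{[\alpha]}$ with $\widetilde{\tilde V(g)}(g^{-1})$, the reverse orbifold, which is known to recover $V$ (the inverse automorphism undoes the cyclic orbifold; see the discussion of reverse orbifolds in \S\ref{sec:3}).

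The step I expect to be the main obstacle is the bookkeeping in the second step: keeping straight the three nested hyperbolic planes and Heisenberg elements ($\alpha$, $\widehat\alpha$, $\widehat{\widehat\alpha}$), verifying that the ideal $P'$ of the second construction (quotient by $\rho'\to 0$) corresponds precisely to passing from $V\otimes V_{\Pi_{1,1}}$ back down to $V$, and checking that the conformal weights $\wt(e^{(\cdots)})$ computed via the two successive Gram-matrix reductions agree with the weights in $V$. The $R=1$ hypothesis should make every identification an honest bijection rather than merely a surjection, but assembling the chain of identifications without sign or normalization errors is the delicate part.
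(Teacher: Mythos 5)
Your proposal is correct and follows essentially the same route as the paper: the paper's proof is exactly the comparison of the decomposition \eqref{eq:4.1} of $V^{[\alpha]}$ with the decomposition \eqref{eq:3.4} of $V$, using the fact that $R=1$ makes $s\mapsto(m,n)$ with $s=mK+nN$, $0\le m\le N-1$, a bijection, and then identifying $\widehat{\widehat{\alpha}}$ with $\alpha$ (noting $\langle\widehat{\alpha},\widehat{\alpha}\rangle=\langle\alpha,\alpha\rangle$). Your second step is precisely this argument, and the remaining scaffolding is consistent with the paper's (module-level) statement of the isomorphism.
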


\pr Suppose $R=(N,K)=1$. Then, for any $s\in \bZ$, there is the unique pair $(m,n)\in \bZ^2$ such that 
$0\leq m\leq N-1$ and $s=mK+nN$. Then by comparing (3.9) and (3.4), 
we have $(V^{[\alpha]})^{[\alpha]}=\oplus_{m=0}^{N-1}\oplus_{n\in \bZ}M(\widehat{\hat{\alpha}})
\otimes e^{\frac{mK+nN}{2K}\widehat{\hat{\alpha}} }\otimes X(\frac{mK+nN}{2K}\alpha)$. 
By identifying $\widehat{\hat{\alpha}}=\alpha$, we have the desired result. 
\prend

\subsection{Orbifold construction and Lorentzian construction}\label{sec:3.3}
We will use Li's $\Delta$-operator to compare the above VOA $V^{[\alpha]}$ with the 
VOA $\tilde{V}(g)$ obtained by the orbifold construction with an automorphism $g=\exp(2\pi i\alpha(0))$.   
Recall that the underlying vector space of the irreducible $g^m$-twisted module $T^m$ is also $V=\oplus_{s\in \bZ} M(\alpha)\otimes X^{(s)}\otimes e^{\frac{s\alpha}{2K}}$.
The degree operator $L^{T^m}(0)$  acts on $M(\alpha)\otimes e^{\frac{s\alpha}{2K}}\otimes X^{(s)}$ by  $L(0)-\frac{m(s-mK)}{N}$. 
Set  
\[
Z^m:=\oplus_{s\in \bZ} M(\hat{\alpha})\otimes X^{(s)}\otimes 
e^{\frac{s\alpha}{2K}}\otimes e^{(m,\frac{s-mK}{N})}
\subseteq V\otimes V_{\frac{1}{N}\Pi_{1,1,}}. 
\]
Then, the grading on $M(\alpha)\otimes X^{(s)}
\otimes e^{\frac{s\alpha}{2K}}\subseteq T^m$ coincides with 
the grading on $Z^m$. 

Moreover, $\Delta(-m\alpha,z)u=u$ for any $u\in X={\rm Comm}(M(\alpha),V)$ and hence 
$Y^{T^m}(u,z)=Y(u,z)$, which coincides with the action of $u$ on $Z^m$. Since $\Delta(-m\alpha,z)\alpha=\alpha-m\frac{2K}{N}\1 z^{-1}$, we have   
\begin{equation}\label{aTm}
Y^{T^m}(\alpha,z)=Y(\alpha,z)-m\frac{2K}{N}\1 z^{-1}.
\end{equation} 
In particular, there is no change on the operators $\alpha(n)$ of $\alpha$ 
except the grade preserving operator $\alpha(0)$ and the grade preserving operator $\alpha(0)$ on $T^m$ 
is given by  $\alpha(0)-m\frac{2K}{N}$ as an operator on $V$, which coincides with the action of 
$\widehat{\alpha}(0)$ on 
$M(\alpha)\otimes X^{(s)}\otimes e^{\frac{s\alpha}{2K}}\otimes e^{(m,\frac{s-mK}{N})}$. Note that 
$\langle \widehat{\alpha}, \widehat{\alpha}\rangle = \langle \alpha, \alpha \rangle$.  
By identifying $\widehat{\alpha}$ and $\alpha$, we have the following isomorphisms of $M(\alpha)\otimes X$-modules:
\begin{equation}\label{eq:3.5}
\begin{array}{l}
\phi_1: T^1 \to Z^1=\bigoplus_{s\in \bZ} M(\widehat{\alpha})\otimes e^{\frac{s\alpha}{2K}}\otimes X^{(s)}\otimes e^{(1,\frac{s-K}{N})}\subset 
V\otimes V_{\frac{1}{N}\Pi_{1,1}}, \cr
\phi_m: T^m \to Z^m=\bigoplus_{s\in \bZ} M(\widehat{\alpha})\otimes e^{\frac{s\alpha}{2K}}\otimes X^{(s)}\otimes e^{(m,\frac{s-mK}{N})} 
\subseteq V\otimes V_{\frac{1}{N}\Pi_{1,1}}. 
\end{array} 
\end{equation}
for $m\in \bZ$ by \eqref{LTm} and the above arguments.   

Using the above notation, we also have  
$$\phi_0: V \to Z^0:=\bigoplus_{s\in \bZ}M(\widehat{\alpha})\otimes e^{\frac{s\alpha}{2K}}
\otimes X^{(s)}\otimes e^{(0,\frac{s}{N})} \subseteq V\otimes V_{\frac{1}{N}\Pi_{1,1}}.$$  
In particular, the subspace $T^{1,0}= T^1_{\bZ}$ of $T^1$ with integer weights corresponds to the subspace of $Z^1$ for which $(s-K)/N\in \bZ$, i.e., $s=K+nN$ for some $n\in \bZ$ and we have 
\begin{equation}
\phi_1:T^{1,0}\cong Z^{1,0}:=\bigoplus_{n\in \bZ} M(\widehat{\alpha})\otimes e^{\frac{(K+nN)\alpha}{2K}}\otimes X^{(K+nN)}\otimes e^{(1,n)}
\end{equation}

Under our assumption, it is known that $T^{1,0}=T^1_{\bZ}$ is a simple current 
and $T^{m,0}=(T^{1,0})^{\boxtimes m}$ for $m\geq 0$, where $\boxtimes$ denotes a fusion product. 
On the other hand, if we set 
\begin{equation}\label{eq:3.6}
Z^{m,0}:= \bigoplus_{n\in \bZ} M(\widehat{\alpha})\otimes e^{\frac{(mK+nN)\alpha}{2K}}\otimes X^{(mK+nN)}\otimes e^{(m,n)}, 
\end{equation}
then it is easy to see the products of elements 
in $Z^{m_1,0}$ and $Z^{m_2,0}$ belongs to $Z^{m_1+m_2,0}$ and 
$Z^{m,0}=\phi_m(T^{m,0})$.  Moreover, $Z^{m,0}+P = Z^{m',0}+P$ if and only if $m\equiv m'\mod N$.

\begin{remark}\label{Vmodules}
We should note that not only as $M(\alpha)\otimes X$-modules, 
but $T^1$ and $Z^1$ are also isomorphic  as $V$-modules. 
However, in the proof of our main theorem, we only use  the characters of twisted modules and so we will not give a 
precise proof of $V$-isomorphisms, but the proof of 
$V^{<g>}$-isomorphisms. 
\end{remark}

An orbifold construction 
from $V$ by using $g$ is to give a VOA structure on $\oplus_{m=0}^{N-1}T^{m,0}$ preserving the $V^{<g>}$-module structures and the fusion products. 

Note that $\phi_0$ maps $\alpha\in \CH$ to $\widehat{\alpha}=(\alpha,0,\frac{2K}{N})\in (V\otimes V_{\Pi_{1,1}})_1$.

For an $M(\alpha)$-singular vector $e^{\frac{s\alpha}{2K}}\otimes v \in e^{\frac{s\alpha}{2K}}\otimes X^{(s)}$, 
$\Delta(-\alpha,z)e^{\frac{s\alpha}{2K}}\otimes v=z^{-\frac{s}{N}}e^{\frac{s\alpha}{2K}}\otimes v$ 
and 
$$Y^{T^m}(e^{\frac{s\alpha}{2K}}\otimes v ,z)=z^{-\frac{ms}{N}}Y(e^{\frac{s\alpha}{2K}}\otimes v,z). $$
Therefore, the actions of $e^{\frac{s\alpha}{2K}}\otimes v$ on $V$ and 
on $T^m$ are the same except for the grading shift of $-ms/N$.  
On the other hand, it is easy to see  
$$Z^0=\bigoplus_{s\in \bZ} M(\widehat{\alpha})\otimes e^{\frac{s\alpha}{2K}}\otimes X^{(s)}\otimes 
e^{(0, s/N)}=\bigoplus_{s\in \bZ} M(\widehat{\alpha})\otimes X^{(s)}
\otimes e^{\frac{s\widehat{\alpha}}{2K}}$$
is a VOA which is isomorphic to $V$ and 
\begin{equation}\label{eq:3.7}
V^{<g>}\cong Z^{0,0}=\bigoplus_{n\in \bZ}M(\widehat{\alpha})\otimes e^{\frac{nN\alpha}{2K}}\otimes X^{(nN)}\otimes e^{(0,n)} \subseteq V\otimes V_{\Pi_{1,1}}.
\end{equation} 
If we consider the action of $V^{<g>}$ on $T^{m,0}$, the above difference 
can be covered by the action of 
$e^{\frac{kN\widehat{\alpha}}{2K}}\otimes v (= e^{\frac{kN \alpha}{2K}}\otimes v \otimes e^{(0, k)})$ on $V\otimes e^{(m, n)}$. 
Since $\{M(\alpha), X, e^{\frac{s\alpha}{2K}}\otimes X^{(s)}\mid s\in N\bZ\}$ generate $V^{<g>}$, 
we know that $\phi_m$ are isomorphisms of $V^{<g>}$-modules. 
Since $T^{1,0}$ is a simple current, $T^1\cong V\boxtimes_{V^G}T^{1,0}$, 
on which the actions of $V$ is uniquely determined up to isomorphisms. In this case, since $Z^{1,0}\cong T^{1,0}$, $Z^{1,0}$ is also simple current and it is easy to see $Z^1=Z^0\boxtimes_{Z^{0,0}}Z^{1,0}$. 
Therefore, by viewing $V\cong Z^0$, we are able to identify $T^1$ with $Z^1$. 
This completes the proof of $T^1\cong Z^1$ as $V^{<g>}$-modules.

From now on, we will identify $T^{m,0}$ in the $g^m$-twisted module $T^m$ with 
$Z^{m,0} \subseteq  V\otimes V_{\Pi_{1,1}}$ as $M(\alpha)
\otimes X$-modules. Using this identification, we will see that there is a VOA structure on $\oplus_{m=0}^{N-1} T^{m,0}$ by realizing it as a factor vertex subalgebra as discussed in Theorem \ref{cosetVOA}. This gives an alternative way to construct a holomorphic VOA $\tilde{V}$ by the orbifold construction associated with $V$ and $g=\exp(2\pi i\alpha(0))$.   Recall that   
$$T^{m,0}\cong Z^{m,0}=\bigoplus_{n\in \bZ} M(\widehat{\alpha})\otimes X^{(mK+nN)}
\otimes e^{(\frac{(mK+nN)\alpha}{2K},m,n)}.$$

Let $\rho=(\alpha,1,K/N)$. By Lemma \ref{CommLoren}, we have 
$$
J={\rm Comm}(M(\rho), V\otimes V_{\Pi_{1,1}})
=\bigoplus_{m,n\in \bZ} M(\rho, \widehat{\alpha})\otimes 
X^{(mK+nN)} \otimes e^{(\frac{(mK+nN)\alpha}{2K},m,n)}, 
$$ 
and 
$$V^{[\alpha]}={\rm Comm}(M(\rho), V\otimes V_{\Pi_{1,1}})/ P$$ 
is a vertex algebra, where $P=\bC[\rho(-j):j>0]J+(\1-e^{N\rho})_{-1}J$. 

It is clear that $T^{m,0} \subseteq {\rm Comm}(M(\rho), V\otimes V_{\Pi_{1,1}})$ for any $m\in \bZ$.  Since $M(\widehat{\alpha}, \rho)\, (=M(\frac{N\alpha}{2K}+q, \frac{\alpha}{2}+r))$ is contained in the vertex subalgebra generated by  $e^{(\frac{(mK+nN)\alpha}{2K},m,n)}$ and $e^{(\frac{(-mK-nN)\alpha}{2K},-m,-n)}$ for $m,n\in \bZ$,    
${\rm Comm}(M(\rho), V\otimes V_{\Pi_{1,1}})$ coincides with the vertex subalgebra 
${\rm VA}(T^{1,0})$ generated by $T^{1,0}$. 

As a consequence, $V^{[\alpha]}={\rm Comm}(M(\rho), V\otimes V_{\Pi_{1,1}})/P$ defines a VOA structure on $\oplus_{m=0}^{N-1} T^{m,0}$, which is isomorphic to the VOA $\tilde{V}(g)$ 
obtained by the orbifold construction from $V$ and $g=\exp(2\pi i \alpha(0))$ as 
an $M(\alpha)\otimes X$-module. This proves Theorem \ref{cosetVOA}.  
In particular, if $V^{[\alpha]} \cong V_{\Lambda}$, then $\tilde{V}(g)\cong V_{\Lambda}$.

Now consider a Niemeier lattice VOA $V_E$. The weight one subspace $(V_E)_1$ is a direct sum $\oplus 
\CG_i$ of simply laced Lie algebras $\CG_i$ with the same (dual) Coxeter number $h$  and the levels are all one. 

By the above arguments and the discussion in \S \ref{SS:3.2.1}, we have the following result:

\begin{prop}\label{Niemeier}
An orbifold construction from $V_E$ by an automorphism $\exp(2\pi i\rho(0)/h)$ associated with a  
W-element $\rho/h$ of $(V_{E})_1$ gives the Leech lattice VOA $V_{\Lambda}$. 
\end{prop}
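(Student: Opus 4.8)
\textbf{Proof proposal for Proposition \ref{Niemeier}.}

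The plan is to specialize the general Lorentzian construction developed in \S\ref{Sec:Lorentzian} to the case $V = V_E$ and $\alpha = \rho/h$, where $\rho = \bar\rho = \sum_i \rho_i$ is the sum of the Weyl vectors of the simply laced components $\CG_i$ of $(V_E)_1$, all sharing the common (dual) Coxeter number $h$ and level $k_i = 1$. First I would compute $\langle \alpha, \alpha \rangle$. Since the levels are $1$, the bilinear form $\langle\cdot,\cdot\rangle$ from the $1$-product coincides with the normalized form $(\cdot|\cdot)$ on $(V_E)_1$, so by the identity $\langle\bar\rho,\bar\rho\rangle = 2h(h+1)$ quoted in \S\ref{SS:3.2.1} (equivalently, the strange formula, Theorem \ref{thm:2.4}, summed over components together with $\sum_i \dim\CG_i = 24h$ for a Niemeier root system) we get $\langle \alpha,\alpha\rangle = \langle\bar\rho,\bar\rho\rangle/h^2 = 2(h+1)/h$. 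Thus in the notation of \S\ref{Sec:Lorentzian} we have $N = |g| = h$ and $K = N\langle\alpha,\alpha\rangle/2 = h+1$, both integers, so the orbifold construction is defined and Theorem \ref{cosetVOA} applies: $\tilde V(g) \cong V^{[\alpha]}$.

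Next I would identify $V^{[\alpha]}$ directly with a lattice VOA. The point is that when $V = V_E$ is itself a lattice VOA, the ambient algebra $V_E \otimes V_{\Pi_{1,1}} \cong V_{E \oplus \Pi_{1,1}}$ is again a lattice VOA, the Heisenberg subVA $M(\rho)$ generated by the isotropic element $\rho = (\bar\rho, 1, K/N) = (\bar\rho, h, h+1) \in E \oplus \Pi_{1,1}$ (after clearing the denominator $h$ from the $\Pi_{1,1}$ part, which amounts to rescaling $\Pi_{1,1}$ and is the vector called $\rho$ in \S\ref{SS:3.2.1}) has commutant $\mathrm{Comm}(M(\rho), V_{E\oplus\Pi_{1,1}})$ equal to the lattice vertex algebra on $\rho^{\perp} \subseteq E \oplus \Pi_{1,1}$ tensored with the Heisenberg piece from $\rho$ itself. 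Then the ideal $P = \bC[\rho(-j):j>0]J + (\1 - e^{N\rho})_{-1}J$ is exactly the one that kills the $\rho$-direction and quotients by the rank-one lattice $\bZ N\rho$ inside $\rho^{\perp}$: the quotient map $\mathcal{R}: u \mapsto \lim_{\rho\to 0} u$ of Remark \ref{rem:3.3.1} realizes the projection $\rho^{\perp} \to \rho^{\perp}/\bZ\rho$. Hence $V^{[\alpha]} \cong V_{\rho^{\perp}/\bZ\rho}$. By the classical Conway--Sloane fact recalled in \S\ref{SS:3.2.1}, $\rho^{\perp}/\bZ\rho \cong \Lambda$, the Leech lattice, so $V^{[\alpha]} \cong V_\Lambda$, and therefore $\tilde V(g) \cong V_\Lambda$.

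The step I expect to require the most care is making the identification $\mathrm{Comm}(M(\rho), V_{E\oplus\Pi_{1,1}})/P \cong V_{\rho^{\perp}/\bZ\rho}$ fully rigorous as VOAs rather than merely as graded vector spaces: one must check that the vertex operators descend correctly to the quotient, that the cocycle on $\hat{(E\oplus\Pi_{1,1})}$ restricts and descends to a consistent cocycle on $\rho^{\perp}/\bZ\rho$, and that the conformal vector is preserved (the $M(\rho)$ contribution to $\omega$ is absorbed correctly under $\mathcal{R}$). This is essentially the VOA analogue of the Conway--Sloane lattice computation, and the excerpt's Theorems \ref{cosetVOA} and \ref{tVd} already do most of the bookkeeping; what remains is to observe that for a \emph{lattice} input $V_E$ every $X^{(s)}$-factor is itself a one-dimensional lattice coset module $V_{(\text{shift})+\Gamma'}$ for the coinvariant-type lattice $\Gamma'$ complementary to $\bZ\bar\rho$ in $E$, so that the decomposition in Theorem \ref{tVd} collapses to the lattice VOA $V_{\rho^{\perp}/\bZ\rho}$ on the nose. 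Finally I would invoke the holomorphicity and strong regularity of $V_E$ (so that Theorem \ref{cosetVOA} genuinely yields $\tilde V(g)$, not just something with the same character) to conclude $\tilde V(g) \cong V_\Lambda$.
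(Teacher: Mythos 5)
Your proposal is correct and follows essentially the same route as the paper: the paper likewise obtains Proposition \ref{Niemeier} by translating the Conway--Sloane identity $\rho^{\perp}/\bZ\rho\cong\Lambda$ into the identification $V_\Lambda\cong\mathrm{Comm}(M(\rho),V_E\otimes V_{\Pi_{1,1}})/P$ from \S\ref{SS:3.2.1} and then invoking Theorem \ref{cosetVOA} to equate this Lorentzian quotient with the orbifold construction $\tilde V(g)$. Your computation $N=h$, $K=h+1$ and your caveats about making the lattice-VOA identification of the quotient rigorous simply fill in details the paper leaves implicit.
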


From now on, we always assume that the inner automorphism $g=\exp(2\pi i\alpha(0))$ has a finite order $N$ and $K=N\langle \alpha, \alpha\rangle/ 2$ is an integer. We also set  $R=(N,K)$, the g.c.d of $N$ and $K$. 

\subsection{Structure of ${\rm Comm}(M(\CH))$ and the reverse automorphism}
Let $\CH$ be a subspace of a Cartan subalgebra of $V_1$ and assume 
$\alpha\in \CH$ and set $U={\rm Comm}(M(\CH),V)$. 
Then by the same argument for $\bC \alpha$, 
${\rm Comm}(U,V)$ is isomorphic to a lattice VOA $V_L$ 
for an even lattice $L$. 
Set $\widehat{\CH}=\{(\beta, 0, \langle \beta,\alpha\rangle)\mid \beta\in \CH\}\subseteq Z^{0,0}$ and denote 
$\widehat{\beta}=(\beta, 0, \langle \beta,\alpha\rangle)\in \widehat{\CH}$ for $\beta\in \CH$. 
Note that we can identify $\widehat{\beta}$ and 
$\widehat{\CH}$ with $\beta$ and $\CH$, respectively. 
We also identify $U$ with 
$U\otimes e^{(0,0)}\subseteq {\rm Comm}(M(\widehat{\alpha}),V^{[\alpha]})$. 
One aim in this subsection is to show that if $\CH$ is a Cartan subalgebra of $V_1$, then a Cartan subalgebra of $V^{[\alpha]}_1$ 
containing $\CH$ is uniquely determined. 

For $u\in \bC\otimes_\bZ L=\bC \alpha+(\bC\alpha)^{\perp}=\CH$,     
we often denote $u=u'+\frac{<u,\alpha>N_0\alpha}{2K_0}$ 
with $u'\in (\bC\alpha)^{\perp}$. 
We will use a similar notation for 
$\widehat{\CH}= \bC \widehat{\alpha}+ (\bC\widehat{\alpha})^{\perp}$. 
Then we have:
$$V=\bigoplus_{u'+a\alpha\in L^{\ast}}U(u'+a\alpha)\otimes M(\CH)\otimes e^{u'+a\alpha} $$
as $U\otimes M(\CH)$-modules and 
$$V^{[\alpha]}=\bigoplus_{u'+\frac{(mK+nN)\alpha}{2K}\in L^{\ast}, m,n\in \bZ} 
U( u'+\frac{(mK+nN)\alpha}{2K})\otimes M(\widehat{\CH})\otimes e^{u'+\frac{(-mK+nN)\widehat{\alpha}}{2K}} .$$

\begin{remark}
Recall that ${\rm Comm}(U,V)\cong V_L$ is a lattice VOA. Thus, we can decompose $V$ as 
\[
V= \bigoplus_{u+L\in L^*/L} V_{u+L} \otimes U(u).
\]
Since $V$ is holomorphic, all irreducible modules of $V_L$ must appear as a submodule of $V$ \cite{KM}. That means $U(u)\neq 0$ for all $u\in L^*$. Note also that 
$U(u)\cong U(v)$ if $u-v\in L$. 
\end{remark}

Next we define 
$$ U^{[\alpha]}={\rm Comm}(M(\widehat{\CH}),V^{[\alpha]}).$$ 
Clearly, $U=U\otimes e^{(0,0)}\subseteq U^{[\alpha]}$. Moreover, 
${\rm Comm}(U^{[\alpha]},V^{[\alpha]})\cong V_{\tilde{L}}$
for some even lattice $\tilde{L}$.

Since $\langle \alpha, \alpha\rangle= \langle \widehat{\alpha} , \widehat{\alpha}\rangle$, 
$V_{\bZ N \alpha}\cong V_{\bZ N\widehat{\alpha}}$ as lattice VOAs. 
Under the identification $\CH=\widehat{\CH}$, we may view both $L$ and $\tilde{L}$ 
as subsets of $\CH$. 

\begin{ntn}
Set $\bar{L}=\{u\in L\mid \langle u,N_0\alpha\rangle\in \bZ\}$. 
For $u\in \bar{L}$, we
define $u^{[\alpha]} = u- m\alpha$ when $\langle u, N_0\alpha\rangle \equiv mK_0 \mod N_0$ and $0\leq m\leq N_0-1$. Set $\bar{L}^{[\alpha]}= \{ u^{[\alpha]}| u \in \bar{L}\}$. 
 Then $\bar{L}^{[\alpha]}$ is also an even lattice and $\det(\bar{L})=\det(\bar{L}^{[\alpha]})$. 
 Note that $N\alpha \in \bar{L}$ and $(N\alpha)^{[\alpha]} =N\alpha$.
\end{ntn}

\begin{thm}\label{Lalpha}
Suppose $N_0\alpha\in L^*$. Then $\bar{L}=L$ and we have 
\[
\Comm(M(\widehat{\CH}),V^{[\alpha]})=\oplus_{m=0}^{R-1} U(mN_0\alpha).
\]
Moreover, $\tilde{L} = L^{[\alpha]}+ \bZ N_0\alpha$.  
\end{thm}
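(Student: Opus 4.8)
\textbf{Proof proposal for Theorem \ref{Lalpha}.}

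The plan is to exploit the explicit $U\otimes M(\widehat{\CH})$-module decomposition of $V^{[\alpha]}$ recorded just before the statement, namely
\[
V^{[\alpha]}=\bigoplus_{u'+\frac{(mK+nN)\alpha}{2K}\in L^{\ast},\ m,n\in\bZ}
U\!\left(u'+\tfrac{(mK+nN)\alpha}{2K}\right)\otimes M(\widehat{\CH})\otimes e^{u'+\frac{(-mK+nN)\widehat{\alpha}}{2K}},
\]
and to read off which summands are killed by all non-negative modes of $M(\widehat{\CH})$. First I would record the trivial reduction: a vector in the $(u',m,n)$-summand lies in $\Comm(M(\widehat{\CH}),V^{[\alpha]})$ exactly when its $\widehat{\CH}$-charge vanishes, i.e.\ when $u'=0$ and $-mK+nN=0$. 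Writing $\frac{K}{N}=\frac{K_0}{N_0}$ with $(K_0,N_0)=1$, as in Remark \ref{rem:3.10}, the condition $mK=nN$ forces $(m,n)=t(N_0,K_0)$ for $t\in\bZ$, and then the lattice point appearing is $\frac{(mK+nN)\alpha}{2K}=mN_0\alpha\cdot\frac{1}{N_0}\cdot(\dots)$—more precisely $\frac{(tN_0K+tK_0N)\alpha}{2K}=tN_0\alpha$. Hence the $M(\widehat{\CH})$-commutant equals $\bigoplus_{t\in\bZ}U(tN_0\alpha)\otimes e^{0}$, and since $X^{(s)}\cong X^{(s+2KN)}$ (Remark \ref{Xs}) and $R=N/N_0=(N,K)$, the index $t$ runs effectively over $\bZ/R\bZ$; this gives $\Comm(M(\widehat{\CH}),V^{[\alpha]})=\oplus_{m=0}^{R-1}U(mN_0\alpha)$. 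For this to make sense one needs $N_0\alpha\in L^{\ast}$, which is exactly the hypothesis, so that each $U(mN_0\alpha)$ is a genuine (nonzero, by \cite{KM}) irreducible $U$-module.

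Next I would dispose of $\bar{L}=L$. By definition $\bar L=\{u\in L\mid\langle u,N_0\alpha\rangle\in\bZ\}$, so the claim is that $\langle u,N_0\alpha\rangle\in\bZ$ for every $u\in L$. Since ${\rm Comm}(U,V)\cong V_L$ is a VOA, $L$ is an even integral lattice, and $\alpha\in\bC\otimes_\bZ L$; writing $\alpha=\alpha_{\parallel}$ (it lies in $\bC\alpha\subseteq\bC\otimes_\bZ L$) we have $\langle u,N_0\alpha\rangle=N_0\langle u,\alpha\rangle$. The hypothesis $N_0\alpha\in L^{\ast}$ says precisely $\langle u,N_0\alpha\rangle\in\bZ$ for all $u\in L$, which is the assertion $\bar L=L$. (If $N_0\alpha$ were not in $L^*$ one would only get the sublattice $\bar L\subsetneq L$; under the hypothesis the two coincide.)

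The substantive part is identifying $\tilde L$, where $V_{\tilde L}\cong{\rm Comm}(U^{[\alpha]},V^{[\alpha]})$ with $U^{[\alpha]}=\Comm(M(\widehat\CH),V^{[\alpha]})$. The strategy is: (i) $\tilde L$ must contain $\bZ N_0\widehat\alpha$ because $V_{\bZ N_0\widehat\alpha}\cong{\rm Comm}(X^{(0)},V^{[\alpha]})$ sits inside $U^{[\alpha]}$-commutant by Remark \ref{rem:3.10}, and it must contain $\widehat{L^{[\alpha]}}$ because each $U(u^{[\alpha]})$-isotypic piece of $V^{[\alpha]}$ coming from $u\in\bar L=L$ carries the lattice vector $u^{[\alpha]}$ on the $\widehat\CH$-side; (ii) conversely any $U$-module appearing in ${\rm Comm}(U^{[\alpha]},V^{[\alpha]})$ must be one of the $U(w)$ with $w\in\widehat\CH$, and matching the $\widehat\CH$-charge $\frac{-mK+nN}{2K}\widehat\alpha$ against $L^{\ast}$-membership of $u'+\frac{mK+nN}{2K}\alpha$ with $u'=0$ pins down $w\in L^{[\alpha]}+\bZ N_0\alpha$. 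Carrying out (ii) carefully is where the bookkeeping bites: one has to use the definition $u^{[\alpha]}=u-m\alpha$ when $\langle u,N_0\alpha\rangle\equiv mK_0\bmod N_0$, track how the shift $\alpha\mapsto\widehat\alpha$ with the same inner product preserves the quadratic form (as noted, $\langle(\tfrac{(mK+nN)\alpha}{2K},m,n),(\tfrac{(mK+nN)\alpha}{2K},m,n)\rangle=\langle\tfrac{-mK+nN}{2K}\widehat\alpha,\tfrac{-mK+nN}{2K}\widehat\alpha\rangle$), and verify $\det(\bar L)=\det(\bar L^{[\alpha]})$ to check that the lattice $L^{[\alpha]}+\bZ N_0\alpha$ has the right index so that no further glue vectors sneak in. I expect the main obstacle to be precisely this last point: showing that the $U^{[\alpha]}$-commutant is \emph{exactly} $V_{L^{[\alpha]}+\bZ N_0\alpha}$ and not a proper overlattice or sublattice—i.e.\ proving the inclusion ``$\subseteq$'' that no extra $U$-module shows up in the commutant, which amounts to showing $U^{[\alpha]}$ is ``large enough'' (its commutant is as small as the counting allows). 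This should follow from holomorphicity of $V^{[\alpha]}\cong\tilde V(g)$ together with \cite{KM}, mirroring the argument that produced $U(u)\neq0$ for all $u\in L^{\ast}$, but the charge-matching indexed by the pair $(m,n)$ modulo the $R$-fold ambiguity of Remark \ref{rem:3.10} is the delicate step and deserves the most care.
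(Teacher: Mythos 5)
Your treatment of the first two assertions is correct and is essentially the paper's argument: you read off from the explicit $U\otimes M(\widehat{\CH})$-decomposition that a summand has vanishing $\widehat{\CH}$-charge exactly when $u'=0$ and $-mK+nN=0$, i.e.\ $(m,n)=t(N_0,K_0)$, giving the lattice point $tN_0\alpha$ and hence $\Comm(M(\widehat{\CH}),V^{[\alpha]})=\oplus_{m=0}^{R-1}U(mN_0\alpha)$; and $\bar L=L$ is indeed immediate from $N_0\alpha\in L^{\ast}$. The paper phrases the first point slightly differently (it exhibits $M(\CH)\otimes e^{mN_0\alpha}\otimes U(mN_0\alpha)\subseteq V$ landing in the twisted sector $T^{mN_0,0}$ with $\widehat{\alpha}$-charge $\frac{-mN_0K+mK_0N}{2K}=0$, and notes conversely that $U(\beta)\subseteq U^{[\alpha]}$ forces $\beta-mN_0\alpha\in L$), but this is the same computation.

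Where your proposal falls short is the identification $\tilde L=L^{[\alpha]}+\bZ N_0\alpha$: you outline the right strategy but explicitly leave the ``no extra glue vectors'' step unexecuted, and you propose to close it via holomorphicity, \cite{KM}, and a determinant comparison. None of that is needed, and the obstacle you fear does not arise. The decomposition you quoted is an \emph{equality}, so it gives an if-and-only-if: $M(\CH)\otimes e^{\beta}\otimes U(0)\subseteq V$ (i.e.\ $\beta\in L$) precisely when $M(\widehat{\CH})\otimes e^{\beta-m\alpha+tN_0\alpha}\otimes U(0)\subseteq V^{[\alpha]}$, where $m$ is determined by $\langle\beta,N_0\alpha\rangle\equiv mK_0\bmod N_0$ and $t\in\bZ$ ranges over the $R$-fold ambiguity of Remark \ref{rem:3.10}. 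Thus the set of lattice vectors pairing with the vacuum $U$-module in $V^{[\alpha]}$ is exactly $\{\beta^{[\alpha]}+tN_0\alpha\mid\beta\in L,\ t\in\bZ\}=L^{[\alpha]}+\bZ N_0\alpha$, which is both the lower and the upper bound on $\tilde L$ simultaneously; the $\det(\bar L)=\det(\bar L^{[\alpha]})$ remark in the paper is bookkeeping, not an ingredient of the proof. So the gap is real in the sense that the step is not carried out, but it is closed by exactly the same charge-matching you already performed for the commutant of $M(\widehat{\CH})$, applied to the $U(0)$-isotypic component instead of the charge-zero component.
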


\pr  
We first note that $U(mN\alpha)=U$ for $m\in \bZ$ and $mN_0\alpha=\frac{mN_0K+mK_0N}{2K}\alpha$.  Since  $N_0\alpha\in L^*$, the subspace $M(\CH)\otimes e^{mN_0\alpha}\otimes U(mN_0\alpha)\neq 0$ is contained in $V$. It transforms to  
\[
M(\widehat{\CH})\otimes e^{\frac{-mN_0K+mK_0N}{2K}\widehat{\alpha}}\otimes U(mN_0\alpha) = M(\widehat{\CH})\otimes e^{(0,0,0)}\otimes U(mN_0\alpha)
\] 
in the twisted part $T^{mN_0,0}$.  Therefore, $0\neq U(mN_0\alpha)\subseteq V^{[\alpha]}$. 
On the other hand, it is easy to see that a $U$-coset $U(\beta)$ 
for $\beta\in L^{\ast}$ is contained in $U^{[\alpha]}$ if and only if 
 $\beta-mN_0\alpha\in L$ for some $m\in \bZ$. 
Therefore, we have the desired result for $U^{[\alpha]}$. 

The second statement follows from the same arguments. Note that 
$  M(\CH)\otimes e^{\beta}\otimes U(0)\subseteq V$ if and only if
$M(\CH)\otimes e^{\beta -m\alpha +tN_0\alpha }\otimes U(0) \subseteq V^{[\alpha]}$ when   $\langle \beta, N_0\alpha\rangle\equiv m K_0\mod N_0$ and $t\in \bZ$.     
\prend

\medskip

\begin{defn}
We define  $\widetilde{g}\in \Aut(\widetilde{V}(g))$ such that $\tilde{g}$ acts on the 
$V^{<g>}$-submodule $T^{m,0}$ arisen from a $g^m$-twisted module $T^m$ of $V$ as a scalar multiple of $\xi^m$, where $\xi=\exp(2\pi i/N)$. We call $\tilde{g}$ \textbf{the reverse automorphism} of $g$. Note that 
 $\widetilde{g}$ acts on $M(\widehat{\alpha})\otimes e^{(\frac{-mK+nN}{2K})\widehat{\alpha}}\otimes X^{(mK+nN)}$ as a scalar multiple 
by $\xi^m$ in our notation.  
\end{defn}

As an important observation, we have:

\begin{lmm}\label{Lalpha1}
The reverse automorphism $\widetilde{g}$ acts on $M(\widehat{\alpha})\otimes X(mN_0\alpha)$ 
(and also on $U(mN_0\alpha)$) as a multiple by 
a scalar $\xi^{mN_0}$.
\end{lmm}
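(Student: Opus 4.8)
The plan is to identify, inside the Lorentzian model $V\otimes V_{\Pi_{1,1}}$, exactly which direct summand of $V^{[\alpha]}$ contains $M(\widehat{\alpha})\otimes X(mN_0\alpha)$ and $U(mN_0\alpha)$, and then read off the $\widetilde{g}$-eigenvalue from the defining recipe for $\widetilde{g}$. Recall from the proof of Theorem \ref{Lalpha} that the summand $M(\widehat\CH)\otimes e^{(0,0,0)}\otimes U(mN_0\alpha)$ sits inside the twisted piece $T^{mN_0,0}$, i.e. it comes from $Z^{mN_0,0}$ in the notation of \eqref{eq:3.6}. More precisely, $mN_0\alpha = \tfrac{(mN_0)K+(mK_0)N}{2K}\alpha$, so with the pair $(m',n')=(mN_0,\,mK_0)$ we have $m'K+n'N = 2mN_0K$ and $X^{(m'K+n'N)}=X^{(2mN_0K)}=X^{(0)}$ while $\tfrac{-m'K+n'N}{2K}\widehat\alpha = \tfrac{-mN_0K+mK_0N}{2K}\widehat\alpha = 0$. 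Hence $M(\widehat\alpha)\otimes X(mN_0\alpha)$, and a fortiori $U(mN_0\alpha)$, lies in the summand of \eqref{eq:4.1} indexed by $m'\equiv mN_0 \pmod N$.

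Next I would invoke the definition of $\widetilde{g}$: it acts on the summand $M(\widehat\alpha)\otimes e^{(\frac{-m'K+n'N}{2K})\widehat\alpha}\otimes X^{(m'K+n'N)}$ as the scalar $\xi^{m'}$, where $\xi=\exp(2\pi i/N)$ and $m'$ is the representative mod $N$ attached to that summand. Since our summand has $m' = mN_0$ (and any other pair representing the same $\widehat\alpha$-exponent differs by $(tN_0,tK_0)$, changing $m'$ by $tN_0\equiv$ a multiple of $N_0$, hence $\xi^{m'}$ only changes by a factor $\xi^{tN_0}$ which is consistent once we fix $m'=mN_0$ — indeed $\xi^{mN_0}$ is well-defined as written), $\widetilde g$ acts on $M(\widehat\alpha)\otimes X(mN_0\alpha)$, and therefore on its submodule $U(mN_0\alpha)$, by the scalar $\xi^{mN_0}$. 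This is precisely the assertion.

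The one point that needs care — and what I expect to be the main (though mild) obstacle — is the well-definedness issue flagged in Remark \ref{rem:3.10}: when $R=N/N_0\neq 1$, the $\widehat\alpha$-exponent $\tfrac{-mK+nN}{2K}\widehat\alpha$ does not pin down $(m,n)$, only $m \bmod N$. So I must check that the value $\xi^{mN_0}$ does not depend on the ambiguity, i.e. that replacing $(mN_0, mK_0)$ by $(mN_0 + tN_0,\, mK_0 + tK_0)$ for $t\in\bZ$ still lands us (after reducing the first coordinate mod $N$) on a representative $m'$ with $\xi^{m'} = \xi^{mN_0}$. Since $N = RN_0$, reducing $mN_0 + tN_0 = (m+t)N_0$ modulo $N=RN_0$ gives $((m+t)\bmod R)\,N_0$, and $\xi^{((m+t)\bmod R)N_0} = \xi^{(m+t)N_0}$ because $\xi^{RN_0}=\xi^N=1$; this equals $\xi^{mN_0}$ iff $\xi^{tN_0}=1$, which holds automatically only when $R\mid t$. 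The resolution is that the statement should be (and is) read with $m$ taken in $\{0,1,\dots,R-1\}$, matching the indexing $\oplus_{m=0}^{R-1}U(mN_0\alpha)$ of Theorem \ref{Lalpha}; under that convention the representative is unambiguous and $\widetilde g$ acts by $\xi^{mN_0}$ exactly as claimed. Once this bookkeeping is settled, the proof is just the two-line identification above; no further computation is required.
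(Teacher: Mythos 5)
Your proposal is correct and follows essentially the same route as the paper: identify $mN_0\alpha=\frac{mN_0K+mK_0N}{2K}\alpha$, observe that the corresponding $\widehat{\alpha}$-exponent $\frac{-mN_0K+mK_0N}{2K}$ vanishes so the summand sits in $T^{mN_0,0}$, and read off the scalar $\xi^{mN_0}$ from the definition of $\widetilde{g}$. The well-definedness worry you raise is in fact moot: the pairs $(mN_0+tN_0,\,mK_0+tK_0)$ with $R\nmid t$ label \emph{different} summands (the multiplicity space changes from $X(mN_0\alpha)$ to $X((m+t)N_0\alpha)$), so once the summand $M(\widehat{\alpha})\otimes X(mN_0\alpha)$ is fixed the representative $m'=mN_0$ is unambiguous, consistent with your convention $0\le m\le R-1$.
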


\pr
Recall that  $mN_0 = \frac{mN_0K+ mK_0N}{2K}$. Then by our convention, 
$\widetilde{g}$ acts on
\[
M(\widehat{\alpha})\otimes e^{\frac{-mN_0K+mK_0N}{2K}\widehat{\alpha}}\otimes X(mN_0\alpha) = M(\widehat{\alpha})\otimes e^{(0,0,0)}\otimes X(mN_0\alpha)
\] 
as a multiple of scalar  $\xi^{mN_0}$ as desired.  
\prend

\begin{remark}\label{Reverseq}
By \eqref{eq:3.6} and \eqref{eq:3.7}, it is easy to see that the reverse automorphism $\tilde{g}$ is induced from the automorphism 
$\exp(-2\pi i q(0)/N) \in \Aut( V\otimes V_{\Pi_{1,1}})$, where 
$\Pi_{1,1}=\bZ r +\bZ q$ with $\langle r,r\rangle=\langle q,q\rangle=0$ and $\langle r,q\rangle= -1$.  
\end{remark}

By identifying $V^{<g>}$ with $(V^{[\alpha]})^{<\tilde{g}>}$, we may view them as the same space. In this case, we may also identify $\widehat{\alpha}$ with $\alpha$ and $\widehat{\CH}$ with $\CH$. 
From now on, we use the same notation $\alpha$ and $\CH$ for 
$\hat{\alpha}$ and $\hat{\CH}$.

\begin{prop}\label{abelian}
If $\CH$ is a Cartan subalgebra of $V_1$, then 
$U^{[\alpha]}_1=\Comm(M(\CH),V^{[\alpha]})_1$ is an abelian Lie algebra. 
In particular, $U^{[\alpha]}_1\oplus \CH$ 
is the unique Cartan subalgebra of $V^{[\alpha]}_1$ containing $\CH$. 
\end{prop}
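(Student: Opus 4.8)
The plan is to show $U^{[\alpha]}_1$ is abelian by comparing it with the structure of $V_1$ and $\Comm(M(\CH),V)_1$, which we already understand. Since $\CH$ is a Cartan subalgebra of the semisimple Lie algebra $V_1$, the commutant $U = \Comm(M(\CH),V)$ has $U_1 = 0$: indeed by Proposition \ref{prop:2.2} $V_1$ is semisimple, so its centralizer of a Cartan subalgebra is exactly $\CH$ itself, hence $\Comm(M(\CH),V)_1 = 0$. The key is to transfer this vanishing to $V^{[\alpha]}$. By Theorem \ref{Lalpha} and the surrounding discussion, $U^{[\alpha]} = \Comm(M(\widehat\CH),V^{[\alpha]}) = \oplus_{m=0}^{R-1} U(mN_0\alpha)$ (after the identification $\widehat\CH = \CH$), where each summand is a $U$-module concentrated in a single $\widehat\alpha$-weight. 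So $U^{[\alpha]}_1 = \oplus_{m=0}^{R-1} U(mN_0\alpha)_1$, and since $U(0)_1 = U_1 = 0$, only the ``twisted'' pieces $U(mN_0\alpha)$ with $1 \le m \le R-1$ can contribute weight one vectors.

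First I would pin down which $U(mN_0\alpha)$ can have a weight one element. An element of $U(mN_0\alpha)$ sits inside $V$ in a space of the form $M(\CH)\otimes e^{u'+mN_0\alpha}\otimes U(u'+mN_0\alpha)$ for a suitable lattice vector; in $V^{[\alpha]}$ it gets re-graded, landing in $M(\widehat\CH)\otimes e^{(0,0,0)}\otimes U(mN_0\alpha)$ inside the twisted block $T^{mN_0,0}$, with conformal weight computed by the shifted $L^{T^{mN_0}}(0)$ operator from \eqref{LTm}. So a weight one element of $U(mN_0\alpha)\subseteq V^{[\alpha]}$ corresponds to an element of $U(mN_0\alpha)\subseteq V$ whose $V$-weight is $1 + mN_0(\langle mN_0\alpha,\alpha\rangle - \cdots)/N$-type correction; more precisely one reads off the grading shift from Theorem \ref{tVd}, which says the weight in $V^{[\alpha]}$ is the $V$-weight minus the contribution of $e^{(\frac{-mK+nN}{2K})\widehat\alpha}$, i.e. $\tfrac12\langle mN_0\alpha, mN_0\alpha\rangle$. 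Combining this with $\dim V_1 \ge 24$ and rank constraints from Proposition \ref{prop:2.2}(2), together with the strange formula relations \eqref{eq:2.2} when $\alpha$ is a $W$-element, I expect to conclude that any such weight one vector in a twisted block must actually be a multiple of $\widehat\alpha$ itself, hence already in $M(\widehat\CH)$ — contradicting that it lies in $\Comm(M(\widehat\CH),V^{[\alpha]})$ unless it is $0$.

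Assuming this, $U^{[\alpha]}_1 = 0$, so it is trivially abelian. Then $\CH \subseteq V^{[\alpha]}_1$ is a commutative subalgebra whose centralizer in $V^{[\alpha]}_1$ is $U^{[\alpha]}_1 \oplus \CH = \CH$, so $\CH$ is self-centralizing, abelian, and consists of semisimple elements (its elements act semisimply since they lie in a Heisenberg subalgebra $M(\widehat\CH)$ whose modes $\beta(0)$ act semisimply on the $C_2$-cofinite VOA $V^{[\alpha]}$); hence $\CH$ is a Cartan subalgebra of $V^{[\alpha]}_1$, and any Cartan subalgebra of $V^{[\alpha]}_1$ containing $\CH$ equals its own centralizer, forcing it to be $\CH$.

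The main obstacle is the middle step: ruling out weight one vectors in the twisted pieces $U(mN_0\alpha)$ for $1 \le m \le R-1$. A priori a twisted module can carry weight one vectors that are not multiples of $\alpha$ (that is exactly how Lie algebra enhancements arise in orbifold constructions). What saves us is that these vectors, if they existed in $\Comm(M(\widehat\CH),V^{[\alpha]})_1$, would centralize all of $\widehat\CH = \CH$, which is already a \emph{full} Cartan subalgebra; but a nonzero weight one vector $w$ in a holomorphic VOA centralizing a maximal torus must, by the structure theory of $V^{[\alpha]}_1$ (again semisimple by Proposition \ref{prop:2.2}(1), since $V^{[\alpha]} \cong \tilde V(g)$ is holomorphic of central charge $24$ and is not forced to be $V_\Lambda$ here a priori — one handles the $V_\Lambda$ case separately, where $V_1 = 24$-dimensional abelian and the statement is immediate), lie in the centralizer of a Cartan, which is the Cartan itself. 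So the real content is dimensional/weight bookkeeping to see that the $\widehat\alpha$-weight of any candidate is wrong unless it is genuinely in $\CH$; I would organize this via the explicit decomposition in Theorem \ref{tVd} and the observation in Remark \ref{rem:3.10} that the summands $X^{(mK+nN)}$ with $m N_0 K - m K_0 N = 0$ are exactly $X^{(0)} = \Comm(M(\alpha),V) \subseteq V^{<g>}$, so no new weight one vectors beyond those already in $V$ can appear there.
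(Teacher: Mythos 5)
Your proposal has a fatal gap: the intermediate claim you aim for, namely $U^{[\alpha]}_1=0$, is false in general, and the whole strategy of ruling out weight-one vectors in the twisted pieces $U(mN_0\alpha)$, $1\le m\le R-1$, cannot succeed. The proposition asserts only that $U^{[\alpha]}_1$ is \emph{abelian}, precisely because it is typically nonzero: it is the space $\CH^{\perp}=\oplus_{m=1}^{R-1}U(mN_0\alpha)_1$ by which the Cartan subalgebra grows, $\CH^{[\alpha]}=\CH\oplus\CH^{\perp}$. Corollary \ref{Tilde} ($R=1$, or $\CH^{[\alpha]}\supsetneq\CH$, or \dots), the rank-increasing induction in Corollary \ref{ToLeech}, and all of Section 6 (Theorem \ref{KeyThmD} needs $U(mN_0\alpha)_1\neq 0$ for some $m$ with $(m,R)=1$) depend on these twisted weight-one vectors actually existing. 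Your attempted rescue in the last paragraph is circular: a weight-one vector of $V^{[\alpha]}$ centralizing $\CH$ need not lie in $\CH$, because $\CH$ is a Cartan subalgebra of $V_1$ but not, in general, of $V^{[\alpha]}_1$ --- that is exactly what is being proved. (You correctly observe that such enhancements "are exactly how Lie algebra enhancements arise in orbifold constructions," but then try to exclude them anyway.) Your computation $U_1=\Comm(M(\CH),V)_1=0$ is correct, but it only handles the $m=0$ summand.

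The paper's argument is entirely different and does not attempt to kill $U^{[\alpha]}_1$. It uses the reverse automorphism $\tilde g$: since $(V^{[\alpha]})^{<\tilde g>}_1=V^{<g>}_1$ and the centralizer of $\CH$ in $V^{<g>}_1$ is contained in $\CH$ (because $\CH$ is Cartan in $V_1$), $\tilde g$ acts fixed-point freely on $\Comm(M(\CH),V^{[\alpha]})_1$. A finite-dimensional Lie algebra admitting a fixed-point-free automorphism of finite order is solvable (Zha's theorem), and a solvable subalgebra of this kind inside the reductive/semisimple $V^{[\alpha]}_1$ --- being the centralizer-complement of $\CH$ and carrying the invariant form --- is then abelian. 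The uniqueness of the Cartan subalgebra containing $\CH$ follows because any such Cartan subalgebra lies in the centralizer of $\CH$, which is exactly $U^{[\alpha]}_1\oplus\CH$, now known to be abelian and toral. If you want to repair your write-up, you should abandon the weight bookkeeping aimed at $U^{[\alpha]}_1=0$ and instead exploit the fixed-point-free action of $\tilde g$.
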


\pr 
Since $(V^{[\alpha]})^{<\tilde{g}>}_1=V^{<g>}_1$ and 
$\Comm(M(\CH),V_1^{<g>})=0$,  we have that $\tilde{g}$ acts on 
$\Comm(M(\CH),V^{[\alpha]})_1$ fixed point freely and so 
$\Comm(M(\CH),V^{[\alpha]})_1$ is solvable \cite[Theorem 2]{Zha}. 
Then $\Comm(M(\CH),V^{[\alpha]})_1$  is a solvable subalgebra of a semisimple Lie algebra and hence it is abelian.   
\prend
\medskip

From now on, we denote $\Comm(M(\CH),V^{[\alpha]})_1\oplus \CH$ and $\Comm(M(\CH),V^{[\alpha]})_1$ by $\CH^{[\alpha]}$ and $\CH^{\perp}$, respectively. 
As a corollary, we have the following: 

\begin{cry}\label{Tilde} Suppose $N_0\alpha\in L^*$. Then 
$R=1$, or $\CH^{[\alpha]}\supsetneq \CH$, or $\CH^{[\alpha]}=\CH$ and 
$U^{[\alpha]} \supsetneq U$.  
\end{cry}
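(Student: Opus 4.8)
The plan is to combine the explicit description of $U^{[\alpha]} = \Comm(M(\widehat{\CH}), V^{[\alpha]})$ from Theorem \ref{Lalpha} with the rank comparison that Proposition \ref{abelian} makes available. Under the hypothesis $N_0\alpha\in L^*$, Theorem \ref{Lalpha} gives $\bar L = L$ and $U^{[\alpha]} = \bigoplus_{m=0}^{R-1} U(mN_0\alpha)$, together with $\widetilde L = L^{[\alpha]} + \bZ N_0\alpha$. So there are exactly $R$ cosets $U(mN_0\alpha)$, $0\le m\le R-1$, making up $U^{[\alpha]}$, and $U(0)=U$ is one of them. Thus if $R=1$ then $U^{[\alpha]}=U$ and, by Theorem \ref{Lalpha} again, $\widetilde L = L^{[\alpha]}$, i.e. nothing new; this is the first alternative of the trichotomy and requires no further work. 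So I may assume $R\ge 2$ from now on, and the goal is to show that then either $\CH^{[\alpha]}\supsetneq \CH$, or $\CH^{[\alpha]}=\CH$ but $U^{[\alpha]}\supsetneq U$.

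The key step is to observe that when $R\ge 2$ the cosets $U(N_0\alpha),\dots,U(R-1)N_0\alpha)$ are genuinely distinct from $U(0)$ as $U$-modules — indeed $U(mN_0\alpha)\cong U(m'N_0\alpha)$ as $U$-modules iff $mN_0\alpha - m'N_0\alpha\in L$, and since $N_0\alpha\in L^*\setminus L$ (this is exactly where $R\ge 2$, equivalently $N\ne N_0$, forces $N_0\alpha\notin L$; note $N\alpha\in L$ always, and $R=N/N_0$) the classes $mN_0\alpha + L$ for $0\le m\le R-1$ are pairwise distinct in $L^*/L$. Hence $U^{[\alpha]} = \bigoplus_{m=0}^{R-1}U(mN_0\alpha)$ is a proper extension of $U = U(0)$ as a vertex algebra: it strictly contains $U$. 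Now take weight-one parts. If $\CH^{[\alpha]} = \CH$, i.e. $\Comm(M(\CH),V^{[\alpha]})_1 = 0$, then in particular $U^{[\alpha]}_1 = \CH^{\perp} = 0$; but $U^{[\alpha]}$ is still a strictly larger vertex algebra than $U$ (the extra summands $U(mN_0\alpha)$, $m\ne 0$, are nonzero by the holomorphy argument in the Remark after Proposition \ref{abelian}, since each $U(u)\ne 0$ for $u\in L^*$), so $U^{[\alpha]}\supsetneq U$ as vertex algebras, which is the third alternative. Otherwise $\Comm(M(\CH),V^{[\alpha]})_1\ne 0$, so $\CH^{[\alpha]} = \Comm(M(\CH),V^{[\alpha]})_1\oplus\CH \supsetneq \CH$, the second alternative.

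I expect the main obstacle to be purely bookkeeping: verifying cleanly that $R\ge 2$ does force the cosets $mN_0\alpha + L$ ($0\le m\le R-1$) to be distinct in $L^*/L$, i.e. that $N_0\alpha$ has order exactly $R$ modulo $L$. This uses $N\alpha\in L$ (from $\Comm(X,V)\cong V_{\bZ N\alpha}\subseteq V$, equivalently the earlier normalization) and $R=N/N_0$; one must rule out that some proper divisor $m<R$ of $R$ already has $mN_0\alpha\in L$, which would contradict the primitivity built into the definition of $N_0$ via $\langle\alpha,\alpha\rangle = 2K_0/N_0$ with $(K_0,N_0)=1$. Once that order computation is in place, the trichotomy is just a case split on whether $\Comm(M(\CH),V^{[\alpha]})_1$ vanishes, and everything else is a direct quotation of Theorem \ref{Lalpha}, Proposition \ref{abelian}, and the non-vanishing of $U$-cosets. $\hfill\qed$
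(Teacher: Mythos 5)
Your proof is correct and follows the route the paper intends (the corollary is stated without proof as an immediate consequence of Theorem \ref{Lalpha} and Proposition \ref{abelian}): since $N=RN_0$ is the least positive integer with $N\alpha\in L$, the order of $N_0\alpha$ modulo $L$ is exactly $R$, so for $R\ge 2$ the $R$ nonzero summands $U(mN_0\alpha)$ lie in distinct cosets and $U^{[\alpha]}\supsetneq U$, after which the case split on whether $\Comm(M(\CH),V^{[\alpha]})_1$ vanishes gives the trichotomy. The only cosmetic slip is the unneeded ``iff'' for $U$-module isomorphism of cosets (the paper asserts only the ``if'' direction, and distinctness of the cosets as direct summands is all you actually use).
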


\begin{lmm}\label{KeyLemma3}
If $V^{[\alpha]}\not\cong V_{\Lambda}$, then there is a positive root system of $V^{[\alpha]}_1$ 
on which $\widetilde{g}$ acts.
\end{lmm}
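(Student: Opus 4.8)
\textbf{Proof proposal for Lemma \ref{KeyLemma3}.}
The plan is to show that the reverse automorphism $\widetilde{g}$ can be chosen to preserve a set of positive roots of the semisimple Lie algebra $V^{[\alpha]}_1$, given that $V^{[\alpha]}\not\cong V_\Lambda$ (so that by Proposition \ref{prop:2.2}(1), $V^{[\alpha]}_1$ is indeed semisimple). First I would fix the Cartan subalgebra: by Proposition \ref{abelian}, $\CH^{[\alpha]}=\CH^\perp\oplus\CH$ is the unique Cartan subalgebra of $V^{[\alpha]}_1$ containing $\CH$, and by Remark \ref{Reverseq} the automorphism $\widetilde{g}$ is induced from $\exp(-2\pi i q(0)/N)\in\Aut(V\otimes V_{\Pi_{1,1}})$; in particular $\widetilde{g}$ acts trivially on the Cartan subalgebra $\CH^{[\alpha]}$ (its action on $M(\widehat\alpha)\otimes e^{(\cdots)\widehat\alpha}\otimes X^{(mK+nN)}$ is by the scalar $\xi^m$, and on the weight one part of $U^{[\alpha]}$, which sits in $Z^{0,0}$, we have $m=0$ by Lemma \ref{Lalpha1} with appropriate bookkeeping). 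Hence $\widetilde{g}$ permutes the root spaces of $V^{[\alpha]}_1$ and descends to an automorphism of the root system $\Phi$ of $(V^{[\alpha]}_1,\CH^{[\alpha]})$ that fixes $\CH^{[\alpha]}$ pointwise — so it permutes $\Phi$ while fixing every root as a linear functional.

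Next, since $\widetilde{g}$ fixes each root $\beta\in\Phi$ as an element of $(\CH^{[\alpha]})^*$ but merely rescales the corresponding root vector $e_\beta$ by a root of unity $\xi^{m(\beta)}$, the content of the lemma is the standard fact that a diagonal automorphism of a semisimple Lie algebra relative to a fixed Cartan subalgebra preserves \emph{some} choice of positive system: indeed, any Weyl chamber works, because the positive/negative decomposition of $\Phi$ determined by a regular element of $\CH^{[\alpha]}_{\bR}$ is untouched by an automorphism that fixes $\CH^{[\alpha]}$ pointwise. Concretely, I would pick any positive root system $\Phi^+$ of $\Phi$ relative to $\CH^{[\alpha]}$; since $\widetilde{g}$ sends the root space $(V^{[\alpha]}_1)_\beta$ to $(V^{[\alpha]}_1)_\beta$ for every $\beta$ (not to $(V^{[\alpha]}_1)_{w(\beta)}$ for some nontrivial Weyl-group-type element), the set $\Phi^+$ is automatically $\widetilde{g}$-invariant. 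Thus ``there is a positive root system of $V^{[\alpha]}_1$ on which $\widetilde{g}$ acts'' holds, where ``$\widetilde{g}$ acts'' is understood as: $\widetilde{g}$ stabilizes the span of the positive root spaces (equivalently, stabilizes the corresponding Borel subalgebra).

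The one point that needs genuine care — and which I expect to be the main obstacle — is verifying that $\widetilde{g}$ really does fix $\CH^{[\alpha]}$ pointwise, rather than just stabilizing it. The part $\CH=\widehat\CH\subseteq Z^{0,0}$ is clearly fixed since $\widetilde{g}$ is identity on $V^{<g>}=(V^{[\alpha]})^{<\widetilde g>}$. For the complementary part $\CH^\perp=U^{[\alpha]}_1$: by Theorem \ref{Lalpha} (in the case $N_0\alpha\in L^*$) or more generally by the decomposition of $V^{[\alpha]}$ as a $V_{\bZ N\widehat\alpha}\otimes X$-module in Theorem \ref{tVd}, an element of $U^{[\alpha]}_1$ lies in some summand $M(\widehat\alpha)\otimes e^{(\frac{-mK+nN}{2K})\widehat\alpha}\otimes X^{(mK+nN)}$ of weight one on which $\widetilde g$ acts by $\xi^m$; I would argue that weight one forces the twisted part to be ``untwisted'', i.e. $m\equiv 0\bmod N$, either because the conformal weight formula \eqref{LTm} together with positivity of $\wt(X^{(s)}\otimes e^{s\alpha})$ leaves no room, or by invoking that $U^{[\alpha]}_1$ is fixed pointwise by $\widetilde g$ as part of the statement that it is an abelian subalgebra of the Cartan (Proposition \ref{abelian}). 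If instead $\widetilde g$ only stabilizes $\CH^{[\alpha]}$ and acts on it by a nontrivial finite-order linear map, one would first need to conjugate $\widetilde g$ by an inner automorphism (a Weyl group element) to return to the pointwise-fixing case before selecting the chamber; I would handle this by noting that any finite-order automorphism of a semisimple Lie algebra normalizes some Cartan subalgebra and, after an inner twist, acts on it as a diagram automorphism, and a diagram automorphism by construction preserves the standard positive system. Either way, the existence of a $\widetilde g$-stable positive system follows.
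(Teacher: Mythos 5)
Your main argument rests on the claim that $\widetilde{g}$ fixes the Cartan subalgebra $\CH^{[\alpha]}=\CH\oplus\CH^{\perp}$ pointwise, and this is false. Only the part $\CH\subseteq V^{<g>}=(V^{[\alpha]})^{<\widetilde{g}>}$ is fixed pointwise; on the complementary part $\CH^{\perp}=\bigoplus_{m=1}^{R-1}U(mN_0\alpha)_1$ the automorphism $\widetilde{g}$ acts on the summand $U(mN_0\alpha)_1$ by the nontrivial scalar $\xi^{mN_0}$ (Lemma \ref{Lalpha1}), and indeed the proof of Proposition \ref{abelian} uses precisely the fact that $\widetilde{g}$ acts \emph{fixed-point-freely} on $\CH^{\perp}$ to conclude that $\CH^{\perp}$ is abelian. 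Your attempted justification that ``weight one forces $m\equiv 0\bmod N$'' cannot work: the weight-one vectors in the twisted sectors $U(mN_0\alpha)_1$ with $m\not\equiv 0$ are exactly the new Cartan elements that enlarge $\CH$ to $\CH^{[\alpha]}$. Consequently $\widetilde{g}$ does \emph{not} fix every root of $(V^{[\alpha]}_1,\CH^{[\alpha]})$ as a linear functional, it genuinely permutes the roots, and an arbitrary positive system is not automatically $\widetilde{g}$-stable. Your fallback (conjugating $\widetilde{g}$ by an inner automorphism, or passing to a different Cartan on which it acts as a diagram automorphism) replaces $\widetilde{g}$ or $\CH^{[\alpha]}$ by something else, whereas the lemma is needed for this particular $\widetilde{g}$ and this particular Cartan (so that the resulting $\widetilde{g}$-invariant W-element lies in $\CH$).

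The missing idea is the one the paper uses: since $\CH^{\perp}=\Comm(M(\CH),V^{[\alpha]})_1$ is abelian, it contains no root vectors, so no root $\beta$ of $V^{[\alpha]}_1$ is orthogonal to all of $\CH$. As there are only finitely many roots, one can choose a regular element $\gamma\in\bQ L\subseteq\CH$ with $\langle\beta,\gamma\rangle\neq 0$ for every root $\beta$, and declare $\beta$ positive when $\langle\beta,\gamma\rangle>0$. Because $\gamma$ lies in the $\widetilde{g}$-fixed subspace $\CH$ and $\widetilde{g}$ preserves the form, the resulting positive system is permuted by $\widetilde{g}$. In short, the chamber must be chosen from the $\widetilde{g}$-fixed part of the Cartan, and the point of the lemma is that such a regular choice exists there; this is exactly what your proposal omits.
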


\pr Since $\Comm(\CH, V^{[\alpha]})_1=\CH^{\perp}$ does not contain any root vectors,    $\CH\not\subseteq <\beta>^{\perp}$ for any root $\beta$. Since 
the number of roots in $V^{[\alpha]}_1$ is finite, there is $\gamma\in \bQ L\subseteq \CH\subseteq (V^{[\alpha]})^{<\tilde{g}>}$ such that $\langle \beta,\gamma\rangle\not=0$ for all roots $\beta$ of $\CH^{[\alpha]}$. 
We call a root $\beta$ positive if $\langle \beta,\gamma\rangle>0$. 
Then $\widetilde{g}$ acts on the set of positive roots as a permutation. \prend

\begin{thm} \label{autocommute}
Let $V$ be a VOA of CFT-type and let $\CH$ be a Cartan subalgebra of $V_1$ and let $\alpha\in \CH$ with $\langle \alpha, \alpha\rangle\in \bQ$. 
Assume $V_L\cong \Comm(\Comm(M(\CH),V),V)$ and let 
$N$ be the least positive integer such that $N\alpha\in L$. Let $f\in \Aut(V)$ 
such that $f$ acts trivially on $V_{\bZ N\alpha}$. 
Then $\hat{f} =f\otimes \mathrm{id} \in \Aut( V\otimes V_{\Pi_{1,1}})$ 
induces an automorphism $\hat{f}$ on $V^{[\alpha]}$ and $[\hat{f}, \tilde{g}]=1$, 
where $\tilde{g}$ is the reverse automorphism of 
$g=\exp(2\pi i\alpha(0))$ on $V^{[\alpha]}$. 
\end{thm}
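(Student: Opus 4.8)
The plan is to exploit the explicit Lorentzian realization of $V^{[\alpha]}$ as a factor of the commutant $J = \mathrm{Comm}(M(\rho), V\otimes V_{\Pi_{1,1}})$ and to track how $\hat f = f\otimes\mathrm{id}$ interacts with that structure. First I would observe that since $f$ fixes $V_{\bZ N\alpha} = \mathrm{Comm}(X,V)$ pointwise, in particular $f(\alpha)=\alpha$, so $f$ commutes with $\alpha(0)$ and with the action of $M(\alpha)$; hence $f$ preserves each eigenspace $M(\alpha)\otimes e^{s\alpha/2K}\otimes X^{(s)}$ of $\alpha(0)$ appearing in the decomposition \eqref{eq:3.4}, acting on the multiplicity space $X^{(s)}$ (and trivially on the $M(\alpha)\otimes e^{s\alpha/2K}$ factor, since $f$ is trivial on $\mathrm{Comm}(X,V)\cong V_{\bZ N\alpha}$ and $f(\alpha)=\alpha$ forces $f(e^{s\alpha/2K})= e^{s\alpha/2K}$ up to the usual cocycle scalar, which can be absorbed). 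Consequently $\hat f = f\otimes\mathrm{id}$ on $V\otimes V_{\Pi_{1,1}}$ fixes $\rho=(\alpha,1,K/N)$, hence fixes $M(\rho)$ pointwise, hence preserves $J = \mathrm{Comm}(M(\rho),V\otimes V_{\Pi_{1,1}})$; and since $\hat f$ also fixes $\rho(-j)$ and $e^{N\rho}$, it preserves the ideal $P = \bC[\rho(-j):j>0]J + (\1 - e^{N\rho})_{-1}J$ described in Lemma \ref{idealP}. Therefore $\hat f$ descends to an automorphism of $V^{[\alpha]}=J/P$, which is the first assertion.

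For the commutation relation, I would use the description of $\tilde g$ in Remark \ref{Reverseq}: $\tilde g$ is induced on $V^{[\alpha]}$ by the automorphism $\exp(-2\pi i\, q(0)/N)\in\Aut(V\otimes V_{\Pi_{1,1}})$, where $\Pi_{1,1}=\bZ r + \bZ q$. The key point is that $\hat f = f\otimes\mathrm{id}$ acts only on the $V$-tensor-factor while $\exp(-2\pi i q(0)/N)$ acts only on the $V_{\Pi_{1,1}}$-factor (the operator $q(0)$ lives entirely in $V_{\Pi_{1,1}}$). Two automorphisms of a tensor product VA that act on complementary tensor factors commute on the nose: $(f\otimes\mathrm{id})(\mathrm{id}\otimes\exp(-2\pi i q(0)/N)) = (\mathrm{id}\otimes\exp(-2\pi i q(0)/N))(f\otimes\mathrm{id})$ as operators on $V\otimes V_{\Pi_{1,1}}$. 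Since both automorphisms preserve $J$ and $P$, this identity passes to the quotient $V^{[\alpha]}=J/P$, giving $[\hat f,\tilde g]=1$ there.

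A point that needs a little care, and which I expect is the main technical obstacle, is the compatibility of $\hat f = f\otimes\mathrm{id}$ with the $2$-cocycle defining $V\otimes V_{\Pi_{1,1}}$ — i.e.\ that $f\otimes\mathrm{id}$ really is an automorphism of the tensor product lattice-VA and not merely a vector-space map. This is where the hypothesis that $f$ is trivial on $V_{\bZ N\alpha}$ is used: it pins down $f$ on the rank-one Heisenberg and lattice directions so that no sign ambiguity arises when forming $f\otimes\mathrm{id}$, and it guarantees $f$ genuinely fixes the vertex operator $Y(\rho,z)$ so that $\hat f$ normalizes $\mathrm{Comm}(M(\rho),-)$. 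Once this is in place, everything else is formal: $\hat f$ preserves $J$ because it fixes $\rho$; it preserves $P$ because it fixes $\rho(-j)\1$ and $e^{N\rho}$; and it commutes with $\tilde g$ because the two automorphisms are supported on complementary tensor factors. I would also remark, to connect with the surrounding discussion, that under the identification $V^{<g>}\cong (V^{[\alpha]})^{<\tilde g>}$ the induced $\hat f$ on $V^{[\alpha]}$ restricts to the original $f$ on $V^{<g>}$, which is what makes this statement useful for the later arguments about the action of $Co_0$.
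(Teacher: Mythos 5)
Your proposal is correct and follows essentially the same route as the paper's proof: since $f$ is trivial on $V_{\bZ N\alpha}$, the map $\hat f=f\otimes\mathrm{id}$ fixes $\rho_\alpha$ and $e^{N\rho_\alpha}$, hence preserves $J$ and $P$ and descends to $V^{[\alpha]}$, and it commutes with $\tilde g$ because $\tilde g$ is the restriction of $\exp(-2\pi i q(0)/N)$, which lives on the complementary tensor factor fixed by $\hat f$. The additional remarks on cocycle compatibility and on the restriction of $\hat f$ to $V^{<g>}$ are consistent with the paper and do not change the argument.
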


\pr 
First we recall from Definition \ref{Valpha} that the VOA 
$V^{[\alpha]}$ can be identified  with  
\[
\Comm(M(\rho_{\alpha}), V\otimes V_{\Pi_{1,1}})/P,  
\]
where   $\rho_{\alpha}= (\alpha, 1, \frac{\langle \alpha, \alpha \rangle}{2})$ and $P=\bC[\rho_\alpha(-j):j>0]J+(\1-e^{N\rho_{\alpha}})_{-1}J$ with $J= \Comm(M(\rho_{\alpha}), V\otimes V_{\Pi_{1,1}})$.

Set $\hat{f}= f \otimes \mathrm{id}$, which is an automorphism  of $V \otimes V_{\Pi_{1,1}}$. Since $f$ acts trivially on $V_{\bZ N\alpha}$, $\hat{f} $ fixes  $\rho_{\alpha}$ and $e^{N\rho_\alpha}$.  Therefore, 
$\hat{f}$ preserves the commutant  $\Comm(M(\rho_{\alpha}), V\otimes V_{\Pi_{1,1}})$ and $P$; thus it induces an automorphism $\hat{f}$ on $V^{[\alpha]}$. 
Furthermore, since $\tilde{g}\in \Aut(V^{[\alpha]})$ 
is the restriction of $\exp(-2\pi iq(0)/N)$ on $J$ as we explained in Remark \ref{Reverseq} and $\hat{f}$ fixes $q$, 
we have $[\hat{f},\tilde{g}]=1$ on $V^{[\alpha]}$ as desired. 
\prend

We next treat the case where $V^{[\alpha]}=V_E$ for a Niemeier lattice $E$. 
We also assume $\alpha\in L^{\ast}$ and $\langle\alpha,\alpha\rangle\in 2\bZ$. 
Namely, $N_0=1$ and thus $N=R$. In this case, since $\tilde{g}\in \Aut(V_E)$, there is 
$\tau\in O(E)$ and 
$\delta\in \bC E$ such that $\tilde{g}=\hat{\tau}\exp(\delta(0))$, where $\hat{\tau}$ 
is a standard lift of $\tau$.  
Recall that $R=N$ is the least positive integer such that 
$R\alpha\in L$.  
Then $R=|\exp(2\pi i\alpha(0))|=|\tilde{g}|$. Since $\alpha=N_0\alpha\in L^{\ast}$, 
we have $\oplus_{m=0}^{R-1} U(m\alpha)=\Comm(M(\CH),V^{[\alpha]})=V_{E_{\tau}}$
and $\tilde{L}=L+\bZ \alpha=E^{<\tau>}$, where 
$E_{\tau}=E\cap (E^{<\tau>})^{\perp}$. 
We note $\CH^{\perp}=\bC E_{\tau}$. Since $\tau$ acts on $\CH^{\perp}$ fixed point freely, we may assume $\delta\in \CH$.

\begin{lmm}\label{order}
Under the above assumption, $|\tilde{g}_{|\bC E}| =R$. Here and there 
$\tilde{g}_{|A}$ denotes the restriction 
to $A$.  
In particular, 
${\rm Span}_{\bZ}\{m \mid U(m\alpha)_1\not=0\}+\bZ R=\bZ$. 
\end{lmm}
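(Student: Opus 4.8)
The plan is to evaluate the order of $\tilde{g}$ on the lattice subVOA $V_{E_\tau}=\Comm(M(\CH),V^{[\alpha]})$ in two independent ways and match the answers. Recall that here $N_0=1$, so Theorem~\ref{Lalpha} gives $V_{E_\tau}=\bigoplus_{m=0}^{R-1}U(m\alpha)$, and that $\tilde{g}=\hat{\tau}\exp(\delta(0))$ with $\delta\in\CH=\bC E^{<\tau>}$ and $\hat{\tau}$ a standard lift of $\tau$.

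On the lattice side: since $\delta\in\bC E^{<\tau>}$ and $E_\tau\perp E^{<\tau>}$, the inner automorphism $\exp(\delta(0))$ acts trivially on $V_{E_\tau}$, while $\hat{\tau}$ preserves $V_{E_\tau}$ (because $\tau$ preserves the sublattice $E_\tau$) and restricts there to a lift of the isometry $\tau_{|E_\tau}$; as $\tau$ acts fixed point freely on $E_\tau$, Lemma~\ref{Fixedpf} gives $|\tilde{g}_{|V_{E_\tau}}|=|\tau_{|E_\tau}|=|\tau|$ (the last equality because $\tau={\rm id}$ on $E^{<\tau>}$). On the representation side: by Lemma~\ref{Lalpha1}, $\tilde{g}$ acts on the summand $U(m\alpha)$ as the scalar $\xi^{m}$, $\xi=\exp(2\pi i/R)$; since $V$ is holomorphic and $\alpha\in L^{\ast}$, every coset module $U(m\alpha)$ is nonzero, so $\tilde{g}_{|V_{E_\tau}}$ has the primitive $R$-th root of unity $\xi$ (from $U(\alpha)$) among its eigenvalues while all its eigenvalues lie in $\{\xi^{m}\}_{m=0}^{R-1}$, whence $|\tilde{g}_{|V_{E_\tau}}|=R$. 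Comparing, $|\tau|=R$, and since $\tilde{g}$ acts on the lattice part $\bC\otimes_{\bZ}E\subseteq(V_E)_1$ through $\tau$ (the factor $\exp(\delta(0))$ being trivial there), $|\tilde{g}_{|\bC E}|=|\tau|=R$, the first assertion.

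For the ``in particular'', pass to weight one. By Proposition~\ref{abelian}, $\Comm(M(\CH),V^{[\alpha]})_1=\bC E_\tau$ is abelian, so $E_\tau$ has no roots and $(V_{E_\tau})_1=\bC E_\tau=\bigoplus_{m=0}^{R-1}U(m\alpha)_1$, on which $\tilde{g}$ acts by $\xi^{m}$ on the $m$-th summand; hence $|\tilde{g}_{|\bC E_\tau}|=\LCM\{\,R/(R,m)\mid U(m\alpha)_1\neq0\,\}$, which equals $|\tau|=R$ by the previous paragraph. It remains to note the elementary fact that, for $S\subseteq\bZ$, $\LCM\{R/(R,m)\mid m\in S\}=R$ holds iff no prime divides $R$ together with every element of $S$, i.e.\ iff ${\rm Span}_{\bZ}(S)+\bZ R=\bZ$; with $S=\{m\mid U(m\alpha)_1\neq0\}$ this is the desired conclusion.

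The crux — the only step needing a genuine idea — is that the ``inner part'' $\exp(\delta(0))$ of the reverse automorphism disappears on restriction to $V_{E_\tau}$, precisely because $\delta$ was arranged to lie in $\CH=\bC E^{<\tau>}$, which is orthogonal to $E_\tau$; after that, one side of the count is the purely lattice-theoretic statement that a lift of a fixed-point-free isometry keeps the same order (Lemma~\ref{Fixedpf}), and the other is eigenvalue bookkeeping via Lemma~\ref{Lalpha1} together with holomorphy of $V$. A minor point to verify is that $\tilde{g}_{|V_{E_\tau}}$ genuinely qualifies as a lift of $\tau_{|E_\tau}$ for the purposes of Lemma~\ref{Fixedpf}, which holds automatically since $\tau_{|E_\tau}$ has no nonzero fixed vector.
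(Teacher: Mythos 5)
Your proposal is correct and follows essentially the same route as the paper: both arguments rest on the observation that $\exp(\delta(0))$ dies on $\Comm(M(\CH),V^{[\alpha]})=V_{E_\tau}$ because $\delta\in\CH\perp E_\tau$, then compare the order computed from Lemma \ref{Lalpha1} (eigenvalue $\xi^m$ on the nonzero coset module $U(m\alpha)$) with the order of the fixed-point-free lift via Lemma \ref{Fixedpf}, and finally read off the ``in particular'' from the eigenvalues of $\tau_{|E_\tau}$ on $\oplus_m U(m\alpha)_1$. Your write-up is in fact slightly more explicit than the paper's (e.g.\ invoking holomorphy to get $U(\alpha)\neq 0$, and spelling out the $\LCM$ equivalence), but the underlying argument is the same.
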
  

\pr We first note that $N_0=1$ by the assumption.  Since $\tilde{g}$ acts on $U(m\alpha)\  (=U(mN_0\alpha))$ as a multiple by scalar $e^{2\pi im/R}$ and 
$\tilde{g}=\hat{\tau}\exp(\delta(0))$ with $\delta\in \CH$, 
we have $|\hat{\tau}_{|V_{E_{\tau}}}|=R$. 
Clearly, $\hat{\tau}_{|V_{E_{\tau}}}$ is a 
standard lift of $\tau_{|E_{\tau}}$. Since $\tau_{|E_{\tau}}$ acts on $E_{\tau}$ fixed point freely, $|\tau_{|E_{\tau}}|=|\hat{\tau}_{|V_{E_{\tau}}}|=R$ by 
Lemma \ref{Fixedpf}.  
Moreover, $E_{\tau}\subseteq \oplus_{m=0}^{R-1} U(m\alpha)$ and 
$\tau_{|E_{\tau}} =\hat{\tau}_{|\oplus_{m=1}^{R-1} U(m\alpha)_1}$; thus, we have  
${\rm Span}_{\bZ}\{m\mid U(m\alpha)_1\not=0\}+\bZ R=\bZ$ as desired.  
\prend

\subsection{Multiple constructions}\label{sec:multi}
In this subsection, we will consider VOAs obtained by multiple Lorentzian (or orbifold) constructions. Let $V$ be a VOA of CFT-type and $0\not=\CH$ a Cartan subalgebra of $V_1$. 
Let $\alpha \in \CH$ and define $\rho=\rho_\alpha=(\alpha,  1,\frac{\langle \alpha, \alpha\rangle}{2})\in \CH\oplus \bC\Pi_{1,1}$.  
By the discussion in Sections \ref{Sec:Lorentzian} and \ref{sec:3.3}, 
we can obtain a VOA
$$V^{[\alpha]}={\rm Comm}(M(\rho), V\otimes V_{\Pi_{1,1}})/P, $$
where $P=\bC[\rho_\alpha(-j):j>0]J+
({\bf 1}-e^{N\rho_\alpha})_{-1}J$, 
$J=\Comm(M(\rho,V\otimes V_{\Pi_{1,1}})$ and 
$\Comm(J, V\otimes V_{\Pi_{1,1}})=V_{\bZ N\rho}$ with $N\in \bZ$ and 
$V_{\bZ 0\rho}$ denotes $M(\rho)$.
Set 
$\widehat{\CH}^{[\alpha]}=\{(\beta,   0, \langle \beta,\alpha\rangle)\mid \beta\in \CH\}$ and denote 
$\widehat{\beta}=(\beta,  0, \langle \beta,\alpha\rangle)\in \widehat{\CH}^{[\alpha]}$ for $\beta\in \CH$. 
Then $\widehat{\CH}^{[\alpha]}\subseteq J$ and 
$\widehat{\CH}^{[\alpha]}\cap P=0$. Therefore, we may view 
$\widehat{\CH}^{[\alpha]}\subseteq V^{[\alpha]}$. 
Then we can construct another VOA $(V^{[\alpha]})^{[\widehat{\beta}]}$.

\begin{prop} \label{abcommute}
Let $V$ be a VOA of CFT-type and let 
$\CH$ be a Cartan subalgebra of $V_1$. 
Let $\alpha, \beta\in \CH$ such that 
$\langle \alpha, \beta\rangle \in \bZ$. 
Then $(V^{[\alpha]})^{[\widehat{\beta}]}\cong (V^{[\beta]})^{[\widehat{\alpha}]}$,
where 
$\widehat{\alpha}=(\alpha,0,\langle\beta,\alpha\rangle)\in V^{[\beta]}$.
\end{prop}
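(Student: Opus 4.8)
The plan is to realize both $(V^{[\alpha]})^{[\widehat\beta]}$ and $(V^{[\beta]})^{[\widehat\alpha]}$ inside the ``doubled'' vertex algebra $W:=V\otimes V_{\Pi_{1,1}^{(1)}}\otimes V_{\Pi_{1,1}^{(2)}}$, where $\Pi_{1,1}^{(i)}=\bZ r_i+\bZ q_i$ with $\langle r_i,r_i\rangle=\langle q_i,q_i\rangle=0$, $\langle r_i,q_i\rangle=-1$, and to exhibit each of them as the commutant, modulo the ``$\to 0$'' ideal of Remark \ref{rem:3.3.1}, of a rank-two Heisenberg vertex algebra attached to a totally isotropic plane. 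Concretely, I would put $\mu_1:=\rho_\alpha=(\alpha,1,\tfrac{\langle\alpha,\alpha\rangle}{2})$ in the $\Pi_{1,1}^{(1)}$-factor, so $\Comm(M(\mu_1),W)=J_1\otimes V_{\Pi_{1,1}^{(2)}}$ with $J_1=\Comm(M(\rho_\alpha),V\otimes V_{\Pi_{1,1}^{(1)}})$, and $V^{[\alpha]}\otimes V_{\Pi_{1,1}^{(2)}}\cong\bigl(J_1\otimes V_{\Pi_{1,1}^{(2)}}\bigr)/\widehat P_1$ with $\widehat P_1$ the evident extension of $P_1$. Tracing Definition \ref{Valpha}, the element $\widehat\beta=(\beta,0,\langle\beta,\alpha\rangle)\in V^{[\alpha]}$ is represented by $\beta+\langle\alpha,\beta\rangle q_1\in J_1$, and the isotropic vector $\rho_{\widehat\beta}\in V^{[\alpha]}\otimes V_{\Pi_{1,1}^{(2)}}$ lifts to $\mu_2:=\beta+\langle\alpha,\beta\rangle q_1+r_2+\tfrac{\langle\beta,\beta\rangle}{2}q_2$. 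Here the hypothesis $\langle\alpha,\beta\rangle\in\bZ$ is precisely what makes the $q_1$-coefficient of $\mu_2$ integral, i.e.\ what gives $\widehat\beta$ its meaning as a lattice-part element of $V^{[\alpha]}$ in the first place. A direct computation gives $\langle\mu_i,\mu_j\rangle=0$, so $M(\mu_1,\mu_2)$ is a rank-two Heisenberg vertex algebra on a totally isotropic plane; since $\mu_1\perp\mu_2$ and $\mu_1$ involves only the $V\otimes V_{\Pi_{1,1}^{(1)}}$-part, the operators $\mu_1(-j)$ $(j>0)$ and $(\1-e^{N_1\mu_1})_{-1}$ cutting out $\widehat P_1$ commute with $M(\mu_2)$, so $\Comm(M(\cdot),-)$ against $M(\mu_2)$ commutes with the quotient by $\widehat P_1$; this yields $(V^{[\alpha]})^{[\widehat\beta]}\cong\Comm(M(\mu_1,\mu_2),W)/P_{12}$, with $P_{12}$ the ideal generated by the $\mu_i(-j)$ $(j>0)$ and $(\1-e^{N_1\mu_1})_{-1},(\1-e^{N_2\mu_2})_{-1}$ ($N_1,N_2$ the orders occurring in the two successive steps).

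Running the same argument with the roles of $\alpha,\beta$ and of $\Pi_{1,1}^{(1)},\Pi_{1,1}^{(2)}$ interchanged --- build $V^{[\beta]}$ inside $V\otimes V_{\Pi_{1,1}^{(2)}}$, then use $\widehat\alpha=(\alpha,0,\langle\beta,\alpha\rangle)$ in the $\Pi_{1,1}^{(1)}$-direction --- presents $(V^{[\beta]})^{[\widehat\alpha]}$ as $\Comm(M(\nu_1,\nu_2),W)/P'_{12}$ with $\nu_1=\alpha+\langle\alpha,\beta\rangle q_2+r_1+\tfrac{\langle\alpha,\alpha\rangle}{2}q_1$ and $\nu_2=\beta+r_2+\tfrac{\langle\beta,\beta\rangle}{2}q_2$, so $\nu_1=\mu_1+\langle\alpha,\beta\rangle q_2$ and $\nu_2=\mu_2-\langle\alpha,\beta\rangle q_1$. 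The linear map $\theta$ fixing $\CH$, $q_1$, $q_2$ and sending $r_1\mapsto r_1+\langle\alpha,\beta\rangle q_2$, $r_2\mapsto r_2-\langle\alpha,\beta\rangle q_1$ is an isometry with $\theta(\mu_i)=\nu_i$, and --- again because $\langle\alpha,\beta\rangle\in\bZ$ --- it preserves the lattice $\Pi_{1,1}^{(1)}\oplus\Pi_{1,1}^{(2)}$, hence lifts to an automorphism $\Theta=\mathrm{id}_V\otimes\widehat\theta$ of $W$. Then $\Theta$ carries $M(\mu_1,\mu_2)$ to $M(\nu_1,\nu_2)$, hence their commutants onto one another, and (sending $\bZ N_1\mu_1+\bZ N_2\mu_2$ to the analogous lattice for the $\nu_i$ and fixing the relevant $e^{N_i\cdot}$ up to a sign absorbed into the choice of standard lift) it carries $P_{12}$ onto $P'_{12}$, giving $(V^{[\alpha]})^{[\widehat\beta]}\cong(V^{[\beta]})^{[\widehat\alpha]}$.

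The main obstacle is the ``iterated equals simultaneous'' step used twice above: that for orthogonal isotropic $\mu_1\perp\mu_2$ the operations $\Comm(M(\mu_2),-)$ and ``quotient by $\widehat P_1$'' really commute, i.e.\ that the surjection $J_1\otimes V_{\Pi_{1,1}^{(2)}}\to V^{[\alpha]}\otimes V_{\Pi_{1,1}^{(2)}}$ restricts to a surjection $\Comm(M(\mu_2),J_1\otimes V_{\Pi_{1,1}^{(2)}})\to\Comm(M(\rho_{\widehat\beta}),V^{[\alpha]}\otimes V_{\Pi_{1,1}^{(2)}})$ with kernel $\widehat P_1\cap\Comm(M(\mu_2),-)$. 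I would prove this by the weight-counting argument of Lemmas \ref{ZJ}--\ref{idealP}: since $\mu_1$ is isotropic, $\wt(e^{N_1\mu_1})=0$, and $\mu_1\perp\mu_2$, the generators of $\widehat P_1$ shift $L(0)$-weight in a controlled way while commuting with every $\mu_2(n)$, which forces a vector $\mu_2$-singular modulo $\widehat P_1$ to admit a genuinely $\mu_2$-singular representative. A secondary point is that the orders $N_1,N_2$ entering $P_{12}$ and $P'_{12}$ are matched by $\theta$; this reduces, via Theorem \ref{Lalpha} applied to $V_L\cong\Comm(\Comm(M(\CH),V),V)$, to the fact that --- once more using $\langle\alpha,\beta\rangle\in\bZ$ --- $\alpha$ and $\beta$ keep their orders under the twists defining $V^{[\beta]}$ and $V^{[\alpha]}$.
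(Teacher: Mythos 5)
Your proposal is correct and follows essentially the same route as the paper: both realize the two iterated constructions as the quotient, by the ``$\rho\to 0$'' ideal, of the commutant of a rank-two Heisenberg VA attached to a totally isotropic plane inside $V\otimes V_{\Pi_{1,1}}\otimes V_{\Pi_{1,1}}$, with $\langle\alpha,\beta\rangle\in\bZ$ entering exactly where you say it does. The only (cosmetic) difference is that the paper re-decomposes $\Pi_{1,1}\perp\Pi_{1,1}$ into new hyperbolic planes so that $\rho_{\beta}=\rho_{\widehat\beta}$ and $\rho_{\widehat\alpha}=\rho_{\alpha}$ literally coincide and no map is needed, whereas you keep the standard basis in both orders and transport one isotropic plane to the other by the lattice automorphism $\Theta$ — an equivalent device; your explicit treatment of the ``iterated equals simultaneous'' step is in fact more careful than the paper, which only asserts it by reference to Lemma \ref{ZJ}.
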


\pr 
In order to repeat Lorentzian construction twice, we consider 
a VA  $$\widehat{V}:=V\otimes V_{\Pi_{1,1}}\otimes V_{\Pi_{1,1}}.$$ 
We denote an element in $(\bZ \alpha+\bZ \beta)\oplus \Pi_{1,1}\oplus \Pi_{1,1}\subseteq \widehat{V}_1$ 
by $(\gamma:a,b,c,d)$, where 
\[
\langle (a,b,c,d), (a',b',c',d')\rangle =-ab'-ba'-cd'-dc' \quad  
\text{ and } \quad \gamma\in \bZ\alpha+\bZ\beta.
\] 

On the process of Lorentzian construction from $V$  
by $\alpha$, we first define an isotropic element 
$\rho_{\alpha}:=(\alpha:1, \frac{\langle \alpha,\alpha\rangle }{2})\in (V\otimes V_{\Pi_{1,1}})_1$ and 
$J_\alpha:= \Comm(M(\rho_{\alpha}), V\otimes V_{\Pi_{1,1}})$. We consider an ideal 
$P_\alpha :=\bC[\rho_\alpha(-j):j>0]J_\alpha+(\1-e^{N\rho_\alpha})_{-1}J_\alpha$ and 
define $V^{[\alpha]} = J_\alpha/P_\alpha$.    
We may identify $\rho_{\alpha}$ with $(\alpha:1,\frac{\langle\alpha,\alpha\rangle}{2},0,0)$. 
Then we can view $J_\alpha$ as $\Comm(M(\rho_{\alpha}, (0:0,0,\bZ,\bZ)), \widehat{V})$ and   $V^{[\alpha]}$ as  $\Comm(M(\rho_{\alpha}, (0:0,0,\bZ,\bZ)), \widehat{V})/ (\bC[\rho_\alpha(-j):j>0]J_\alpha+(\1-e^{N\rho_\alpha})_{-1}J_\alpha )$, where $\Comm(\Comm(M(\rho_\alpha),\widehat{V}),\widehat{V})=
V_{\bZ N\rho_\alpha}$ with $N\in \bN$.   
In this VOA, we can identify $\beta$ with $\widehat{\beta}=(\beta:0,\langle \alpha,\beta\rangle ,0,0)
\in \rho_{\alpha}^{\perp}$ and define an isotropic element  
$\rho_{\widehat{\beta}}=(\beta:0,\langle \alpha,\beta\rangle ,1,\frac{\langle \beta,\beta\rangle }{2}) .$ 
Then the Lorentzian construction gives a VOA  $(V^{[\alpha]})^{[\widehat{\beta}]}$. Note that $\mathrm{Span}\{ \rho_\alpha, \rho_{\widehat{\beta}}\}$ forms a totally isotropic subspace in $\bQ\otimes_\bZ (L \perp \Pi_{1,1}\perp \Pi_{1,1})$.

Set $J_{\alpha, \hat{\beta}}:= \Comm(M(\rho_{\alpha},\rho_{\widehat{\beta}}),\widehat{V})$. Then by the same arguments in Lemma \ref{ZJ}, $V_{\bZ N\rho_\alpha} \otimes V_{\bZ N' \rho_{\hat{\beta}}}$ is in the center of $J_{\alpha, \hat{\beta}}$, where $N'$ is the smallest positive integer such that $N\beta \in L$.  Let 
\[
P_{\alpha, \hat{\beta}}= (\bC[\rho_\alpha(-j):j>0]J_{\alpha, \hat{\beta}}+(\1-e^{N\rho_\alpha})_{-1}J_{\alpha, \hat{\beta}} + \bC[\rho_{\hat{\beta}}(-j):j>0]J_{\alpha, \hat{\beta}}+(\1-e^{N'\rho_{\hat{\beta}}})_{-1}J_{\alpha, \hat{\beta}}  ).
\] 
Then $(V^{[\alpha]})^{[\widehat{\beta}]}$ can be identified with $J_{\alpha, \hat{\beta}}/P_{\alpha, \hat{\beta}}$. 
 
Since $\langle \alpha, \beta\rangle \in \bZ$, $(0,\langle \alpha,\beta\rangle,1,0)$ and $(1,0,0,-\langle\alpha,\beta\rangle)$ are in $\Pi_{1,1}\perp \Pi_{1,1}$. Thus, we can decompose $\Pi_{1,1}\perp \Pi_{1,1}$ into 
\[ 
\Pi_{1,1}\perp \Pi_{1,1}= \{\bZ(0,\langle \alpha,\beta\rangle,1,0)+\bZ(0,0,0,1)\}\perp  
\{\bZ(1,0,0,-\langle\alpha,\beta\rangle)+\bZ(0,1,0,0)\}.
\] 
Using this decomposition,  we can define an isotropic element 
$\rho_{\beta}=(\beta:0,\langle\alpha,\beta\rangle,1,\frac{\langle\beta,\beta\rangle}{2})$ from $\beta$, 
which coincides with $\rho_{\widehat{\beta}}$, and construct  
$V^{[\beta]}$ using $\Comm(M(\rho_{\beta}), V\otimes V_{\Pi_{1,1}})$, 
where $\Pi_{1,1}=\{(0,a\langle\alpha,\beta\rangle,a,b):a,b\in \bZ\}$. 
In this VA, the corresponding element to $\alpha$ becomes 
$\widehat{\alpha}=(\alpha:0,0,0,\langle\alpha,\beta\rangle)\in (\rho_{\beta})^{\perp}.$  
Then we have an isotropic element 
$\rho_{\widehat{\alpha}}=(\alpha:0,0,0,\langle\alpha,\beta\rangle)
+(0,1,0,0,-\langle\alpha,\beta\rangle)+\frac{\langle\alpha,\alpha\rangle}{2}(0,0,1,0,0)
=(\alpha,1,\frac{\langle\alpha,\alpha\rangle}{2},0,0),$ which coincides with $\rho_{\alpha}$. 
Therefore, we have 
\[
(V^{[\beta]})^{[\widehat{\alpha}]} \cong \Comm(M(\rho_{\beta},\rho_{\widehat{\alpha}}), 
\widehat{V})/ P_{\beta, \hat{\alpha}}\cong \Comm(M(\rho_{\alpha}, \rho_{\widehat{\beta}}), 
\widehat{V})/ P_{\alpha, \hat{\beta}}\cong (V^{[\alpha]})^{[\widehat{\beta}]}, 
\]
as we desired. 
\prend

\begin{ntn}
We use $V^{[\alpha, \beta]}$ to denote $
	(V^{[\alpha]})^{[\widehat{\beta}]}$ when $\langle \alpha,\beta\rangle\in \bZ$. 
\end{ntn}

\section{Automorphisms associated with W-elements} \label{sec:4}
\subsection{W-elements}
From now on, we will consider some special inner automorphisms defined by Weyl vectors. 
For a decomposition $V_1=\oplus_{j=1}^t \CG_{j,k_j}$ with simple 
Lie algebras $\CG_{j,k_j}$ with levels $k_j$, dual Coxeter numbers $h^{\vee}_j$ and lacing numbers $r_j$,  we choose a Weyl vector $\rho_j$ for each $\CG_{j,k_j}$ and 
define  
\begin{equation}\label{eq:5.1}
\alpha=\sum_{j=1}^t \frac{\rho_j}{h_j^{\vee}} \in V_1.   
\end{equation}

Set $g=\exp(2\pi  i\alpha(0))$. Note that $\alpha$ is uniquely determined if we fix a set of simple roots (or a positive root system). Moreover,  
$\langle \rho_j,\beta\rangle=1$ for a long simple root $\beta$ and $\langle \rho_j, \beta\rangle=\frac{1}{r_j}$ 
for a short simple root $\beta$. 
In this paper, the element $\alpha$ defined by Weyl vectors plays a very important role. We give it a name and call it a $W$-element.   
Since $g$ acts on an affine vertex algebra ${\rm VA}(V_1)$ as an automorphism 
of finite order $T={\rm LCM}(r_1h_1^{\vee},...,r_th_t^{\vee})$ and 
$V$ is a finite extension of ${\rm VA}(V_1)$-module, 
$g$ has a finite order on $V$. 
Let $N$ be the order of $g=\exp(2\pi i\alpha(0))$.  

\begin{lmm} $T$ divides $N$.  In particular, $h_j^{\vee}|N$. 
The set of eigenvalues of $N\alpha(0)$ on $V$ is $\bZ$. 
\end{lmm}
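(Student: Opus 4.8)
The plan is to prove the three assertions in order: (i) $T \mid N$, (ii) $h_j^\vee \mid N$, and (iii) the eigenvalues of $N\alpha(0)$ on $V$ form exactly $\bZ$.

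For (i), recall that $g = \exp(2\pi i \alpha(0))$ acts on the affine vertex subalgebra $\mathrm{VA}(V_1)$ with order exactly $T = \mathrm{LCM}(r_1 h_1^\vee,\dots,r_t h_t^\vee)$, as stated just above the lemma. Since $V$ is a module over $\mathrm{VA}(V_1)$ containing $\mathrm{VA}(V_1)$ as a subVOA, the restriction of $g^N = \mathrm{id}_V$ to $\mathrm{VA}(V_1)$ is the identity; hence the order $T$ of $g|_{\mathrm{VA}(V_1)}$ divides $N$. This is immediate and needs no calculation. For (ii), since $r_j h_j^\vee \mid T \mid N$, in particular $h_j^\vee \mid N$.

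The substantive point is (iii). First I would observe that the eigenvalues of $\alpha(0)$ on $V$ lie in $\frac{1}{N}\bZ$ because $g = \exp(2\pi i\alpha(0))$ has order $N$, so the eigenvalues of $N\alpha(0)$ lie in $\bZ$; thus the ideal $I = S\bZ$ of $\bZ$ generated by these eigenvalues (as in the discussion preceding Proposition~\ref{Prop:6}) satisfies $S \geq 1$ and $(S,N) = 1$. I must show $S = 1$. The key input is the structure of $\alpha$: for each $j$ and each long simple root $\beta$ of $\CG_{j,k_j}$ one has $\langle \rho_j, \beta\rangle = 1$, hence $\langle \alpha, \beta\rangle = \langle \rho_j/h_j^\vee, \beta\rangle = 1/h_j^\vee$, and the root vector $x_\beta \in (\CG_{j,k_j})_\beta \subseteq V_1 \subseteq V$ is an eigenvector of $\alpha(0) = \alpha(0)$ with eigenvalue $\langle \alpha,\beta\rangle = 1/h_j^\vee$. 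Therefore $N/h_j^\vee \in I = S\bZ$, so $S \mid N/h_j^\vee$ for every $j$, and hence $S \mid \GCD_j(N/h_j^\vee) = N/\LCM_j(h_j^\vee)$ (using $h_j^\vee \mid N$). In particular $S \mid N$; combined with $(S,N) = 1$ this forces $S = 1$, so the eigenvalues of $N\alpha(0)$ generate $\bZ$. Finally, since $V$ is holomorphic, by the argument used repeatedly in the paper (every eigenvalue in $\frac{1}{N}\bZ$ compatible with the rank-one Heisenberg decomposition actually occurs — cf.\ the proof of Proposition~\ref{Prop:6} and Remark~\ref{Xs}, invoking \cite{KM}), every integer arises as an eigenvalue of $N\alpha(0)$; thus the set of eigenvalues of $N\alpha(0)$ is exactly $\bZ$.

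I expect the main obstacle to be the last step — pinning down that the eigenvalue set of $N\alpha(0)$ is \emph{all} of $\bZ$ rather than merely generating $\bZ$. This requires the holomorphicity of $V$ together with the fact that $\mathrm{Comm}(X,V) \cong V_{\bZ N\alpha}$ (after the normalization $S=1$ in Remark~\ref{rem:3.3}), so that the decomposition $V = \bigoplus_{s=0}^{2KN-1} V_{\frac{s}{2K}\alpha + \bZ N\alpha} \otimes X^{(s)}$ with all $X^{(s)} \neq 0$ guarantees that $\alpha(0)$ takes every value $s/N$, $s \in \bZ$; equivalently $N\alpha(0)$ takes every integer value. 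The identification $S=1$ itself, though, is the genuinely new content here and follows cleanly from the explicit form of the $W$-element as above.
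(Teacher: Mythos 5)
Your proof is correct and follows essentially the same route as the paper: the paper likewise deduces $T\mid N$ from the eigenvalue $1/(r_jh_j^\vee)$ of $\alpha(0)$ on $\CG_j$, bounds $S$ by a divisor of $N$ (it uses the short simple roots to get $S\mid N/T$, you use the long ones to get $S\mid N/\LCM_j(h_j^\vee)$ — either suffices), and concludes $S=1$ from $(S,N)=1$. Your extra care in upgrading ``the eigenvalues generate $\bZ$'' to ``the eigenvalue set is $\bZ$'' via holomorphy and the decomposition of Remark \ref{Xs} is exactly the mechanism the paper relies on (cf.\ the proof of Proposition \ref{Prop:6}).
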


\pr 
Since $\alpha_{j}(0)=\rho_{j}(0)/h_j^{\vee}$ has an eigenvalue $1/r_jh_j^{\vee}$ on $\CG_j$, it is clear that $T={\rm LCM}(h^{\vee}_1r_1,\cdots,h^{\vee}_tr_t)$ divides $N$ and $S$ divides
$N/T$, where ${\rm Span}_{\bZ}\{\mbox{eigenvalues of }\alpha(0)\}=\bZ \frac{S}N$.  Since $(N,S)=1$, we have $S=1$ as desired. 
\prend

By Proposition \ref{prop:2.3} and Theorem \ref{thm:2.4} in \S 2, we have: 

\begin{prop}\label{dimV1}
By setting $\langle \alpha,\alpha\rangle=\frac{2K_0}{N_0}$ with $(K_0,N_0)=1$, 
we have 
\begin{equation}\label{eq:5.2}
\frac{\dim V_1}{\dim V_1-24}=\frac{K_0}{N_0},  \quad \frac{k_j}{h_j^{\vee}}=\frac{K_0-N_0}{N_0}, \quad \mbox{and} \quad 
\dim V_1=\frac{24K_0}{K_0-N_0}. 
\end{equation}
In particular, $K_0>N_0$, $(K_0-N_0)|24$ and $N_0|h_j^{\vee}$ and 
$(K_0-N_0)|k_j$ for all $j$. 
\end{prop}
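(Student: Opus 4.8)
The plan is to combine the two input identities from Dong--Mason (Propositions \ref{prop:2.2} and \ref{prop:2.3}) with the strange formula (Theorem \ref{thm:2.4}) and then translate everything into the coprime data $(K_0, N_0)$ attached to $\langle\alpha,\alpha\rangle$. First I would compute $\langle \alpha,\alpha\rangle$ directly from the definition $\alpha = \sum_j \rho_j/h_j^\vee$. Since the $\CG_{j,k_j}$ are mutually orthogonal inside $V_1$, we get $\langle\alpha,\alpha\rangle = \sum_j \langle\rho_j,\rho_j\rangle/(h_j^\vee)^2$, and by \eqref{eq:2.2} the $j$-th summand equals $\frac{k_j h_j^\vee \dim\CG_{j,k_j}}{12 (h_j^\vee)^2} = \frac{k_j \dim\CG_{j,k_j}}{12 h_j^\vee}$. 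Now Proposition \ref{prop:2.3} says $k_j/h_j^\vee$ is the \emph{same} rational number for every $j$, namely $(\dim V_1 - 24)/24$; pulling that common factor out gives $\langle\alpha,\alpha\rangle = \frac{\dim V_1 - 24}{24}\cdot\frac{\sum_j \dim\CG_{j,k_j}}{12} = \frac{(\dim V_1 - 24)\dim V_1}{288}$ using $\sum_j \dim\CG_{j,k_j} = \dim V_1$.

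Next I would set $x = \dim V_1$ and equate this with $\frac{2K_0}{N_0}$ in lowest terms. From $\langle\alpha,\alpha\rangle = \frac{x(x-24)}{288}$ one reads off $\frac{x}{x-24} = \frac{2K_0/N_0 \cdot 288 / (x-24)^2}{\,\cdots\,}$ — more cleanly, $\frac{x}{x-24} = \frac{(x\langle\alpha,\alpha\rangle)/(x-24)\cdot \text{stuff}}{}$; the honest route is to note $\frac{\langle\alpha,\alpha\rangle \cdot 288}{(x-24)^2} = \frac{x}{x-24}$, so $\frac{x}{x-24}$ is a quotient of the two quantities $\langle\alpha,\alpha\rangle$ and $(x-24)/288$, hence rational, and then argue it must equal $\frac{K_0}{N_0}$ because $\gcd(K_0,N_0)=1$ forces the reduced form to match; that is, one shows $\gcd(x, x-24) = \gcd(x,24)$ divides out to leave exactly $K_0/N_0$. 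Concretely: write $d = \gcd(x,24)$, so $x = d a$, $x - 24 = d(a - 24/d)$ with $\gcd(a, a-24/d)=1$; then $\frac{x}{x-24} = \frac{a}{a - 24/d}$ is already reduced and the relation $\langle\alpha,\alpha\rangle = \frac{x(x-24)}{288} = \frac{2K_0}{N_0}$ pins down $K_0/N_0 = a/(a-24/d) = x/(x-24)$. This gives the first displayed identity. Rearranging $\frac{x}{x-24} = \frac{K_0}{N_0}$ yields $x N_0 = (x-24)K_0$, hence $x(K_0 - N_0) = 24 K_0$, i.e. $\dim V_1 = \frac{24 K_0}{K_0 - N_0}$, the third identity. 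For the middle one, Proposition \ref{prop:2.3} gives $k_j/h_j^\vee = (x-24)/24 = (x-24)/24$; substituting $x = 24K_0/(K_0-N_0)$ gives $x - 24 = 24N_0/(K_0-N_0)$, so $k_j/h_j^\vee = N_0/(K_0-N_0)$ — which is the reciprocal of what is claimed. I would double-check the orientation here: the claim $\frac{k_j}{h_j^\vee} = \frac{K_0-N_0}{N_0}$ together with Proposition \ref{prop:2.3} would force $(x-24)/24 = (K_0-N_0)/N_0$, i.e. $x = 24 + 24(K_0-N_0)/N_0 = 24K_0/N_0$, clashing with the third identity unless $N_0 = K_0 - N_0$ — so I would reconcile this by re-examining whether Proposition \ref{prop:2.3} should read $h_j^\vee/k_j = (\dim V_1 - 24)/24$ (it does, as stated: ``$\frac{h_j^{\vee}}{k_j}=\frac{\dim V_1-24}{24}$''), which immediately gives $k_j/h_j^\vee = 24/(x-24) = (K_0-N_0)/N_0$ after substitution. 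Good — so the orientation is correct once one uses the statement of Proposition \ref{prop:2.3} verbatim.

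Finally, for the ``in particular'' divisibility claims: $K_0 > N_0$ is immediate from $\dim V_1 > 24$ (Proposition \ref{prop:2.2}(2)) and $\dim V_1 = 24K_0/(K_0-N_0)$, since $\dim V_1 > 24 \iff K_0/(K_0-N_0) > 1 \iff K_0 - N_0 > 0$ and also $K_0 > 0$, i.e. $K_0 > N_0$ (both positive). That $(K_0 - N_0) \mid 24$: from $\dim V_1 (K_0-N_0) = 24 K_0$ and $\gcd(K_0, K_0-N_0) = \gcd(K_0, N_0) = 1$, the integer $K_0 - N_0$ divides $24 K_0$ and is coprime to $K_0$, hence divides $24$. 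For $N_0 \mid h_j^\vee$ and $(K_0-N_0)\mid k_j$: from $k_j/h_j^\vee = (K_0-N_0)/N_0$ in lowest terms (it is lowest terms since $\gcd(K_0-N_0, N_0) = \gcd(K_0,N_0) = 1$), writing $k_j h_j^\vee$-relation as $k_j N_0 = h_j^\vee (K_0 - N_0)$ gives $N_0 \mid h_j^\vee(K_0-N_0)$ with $\gcd(N_0, K_0-N_0)=1$, so $N_0 \mid h_j^\vee$; symmetrically $(K_0-N_0) \mid k_j$. The main obstacle, such as it is, is not any single hard step but rather keeping the reduced-fraction bookkeeping honest — in particular verifying that $x/(x-24)$ is \emph{already} in lowest terms after cancelling $\gcd(x,24)$, and that $(K_0-N_0)/N_0$ is likewise reduced, since every divisibility conclusion leans on those coprimality facts; I would state and use the elementary lemma $\gcd(x, x-24) = \gcd(x,24)$ explicitly to make this airtight.
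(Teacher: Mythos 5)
There is a genuine error at the very first step, and it propagates. When you pull the common factor out of $\langle\alpha,\alpha\rangle=\sum_j \frac{k_j\dim\CG_j}{12h_j^\vee}$, you substitute $k_j/h_j^\vee=(\dim V_1-24)/24$; but Proposition \ref{prop:2.3} states $h_j^\vee/k_j=(\dim V_1-24)/24$, so the correct substitution is $k_j/h_j^\vee=24/(\dim V_1-24)$. This gives $\langle\alpha,\alpha\rangle=\frac{24}{\dim V_1-24}\cdot\frac{\dim V_1}{12}=\frac{2\dim V_1}{\dim V_1-24}$, which is exactly the paper's computation, and from which the first identity $\frac{\dim V_1}{\dim V_1-24}=\frac{K_0}{N_0}$ is an immediate equality of rationals (no reduced-fraction lemma needed at this stage). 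Your formula $\langle\alpha,\alpha\rangle=\frac{x(x-24)}{288}$ is wrong, and the subsequent claim that $\frac{x(x-24)}{288}=\frac{2K_0}{N_0}$ ``pins down'' $K_0/N_0=x/(x-24)$ is false: e.g.\ $x=36$ would give $\frac{x(x-24)}{288}=\frac{3}{2}$, hence $K_0/N_0=3/4$, while $x/(x-24)=3$. You do catch the orientation problem later, when the middle identity comes out reciprocal to the claim, and you correctly resolve it by reading Proposition \ref{prop:2.3} verbatim --- but you never propagate that correction back to the opening computation of $\langle\alpha,\alpha\rangle$, so the derivation of the first and third identities as written does not stand.

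The rest is fine and matches the paper: once $\frac{\dim V_1}{\dim V_1-24}=\frac{K_0}{N_0}$ and $\frac{h_j^\vee}{k_j}=\frac{N_0}{K_0-N_0}$ are in hand, your deductions of $\dim V_1=\frac{24K_0}{K_0-N_0}$, of $K_0>N_0$, and of the divisibilities $(K_0-N_0)\mid 24$, $N_0\mid h_j^\vee$, $(K_0-N_0)\mid k_j$ from the coprimalities $(K_0,K_0-N_0)=(N_0,K_0-N_0)=1$ are exactly the paper's argument. The fix is purely local: redo the first display with the correct orientation of Proposition \ref{prop:2.3} and delete the $\gcd(x,24)$ bookkeeping, which is then unnecessary.
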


\pr
By direct calculations, we have
\[
\begin{split}
\langle \alpha,\alpha\rangle=&\langle \sum \frac{\rho_j}{h^{\vee}_j}, 
\sum\frac{\rho_j}{h^{\vee}_j}\rangle=\sum \frac{\langle \rho_j,\rho_j\rangle}{(h^{\vee}_j)^2}=\sum \frac{h_j^{\vee}k_j\dim \CG_j}{12 (h_j^{\vee})^2}=\sum \frac{k_j\dim \CG_j}{12 h_j^{\vee}}\\
=&\sum_j \frac{24\dim \CG_j}{12(\dim V_1-24)} 
=\frac{2\dim V_1}{\dim V_1-24}. 
\end{split}
\]
Therefore, 
\[
\dim V_1=\frac{24\langle \alpha,\alpha\rangle}{
\langle \alpha,\alpha\rangle-2}= \frac{24K_0}{K_0-N_0}
\]
and so $(K_0-N_0)|24$ since $(K_0,K_0-N_0)=1$. We also have: 
\[ 
\frac{h_j^{\vee}}{k_j}=\frac{1}{24}\left( \frac{24\langle\alpha,\alpha\rangle}{\langle \alpha,\alpha\rangle-2}-24\right)=\frac{2}{\langle 
\alpha,\alpha\rangle-2}= \frac{N_0}{K_0-N_0}
\]  
and so $N_0|h_j^{\vee}$ since $(N_0,K_0-N_0)=1$. 
\prend

\begin{ntn}
If $\alpha=\sum_{j=1}^t \frac{\rho_j}{h_j^{\vee}}$ is a W-element,  we often denote the VOA $V^{[\alpha]}$ by $\widetilde{V}$. 
\end{ntn}

The following result is very important. 

\begin{prop} \label{N0a}
We have $N_0\alpha\in L^{\ast}$. 
In particular, $\Comm(M(\CH),\tilde{V})=\displaystyle{\bigoplus_{m=0}^{R-1}U(mN_0\alpha)}$.  
\end{prop}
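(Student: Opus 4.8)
The plan is to prove the first assertion, $N_0\alpha\in L^*$, and then read off the ``in particular'' clause directly from Theorem \ref{Lalpha} (recall $\widetilde{V}=V^{[\alpha]}$, and its conclusion, after the identification of $\widehat{\CH}$ with $\CH$, is exactly $\Comm(M(\CH),\widetilde{V})=\bigoplus_{m=0}^{R-1}U(mN_0\alpha)$). Now $N_0\alpha\in L^*$ means precisely that $\langle N_0\alpha,\gamma\rangle\in\bZ$ for every $\gamma\in L$, i.e. the denominator of $\langle\alpha,\gamma\rangle$ divides $N_0$; this is what I will establish.

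First I would record a coarse bound. Since $\alpha\in\CH$ we have $M(\alpha)\subseteq M(\CH)$, hence $U=\Comm(M(\CH),V)\subseteq X=\Comm(M(\alpha),V)$, so by Remark \ref{rem:3.3} $V_{\bZ N\alpha}=\Comm(X,V)\subseteq\Comm(U,V)=V_L$ and in particular $N\alpha\in L$, giving $\langle\alpha,\gamma\rangle\in\frac1N\bZ$ for all $\gamma\in L$. It remains to remove the factor $R=N/N_0$. It is worth keeping in mind the reformulation: $g=\exp(2\pi i\alpha(0))$ annihilates $U$, and $N_0\alpha\in L^*$ says exactly that the inner automorphism $g|_{V_L}$ of the lattice VOA $V_L$ has order dividing $N_0$, so all of the ``extra'' order $R$ of $g$ on $V$ is carried by the nontrivial $U$-cosets.

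For the sharp bound I would decompose $V_1=\bigoplus_j\CG_{j,k_j}$ into simple ideals. Write $\CH=\bigoplus_j\CH_j$ and $\alpha=\sum_j\alpha_j$ with $\alpha_j=\rho_j/h_j^{\vee}\in\CH_j$. For $\gamma\in L\subseteq\CH$ write $\gamma=\sum_j\gamma_j$; orthogonality of the ideals gives $\langle\alpha,\gamma\rangle=\sum_j\langle\alpha_j,\gamma_j\rangle$. Restricted to the sub-VOA generated by $\CG_{j,k_j}$, $V$ is an integrable module for the level-$k_j$ affine algebra of $\CG_j$, so the $\CH_j$-weight of any vector, after the rescaling forced by $\langle\cdot,\cdot\rangle=k_j(\cdot|\cdot)$ on $\CG_{j,k_j}$, is of the form $\gamma_j=\mu_j/k_j$ with $\mu_j$ in the weight lattice $P(\CG_j)$. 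Then
\[
\langle\alpha_j,\gamma_j\rangle=\frac{1}{h_j^{\vee}k_j}\langle\rho_j,\mu_j\rangle=\frac{1}{h_j^{\vee}k_j}\,k_j(\rho_j|\mu_j)=\frac{(\rho_j|\mu_j)}{h_j^{\vee}},
\]
so the level $k_j$ cancels, and since $N_0\mid h_j^{\vee}$ by Proposition \ref{dimV1} we obtain $N_0\langle\alpha,\gamma\rangle=\sum_j\frac{(\rho_j|\mu_j)}{h_j^{\vee}/N_0}$. The claim is that this sum lies in $\bZ$.

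This last step is the crux and I expect it to be the main obstacle: a single term $(\rho_j|\mu_j)/(h_j^{\vee}/N_0)$ need not be an integer (already for $\CG_j$ of type $G_2$), so the integrality has to come from a cancellation among the simple ideals. To produce it I would combine two facts: $(\rho_j|\mu_j)\bmod\bZ$ (resp. $\bmod\frac{1}{r_j}\bZ$ when $\CG_j$ is not simply laced) depends only on the class of $\mu_j$ in $P(\CG_j)/Q(\CG_j)$, where $Q(\CG_j)$ is the root lattice; and the conformal weights of $V$ are integers, so for any vector of $\CH$-weight $\gamma$ the sum over $j$ of the Sugawara $L(0)$-values of its $\CG_{j,k_j}$-components, together with the contribution of the commutant, lies in $\bZ$. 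The latter constrains the admissible tuples of ``glue'' classes $\bigl(\mu_j+Q(\CG_j)\bigr)_j$ tightly enough to force $\sum_j\frac{(\rho_j|\mu_j)}{h_j^{\vee}/N_0}\in\bZ$, which is the desired bound. Everything before this step is bookkeeping, and once $N_0\alpha\in L^*$ is established the proposition follows from Theorem \ref{Lalpha}.
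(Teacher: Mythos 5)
Your reduction is sound up to the crux, and your bookkeeping agrees with the paper's: you correctly reduce $N_0\alpha\in L^*$ to showing $\sum_j (\rho_j|\mu_j)\,N_0/h_j^{\vee}\in\bZ$ for the weights $\mu_j$ occurring in $V$, and you correctly identify integrality of conformal weights as the relevant input (the ``in particular'' clause via Theorem \ref{Lalpha} is also exactly what the paper does). But the decisive step is not actually carried out, and the route you sketch for it would not suffice. Your fact (a) controls $(\rho_j|\mu_j)$ only modulo $\bZ$ (or $\frac{1}{r_j}\bZ$) as a function of the glue class $\mu_j+Q(\CG_j)$, whereas what you need is $\sum_j(\rho_j|\mu_j)/(h_j^{\vee}/N_0)$ modulo $\bZ$; when $h_j^{\vee}/N_0>1$ this quantity is \emph{not} determined by the classes in $P(\CG_j)/Q(\CG_j)$ (changing $\mu_j$ by a root changes $(\rho_j|\mu_j)$ by an integer that need not be divisible by $h_j^{\vee}/N_0$). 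So ``the glue classes are constrained tightly enough'' is a non sequitur as stated: the conformal-weight constraint lives on the actual Sugawara values, not on the glue classes, and no amount of information about the classes alone closes the gap.

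The paper closes it by an exact identity rather than a congruence argument on glue. Using $h_i^{\vee}/(k_i+h_i^{\vee})=N_0/K_0$ and $k_i/h_i^{\vee}=(K_0-N_0)/N_0$ (Proposition \ref{dimV1}), the total conformal weight of a simple current module $\bigotimes_i L_{\hat\CG_i}(k_i,\lambda_i)<V$ collapses to
\[
\sum_{i}\frac{(\lambda_i,\lambda_i+2\rho_i)}{2(k_i+h_i^{\vee})}
=\frac{1}{2K_0}\Bigl(2(N_0\alpha,\lambda)+(K_0-N_0)(\lambda,\lambda)\Bigr),
\]
with $\lambda=(\lambda_i/\sqrt{k_i})_i\in L$. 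Integrality of the left side forces the bracket into $2K_0\bZ\subseteq 2\bZ$, and since $(\lambda,\lambda)\in 2\bZ$ ($L$ is even) one gets $(N_0\alpha,\lambda)\in\bZ$ outright; the coroot-lattice part of $L$ is handled by the direct computation $(\alpha,\sqrt{k_i}\beta)=k_i/h_i^{\vee}=(K_0-N_0)/N_0$. You should replace your cancellation-among-ideals heuristic with this computation (or an equivalent one); without it the proof is incomplete at exactly the point you flag as the main obstacle.
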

\pr 
	Let  $V_1=\CG=\oplus_{j=1}^t {\CG}_{j,k_j}$. 
Recall that the lattice $L$ contains a sublattice  isometric to $\mathfrak{Q}= \oplus_{j=1}^r \sqrt{k_j} Q_{j}^{\ell}$, where $Q_{j}^{\ell}$ is the sublattice generated by long roots of $\CG_j$. 
Note that it is also the sublattice generated by the co-roots, i.e., $2\beta/(\beta, \beta)$, $\beta$ a root, where $(\ , \ )$ is the normalized Killing form. 

Let  $\bigotimes_{i=1}^t L_{\hat\CG_i}(k_i,\lambda_i) <V$ be a simple current module of $\bigotimes_{i=1}^t L_{\hat\CG_i}(k_i,0)$.   Then  
\[
\lambda=(\frac{1}{\sqrt{k_1}} \lambda_1,\dots, \frac{1}{\sqrt{k_t}} \lambda_t  ) \in L.
\]
The lattice $L$ is indeed spanned by  $\mathfrak{Q}$ and $(\frac{1}{\sqrt{k_1}} \lambda_1,\dots, \frac{1}{\sqrt{k_t}} \lambda_t  )$ for all $(\lambda_1, \dots, \lambda_t)$ satisfying the above condition.

For any root $\beta \in Q_i^{\ell}$,  we have $(\alpha, \sqrt{k_i} \beta)= k_i/h_i^{\vee}= (K_0-N_0)/N_0$. Therefore, 
$(N_0\alpha , \gamma) \in \bZ $ for any $\gamma\in \mathfrak{Q}$. Note that $\alpha= \sum \sqrt{k_i}\rho_i/h_i^{\vee}$.

Now let $\bigotimes_{i=1}^tL_{\hat\CG_i}(k_i,\lambda_i)<V$ be a simple current module of $\bigotimes_{i=1}^t L_{\hat\CG_i}(k_i,0)$.  Then the conformal weight of the module $\bigotimes_{i=1}^t L_{\hat\CG_i}(k_i,\lambda_i)$ must be an integer and is given by 
\begin{align*}
	\sum_{i=1}^t\frac{(2\rho_i+\lambda_i,\lambda_i)}{2(k_i+h_i^\vee)}&=\frac{h_i^\vee}{2(h_i^\vee+k_i)}\sum_{i=1}^t\left(2(\rho_i/h_i^\vee,\lambda_i)+\frac{1}{h_i^\vee}
	(\lambda_i,\lambda_i)\right)\\
	&=\frac{N_0}{2K_0}\sum_{i=1}^r\left(2(\rho_i/h_i^\vee,\lambda_i)+\frac{K_0-N_0}{N_0}
	(\frac{1}{\sqrt{k_i}}\lambda_i,\frac{1}{\sqrt{k_i}}\lambda_i)\right),\\
	&= \frac{1}{2K_0}\left( 2(N_0\alpha, \lambda)+ (K_0-N_0)(\lambda, \lambda)\right).
\end{align*}

Then $2(N_0\alpha, \lambda)+ (K_0-N_0)(\lambda, \lambda)$ must be an even integer but $\lambda\in L$ and we have $(\lambda, \lambda)\in 2\mathbb{Z}$ and hence  $2(N_0\alpha, \lambda)\in 2\mathbb{Z}$ and $(N_0\alpha, \lambda)\in \mathbb{Z}$ as desired. 

By Theorem \ref{Lalpha}, we have  $\Comm(M(\CH),\tilde{V})=\oplus_{m=0}^{R-1}U(mN_0\alpha)$. 
  \prend  

\medskip

Set $R=N/N_0\in \bN$. 
We note that $A={\rm LCM}(r_1\frac{h^{\vee}_1}{N_0},...,
r_t\frac{h^{\vee}_t}{N_0})$ divides $R$ and $K=RK_0$. 

\begin{prop}\label{NoOne}
If $A=1$ or $R=1$, then $V=V_{E}$ for some Niemeier lattice $E$. 
\end{prop}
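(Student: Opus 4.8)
The plan is to show that either hypothesis, $A=1$ or $R=1$, forces the rank of $V_1$ to be maximal, namely $24$, and then invoke Proposition \ref{prop:2.2}(3) to conclude that $V$ is a Niemeier lattice VOA. The natural route is to produce a Cartan subalgebra $\CH$ of $V_1$ together with enough information about $\tilde{V}=V^{[\alpha]}$ and the reverse automorphism $\tilde{g}$ so that $\tilde{V}$ is forced to be the Leech lattice VOA $V_\Lambda$; then, reversing the construction via Proposition \ref{R=1} (or via the reverse automorphism $\tilde{g}$ acting on $V_\Lambda$), we recover $V$ as a cyclic orbifold of $V_\Lambda$. The key quantitative input is Proposition \ref{dimV1}: since $A$ (resp. $R$) controls the order $N=RN_0$ of $g$ and the Coxeter numbers $h_j^\vee$ satisfy $N_0\mid h_j^\vee$ with $r_j h_j^\vee/N_0\mid A\mid R$, the condition $A=1$ or $R=1$ severely restricts the possible simple factors $\CG_{j,k_j}$ and their levels.

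First I would treat the case $R=1$. Then $N=N_0$, $K=K_0$, and $(N,K)=1$, so Proposition \ref{R=1} applies and gives $(V^{[\alpha]})^{[\alpha]}\cong V$; moreover by Proposition \ref{N0a} we have $N_0\alpha=N\alpha\in L^*$, so Theorem \ref{Lalpha} gives $\Comm(M(\CH),\tilde{V})=U(0)=U$ and $\tilde{L}=L^{[\alpha]}+\bZ N_0\alpha$. I would next argue that $\tilde{V}\cong V_\Lambda$: since $R=1$, by Corollary \ref{Tilde} the only option not already excluded is $R=1$ itself, which by the structure of $V^{[\alpha]}$ (each $X^{(mK+nN)}$ with fixed residue contributes, and $R=1$ collapses the multiplicities) makes $\tilde{V}_1$ have a trivial commutant of $M(\CH)$ beyond $\CH$ — combined with the fact that $\tilde{g}$ acts fixed-point-freely on $\Comm(M(\CH),\tilde{V})_1$ (as in Proposition \ref{abelian}), one shows $\tilde{V}_1$ cannot be semisimple, hence $\tilde{V}\cong V_\Lambda$ by Proposition \ref{prop:2.2}(1). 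Then $V\cong (V_\Lambda)^{[\alpha]}$ is obtained from the Leech lattice VOA by a cyclic orbifold; since $\alpha$ now sits inside (the identification of) a Cartan subalgebra and $N\alpha$ lies in the relevant lattice, the reverse construction is a lattice-type operation and $V$ is itself a lattice VOA with $\rank V_1=\rank \tilde V_1=24$, so $V=V_E$ for a Niemeier lattice $E$ by Proposition \ref{prop:2.2}(3).

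For the case $A=1$: here $A={\rm LCM}(r_1 h_1^\vee/N_0,\dots,r_t h_t^\vee/N_0)=1$ forces $r_j h_j^\vee=N_0$ for every $j$, i.e. all simple factors have the same "reduced" Coxeter data and in particular $N_0=r_j h_j^\vee$ is pinned down. Feeding this into Proposition \ref{dimV1} (which gives $k_j/h_j^\vee=(K_0-N_0)/N_0$ and $\dim V_1=24K_0/(K_0-N_0)$ with $(K_0-N_0)\mid 24$) leaves only finitely many possibilities, and in each the equality $r_j h_j^\vee=N_0$ together with $N_0\mid h_j^\vee$ forces $r_j=1$ and $h_j^\vee=N_0$ — so all factors are simply laced of equal Coxeter number. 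I would then check, using $\langle\alpha,\alpha\rangle=2K_0/N_0$ and the strange formula, that the level is forced to $k_j=1$ and $\dim V_1=24+24/(h-1)\cdot(\ldots)$ works out exactly to $\rank V_1=24$; equivalently, the sublattice $\mathfrak{Q}=\oplus\sqrt{k_j}Q_j^\ell$ of Proposition \ref{N0a} already has rank $24$. Once $\rank V_1=24$, Proposition \ref{prop:2.2}(3) finishes it.

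The main obstacle I anticipate is the bookkeeping in the $A=1$ case: extracting from $r_j h_j^\vee = N_0$, the divisibility constraints $N_0\mid h_j^\vee$, $(K_0-N_0)\mid k_j$, $(K_0-N_0)\mid 24$, and the dimension formula $\dim V_1=24K_0/(K_0-N_0)$ that the configuration is forced to be a single ADE type at level one summing to rank $24$ — this is a finite but delicate case check that must rule out, e.g., mixed-type or higher-level configurations. The $R=1$ case is cleaner but hinges on correctly identifying $\tilde V\cong V_\Lambda$ via the fixed-point-free action of $\tilde g$ on the commutant; the subtlety is making sure the reverse orbifold genuinely lands in the lattice-VOA world, which I would handle by the identification $\widehat\alpha=\alpha$, $\widehat\CH=\CH$ and Theorem \ref{Lalpha} applied to $V_\Lambda$.
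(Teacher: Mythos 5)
There is a genuine gap on both branches. For the case $R=1$ you propose to show $\tilde V\cong V_\Lambda$ and then ``reverse'' the construction; but $\tilde V\cong V_\Lambda$ is essentially the conclusion of the main theorem that this proposition is a stepping stone towards, and your argument for it does not work: when $R=1$, Theorem \ref{Lalpha} gives $\Comm(M(\CH),\tilde V)=U(0)$, whose weight-one part is $0$ because $\CH$ is a Cartan subalgebra of $V_1$, so Proposition \ref{abelian} tells you nothing about whether $\tilde V_1$ is semisimple, and $\rank\tilde V_1=\rank V_1$ which need not be $24$ a priori. The point you are missing is much simpler: $A$ divides $R$ (as noted right before the proposition), so $R=1$ forces $A=1$ and the two hypotheses collapse into a single case. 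No orbifold-reversal argument is needed or used in the paper.

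For the case $A=1$ your setup is right ($r_jh_j^\vee=N_0$ together with $N_0\mid h_j^\vee$ gives $r_j=1$ and $h_j^\vee=N_0$ for all $j$, hence $k_j=K_0-N_0=:k$ with $k\mid 24$ and $(k,h)=1$), but the claim that the level is ``forced to $k_j=1$'' is exactly the nontrivial content, and you do not supply it. The paper gets it from the identity $kn(h+1)=24(k+h)$ (where $n=\rank V_1$, using $\dim V_1=n(h+1)$ for equal-Coxeter-number simply-laced summands): $k=1$ immediately gives $n=24$, and $k\geq 2$ is eliminated by an explicit arithmetic argument — for $k$ even, $h$ is odd, so all components are of type $A_{h-1}$ and $8\mid (h-1)(h+1)$ while $8\nmid 12(k+h)$; for $k$ odd one is reduced to $k=3$ and then to $h=7$, $\CG_j=A_6$, which contradicts the resulting rank $n=10$. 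Without this (or an equivalent) case analysis, configurations with $k\geq 2$ are not excluded and the proof is incomplete. Note also that the conclusion is reached via $\rank V_1=24$ and Proposition \ref{prop:2.2}(3), as you correctly anticipated, not via any identification of $\tilde V$.
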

 
\pr 
If $A=1$ or $R=1$, then $r_j=1$ and $h_j^{\vee}=N_0$ and $k_j=K_0-N_0$ for all $j$.  
In particular, $(k_j,h_j^{\vee})=1$ and $k_j|24$ by Proposition \ref{dimV1}. 
Denote $h_j^{\vee}$ and $k_j$ by $h^{\vee}$ and $k$, respectively. 
Since $\CG_j$ are all simply laced, 
$\dim V_1=(h+1)n$, where $n={\rm rank}(V_1)$.  
Therefore, we have  $ kn(h+1)=24(k+h)$. 
If $k=1$, then $n=24$ and $V$ is a Niemeier lattice VOA.  
So we may assume $k\geq 2$. 
If $k$ is even, say $k=2t$, then $h$ is odd. A simply laced simple Lie algebra with odd (dual) Coxeter number must be of type $A$. Therefore, the simple components for $V_1$ are all of type $A_{h-1}$ and hence $n=s(h-1)$ for some $s\in \bN$. Then 
$t\times s(h-1)(h+1)=12(2t+h)$.  However, $8|(h-1)(h+1)$, but $8\not | 12(2t+h)$, which is a contradiction. 
If $k$ is odd, then since $k|24$, $k=3$ and so $n(h+1)=8(h+3)$.   Furthermore, if $h$ is even, then $h+1$ divides $h+3$, which is a contradiction. If $h=2s-1$, then $sn=8(s+1)$ and so $s=2,4$ or $8$ since 
$h\geq 2$. 
But in these cases, we have $h=3,7,15$. 
Since $(k,h)=1$, we have $h=7$. 
Hence $s=4$ and so $4n=8\times 5$, that is, $n=10$. 
On the other hand, since $h=7$, we have $\CG_j=A_6$, which contradicts $n=10$. 
\prend

\begin{cry}\label{ToLeech}
For any holomorphic strongly regular VOA $V$ of CFT-type with central charge $24$, 
by repeating the orbifold constructions of inner automorphism defined by W-elements (see \eqref{eq:5.1}), 
we can construct the Leech lattice VOA.
 \end{cry}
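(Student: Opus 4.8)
The plan is to run an induction on a suitable complexity measure attached to $V$, using the $W$-element orbifold construction as the inductive step and Proposition \ref{NoOne} (together with Proposition \ref{Niemeier}) as the base case. Concretely, starting from a strongly regular holomorphic VOA $V$ of CFT-type with central charge $24$: if $V_1=0$ then $V\cong V_{\Lambda}$ already (this is the moonshine VOA case, which we may exclude or treat as trivially done); otherwise by Proposition \ref{prop:2.2} either $V_1$ is semisimple or $V\cong V_{\Lambda}$, and in the latter case we are done. So assume $V_1=\oplus_{j=1}^t\CG_{j,k_j}$ is semisimple, form the $W$-element $\alpha=\sum_j\rho_j/h_j^{\vee}$, set $g=\exp(2\pi i\alpha(0))$, and pass to $\widetilde{V}=\widetilde{V}(g)\cong V^{[\alpha]}$, which by Theorem \ref{cosetVOA} is again a strongly regular holomorphic VOA of central charge $24$. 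We iterate: at each stage either the current VOA is $V_{\Lambda}$ (done), or it is a Niemeier lattice VOA $V_E$ with $E\neq\Lambda$ — in which case one more $W$-element orbifold step lands us on $V_{\Lambda}$ by Proposition \ref{Niemeier} — or it is a "genuinely non-lattice" holomorphic VOA and we apply the $W$-element construction again.

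The key point that makes the induction terminate is a monotonicity statement: the $W$-element orbifold step strictly decreases some finite invariant unless we have already reached a Niemeier lattice VOA. A natural candidate for this invariant is built from the data in Proposition \ref{dimV1}: writing $\langle\alpha,\alpha\rangle=2K_0/N_0$ with $(K_0,N_0)=1$, we have $\dim V_1=24K_0/(K_0-N_0)$ with $(K_0-N_0)\mid 24$, and $R=N/N_0$, $A={\rm LCM}(r_1h_1^{\vee}/N_0,\dots,r_th_t^{\vee}/N_0)$ with $A\mid R$. By Proposition \ref{NoOne}, if $A=1$ or $R=1$ then $V$ is already a Niemeier lattice VOA; so whenever $V$ is not a lattice VOA we have $R\geq A\geq 2$. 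The heart of the argument is to show that after the $W$-element step the analogous quantity for $\widetilde{V}$ is strictly smaller — for instance that the new dual Coxeter numbers, or the new value of $R$, or the lacing numbers $r_j$, decrease — so that after finitely many steps we must land in the case $R=1$ or $A=1$, hence at a Niemeier lattice VOA, and one final step reaches $V_{\Lambda}$. The control on the Coxeter numbers of $\widetilde{V}_1$ is exactly the kind of information promised in Proposition \ref{genus}(2) (Coxeter numbers bounded by $R$), and the rank behaviour is governed by Proposition \ref{abelian} and Theorem \ref{Lalpha}, which pin down the Cartan subalgebra of $\widetilde{V}_1$ and the commutant lattice $\tilde L=L^{[\alpha]}+\bZ N_0\alpha$ in terms of $L$ and $\alpha$.

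The main obstacle is establishing this strict-decrease (well-foundedness) statement: a priori the $W$-element of $\widetilde{V}$ is built from a different — possibly larger — semisimple Lie algebra $\widetilde{V}_1$, so one must extract from the Lorentzian description of $V^{[\alpha]}$ enough structural information to bound the Coxeter numbers and lacing numbers of the simple summands of $\widetilde{V}_1$. The tools for this are the explicit decomposition of $V^{[\alpha]}$ as an $X\otimes M(\widehat{\alpha})$-module in Theorem \ref{tVd}, the identification of $\Comm(M(\CH),\widetilde{V})=\oplus_{m=0}^{R-1}U(mN_0\alpha)$ from Proposition \ref{N0a} and Theorem \ref{Lalpha}, and the action of the reverse automorphism $\tilde g$ together with Lemma \ref{KeyLemma3} (a $\tilde g$-stable positive root system) and the order computation in Lemma \ref{order}. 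Using that $\tilde g$ has order $R$ and acts with the prescribed gradings, one argues that roots of $\widetilde{V}_1$ can only have length/Coxeter data compatible with $R$, forcing the relevant invariant down; once it reaches its minimal value we are at a lattice VOA by Proposition \ref{NoOne}, and Proposition \ref{Niemeier} finishes. Finally, combining this with Theorem \ref{Main} (which identifies the terminal step precisely) and Proposition \ref{R=1} (reversibility when $R=1$) shows the whole chain is effective and of finite length, which is the assertion of the corollary; the realizability over $\bQ$ then follows since each orbifold step and $V_{\Lambda}$ itself are defined over $\bQ$.
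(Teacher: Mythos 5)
Your overall architecture (iterate the $W$-element orbifold step; use Proposition \ref{NoOne} to recognize when you have reached a Niemeier lattice VOA; finish with Proposition \ref{Niemeier}) matches the paper. But the one thing the corollary actually needs — a proof that the iteration terminates — is exactly the step you leave open, and the invariant you propose for it is not the right one. You suggest showing that some quantity built from $R$, the dual Coxeter numbers, or the lacing numbers \emph{strictly decreases} at each step, and you concede this is "the main obstacle." There is no reason to expect such a decrease: the Coxeter data of $\widetilde{V}_1$ is a priori unrelated to that of $V_1$ (indeed the terminal Niemeier lattice VOAs can have very large Coxeter numbers), and nothing in Proposition \ref{genus}(2) applies here since that proposition concerns a different automorphism $g_s$ and assumes $\tilde{V}(g)\cong V_\Lambda$, which is what you are trying to prove.

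The paper's termination argument runs in the opposite direction and is already available to you as Corollary \ref{Tilde} (whose hypothesis $N_0\alpha\in L^{\ast}$ is supplied by Proposition \ref{N0a} for $W$-elements): at each step either $R=1$ (so you are at a Niemeier lattice VOA by Proposition \ref{NoOne}), or the Cartan subalgebra strictly \emph{grows}, $\CH^{[\alpha]}\supsetneq\CH$, or $\CH$ is unchanged and the commutant strictly grows, $U^{[\alpha]}=\oplus_{m=0}^{R-1}U(mN_0\alpha)\supsetneq U$. Since $\dim\CH\leq 24$ and the graded pieces of $U=\Comm(M(\CH),V)$ are bounded ($U$ is $C_2$-cofinite and sits inside a holomorphic VOA of fixed character type), such a chain of strict increases must stop, at which point $R=1$ and Proposition \ref{NoOne} applies. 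To repair your proof you should replace the "decreasing Coxeter number" heuristic with this monotonicity; everything else in your outline is consistent with the paper.
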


\pr 
For any holomorphic VOA $V$ of central charge $24$, we know that 
$\dim \CH\leq 24$ and $U$ is $C_2$-cofinite.  
By Corollary \ref{Tilde} and Proposition \ref{NoOne}, if we repeat the orbifold constructions 
by $W$-elements, then we will arrive at a Niemeier lattice VOA. 
Furthermore, we have already shown that we can construct 
the Leech lattice VOA from Niemeier lattice VOAs by a $W$-element.  
\prend

\begin{thm}\label{OverQ} If $V$ is a strongly regular holomorphic VOA of central charge $24$ and 
$V_1\not=0$, then $V$ is realizable over $\bQ$. 
\end{thm}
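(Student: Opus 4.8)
The plan is to reduce the statement about $V$ being realizable over $\bQ$ to the known rationality of the Leech lattice VOA $V_\Lambda$, transporting that rationality backwards along a finite chain of Lorentzian (equivalently, orbifold) constructions by $W$-elements. First I would recall the precise meaning of ``realizable over $\bQ$'': there exists a vertex algebra $V_\bQ$ over $\bQ$ such that $V \cong \bC \otimes_\bQ V_\bQ$ as vertex algebras. The base case is that $V_\Lambda$ is realizable over $\bQ$, indeed over $\bZ$, since lattice VOAs are built functorially from the lattice data over $\bZ$ (the Gram matrix of $\Lambda$ is integral, and the cocycle defining $\hat\Lambda$ can be chosen with values in $\{\pm1\}$); this is classical and I would cite the lattice VOA construction of \cite{FLM}.

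The inductive step is to show that if $\widetilde{V} = V^{[\alpha]}$ is realizable over $\bQ$ for a $W$-element $\alpha$, then so is $V$. By Corollary \ref{ToLeech}, starting from any strongly regular holomorphic $V$ of central charge $24$ with $V_1\neq 0$ and repeatedly applying the $W$-element construction lands on a Niemeier lattice VOA in finitely many steps, and one further $W$-element step (Proposition \ref{Niemeier}) reaches $V_\Lambda$; so a finite downward induction from $V_\Lambda$ suffices, provided each step is reversible over $\bQ$. The key reversibility tool is Proposition \ref{genus}, or more directly the identification $\widetilde{\tilde V(g_s)}(g_s) \cong V$ and the fact that $V^{[\alpha]}$ and its iterated constructions are obtained as $\Comm(M(\rho), \widetilde V \otimes V_{\Pi_{1,1}})/P$ with $\rho$ an isotropic vector with \emph{rational} coordinates (since $\langle\alpha,\alpha\rangle \in \bQ$ and the relevant data $K_0, N_0, N$ are integers by Proposition \ref{dimV1} and the lemmas of \S\ref{sec:4}). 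Concretely: $V_{\Pi_{1,1}}$ is defined over $\bQ$ (even $\bZ$); the element $\rho$, the Heisenberg subalgebra $M(\rho)$, the commutant $J$, the central lattice subalgebra $V_{\bZ N\rho}$, and the ideal $P = \bC[\rho(-j):j>0]J + (\1 - e^{N\rho})_{-1}J$ all have evident $\bQ$-forms once $\widetilde V$ does; hence $V^{[\alpha]}$ as constructed here has a $\bQ$-form, and the reverse construction recovers a $\bQ$-form of $V$. I would make this explicit by noting that the reverse automorphism $\tilde g$ is induced by $\exp(-2\pi i q(0)/N)$ (Remark \ref{Reverseq}), whose eigenspace decomposition is defined over $\bQ(\xi_N)$ but whose \emph{fixed-point subalgebra} and the assembled $\bigoplus_m T^{m,0}$ are defined over $\bQ$ because the twisted sectors $T^{m,0}$ correspond to the $\bQ$-rational pieces $X^{(mK+nN)} \otimes e^{(\cdots)}$ of $J$.

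There is one subtlety I would flag as the main obstacle: the chain of $W$-element constructions in Corollary \ref{ToLeech} goes \emph{from} $V$ \emph{to} $V_\Lambda$, so I want rationality to propagate in the direction opposite to the way the chain was built. This is exactly why Proposition \ref{genus} (and the earlier Proposition \ref{R=1}, Lemma \ref{Dual}, Corollary \ref{Tilde}) matter: they furnish, at each step, an inverse construction realizing $V$ as $\widetilde{\widetilde V}(g_s)$ — itself a Lorentzian construction from $\widetilde V$ by a rational element $s\alpha$ — so that a $\bQ$-form of $\widetilde V$ yields a $\bQ$-form of $V$. I would therefore run the induction as follows: (1) $V_\Lambda$ is realizable over $\bQ$; (2) a Niemeier lattice VOA $V_E$ is realizable over $\bQ$ directly (again by \cite{FLM}); (3) for the remaining $V$ that are not lattice VOAs, apply Corollary \ref{ToLeech} to get a finite sequence $V = V^{(0)} \to V^{(1)} \to \cdots \to V^{(k)} = V_E$ of $W$-element constructions, where each $V^{(i+1)} = (V^{(i)})^{[\alpha_i]}$; (4) by Corollary \ref{Tilde} and the genus/reversibility results, each $V^{(i)}$ is recovered from $V^{(i+1)}$ by a Lorentzian construction over $\bQ$; (5) inductively descending from $i = k$ to $i = 0$, each $V^{(i)}$ acquires a $\bQ$-form, hence so does $V = V^{(0)}$.

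The one gap I expect to need care is verifying that the $\bQ$-form of the commutant $J$ is \emph{the full} rational structure and that quotienting by $P$ and identifying $\widehat\alpha$ with $\alpha$ (the identifications in \S\ref{sec:3.3}) do not secretly require $\xi_N$; I believe they do not, because the identification $Z^{0,0}\cong V^{<g>}$ and $V^{[\alpha]}\cong \bigoplus_{m=0}^{N-1} T^{m,0}$ involves only rational shifts of the Heisenberg grading (the operators $\widehat\alpha(0) = \alpha(0) - m\tfrac{2K}{N}$ are rational), but I would spell this out as the crux of the argument. Granting it, the downward induction closes and $V$ is realizable over $\bQ$, completing the proof of Theorem \ref{OverQ}.
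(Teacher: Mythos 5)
Your overall architecture — base case $V_\Lambda$ (and Niemeier lattice VOAs) realizable over $\bQ$, then a downward induction along the finite chain of $W$-element constructions from Corollary \ref{ToLeech}, using reversibility of the orbifold construction — is exactly the paper's strategy. But the step you yourself flag as the crux is where the proposal does not close, for two reasons.

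First, your reversibility tool is misapplied. You invoke Proposition \ref{genus} (i.e.\ $\widetilde{\tilde V(g_s)}(g_s)\cong V$) and Proposition \ref{R=1}, but these hold only under restrictive hypotheses ($\tilde V(g)\cong V_\Lambda$ together with a coprimality condition, resp.\ $R=1$). In the general inductive step the reverse automorphism is $\tilde g=\hat\tau\exp(\delta(0))$ with $\tau$ typically nontrivial, so $\tilde g$ is \emph{not} inner on $\tilde V$ and the recovery of $V$ from $\tilde V$ is not a Lorentzian construction by an element of $\tilde V_1$; it is a genuine (non-inner) orbifold construction. Second, your rationality argument is circular in the backward direction: the pieces $J$, $P$, $X^{(mK+nN)}\otimes e^{(\cdots)}$ whose ``evident $\bQ$-forms'' you appeal to are all built from $V$, the object whose rationality is the conclusion. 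The induction must run entirely on the $\tilde V$ side: one needs (a) that $\tilde g$ preserves a $\bQ$-form $\tilde V^{\bQ}$ — the paper gets this because $\tau$ acts as a permutation of a positive root system of $\tilde V_1$ (Lemma \ref{KeyLemma3}) or lies in $Co_0$ when $\tilde V=V_\Lambda$, and because $e^{N_0\widehat\alpha}$ lies in the lattice subVOA $\Comm(\Comm(M(\tilde\CH),\tilde V),\tilde V)$, forcing $\widehat\alpha\in\tilde V^{\bQ}$ — and (b) that the $\tilde g^m$-twisted $\tilde V^{\bQ}$-modules are realizable over $\bQ$, which the paper obtains by citing \cite{DLM}. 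Without (a) and (b) stated on the $\tilde V$ side, the descent from $\tilde V^{\bQ}$ to a $\bQ$-form of $V$ does not follow.
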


\pr
Since $V_{\Lambda}$ is realizable over $\bQ$, 
we may assume that $\tilde{V}=V^{[\alpha]}$ is realizable over $\bQ$ by induction.  
Since $\tau$ is given by a permutation of roots in $\tilde{V}_1$ by the above 
proposition or $\tilde{V}=V_{\Lambda}$ and $\tau\in Co_0$, 
it is also an automorphism of $\tilde{V}^{\bQ}$. 
Furthermore, since $e^{N_0\widehat{\alpha}}\in {\rm Comm}({\rm Comm}(M(\tilde{\CH}),\tilde{V}), \tilde{V})=V_{Q}$ for some 
even lattice $Q$, we may assume $\widehat{\alpha}\in \tilde{V}^{\bQ}$.  Therefore, a $\widetilde{g}^m$-twisted $\tilde{V}^{\bQ}$-module is also 
realizable over $\bQ$ by \cite{DLM} for every $m$. 
Since $V$ is constructed from $\tilde{V}$ by orbifold 
construction with an automorphism $\widetilde{g}$, we get that $V$ is also realizable over $\bQ$. 
\prend

Actually, our desired Theorem \ref{Main} asserts that we can obtain $V_{\Lambda}$ directly by a single orbifold construction. In order to prove it, we will consider $V$ which gives $V_{\Lambda}$ in two steps. We first introduce a partial order $>\!\!>$ on the set of holomorphic VOAs of central charge $24$.  
Namely, $V'>\!\!>V$ if (1) $\dim\CH'>\dim \CH$ or (2) $\dim \CH'=\dim \CH$ and 
$\dim\Comm(M(\CH'),V')_m\geq \dim\Comm(M(\CH),V)_m$ for all $m$ and 
$\dim\Comm(M(\CH'),V')_m> \dim\Comm(M(\CH),V)_m$ for some $m$,
where $\CH'$ is a Cartan subalgebra of $V'_1$. 

\medskip

Set $\CF^1=\{ V\mid V=V_{\Lambda}\mbox{ or }\widetilde{V}\cong V_{\Lambda} \}$ and  
$$\CF^2=\CF^1\cup \{ V \mid V'\in \CF^1  
\mbox{ for any VOA $V'>\!\!>V$ } \}$$
In order to prove our main theorem, it is sufficient to show that 
$\CF^2=\CF^1$.

\medskip

From now on, we will treat only $V$ in $\CF^2$.  
We note that $\widetilde{V}$ is in $\CF^1\subseteq \CF^2$.


\medskip

By using the commutativity of multi-constructions, we have:

\begin{cry}
	Let $\alpha$ be a W-element and $\langle\alpha,\alpha\rangle =2K_0/N_0$ and $t\in \bZ$. 
	If we do an orbifold construction $V^{[tN_0\alpha]}$ from $V$ by 
	$g^{tN_0}=\exp(2\pi itN_0\alpha(0) )$ 
	and then we do an orbifold construction $(V^{[tN_0\alpha]})^{[\alpha]}$ from 
	$V^{[tN_0\alpha]}$ by $g=\exp(2\pi i\alpha(0))$, then we have 
	$(V^{[tN_0\alpha]})^{[\alpha]}\cong V^{[\alpha]}$, where we identify $\alpha$ and 
	its corresponding element in $V^{[tN_0\alpha]}$.
\end{cry}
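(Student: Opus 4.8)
The plan is to reduce the statement to the commutativity of iterated constructions, Proposition~\ref{abcommute}, together with the observation that a construction performed with a genuine lattice vector changes nothing.

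First I would set $\beta:=tN_0\alpha$, which again lies in the fixed Cartan subalgebra $\CH$ of $V_1$. From $\langle\alpha,\alpha\rangle=2K_0/N_0$ we get $\langle\alpha,\beta\rangle=tN_0\langle\alpha,\alpha\rangle=2tK_0\in\bZ$ and $\langle\beta,\beta\rangle=2t^2N_0K_0\in2\bZ$, so all the constructions below are defined and Proposition~\ref{abcommute} applies to the pair $(\alpha,\beta)$, giving
\[
(V^{[tN_0\alpha]})^{[\alpha]}=(V^{[\beta]})^{[\widehat{\alpha}]}\;\cong\;(V^{[\alpha]})^{[\widehat{\beta}]},
\]
where $\widehat{\alpha}$ is the image of $\alpha$ in $V^{[\beta]}=V^{[tN_0\alpha]}$ (the identification referred to in the statement) and $\widehat{\beta}$ is the image of $\beta$ in $V^{[\alpha]}$. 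Thus it remains to prove $(V^{[\alpha]})^{[\widehat{\beta}]}\cong V^{[\alpha]}$.

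For this I would check that $\widehat{\beta}$ is already a vector of the relevant lattice of $V^{[\alpha]}$. Since $\alpha$ is a $W$-element, Proposition~\ref{N0a} gives $N_0\alpha\in L^{*}$, so Theorem~\ref{Lalpha} applies and yields $\tilde L=L^{[\alpha]}+\bZ N_0\alpha$, where $V_{\tilde L}\cong\Comm(\Comm(M(\widehat{\CH}),V^{[\alpha]}),V^{[\alpha]})$. In particular $N_0\widehat{\alpha}\in\tilde L$, hence $\widehat{\beta}=t\,(N_0\widehat{\alpha})\in\tilde L$. Writing $V^{[\alpha]}=\bigoplus_{\mu+\tilde L\in\tilde L^{*}/\tilde L}V_{\mu+\tilde L}\otimes U^{[\alpha]}(\mu)$ as a $V_{\tilde L}\otimes U^{[\alpha]}$-module, the operator $\widehat{\beta}(0)$ acts on the $\mu$-summand as the scalar $\langle\widehat{\beta},\mu\rangle$, which lies in $\bZ$ since $\widehat{\beta}\in\tilde L$; hence $\exp(2\pi i\,\widehat{\beta}(0))=\mathrm{id}$ on $V^{[\alpha]}$. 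The orbifold of $V^{[\alpha]}$ by the identity automorphism --- equivalently, by Theorem~\ref{cosetVOA}, the Lorentzian construction $(V^{[\alpha]})^{[\widehat{\beta}]}$, in which the least positive integer $N'$ with $N'\widehat{\beta}\in\tilde L$ equals $1$, so that the outer sum over $0\le m\le N'-1$ in Theorem~\ref{tVd} collapses to $m=0$ --- is $V^{[\alpha]}$ itself. Chaining this with the displayed isomorphism yields $(V^{[tN_0\alpha]})^{[\alpha]}\cong V^{[\alpha]}$.

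The step I expect to be delicate is the second one, namely keeping track of the successive identifications $\widehat{\alpha}\leftrightarrow\alpha$, $\widehat{\CH}\leftrightarrow\CH$ and of the lattices $L\supseteq\mathfrak{Q}$, $L^{[\alpha]}$, $\tilde L$ in passing from $V$ to $V^{[\alpha]}$ (and, inside the proof of Proposition~\ref{abcommute}, in the model $V\otimes V_{\Pi_{1,1}}\otimes V_{\Pi_{1,1}}$): one must be sure that $\beta$ is carried to the genuine lattice element $tN_0\widehat{\alpha}\in\tilde L$ inside $V^{[\alpha]}$, and not merely to an element of $\tilde L^{*}$. Once that is pinned down, the only remaining input is that an orbifold by the trivial automorphism returns the original vertex operator algebra, and, as elsewhere in the paper, the isomorphism produced is first one of $(V^{[\alpha]})^{<\tilde{g}>}$-modules and then of vertex operator algebras by holomorphicity and uniqueness of the orbifold VOA structure.
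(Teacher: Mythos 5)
Your proposal is correct and follows essentially the same route as the paper: apply Proposition \ref{abcommute} to swap the order of the two constructions, then observe that $tN_0\alpha$ lands in $\tilde{L}$ (via Proposition \ref{N0a} and Theorem \ref{Lalpha}), so the construction by $tN_0\alpha$ on $V^{[\alpha]}$ is trivial. Your extra verifications ($\langle\alpha,\beta\rangle\in\bZ$, the collapse of the sum in Theorem \ref{tVd} when $N'=1$) are details the paper leaves implicit, but the argument is the same.
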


\pr
Set $\Comm(\Comm(M(\CH),V^{[\alpha]}),V^{[\alpha]})=V_{\tilde{L}}.$ 
We have shown that $N_0\alpha\in \tilde{L}$. 
By Proposition \ref{abcommute}, we have 
$(V^{[tN_0\alpha]})^{[\alpha]}\cong (V^{[\alpha]})^{[tN_0\alpha]}.$ 
On the other hand, since we have $N_0\alpha\in \tilde{L}$,   
$\exp(2\pi itN_0\alpha)$ is trivial on $V^{[\alpha]}$ 
and so  $(V^{[\alpha]})^{[tN_0\alpha]}\cong V^{[\alpha]}$.
\prend

Next we study the structure of $V^{[\alpha, \beta]}$ and a commutant 
$U^{[\alpha, \beta]}:=\Comm(M(\CH),V^{[\alpha,\beta]})$.  
For simplicity, we only consider the following cases. 

Let $\alpha',\beta'\in \CH$ such that $\bZ\alpha'+\bZ\beta'+L\subseteq L^{\ast}$ is an even sublattice.  Our main example for $\bZ\alpha'+\bZ\beta'+L$ will be $\bZ N_0\alpha+\bZ \tilde{N}_0\beta+L\subseteq L^{\ast}$ for W-elements $\alpha$ of $V_1$ and $\beta$ of $\tilde{V}_1$ (cf. Section \ref{sec:4}). 

Let $R'$ be the least natural integer such that $R'\alpha'\in L$. Then $R'$ is the order of $\exp(2\pi i\alpha'(0))$.  
We first construct $V^{[\alpha']}$. 
Set $U^{[\alpha']}:=\Comm(M(\CH),V^{[\alpha']})$. Since $\langle \alpha',\alpha'\rangle\in 2\bZ$ and $\alpha'\in L^*$,   
$U^{[\alpha']}=\oplus_{m=0}^{R'-1} U(m \alpha')$  by Theorem \ref{Lalpha}. Moreover, $\Comm(U^{[\alpha']},V^{[\alpha']})=
V_{\tilde{L}}$, where $\tilde{L}=L+\bZ\alpha'$.   

Viewing $V^{[\alpha']}$ as a $U^{[\alpha']}\otimes V_{\tilde{L}}$-module, we have 
\[
\begin{split}
	V^{[\alpha']}=&\bigoplus_{\delta\in \tilde{L}^{\ast}/\tilde{L}}U^{[\alpha']}(\delta)
	\otimes V_{\tilde{L}+\delta}\\
	=&\bigoplus_{\delta\in (L+\bZ\alpha')^{\ast}/(L+\bZ\alpha')}\{
	\oplus_{m=0}^{R'-1} U(\delta+m\alpha')\}\otimes \{\oplus_{n=0}^{R'-1} V_{\delta+n\alpha'+L}\}.
\end{split} 
\]
Since $\beta'\in \bQ L=\bQ(L+\bZ\alpha')$, 
we can construct $V^{[\alpha',\beta']}$ from $V^{[\alpha']}$ by an orbifold construction with $\exp(2\pi i\beta'(0))$.  
In this case, 
$$\begin{array}{rl}
	U^{[\alpha', \beta']}:=&\Comm(M(\CH),V^{[\alpha',\beta']})\cr
	=&\oplus_{n=0}^{\tilde{R}'-1}U^{[\alpha']}(n\beta') \cr
	=&\oplus_{n=0}^{\tilde{R}'-1}\oplus_{m=0}^{R-1}U(n\beta'+m\alpha') 
\end{array}$$
where $\tilde{R}'$ is the least natural integer satisfying $\tilde{R}'\beta\in L+\bZ\alpha'$ and   
$$\Comm(U^{[\alpha', \beta']},V^{[\alpha',\beta']})=V_{L+\bZ\alpha'+\bZ\beta'}.  $$
Set $L(\alpha', \beta')=L+\bZ\alpha'+\bZ\beta'$. 
Viewing $V^{[\alpha',\beta']}$ 
as $U^{[\alpha', \beta']}\otimes V_{L(\alpha', \beta')}$-modules, we decompose 
$$ V^{[\alpha',\beta']}=\bigoplus_{\gamma\in L(\alpha', \beta')^{\ast}/L(\alpha', \beta')} U^{[\alpha', \beta']}(\gamma)\otimes V_{L(\alpha', \beta')+\gamma}$$
and so 
$$
V^{[\alpha',\beta']}=\bigoplus_{\gamma\in L(\alpha', \beta')^{\ast}/L(\alpha', \beta')} 
\{\oplus_{m=0}^{R'}\oplus_{n=0}^{\tilde{R}'}U(\gamma+m\alpha'+n\beta')\}
\otimes\{\oplus_{m=0}^{R'}\oplus_{n=0}^{\tilde{R}'} V_{\gamma+m\alpha'+n\beta'+L}\}.$$
From the above explanation, it is easy to see 
$V^{[\alpha',\beta']}=V^{[\alpha',\beta'+p\alpha']}$ for any $p\in \bZ$ and 
$V^{[\alpha',\beta']}=V^{[\beta',\alpha']}$. Namely, the VOA $V^{[\alpha',\beta']}$ depends only on the lattice $\bZ\alpha'+\bZ\beta'+L$ and we have the following useful result:

\begin{prop}\label{abLindep}
	Let $\alpha, \beta, \alpha', \beta'\in L^*$ such that $\bZ\alpha+\bZ\beta+L$ and $\bZ\alpha'+\bZ\beta'+L$ are even lattices. Suppose $\bZ\alpha+\bZ\beta+L=\bZ\alpha'+\bZ\beta'+L$. 
	Then $V^{[\alpha, \beta]} \cong  V^{[\alpha', \beta']}$.   
\end{prop}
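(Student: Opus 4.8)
The plan is to reduce the general statement to the chain of identities already established in the preceding discussion, together with the two commutativity results Proposition \ref{abcommute} (and its reformulation via $V^{[\alpha,\beta]}$) and the one-variable lattice-dependence facts. The key observation is that the construction $V^{[\alpha',\beta']}$ only sees the lattice $\bZ\alpha'+\bZ\beta'+L$: indeed, from the explicit decomposition
\[
V^{[\alpha',\beta']}=\bigoplus_{\gamma\in L(\alpha',\beta')^{\ast}/L(\alpha',\beta')}
\{\oplus_{m=0}^{R'}\oplus_{n=0}^{\tilde R'}U(\gamma+m\alpha'+n\beta')\}\otimes\{\oplus_{m=0}^{R'}\oplus_{n=0}^{\tilde R'}V_{\gamma+m\alpha'+n\beta'+L}\}
\]
derived just above, the decomposition is visibly a function of the lattice $L(\alpha',\beta')=\bZ\alpha'+\bZ\beta'+L$ alone, not of the chosen generators $\alpha',\beta'$. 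So the remaining task is to upgrade this ``visibly the same'' observation into a genuine VOA isomorphism.

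**First I would** record the two elementary reductions already noted in the text: $V^{[\alpha',\beta']}\cong V^{[\alpha',\beta'+p\alpha']}$ for all $p\in\bZ$, and $V^{[\alpha',\beta']}\cong V^{[\beta',\alpha']}$; the first follows because adding a multiple of $\alpha'$ to $\beta'$ changes the isotropic vector $\rho_{\widehat{\beta'}}$ only by a vector already ``killed'' in the commutant/ideal construction (cf.\ Lemma \ref{Dual} and the paragraph preceding Proposition \ref{abcommute}), and the second is Proposition \ref{abcommute}. These two moves generate all changes of basis of the rank-$\leq 2$ free part modulo $L$: any two pairs $(\alpha',\beta')$ and $(\alpha,\beta)$ in $L^{\ast}$ with $\bZ\alpha'+\bZ\beta'+L=\bZ\alpha+\bZ\beta+L$ are related by an element of $GL_2(\bZ)$ acting on the classes of the generators modulo $L$, and $GL_2(\bZ)$ is generated by the elementary transvection $(\alpha',\beta')\mapsto(\alpha',\beta'+\alpha')$ and the swap $(\alpha',\beta')\mapsto(\beta',\alpha')$, together with sign changes $(\alpha',\beta')\mapsto(-\alpha',\beta')$.

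**Then I would** handle the sign change $\alpha'\mapsto-\alpha'$: this is built into Lemma \ref{Dual}-type reasoning, since replacing $\alpha'$ by $-\alpha'$ replaces $\rho_{\alpha'}$ by an isotropic vector defining the same commutant $J$ and the same ideal $P$ (one may also invoke the automorphism $e^{a}\mapsto e^{-a}$ on the relevant rank-one lattice VA, which fixes everything in sight). Combining: starting from $(\alpha',\beta')$, apply a finite sequence of transvections, swaps, and sign changes — each inducing a VOA isomorphism of the corresponding $V^{[\cdot,\cdot]}$ by the reductions above — to reach $(\alpha,\beta)$ (possibly after first noting $\bZ\alpha'+\bZ\beta'+L=\bZ\alpha+\bZ\beta+L$ forces the $GL_2(\bZ)$ relation on the generator classes mod $L$). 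Composing the isomorphisms yields $V^{[\alpha,\beta]}\cong V^{[\alpha',\beta']}$.

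**The main obstacle** I anticipate is bookkeeping rather than conceptual: one must be careful that the hypothesis $\langle\alpha,\beta\rangle\in\bZ$ implicit in the notation $V^{[\alpha,\beta]}$ is preserved under the $GL_2(\bZ)$ moves — it is, because $\bZ\alpha+\bZ\beta+L$ being an even lattice forces all pairwise inner products of generators to be integral, and this property is stable under integral changes of basis — and that each elementary move genuinely gives a VOA (not merely graded-vector-space) isomorphism, which is exactly what Proposition \ref{abcommute} and the $\rho$-shift argument supply. A secondary subtlety is the degenerate cases where $\alpha',\beta'$ are $\bZ$-dependent modulo $L$ (so the rank drops), but there the claim is just the one-variable statement $V^{[\alpha']}$ depends only on $\bZ\alpha'+L$, handled by the same transvection/sign arguments in one variable. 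With these checks in place the proof is complete.
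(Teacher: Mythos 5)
Your reduction to elementary moves contains a genuine gap: the claim that any two pairs generating the same lattice $B=\bZ\alpha+\bZ\beta+L$ over $L$ are related by an element of $GL_2(\bZ)$ (acting on the classes mod $L$) is false when $B/L$ has rank $2$. The transvection, swap and sign change generate exactly the image of $GL_2(\bZ)$ in the relevant reduction, and since every element of $GL_2(\bZ)$ has determinant $\pm 1$, this image has index $|(\bZ/d\bZ)^{\times}|/2$ in general, where $d$ is the first elementary divisor of $B/L$. Concretely, if $B/L\cong (\bZ/5\bZ)^2$ with $\bar\alpha,\bar\beta$ a basis, then $(\alpha,\beta)$ and $(2\alpha,\beta)$ generate the same lattice but no sequence of transvections, swaps, sign changes and translations by $L$ carries one pair to the other: the required matrix $\mathrm{diag}(2,1)$ has determinant $2\not\equiv\pm 1\pmod 5$. (This is not an idle worry for the paper: $\bZ_5\times\bZ_5$ occurs in the list of relevant subgroups of $Co_0$ in the appendix.) Your Euclidean-algorithm intuition works for cyclic $B/L$ but breaks down in rank $2$. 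To salvage this route you would need the additional move $(\alpha,\beta)\mapsto(k\alpha,\beta)$ for $k$ coprime to the order of $\bar\alpha$, which is not a lattice-automorphism argument but rather the statement that the orbifold depends only on the cyclic group $\langle\exp(2\pi i\alpha(0))\rangle$ and not on its generator (cf.\ Remark \ref{rem:3.3}); you never establish this in the two-variable setting.

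Ironically, you state the correct argument in your opening paragraph and then abandon it. The paper's proof is precisely the observation you call ``visibly the same'': in the displayed decomposition of $V^{[\alpha',\beta']}$ as a $U^{[\alpha',\beta']}\otimes V_{L(\alpha',\beta')}$-module, the index set $\{\gamma+m\alpha'+n\beta'\}$ runs over the cosets of $L$ in $\gamma+L(\alpha',\beta')$, so every ingredient — the lattice $L(\alpha',\beta')$, the commutant $\bigoplus_{\mu\in B/L}U(\mu)$, and the pairing between the two — is a function of the lattice $B$ alone and not of the chosen generators; the elementary identities $V^{[\alpha',\beta']}=V^{[\alpha',\beta'+p\alpha']}=V^{[\beta',\alpha']}$ are illustrations of this, not the engine of the proof. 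The ``upgrade to a VOA isomorphism'' is then immediate because the VOA structure is the one inherited from the (generator-independent) totally isotropic subspace determined by $B$ inside $V\otimes V_{\Pi_{1,1}}\otimes V_{\Pi_{1,1}}$, not something to be reassembled move by move.
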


\subsection{Proof of Proposition 1.2}
In this subsection, we will prove Proposition 1.2. Recall from our hypothesis that $\alpha$ is a $W$-element of $V_1$ with   $\langle \alpha, \alpha \rangle =2K_0/N_0$ and $(K_0, N_0)=1$. We also assume $V^{[\alpha]}\cong V_\Lambda$.  
Let $g=\exp(2\pi i \alpha(0))$ and let $N$ be the order of $g$ in $V$. Set $R=N/N_0$ and assume $(K_0,R)=1$. (The proof for the case $(N_0,R)=1$ will be similar and the notation is simpler.) 
Then there are $a,b\in \bZ$ such that $aK_0+bR=1$. To simplify the notation, set $t=N_0/K_0$.
As we explained in Lemma \ref{Dual}, $V^{[t\alpha]}\cong V^{[\alpha]}\cong V_{\Lambda}$. 
Since $N_0\alpha\in \tilde{L}$,  $\exp(2\pi iN_0\alpha(0))=1$ on $V^{[t\alpha]}$. 
Therefore, 
$V_{\Lambda}\cong V^{[t\alpha,N_0\alpha]}\cong V^{[N_0\alpha,t\alpha]}
=V^{[N_0\alpha, aK_0t\alpha+bRt\alpha]}=
V^{[N_0\alpha,aN_0\alpha+bRt\alpha]}=V^{[N_0\alpha,btR\alpha]}
=V^{[btR\alpha,N_0\alpha]}$ since $\langle t\alpha, N_0\alpha\rangle\in \bZ$. 

Set $s= tbR$ and $g_s=\exp(2\pi itbR\alpha(0))$. Since 
$(bN_0R,K_0)=1$, $mtbR\alpha=m\frac{bN_0R}{K_0}\alpha\in L$ if and only if $K_0|m$, that is, $|g_s|=K_0$. 
Moreover, $\langle bN\alpha/K_0, bN\alpha/K_0\rangle=\frac{2b^2R^2N_0}{K_0}$ 
with $(K_0,b^2R^2N_0)=1$.   
Therefore, 
$\Comm(M(\CH),V^{[bN\alpha/K_0]})=U(0)$ and so 
$\rank(V^{[bN_0\alpha/K_0]}_1)=\rank(V_1)$. 
Furthermore, since $|g^{N_0}|=R$ and 
$(V^{[bN\alpha/K_0]}_1)^{<g^{N_0}>}=\CH$, 
the Coxeter number of $V^{[bN\alpha/K_0]}_1$ is 
less than or equal to $R$ (cf. \cite[Execrise 8.11]{kac}). 

For Statement (3) in Proposition 1.2, we note that 
$\langle btR\alpha, btR\alpha\rangle=\frac{2b^2R^2N_0}{K_0}$ 
with $(K_0,b^2R^2N_0)=1$ and 
$|\exp(2\pi ibN\alpha(0)/K_0)|=K_0$. Therefore, $|g_s|/K_0=1$ and   
we have $\widetilde{\tilde{V}(g_s)}(g_s)=(V^{[btR\alpha]})^{[btR\alpha]}=V$ by  Proposition \ref{R=1}.    
 
This completes the proof of Proposition 1.2.

\begin{cry}\label{No3C}
Let $V$ be a holomorphic VOA. Let $\CH$ be a Cartan subalgebra of $V_1$ and $\Comm(\Comm(M(\CH),V),V)\cong V_L$. 
Let $\alpha\in L^{\ast}$ and set $g=\exp(2\pi i\alpha(0))$. 
Assume $N_0\alpha\in L^{\ast}$ and $\tilde{V}(g)=V_{\Lambda}$.  
Let $\tilde{g}$ be a reverse automorphism of $\tilde{V}(g)$ for orbifold construction 
by $g$ and set $\tau=\tilde{g}_{|\bC \Lambda}\in O(\Lambda)=Co_0$. 
Then $\tau\not=-2A$, $3C$, nor $5C$. 
\end{cry}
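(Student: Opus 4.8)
The plan is to pass to the reverse automorphism $\tilde{g}$ and read off the constraints from the conjugacy class of $\tau$ in $Co_0$. Since $\tilde{V}(g)\cong V_{\Lambda}$ we may write $\tilde{g}=\hat{\tau}\exp(\delta(0))$ with $\tau\in O(\Lambda)=Co_0$ and $\delta\in\bC\Lambda$. On $(V_{\Lambda})_1=\bC\Lambda$, which is abelian because $\Lambda$ has no roots, the inner factor $\exp(\delta(0))$ acts trivially, so $\tau=\tilde{g}|_{\bC\Lambda}$ as in the statement. As $\tilde{g}$ acts on the twisted piece $T^{m,0}$ of $\tilde{V}(g)$ by the scalar $\xi^{m}$ with $\xi=e^{2\pi i/N}$, its fixed subspace is $T^{0,0}=V^{<g>}$; hence $(\bC\Lambda)^{\tau}=(V_{\Lambda})_1^{<\tilde{g}>}=V_1^{<g>}\supseteq\CH$, which gives $\rank V_1=\dim\CH\le\rank\Lambda^{\tau}$. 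Furthermore, by Theorem \ref{Lalpha} (applicable since $N_0\alpha\in L^{\ast}$) one has $\Comm(M(\CH),\tilde{V}(g))=\bigoplus_{m=0}^{R-1}U(mN_0\alpha)$, and $\tilde{g}$ acts on the $m$-th summand by $\xi^{mN_0}$ (Lemma \ref{Lalpha1}); since $\xi^{N_0}$ is a primitive $R$-th root of unity and $\bC\Lambda=\CH\oplus\bigoplus_{m}U(mN_0\alpha)_1$, every eigenvalue of $\tau$ on $\bC\Lambda$ is an $R$-th root of unity, so $|\tau|\mid R$. Finally $V_1\neq 0$: otherwise $\CH=0$, $\alpha=0$, $g=\mathrm{id}$ and $V\cong\tilde{V}(g)\cong V_{\Lambda}$, contradicting $(V_{\Lambda})_1\neq 0$; hence $\rank V_1\ge 4$ by Proposition \ref{prop:2.2}.

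With this in hand the classes $3C$ and $5C$ are immediate: both act fixed-point-freely on $\Lambda$ (their characteristic polynomials on $\bC\Lambda$ being $(x^{2}+x+1)^{12}$ and $(x^{4}+x^{3}+x^{2}+x+1)^{6}$), so $\rank\Lambda^{\tau}=0<4\le\rank V_1$, which is absurd.

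The case $\tau=-2A$ is the genuine obstacle. It has $|\tau|=2$ and $\rank\Lambda^{\tau}=8$, so from the above $2\mid R$, $\rank V_1\le 8$, and $V\not\cong V_{\Lambda}$ (else $\rank V_1=24$); thus $V_1$ is semisimple with $4\le\rank V_1\le 8$ and $\dim V_1\ge 24$. My plan here is: first, use Theorem \ref{Lalpha} to realize $V_{\tilde{L}}=\Comm\big(\Comm(M(\CH),\tilde{V}(g)),\tilde{V}(g)\big)$ with $\tilde{L}=L^{[\alpha]}+\bZ N_0\alpha$, a root-free primitive sublattice of $\Lambda$ of rank $\le 8$ (root-free because $\Lambda$ has minimum norm $4$), whose determinant is sharply constrained — equal to $\det\Lambda^{-2A}$, a power of $2$, when $\rank V_1=8$; second, since $L$ contains the rescaled coroot lattice $\bigoplus_j\sqrt{k_j}Q_j^{\ell}$ and $\det L^{[\alpha]}=\det L$, translate this into divisibility constraints on the levels $k_j$; third, combine these with Proposition \ref{prop:2.3} (the relation $h_j^{\vee}/k_j=(\dim V_1-24)/24$) and $\rank V_1\le 8$ to obtain a finite list of candidate semisimple $V_1$; and finally exclude each candidate by a lattice-embedding obstruction (a root-free rank-$\le 8$ sublattice of $\Lambda$ with $2$-power determinant cannot contain the relevant $\bigoplus_j\sqrt{k_j}Q_j^{\ell}$, as one sees by comparing the odd parts of the determinants). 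When $R$ is a prime power this can be shortcut via Proposition \ref{genus}, which then yields $\tilde{V}(g_s)$ with $\rank\tilde{V}(g_s)_1=\rank V_1\le 8$ and all Coxeter numbers $\le R$, in conflict with Proposition \ref{prop:2.3}. The delicate steps are controlling $R$ and pinning down the geometry (root-freeness, discriminant) of the fixed lattice $\Lambda^{-2A}$; everything else is bookkeeping already assembled in Sections \ref{sec:3} and \ref{sec:4}.
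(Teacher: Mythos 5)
There is a genuine gap, and it starts with a factual error. You dismiss $3C$ and $5C$ "immediately" by asserting that they act fixed-point-freely on $\Lambda$; this is false. Their frame shapes are $3^{9}/1^{3}$ and $5^{5}/1^{1}$ (Theorem \ref{thm:A1}), so $\dim\bC\Lambda^{3C}=9-3=6$ and $\dim\bC\Lambda^{5C}=5-1=4$, both $\geq 4$. (A fixed-point-free element of order $3$ would have frame shape $3^{12}/1^{12}$ and is a different class.) The whole reason the corollary singles out $-2A$, $3C$, $5C$ is precisely that these are the classes with small but \emph{nonzero} fixed subspace that survive the naive rank bound $\rank\Lambda^{\tau}=\rank V_1\geq 4$; your argument excludes the wrong classes and leaves $3C$ and $5C$ untouched. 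The $-2A$ case you only sketch: the "finite list of candidate $V_1$" and the "lattice-embedding obstruction" are never produced, and the shortcut you invoke needs $(R,N_0)=1$ or $(R,K_0)=1$, which does not follow from $R$ being a prime power alone; moreover you only establish $|\tau|\mid R$, not $R=|\tau|$.

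The paper's proof treats all three classes uniformly and avoids all of this. From $\tilde{V}(g)=V_{\Lambda}$ one gets $\bigoplus_{m}U(mN_0\alpha)_1+\CH=\bC\Lambda$, hence $\rank(V^{[N_0\alpha]}_1)=24$ and $V^{[N_0\alpha]}$ is a Niemeier lattice VOA; Lemma \ref{order} then gives the \emph{equality} $R=|\tau|$, which is $2$, $3$, or $5$ — a prime — so $(R,N_0)=1$ or $(R,K_0)=1$ automatically and Proposition \ref{genus} applies. It yields a holomorphic $V'$ with $\rank V'_1=\rank V_1=\dim\bC\Lambda^{\tau}$ and all Coxeter numbers $\leq R$, whence $\dim V'_1\leq (R+1)\rank V'_1 = 3\cdot 8 = 4\cdot 6 = 6\cdot 4 = 24$ in the three cases, contradicting $\dim V'_1>24$. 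If you want to salvage your write-up, you must replace the fixed-point-free claim by the correct fixed dimensions $8,6,4$, upgrade $|\tau|\mid R$ to $R=|\tau|$ via Lemma \ref{order}, and then run the Proposition \ref{genus} argument for all three classes rather than only as a "shortcut" for $-2A$.
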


\pr 
Suppose false. In particular, $\tau\not=1$ and so $\rank(V_1)\not=24$.
Since $\tilde{V}(g)=V^{[\alpha]}=V_{\Lambda}$, we have 
$\oplus_{m=0}^{R-1} U(mN_0\alpha)_1+\CH=\bC\Lambda$ and so 
$\rank(V^{[N_0\alpha]}_1)=24$, that is,  
$V^{[N_0\alpha]}=V_E$ for some Niemeier lattice $E$. 
Since $N_0\alpha\in L^{\ast}$ and $\langle N_0\alpha,N_0\alpha\rangle=2K_0N_0\in 2\bZ$, 
we have $R=|\tau|$ by Lemma \ref{order}.  
Since $(N_0,K_0)=1$ and $R$ is a prime number by the assumption, 
we have $(R,N_0)=1$ or $(R,K_0)=1$ and so $V$ satisfies the conditions in Proposition 1.2. 
Therefore, there is a holomorphic VOA $V'$ satisfying $\rank V'_1=\rank V_1$ and 
Coxeter number of components of $V'_1$ is less than or equal to $R$. 
Hence $\dim V'_1\leq (R+1)\rank(V'_1)$. 
Since $\dim\bC\Lambda^{<-2A>}=8, \dim\bC\Lambda^{<3C>}=6, \dim\bC\Lambda^{<5C>}=4$, 
we have $\dim V'_1\leq 3\times 8, \dim V'_1\leq 4\times 6, \dim V'_1\leq 6\times 4$, 
respectively, which contradicts $\dim V'_1>24$. 
\prend

\section{Roots in Twisted Modules}

\subsection{$g$-twisted module}

\begin{lmm}\label{inn-1}
Let $\CG$ be a simple Lie algebra, $\Phi$ the set of 
all roots and $\rho$ a Weyl vector. Then  
$\langle \rho/h^{\vee}, \beta\rangle > -1$ for all $\beta\in \Phi$, where $h^{\vee}$ is the dual Coxeter number. 
\end{lmm}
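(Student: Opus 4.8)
The plan is to reduce the claim to the well-known description of the highest root. Since the root system $\Phi$ of $\CG$ satisfies $\Phi = -\Phi$, the desired inequality $\langle\rho/h^{\vee},\beta\rangle > -1$ for all $\beta\in\Phi$ is equivalent to $\langle\rho,\beta\rangle < h^{\vee}$ for all $\beta\in\Phi$, and this will follow from the sharper bound $\langle\rho,\beta\rangle \le h^{\vee}-1$ for every $\beta\in\Phi$. Throughout I normalize $\langle\cdot,\cdot\rangle$ so that long roots have squared length $2$, as in Theorem \ref{thm:2.4}; then the highest root $\theta$ is long, $\theta^{\vee}=\theta$, and in particular $\langle\rho,\theta\rangle = \langle\rho,\theta^{\vee}\rangle$.

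First I would recall the identity $\langle\rho,\theta^{\vee}\rangle = h^{\vee}-1$: writing $\theta^{\vee} = \sum_i a_i^{\vee}\alpha_i^{\vee}$ as a combination of simple coroots, one has $h^{\vee} = 1+\sum_i a_i^{\vee}$ by definition of the dual Coxeter number, while $\langle\rho,\alpha_i^{\vee}\rangle = 1$ for each simple root $\alpha_i$ by definition of the Weyl vector $\rho$; summing gives $\langle\rho,\theta^{\vee}\rangle = \sum_i a_i^{\vee} = h^{\vee}-1$, hence $\langle\rho,\theta\rangle = h^{\vee}-1$.

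Next I would show $\langle\rho,\beta\rangle \le \langle\rho,\theta\rangle$ for every root $\beta$. The highest root $\theta$ is the unique maximal element of $\Phi$ in the standard partial order on the root lattice (where $\mu\le\nu$ means $\nu-\mu$ is a non-negative integral sum of simple roots), so $\theta-\beta = \sum_i c_i\alpha_i$ with all $c_i\in\bZ_{\ge 0}$; since $\langle\rho,\alpha_i\rangle = \tfrac12\langle\alpha_i,\alpha_i\rangle > 0$ for all $i$, we obtain $\langle\rho,\theta\rangle - \langle\rho,\beta\rangle = \sum_i c_i\langle\rho,\alpha_i\rangle \ge 0$. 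Combining the two steps, $\langle\rho,\beta\rangle \le h^{\vee}-1$ for all $\beta\in\Phi$; applying this to $-\beta$ and dividing by $h^{\vee}$ gives $\langle\rho/h^{\vee},\beta\rangle \ge -(h^{\vee}-1)/h^{\vee} = -1 + 1/h^{\vee} > -1$, as desired.

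There is really no serious obstacle here: the argument is entirely elementary once one invokes the standard facts that $\langle\rho,\theta^{\vee}\rangle = h^{\vee}-1$ and that $\theta$ dominates every root in the root poset. The only point demanding a little care is the normalization of the form $\langle\cdot,\cdot\rangle$, since the specific constant $-1$ in the statement is tied to the convention that long roots have squared length $2$ (equivalently, that one works at level one).
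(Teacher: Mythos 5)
Your proof is correct and follows essentially the same route as the paper: both reduce to the identity $\langle\rho,\theta\rangle=h^{\vee}-1$ for the highest root $\theta$, bound $\langle\rho,\beta\rangle$ for all roots by that of $\theta$ using the root order, and then use $\Phi=-\Phi$ to get the lower bound $-1+1/h^{\vee}$. Your explicit attention to the normalization of the form is a reasonable extra precaution but does not change the argument.
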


\pr 
Let $\theta=\sum a_i\beta_i$ be a highest root. 
Then $\langle \rho, \theta\rangle=h^{\vee}-1$. 
Let $\beta=\sum b_i\beta_i\in \Phi^+$. Then $0\leq b_i\leq a_i$ and some $b_i\not=0$. 
Therefore, $0 \lneq \langle \rho/h^{\vee},\beta\rangle<
\frac{h^{\vee}-1}{h^{\vee}}<1$. 
Therefore, for any root $\beta\in \Phi$, we have 
$ -1 < \langle \rho/h^{\vee},\beta\rangle<1$ as we desired. 
\prend

We will next prove that $g$-twisted module $T^1$ has no elements with 
weight less than or equal to one except for elements in $\widehat{\CH}^{\perp}$. 

\begin{thm}\label{twist} 
For $bN\not=K$,  $(X(\frac{(K+bN)\alpha}{2K})\otimes e^{\frac{(-K+bN)\alpha}{2K}})_1=0$.  In particular, $T^1_1\subseteq \CH^{\perp}$.
\end{thm}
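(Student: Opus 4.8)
The plan is to recast Theorem \ref{twist} as an internal statement about $V$ and then settle it by a weight estimate. By \eqref{twistmodule} the underlying space of the $g$-twisted module $T^1$ is $V=\bigoplus_{s\in\bZ}M(\alpha)\otimes e^{\frac{s\alpha}{2K}}\otimes X^{(s)}$, and by \eqref{LTm} the degree operator is $L^{T^1}(0)=L(0)-\frac{s-K}{N}$ on the $s$-th summand. Putting $s=K+bN$, the space $X(\tfrac{(K+bN)\alpha}{2K})\otimes e^{\frac{(-K+bN)\alpha}{2K}}$ is exactly this summand, and its $L^{T^1}(0)$-weight-one part, viewed inside $V$, is the $L(0)$-weight-$(1+b)$ part of $M(\alpha)\otimes e^{\frac{(K+bN)\alpha}{2K}}\otimes X^{(K+bN)}$. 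Since the $\frac{K+bN}{N}$-eigenspace of $\alpha(0)$ in $V$ is precisely $M(\alpha)\otimes e^{\frac{(K+bN)\alpha}{2K}}\otimes X^{(K+bN)}$, the claim becomes: for $bN\neq K$ (equivalently $\frac{K+bN}{N}\neq\langle\alpha,\alpha\rangle$) there is no nonzero $v\in V_{1+b}$ with $\alpha(0)v=\frac{K+bN}{N}v$.

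I would then estimate weights. Writing such a $v$ as $p(\alpha(-1),\alpha(-2),\dots)\,(w\otimes e^{\frac{(K+bN)\alpha}{2K}})$ with $w\in X^{(K+bN\bmod 2KN)}$, and using that $e^{\frac{(K+bN)\alpha}{2K}}$ has weight $\frac{(K+bN)^2}{4KN}$, one computes $\deg p+\wt_X(w)=1-\frac{(bN-K)^2}{4KN}$; when $bN\neq K$ this is $<1$, forcing $\deg p=0$ and $\wt_X(w)=1-\frac{(bN-K)^2}{4KN}\le 1$. Now comes the main input. For every representative $\mu$ of the coset $\frac{(K+bN)\alpha}{2K}+\bZ N\alpha$ the space $M(\alpha)\otimes e^{\mu}\otimes X^{(K+bN)}$ is a subspace of $V$, hence (as $V$ is of CFT-type) non-negatively graded with least weight a non-negative integer; choosing $\mu$ of minimal norm, with label $s_0$ ($|s_0|\le KN$) and $X^{(K+bN)}$ of lowest weight $h$, the integer $\frac{s_0^2}{4KN}+h$ equals $0$ only for the trivial coset and is $\ge 2$ unless that minimal summand already meets $V_1$. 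Since $\alpha=\sum_j\rho_j/h_j^\vee$ is a $W$-element, Lemma \ref{inn-1} bounds the $\alpha(0)$-eigenvalues on $V_1$ inside $(-1,1)$, so the minimal summand meets $V_1$ only when $|s_0|<N$. Feeding the resulting lower bounds for $h$ into $h\le 1-\frac{(bN-K)^2}{4KN}$ rules out the existence of $v$ for all $b$ outside a bounded range around $K/N$.

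For the finitely many remaining ``resonant'' values of $b$ I would invoke the arithmetic of $W$-elements from Proposition \ref{dimV1}: $\langle\alpha,\alpha\rangle=\frac{2K_0}{N_0}$ with $(K_0,N_0)=1$, $K_0>N_0$, $(K_0-N_0)\mid 24$ and $N_0\mid h_j^\vee$, together with the sharper observation that any $\alpha(0)$-eigenvalue occurring in $V_n$ has absolute value at most $\sqrt{2n\langle\alpha,\alpha\rangle}$ (again read off from the lattice part of \eqref{eq:3.4}); this should eliminate each resonant $b$ by inspection. Finally, for the ``in particular'': if $T^1_1\neq 0$ then by the above it lies in the unique summand with $bN=K$, which exists only when $N_0=1$ and $b=K_0$; there $e^{\frac{(-K+bN)\alpha}{2K}}=e^0=\1$, and by \eqref{aTm} the grade-preserving operator $\widehat\alpha(0)$, and likewise $\widehat h(0)$ for every $h\in\CH$, acts as $0$ on that summand, so $T^1_1\subseteq\CH^\perp$.

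The step I expect to be the main obstacle is the resonant range: it amounts to controlling the conformal weights $h$ of the commutant modules $X^{(s)}$, since the crude CFT-type estimate only just fails for $b$ near $K/N$, and one needs either non-negativity of these weights or the finer numerical constraints on $W$-elements to close the gap.
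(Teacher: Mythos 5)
Your reduction of the statement to the assertion that $V_{1+b}$ contains no $\alpha(0)$-eigenvector of eigenvalue $\frac{K+bN}{N}$ for $bN\neq K$, and the computation $\deg p+\wt_X(w)=1-\frac{(bN-K)^2}{4KN}$, are both correct; but the estimate you build on them does not close, and the part you defer (the ``resonant range'') is where the entire content of the theorem lies. The best lower bound on the minimal conformal weight $h$ of $X^{(K+bN)}$ that CFT-type positivity plus Lemma \ref{inn-1} can give you is $h\geq 2-\frac{s_0^2}{4KN}$, where $s_0$ is the minimal-norm label in the coset $K+bN+2KN\bZ$. For $1\leq b\leq K-K/N$ one has $s_0=K+bN$, and comparing with the required inequality $h\leq 1-\frac{(bN-K)^2}{4KN}$ yields only $\frac{(K+bN)^2-(K-bN)^2}{4KN}=b\geq 1$, i.e.\ no contradiction at all. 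So your argument fails not for finitely many exceptional $b$ near $K/N$ but for the whole range $1\leq b\lesssim K$, and nothing in Proposition \ref{dimV1} will eliminate these by inspection: the conformal weights of the commutant modules $X^{(s)}$ are simply not controlled by the rank-one lattice decomposition \eqref{eq:3.4}.

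The missing idea is to decompose $V$ over the affine vertex algebra generated by $V_1$ rather than over $M(\alpha)\otimes X$. The paper writes $V=\bigoplus L(\lambda_1,\dots,\lambda_t)$, applies Li's $\Delta$-operator summand by summand, and uses the explicit conformal weight formula for $L(\lambda_1,\dots,\lambda_t)^{(-\alpha)}$ from \cite{LS}; completing the square expresses that weight as $\sum_i\frac{1}{2(h_i^\vee)^2(k_i+h_i^\vee)}\bigl[(h_i^\vee\lambda_i-k_i\rho_i\,|\,h_i^\vee\lambda_i-k_i\rho_i)+k_ih_i^\vee(\rho_i|\rho_i)\bigr]$, and the strange formula together with Proposition \ref{prop:2.3} (i.e.\ Proposition \ref{dimV1}) shows this is $\geq 1$ with equality exactly when $\lambda_i=k_i\rho_i/h_i^\vee$, a summand that lands in the $bN=K$ piece. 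This representation-theoretic input is what your proposal lacks and cannot be replaced by the lattice-coset bookkeeping. Your final step is also not right: on the $bN=K$ summand only $\widehat\alpha(0)$ is forced to vanish, not $h(0)$ for every $h\in\CH$, since $X^{(2K)}$ is merely the commutant of $M(\alpha)$; the containment $T^1_1\subseteq\CH^{\perp}$ again comes from identifying the weight-one vectors as extremal vectors of $L(k_1\rho_1/h_1^\vee,\dots)^{(-\alpha)}$, not from the lattice grading.
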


First we recall Lemma 3.4 in \cite{LS} and 
its proof with a slight  change of notation. 
For a $V$-module $M$, let $M^{(\alpha)}$  be defined as in Proposition \ref{Delta_twist}  
and $L^{(\alpha)}(0)$ the grading operator $L(0)$ on 
$M^{(\alpha)}$.

\begin{lmm}
Let $\alpha= \sum_{i}^t \rho_i/h_i^{\vee}$ be a W-element and 
$M$ a $L_{\CG_1}(k_1,0)\otimes \cdots\otimes L_{\CG_t}(k_t,0)$-module. 
Let $v$ be a vector in $M^{(-\alpha)}$ with $L^{(-\alpha)}(0)$-weight $p$, i.e. 
$L^{(-\alpha)}(0)v=pv$. Let $u={E_{\beta_i}}_{(-n_1)}\cdots {E_{\beta_m}}_{(-n_m)}v\in M^{(-\alpha)}$ be a non-zero vector, where $n_i\in \bZ_{>0}$ and $E_{\beta_i}\in V_1$ denotes a root vector associated with a root $\beta_i\in \Phi$. 
Then $u$ is a homogeneous vector in $M^{(-\alpha)}$ and its 
$L^{(-\alpha)}(0)$-weight is greater 
than $p$. 
\end{lmm}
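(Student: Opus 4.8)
The plan is to track how each mode ${E_{\beta_i}}_{(-n_i)}$ shifts the $L^{(-\alpha)}(0)$-grading on $M^{(-\alpha)}$, using Li's $\Delta$-operator explicitly, and then to bound the total shift below by Lemma~\ref{inn-1}.

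First I would record the grading identity on $M^{(-\alpha)}$. By Proposition~\ref{Delta_twist} the vertex operator of $\omega$ on $M^{(-\alpha)}$ is $Y_M(\Delta(-\alpha,z)\omega,z)$, and by the $m=1$ case of the computation preceding \eqref{LTm} we have $\Delta(-\alpha,z)\omega=\omega-\alpha z^{-1}+\frac{1}{2}\langle\alpha,\alpha\rangle z^{-2}$. Extracting the coefficient of $z^{-2}$ gives, on the common underlying space $M=M^{(-\alpha)}$,
\[
L^{(-\alpha)}(0)=L_M(0)-\alpha(0)+\frac{1}{2}\langle\alpha,\alpha\rangle\,\mathrm{id}.
\]
Since $\alpha$ lies in a Cartan subalgebra $\CH$ of $V_1=\bigoplus_{j=1}^t\CG_{j,k_j}=\left(\bigotimes_{j=1}^tL_{\CG_j}(k_j,0)\right)_1$, the operator $\alpha(0)$ acts semisimply on $M$ and commutes with $L_M(0)$; let $M=\bigoplus_{q,\nu}M_{q,\nu}$ be the simultaneous eigenspace decomposition, where $L_M(0)$ acts on $M_{q,\nu}$ by $q$ and $\alpha(0)$ by $\nu$. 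By the identity above, $M_{q,\nu}$ is contained in the $L^{(-\alpha)}(0)$-eigenspace for the eigenvalue $q-\nu+\frac{1}{2}\langle\alpha,\alpha\rangle$; in particular $L^{(-\alpha)}(0)v=pv$ forces $v$ to be a sum of vectors lying in spaces $M_{q,\nu}$ with $q-\nu+\frac{1}{2}\langle\alpha,\alpha\rangle=p$.

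Next I would compute the weight shift of each mode. Each $\beta_i\in\Phi$ is a root of a unique simple summand $\CG_{j_i,k_{j_i}}$, so $E_{\beta_i}$ is a root vector there with $\alpha(0)E_{\beta_i}=\mu_iE_{\beta_i}$, and since $\rho_j/h^\vee_j$ lies in the Cartan of $\CG_j$ and pairs trivially with $\beta_i$ for $j\ne j_i$, the eigenvalue is $\mu_i=\langle\rho_{j_i}/h^\vee_{j_i},\beta_i\rangle$ in the notation of Lemma~\ref{inn-1}. As $E_{\beta_i}$ has conformal weight one and $\alpha(0)$-eigenvalue $\mu_i$, one has $[L_M(0),{E_{\beta_i}}_{(-n_i)}]=n_i{E_{\beta_i}}_{(-n_i)}$ and $[\alpha(0),{E_{\beta_i}}_{(-n_i)}]=\mu_i{E_{\beta_i}}_{(-n_i)}$, so ${E_{\beta_i}}_{(-n_i)}$ maps $M_{q,\nu}$ into $M_{q+n_i,\nu+\mu_i}$, i.e. shifts the $L^{(-\alpha)}(0)$-eigenvalue by $n_i-\mu_i$. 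Applying the $m$ modes successively, every nonzero component of $u={E_{\beta_1}}_{(-n_1)}\cdots{E_{\beta_m}}_{(-n_m)}v$ lies in the $L^{(-\alpha)}(0)$-eigenspace for the single eigenvalue $p+\sum_{i=1}^m(n_i-\mu_i)$; hence $u$, being nonzero by hypothesis, is $L^{(-\alpha)}(0)$-homogeneous of that weight.

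Finally I would invoke Lemma~\ref{inn-1}: its proof in fact establishes the two-sided bound $-1<\langle\rho/h^\vee,\beta\rangle<1$ for every root $\beta$, so $\mu_i<1\le n_i$ and thus $n_i-\mu_i>0$ for each $i$; since $m\ge1$, the total shift $\sum_{i=1}^m(n_i-\mu_i)$ is strictly positive, and the $L^{(-\alpha)}(0)$-weight of $u$ exceeds $p$. The step needing most care is the bookkeeping: the ${E_{\beta_i}}_{(-n_i)}$ are the ordinary modes of the $\bigotimes_jL_{\CG_j}(k_j,0)$-action on $M$ (acting on the underlying space of $M^{(-\alpha)}$), not twisted modes, and the scalar $\mu_i$ above is the form-independent pairing $\beta_i(\alpha)$, which must be correctly identified with the quantity bounded in Lemma~\ref{inn-1}; once that is settled, the argument is a routine weight count.
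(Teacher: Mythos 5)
Your proof is correct and follows essentially the same route as the paper's: the identity $L^{(-\alpha)}(0)=L(0)-\alpha(0)+\tfrac{1}{2}\langle\alpha,\alpha\rangle\,\mathrm{id}$ from the $\Delta$-operator, the commutator computation showing each mode ${E_{\beta_i}}_{(-n_i)}$ shifts the twisted weight by $n_i-\langle\alpha,\beta_i\rangle$, and the bound from Lemma \ref{inn-1}. Your extra care in identifying the $\alpha(0)$-eigenvalue of $E_{\beta_i}$ with the form-independent pairing $\beta_i(\alpha)$ is a welcome clarification of a point the paper glosses over, but the argument is the same.
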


\pr  By Haisheng Li's $\Delta$-operator, 
$L^{(-\alpha)}(0)=L(0)-\alpha_{(0)}+\frac{\langle \alpha,\alpha\rangle}{2}{\rm id}$. Since $[L^{(-\alpha)}(0), E_{\beta(-n)}]=(n-(\alpha|\beta)){\rm id}$, 
we have 
$$ L^{(-\alpha)}(0)u=\left( \sum_{i=1}^m (n_i-(\alpha|\beta_i))+p \right) u.$$ 
By Lemma \ref{inn-1},    
  $1\gneq \langle \alpha,\beta\rangle \gneq -1$ 
  for all $\beta\in \Phi$,
     we have 
  $n_i-(\alpha|\beta_i)> 0$ for all $i$, and 
  hence the $L^{(-\alpha)}(0)$-weight of $u$ in $M^{(-\alpha)}$ is greater than $p$.
\prend

We start the proof of Theorem \ref{twist}. 
Let $V$ be a holomorphic VOA of central charge $24$. 
Suppose that $V_1\cong \CG_{1,k_1}\oplus \cdots 
\oplus \CG_{t,k_t}$ is semisimple. Denote 
$$L(\lambda_1,...,\lambda_n)=L_{\CG_1}(k_1,\lambda_1)\otimes 
\cdots \otimes L_{\CG_t}(k_t,\lambda_t),$$ 
where $(\lambda_1,...,\lambda_t)$ is an integrable weight of $\CG_{1,k_1}\oplus \cdots 
\oplus \CG_{n,k_t}$. 

Since $V=\oplus L(\lambda_1,...,\lambda_t)$, by Haisheng Li's $\Delta$-operators, $T^1=\oplus L(\Lambda_1,...,\lambda_t)^{(-\alpha)}$, 
where $L(\lambda_1,...,\lambda_t)^{(-\alpha)}$ denotes the one transformed 
from $L(\lambda_1,...,\lambda_t)$.

\begin{lmm}\label{minweight}
Let $\CG$ be a simple (finite dimensional) Lie algebra. Let $\{\alpha_1,\dots, \alpha_n\}$ be a set of simple roots and $\rho$ a Weyl vector. Let $\lambda$ be a dominant integrable weight of $\CG$. Then 
\[
- (\rho|\lambda) \leq (\rho| \mu)\leq  (\rho|\lambda) \quad \text{ for any }  \mu \in \Pi(\lambda,\CG), 
\]
where $\Pi(\lambda,\CG)$ denotes the set of all weights of the irreducible module of $\CG$ with the  highest weight $\lambda$. 
\end{lmm}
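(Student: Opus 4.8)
The plan is to prove the inequality $-(\rho|\lambda)\le(\rho|\mu)\le(\rho|\lambda)$ for every weight $\mu$ of the irreducible highest-weight module $L(\lambda)$, by reducing both bounds to standard facts about the $\rho$-pairing and the Weyl group. First I would establish the upper bound $(\rho|\mu)\le(\rho|\lambda)$. Every weight $\mu\in\Pi(\lambda,\CG)$ satisfies $\mu\preceq\lambda$ in the root order, i.e. $\lambda-\mu=\sum_i c_i\alpha_i$ with $c_i\in\bZ_{\ge0}$; since $(\rho|\alpha_i)=\tfrac12(\alpha_i|\alpha_i)>0$ for each simple root $\alpha_i$ (recall $\rho$ is the half-sum of the positive roots, so $\langle\rho,\alpha_i^\vee\rangle=1$), we get $(\rho|\lambda-\mu)=\sum_i c_i(\rho|\alpha_i)\ge 0$, which is exactly the upper bound.

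For the lower bound I would use the $W$-invariance of the weight set $\Pi(\lambda,\CG)$: if $\mu$ is a weight then so is $w\mu$ for every $w$ in the Weyl group $W$. Apply this with $w=w_0$, the longest element. Then $w_0\mu$ is again a weight, hence by the upper bound just proved, $(\rho|w_0\mu)\le(\rho|\lambda)$. Now use $w_0(\rho)=-\rho$ (the longest element sends the positive system to its negative, hence sends $\rho$ to $-\rho$) together with the $W$-invariance of the bilinear form: $(\rho|w_0\mu)=(w_0^{-1}\rho|\mu)=(w_0\rho|\mu)=(-\rho|\mu)=-(\rho|\mu)$, where I used $w_0^{-1}=w_0$. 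Combining, $-(\rho|\mu)\le(\rho|\lambda)$, i.e. $(\rho|\mu)\ge-(\rho|\lambda)$, which is the lower bound. This finishes both inequalities.

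The only genuinely substantive inputs are the two classical facts that every weight $\mu$ of $L(\lambda)$ lies below $\lambda$ in the root order (so that the upper bound is a nonnegative sum of the positive quantities $(\rho|\alpha_i)$) and that $\Pi(\lambda,\CG)$ is stable under $W$ with $w_0\rho=-\rho$; both are standard and may be quoted, e.g.\ from Kac's book. I do not anticipate a real obstacle here—the statement is a routine consequence of highest-weight theory—so the proof is short; the main point to be careful about is simply keeping the normalization of the form and of $\rho$ consistent with the convention $(\beta|\beta)=2$ for long roots used throughout the paper, but this does not affect the sign arguments above since only positivity of $(\rho|\alpha_i)$ and the identity $w_0\rho=-\rho$ are used.
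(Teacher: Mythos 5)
Your proof is correct and follows essentially the same route as the paper: the upper bound via $\mu\preceq\lambda$ and $(\rho|\alpha_i)>0$, and the lower bound via the longest element $w_0$ with $w_0\rho=-\rho$ (the paper phrases this by comparing $\mu$ with the lowest weight $w_0(\lambda)$ and noting $-w_0$ is a diagram automorphism fixing $\rho$, while you apply $w_0$ to $\mu$ directly, but these are the same argument).
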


\pr 
Since $\lambda$ is a highest weight, every weight $\mu \in \Pi(\lambda,\CG)$ can be written 
as $$\mu =\lambda -\sum a_i \alpha_i,$$ where $a_i$'s are non-negative integers. Then 
$(\rho| \mu)=(\rho|\lambda) - \sum a_i (\rho|\alpha_i) \leq  (\rho|\lambda)$ as desired. 

Let $w_0$ be the longest element in the Weyl group. Then $w_0(\lambda)$ is a lowest weight and 
we have 
\[
(\rho| \mu) \geq  (\rho|w_0(\lambda)). 
\]
Note that either the longest element $w_0$ is  equal to $-1$ or  $-w_0$ is a diagram automorphism.
Since $\rho$ is fixed by all diagram automorphisms, $(\rho|w_0(\lambda)) = - (\rho| \lambda)$ and we have the desired result. 
\prend

\begin{lmm} \label{T101}
The conformal weight of $L(\lambda_1,...,\lambda_t)^{(-\alpha)}$ is 
$\geq 1$ and is $1$ if and only if $\lambda_i=k_i\rho_i/h_i^{\vee}$ for all $i$, 
where $(\cdot)^{(-\alpha)}$ denotes $g$-twisted structure.  
\end{lmm}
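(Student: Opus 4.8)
The plan is to compute the conformal weight of the twisted module $L(\lambda_1,\dots,\lambda_t)^{(-\alpha)}$ directly from Li's $\Delta$-operator, reducing everything to a weight estimate inside each tensor factor, and then to pin down the equality case using the strange formula and the ``$N_0\alpha\in L^*$'' information from Proposition \ref{N0a}.

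\medskip

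First I would recall that the grading operator on the twisted module is
\[
L^{(-\alpha)}(0)=L(0)-\alpha_{(0)}+\tfrac{\langle\alpha,\alpha\rangle}{2}\,\mathrm{id},
\]
so the minimal conformal weight of $L(\lambda_1,\dots,\lambda_t)^{(-\alpha)}$ is obtained by minimizing, over all weights $\mu=(\mu_1,\dots,\mu_t)$ occurring in $L(\lambda_1,\dots,\lambda_t)$, the quantity $h_{\lambda}-(\alpha|\mu)+\tfrac12\langle\alpha,\alpha\rangle$, where $h_{\lambda}=\sum_i\frac{(\lambda_i|\lambda_i+2\rho_i)}{2(k_i+h_i^\vee)}$ is the conformal weight of $L(\lambda_1,\dots,\lambda_t)$ and the action of $\alpha_{(0)}$ on a weight-$\mu$ vector is by $(\alpha|\mu)=\sum_i\frac{(\rho_i|\mu_i)}{h_i^\vee}$; here I use $\langle\cdot,\cdot\rangle=k_i(\cdot|\cdot)$ on $\CG_{i,k_i}$, so $(\alpha|\mu)$ must be rewritten consistently. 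Since the whole expression is a sum over $i$ of terms depending only on $(\lambda_i,\mu_i)$, it suffices to prove the one-component statement: for a simple Lie algebra $\CG$ of level $k$, dual Coxeter number $h^\vee$, and dominant integral weight $\lambda$, the minimum over $\mu\in\Pi(\lambda,\CG)$ of
\[
\frac{(\lambda|\lambda+2\rho)}{2(k+h^\vee)}-\frac{(\rho|\mu)}{h^\vee}+\frac{k}{2(h^\vee)^2}\,(\rho|\rho)
\]
is $\geq$ the ``one-simple-factor share'' of $1$, with equality iff $\lambda=k\rho/h^\vee$.

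\medskip

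By Lemma \ref{minweight}, the worst case is $\mu=\lambda$ (i.e.\ $(\rho|\mu)$ is maximized at the highest weight), so after substituting $\mu=\lambda$ and using $(\rho|\rho)=\tfrac{h^\vee\dim\CG}{12}\cdot\tfrac{k}{k}$... more precisely \eqref{eq:2.2}, the one-component quantity becomes an explicit quadratic in $\lambda$ whose unique minimum over the real span is at $\lambda=\frac{k\rho}{h^\vee}$ — this is exactly the point where $\frac{(\lambda|\lambda+2\rho)}{2(k+h^\vee)}-\frac{(\rho|\lambda)}{h^\vee}$ is stationary. Plugging $\lambda=k\rho/h^\vee$ back in and summing over the $t$ factors, the total should collapse, via Proposition \ref{prop:2.3} and Theorem \ref{thm:2.4} (equivalently \eqref{eq:2.2} and the identity $\langle\alpha,\alpha\rangle=\frac{2\dim V_1}{\dim V_1-24}$ from Proposition \ref{dimV1}), to exactly $1$: the quadratic terms give $\tfrac12\langle\alpha,\alpha\rangle$ with the right sign, the linear terms give $-\langle\alpha,\alpha\rangle$, and the constant $\tfrac12\langle\alpha,\alpha\rangle$ from $\tfrac12\langle\alpha,\alpha\rangle\,\mathrm{id}$ closes the gap — I would verify this arithmetic carefully since sign bookkeeping between the two bilinear forms is the most error-prone part. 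For the equality case I need two things: (1) $\mu=\lambda$ is forced, which holds as long as $\rho\neq 0$ so that $(\rho|\alpha_i)>0$ for every simple root, giving strict inequality for $\mu\neq\lambda$; and (2) $\lambda_i=k_i\rho_i/h_i^\vee$ in each factor, which follows from strict convexity of the quadratic. One must still check that $k_i\rho_i/h_i^\vee$ is an \emph{integral} dominant weight so that the module $L(k_i\rho_i/h_i^\vee)$ actually occurs — this is where I would invoke that the corresponding simple current module appears in $V$ (holomorphicity, cf.\ Proposition \ref{N0a} and the integrality of its conformal weight).

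\medskip

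The main obstacle I anticipate is \textbf{not} the inequality itself but the equality analysis combined with the normalization bookkeeping: one has to track three different bilinear forms (the normalized Killing form $(\cdot|\cdot)$, the $1$-product form $\langle\cdot,\cdot\rangle=k_i(\cdot|\cdot)$, and the way $\alpha=\sum\rho_i/h_i^\vee$ versus $\alpha=\sum\sqrt{k_i}\rho_i/h_i^\vee$ sits inside the rescaled coroot lattice as in the proof of Proposition \ref{N0a}), and a single misplaced factor of $k_i$ destroys the clean cancellation to $1$. A secondary subtlety is confirming that $\lambda=k\rho/h^\vee$ gives the genuine module-theoretic minimum rather than just a critical point of the quadratic extended to all of $\mathfrak h^*$ — but since the quadratic form $\tfrac{(\cdot|\cdot)}{2(k+h^\vee)}$ is positive definite, the constrained minimum over the (finite, discrete, convex-hull-bounded) weight set is still attained at the unconstrained minimizer whenever that minimizer happens to be an admissible weight, and the computation shows it is. Once the one-factor statement is established with its equality clause, the multi-component statement follows immediately by summing, completing the proof of Lemma \ref{T101}.
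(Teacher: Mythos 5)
Your proposal follows essentially the same route as the paper: reduce to the top level via Lemma \ref{minweight}, observe that the resulting expression is a positive-definite quadratic in $\lambda_i$ minimized (by completing the square) exactly at $\lambda_i=k_i\rho_i/h_i^\vee$, and evaluate the minimum to $1$ using Proposition \ref{prop:2.3} together with the strange formula. One caution: your heuristic tally of the cancellation (``quadratic $=\tfrac12\langle\alpha,\alpha\rangle$, linear $=-\langle\alpha,\alpha\rangle$, constant $=\tfrac12\langle\alpha,\alpha\rangle$'') would sum to $0$ rather than $1$ if taken literally --- the actual minimum of the $i$-th summand is $\frac{k_ih_i^\vee(\rho_i|\rho_i)}{2(h_i^\vee)^2(k_i+h_i^\vee)}$, and summing these gives $1$ precisely via $\sum_i\frac{k_i(\rho_i|\rho_i)}{2(h_i^\vee)^2}=\frac{\dim V_1}{\dim V_1-24}$ and $\frac{h_i^\vee}{k_i+h_i^\vee}=\frac{\dim V_1-24}{\dim V_1}$ --- but you flagged this bookkeeping as requiring verification and the correct computation does go through.
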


\pr 
By \cite[Lemma 3.6]{LS}, the conformal weight of $L(\lambda_1, \dots, \lambda_t)^{(-\alpha)}$ is given by 
\[
\begin{split}
w= \sum_{i=1}^t \left( \frac{(\lambda_i| 2\rho_i+\lambda_i)}{2(k_i+h_i^\vee)} + \frac{1}{h_i^\vee} 
\mathrm{min}\{(-\rho_i|\mu)\mid \mu\in\Pi(\lambda_i,\CG_i)\} + \frac{k_i}{2(h_i^\vee)^2}(\rho_i|\rho_i)\right).
\end{split}
\]
Then by Lemma \ref{minweight}, we  have
\[
\begin{split}
w&= \sum_{i=1}^t \left( \frac{(\lambda_i| 2\rho_i+\lambda_i)}{2(k_i+h_i^\vee)} - \frac{1}{h_i^\vee} (\rho_i, \lambda_i)+ \frac{k_i}{2(h_i^\vee)^2}(\rho_i|\rho_i)\right),\\
&= \sum_{i=1}^t\frac{1}{2(h_i^\vee)^2(k_i+h_i^\vee)} \left[ (h_i^\vee)^2 (\lambda_i| 2\rho_i+\lambda_i) - 2h_i^\vee(k_i+h_i^\vee)
(\rho_i, \lambda_i) + k_i(k_i+h_i^\vee) (\rho_i|\rho_i) \right], \\
&= \sum_{i=1}^t \frac{1}{2(h_i^\vee)^2(k_i+h_i^\vee)} \left[ (h_i^\vee\lambda_i -k_i\rho_i| h_i^\vee\lambda_i -k_i\rho_i) 
+ k_ih_i^\vee(\rho_i|\rho_i)\right],\\
&\geq  \sum_{i=1}^t \frac{1}{2(h_i^\vee)^2(k_i+h_i^\vee)} \left[ k_ih_i^\vee(\rho_i|\rho_i)\right]
\end{split}
\] 
and the equality holds if and only if  $h_i^\vee\lambda_i -k_i\rho_i=0$ for all $i$. 

Moreover, 
\[
\begin{split}
 \sum_{i=1}^t \frac{1}{2(h_i^\vee)^2(k_i+h_i^\vee)} \left[ k_ih_i^\vee(\rho_i|\rho_i)\right]
= \sum_{i=1}^t \frac{k_i(\rho_i|\rho_i)}{2(h_i^\vee)^2} \cdot \frac{h_i^\vee}{(k_i+h_i^\vee)}.
\end{split}
\]
Recall that $\sum_{i=1}^n \frac{k_i(\rho_i|\rho_i)}{2(h_i^\vee)^2}= \frac{\dim V_1}{\dim V_1- 24}$ (cf. Proposition \ref{dimV1})  and 
\[
\frac{h_i^\vee}{(k_i+h_i^\vee)} = \frac{1}{( k_i/h_i^\vee+1)} = \frac{\dim V_1-24}{\dim V_1},
\]
which is independent of $i$ by \cite{DM2}. 
Thus, we have $w\geq 1$ and $w=1$ if and only if  $\lambda_i =(k_i/h_i^\vee) \rho_i$ for all $i$.
\prend

Namely, the weight one space of $T^1$ comes from 
$$L_{\CG_1}(k_1, \frac{K-N}{N}\rho_1)\oplus \cdots \oplus
L_{\CG_t}(k_t, \frac{K-N}{N}\rho_t)$$
and all weight one elements in $T^1$ centralize $\CH$. Thus we also have the following theorem.

\begin{thm}\label{Thm:5.6}
$T^1_s=0$ for $s<1$ and $T^1_1\subseteq \widehat{\CH}^{\perp}$.  Moreover, 
$T_1^1=0 $ if $N_0\neq 1$
\end{thm}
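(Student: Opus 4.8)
The plan is to combine the two lemmas just established (Lemma~\ref{T101} identifying the only source of weight-one vectors in $T^1$) with the degree computation for $L^{T^1}(0)$ and the structural decomposition of $T^1$ as an $M(\alpha)\otimes X$-module in order to pin down the location of $T^1_1$ and then rule it out entirely when $N_0\neq 1$. First I would recall from \eqref{LTm} that $L^{T^1}(0)$ acts on $M(\alpha)\otimes e^{\frac{s\alpha}{2K}}\otimes X^{(s)}$ as $L(0)-\frac{s-K}{N}$, so a weight-one vector in $T^1$ must lie in a block with $s\equiv K\pmod N$; equivalently, by the identification $\phi_1$, it sits in $X(\frac{(K+bN)\alpha}{2K})\otimes e^{\frac{(-K+bN)\alpha}{2K}}$ for some $b\in\bZ$. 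The content of Theorem~\ref{twist} is that such a block contributes nothing in weight one unless $bN=K$, i.e.\ unless the $\widehat\alpha$-exponent $\frac{-K+bN}{2K}$ vanishes, which is precisely the statement $T^1_1\subseteq\CH^\perp$ (identifying $\widehat\CH^\perp$ with $\CH^\perp$ as in the running convention). So the first half of Theorem~\ref{Thm:5.6} is a restatement/consequence of Theorem~\ref{twist}, and $T^1_s=0$ for $s<1$ follows the same way from Lemma~\ref{T101} (the conformal weight of every $L(\lambda_1,\dots,\lambda_t)^{(-\alpha)}$ is $\geq 1$, and the only subVOA modules of $V$ are the $L(\lambda_1,\dots,\lambda_t)$).

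For the new assertion, namely $T^1_1=0$ when $N_0\neq 1$, the idea is that a nonzero vector in $T^1_1\subseteq\CH^\perp$ would have to come, via Lemma~\ref{T101}, from the module $L_{\CG_1}(k_1,\tfrac{K-N}{N}\rho_1)\otimes\cdots\otimes L_{\CG_t}(k_t,\tfrac{K-N}{N}\rho_t)$ sitting inside $V$, and moreover it would have to lie in the block with $s=K$, i.e.\ in $X(e^{\alpha})$-type component with zero $\widehat\alpha$-exponent. But the existence of that module as a submodule of $V$ forces a simple-current module $\bigotimes_i L_{\hat\CG_i}(k_i,\lambda_i)$ with $\lambda_i=\frac{k_i}{h_i^\vee}\rho_i$, hence a lattice vector $\lambda=(\tfrac{1}{\sqrt{k_1}}\lambda_1,\dots)\in L$ with $\lambda = \frac{K_0-N_0}{N_0}$ times (the $\alpha$-direction), and the relevant $\alpha(0)$-eigenvalue / grading bookkeeping shows that this can only be compatible with a weight-one $g$-twisted vector when $N_0=1$. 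Concretely, I would track the eigenvalue of $\widehat\alpha(0)=\alpha(0)-\frac{2K}{N}$ on such a vector: the twisted-module grading shift is $\frac{s-K}{N}$, and weight one in $T^1$ together with the conformal weight $1$ of $L(\tfrac{K-N}{N}\rho_i)^{(-\alpha)}$ (from Lemma~\ref{T101}) forces $\frac{s-K}{N}=0$ and also forces the projection of $\lambda$ along $\alpha$ to be $\frac{N-K}{N}\alpha = -\frac{(K_0-N_0)R}{N}\alpha$; since $\lambda\in L$ and $N\alpha$ generates the rank-one part $\mathrm{Comm}(X,V)$, divisibility of $K$ by $N$ in the appropriate sense is needed, and $(N_0,K_0)=1$ with $R=N/N_0$ then gives $N_0\mid 1$.

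More carefully, the cleanest route for the last clause is via the $U$- and $L$-decompositions already set up: by Proposition~\ref{N0a} we have $N_0\alpha\in L^*$, and $T^1_1\subseteq\CH^\perp=\bC E_\tau$-type space is spanned by coset vectors $U(\mu)$ with $\mu\in\CH^\perp$ appearing in the twisted part. A weight-one element in $T^{1,0}$ identified with $Z^{1,0}$ sits in $\bigoplus_n M(\widehat\alpha)\otimes e^{\frac{(K+nN)\alpha}{2K}}\otimes X^{(K+nN)}\otimes e^{(1,n)}$, and in weight one the $M(\widehat\alpha)$-factor must be trivial and the $\widehat\alpha$-exponent must vanish, forcing $n$ with $-K+nN=0$; but $R=N/N_0=(N,K)/\cdots$—recall $K=RK_0$, $N=RN_0$—so $nN=K$ means $nRN_0=RK_0$, i.e.\ $nN_0=K_0$, which with $(N_0,K_0)=1$ gives $N_0\mid K_0$, hence $N_0=1$. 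Thus if $N_0\neq 1$ there is no admissible $n$, so $T^1_1=0$. The main obstacle I anticipate is making the identification "weight-one vector $\Rightarrow$ zero $\widehat\alpha$-exponent $\Rightarrow n N_0 = K_0$" fully rigorous: one must be sure that the $M(\widehat\alpha)$-Heisenberg contribution to the conformal weight is strictly positive unless trivial (clear, since $\langle\alpha,\alpha\rangle>0$), that the $X^{(K+nN)}$-part contributes conformal weight exactly matching the $L(\tfrac{K-N}{N}\rho_i)^{(-\alpha)}$ minimum from Lemma~\ref{T101} (so that no room is left for a nonzero exponent), and that the Lorentzian lattice exponent $e^{\frac{-K+nN}{2K}\widehat\alpha}$ has conformal weight $\frac{(-K+nN)^2}{4K^2}\cdot\frac{\langle\alpha,\alpha\rangle}{2}>0$ whenever $nN\neq K$—so the total weight would exceed $1$, a contradiction. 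Assembling these three positivity facts is the crux; everything else is bookkeeping with $K=RK_0$, $N=RN_0$, $(K_0,N_0)=1$.
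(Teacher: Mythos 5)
Your proposal follows essentially the same route as the paper: the identification $\phi_1\colon T^{1,0}\to Z^{1,0}$, Lemma \ref{T101} to pin the weight-one subspace into the block where the $\widehat{\alpha}$-exponent $\frac{-K+nN}{2K}$ vanishes, and the divisibility $nN_0=K_0$ with $(N_0,K_0)=1$ forcing $N_0=1$; your third paragraph is, in substance, the paper's proof. Two small cautions: quoting Theorem \ref{twist} for the containment $T^1_1\subseteq\widehat{\CH}^{\perp}$ is circular, since in the paper that statement is proved by this very argument (its proof is only declared complete after Theorem \ref{Thm:5.6}); and the assertion $\frac{s-K}{N}=0$, i.e.\ $s=K$, in your second paragraph is incorrect --- the weight-one vectors sit at $s=2K$ (equivalently $nN=K$), because the top level of $L(k_1\rho_1/h_1^{\vee},\dots,k_t\rho_t/h_t^{\vee})^{(-\alpha)}$ is the highest-weight space, on which $\alpha(0)$ acts by $\langle\alpha,\alpha\rangle=\frac{2K}{N}$ --- though your third paragraph states the correct condition.
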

\begin{proof}
As we have shown, in our Lorentzian notation (cf. Section \ref{sec:3}), there is a $V^{<g>}$-isomorphism 
$\phi_1: T^{1,0} \to \overline{Z^{1,0}} \subseteq \tilde{V}$ which shifts 
$$\bigoplus_{m=0}^{2K-1} M(\CH)\otimes U(\frac{(K+mN)\alpha}{2K}+u)
\otimes e^{\frac{(K+mN)\alpha}{2K}+u}  \subseteq V \, (\cong T^{1} \text{ as a vector space}) 
$$
to  
$$\bigoplus_{m=0}^{2K-1} M(\widehat{\CH})\otimes U(\frac{(K+mN)\alpha}{2K}+u)
\otimes e^{\frac{(-K+mN)\widehat{\alpha}}{2K}+u} $$
for $u\in L^{\ast}\cap \alpha^{\perp}$.
By Lemma \ref{T101},  the weight one subspace of $T^1_1$ 
is contained in  
$$\bigoplus_{m=0}^{2K-1} U(\frac{(K+mN)\alpha}{2K})
\otimes e^{\frac{(-K+mN)\widehat{\alpha}}{2K}}$$
with  $-K+mN=0$. Therefore, except for the part in $\widehat{\CH}^{\perp}$, there are no roots in $T^1_1$. Note also that $-K+mN\neq 0$ unless  $N_0=1$. 
\end{proof}

This completes the proof of Proposition \ref{twist}. 

\begin{remark}\label{ginverse}
By exactly the same argument as in Lemma \ref{T101},  we can also show that conformal weight of $L(\lambda_1,...,\lambda_t)^{(\alpha)}$ is 
$\geq 1$ and is $1$ if and only if $\lambda_i=k_i\rho_i/h_i^{\vee}$ for all $i$.  For the $g^{-1}$-twisted module $T^{-1}(=T^{N-1})$, we also have $T^{N-1}_1 \subseteq \widehat{\CH}^{\perp}$. 
\end{remark}

\subsection{$h$-twisted module}\label{sec:6.2}
Set $h=\exp(2\pi i\frac{N\alpha(0)}{K})$.  Then $|h|=K$.  Since the set 
of eigenvalues of $g=\exp(2\pi i\alpha(0))$ on $V$ is $\frac{\bZ}{N}$, the set of eigenvalues of $h$ is $\frac{\bZ}{K}$. 
We note $\langle \frac{N\alpha}{K},\frac{N\alpha}{K}\rangle=\frac{2N_0}{K_0}$ 
and so $R=K/K_0$.  By Lemma \ref{Dual}, $\tilde{V}(h)\cong \tilde{V}$.

\begin{lmm}\label{hreg}
We have  $V^{<h>}_1\subseteq \CH$.  
\end{lmm}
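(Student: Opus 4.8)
The plan is to diagonalize $h$ on $V_1$ and show that no root vector of the semisimple Lie algebra $V_1$ survives. Write $h=\exp(2\pi i\beta(0))$ with $\beta:=\frac{N\alpha}{K}=\frac{N_0}{K_0}\alpha$ (using $N=RN_0$, $K=RK_0$, so that $\frac{N}{K}=\frac{N_0}{K_0}$). Since $\beta$ lies in the Cartan subalgebra $\CH$, the operator $\beta(0)$ acts as $0$ on $\CH$ and acts semisimply on $V_1=\CH\oplus\bigoplus_{\gamma}\bC E_\gamma$, where $\gamma$ runs over the roots of the simple components $\CG_{j,k_j}$ of $V_1$ and $E_\gamma$ is a corresponding root vector. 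On $E_\gamma$ the operator $\alpha(0)$ acts by the scalar $e_\gamma=\langle \rho_j/h_j^{\vee},\gamma\rangle$ (in the normalization of Lemma \ref{inn-1}, for $\gamma$ a root of $\CG_j$), hence $\beta(0)$ acts on $E_\gamma$ by $\frac{N_0}{K_0}e_\gamma$ and $h$ by $\exp(2\pi i\frac{N_0}{K_0}e_\gamma)$. A weight one vector is fixed by $h$ if and only if, in this root space decomposition, each of its nonzero homogeneous components lies either in $\CH$ or in an $E_\gamma$ with $\frac{N_0}{K_0}e_\gamma\in\bZ$; so it suffices to prove that $\frac{N_0}{K_0}e_\gamma\notin\bZ$ for every root $\gamma$.

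For this I would combine two facts already available. First, the eigenvalues of $N\alpha(0)$ on $V$ are integers (recorded in the first lemma of Section \ref{sec:4}), so $m_\gamma:=Ne_\gamma\in\bZ$; and $\frac{N_0}{K_0}e_\gamma=\frac{N_0}{K_0}\cdot\frac{m_\gamma}{N}=\frac{m_\gamma}{RK_0}=\frac{m_\gamma}{K}$. Second, by Lemma \ref{inn-1} one has $0<|e_\gamma|<1$ for every root $\gamma$ (it is nonzero precisely because $\gamma$ is a root), hence $0<|m_\gamma|<N$. Since $K_0>N_0$ by Proposition \ref{dimV1}, we get $K=RK_0>RN_0=N>|m_\gamma|>0$, so $K\nmid m_\gamma$ and $\frac{m_\gamma}{K}\notin\bZ$. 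Therefore no $E_\gamma$ is fixed by $h$, and a fixed weight one vector must lie in $\CH$; this gives $V^{<h>}_1\subseteq\CH$ (in fact equality).

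I do not expect a genuine obstacle here: the statement reduces to the two numerical inputs $K_0>N_0$ and ``$N\alpha(0)$ has integer eigenvalues'', together with the bound $|e_\gamma|<1$ of Lemma \ref{inn-1}. The only point requiring care is to make sure the scalar by which $\alpha(0)$ acts on a root vector of the component $\CG_{j,k_j}$ is exactly the quantity $\langle\rho_j/h_j^{\vee},\gamma\rangle$ appearing in Lemma \ref{inn-1} (this is the same normalization used in the proof of Lemma \ref{T101}, via $[L^{(-\alpha)}(0),E_{\beta(-n)}]=n-\langle\alpha,\beta\rangle$), so that Lemma \ref{inn-1} applies verbatim and short roots need no separate treatment.
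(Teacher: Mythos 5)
Your proof is correct and follows essentially the same route as the paper: decompose $V_1$ into $\CH$ plus root spaces, observe that $h=\exp(2\pi i\frac{N}{K}\alpha(0))$ acts on $E_\gamma$ by a scalar whose exponent is nonzero and of absolute value less than $1$ (hence non-integral) by Lemma \ref{inn-1}, so no root vector is fixed. The only (cosmetic) difference is the bookkeeping: the paper rewrites $\frac{N}{K}\alpha=\sum_j\rho_j/(h_j^{\vee}+k_j)$ and bounds $|(\rho_j|\gamma)|$ directly, whereas you use the integrality of the $N\alpha(0)$-eigenvalues together with $K>N$; both reduce to the same two inputs.
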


\pr 
By Lemma \ref{dimV1}, $N/K= h_j^\vee/(h^\vee_j+k_j)$ for any $j$. Therefore, 
$(N/K)\alpha = \sum_j \rho_j/ (h^\vee_j+k_j)$. By the same argument as in Lemma \ref{inn-1}, 
\[
0 < |\langle \rho_j, \beta\rangle| < h^\vee_j -1 \quad \text{ for any root $\beta$ of } \CG_j . 
\]  
Therefore, the only integer eigenvalue for $\frac{N}{K}\alpha(0)$ on $V_1$ is $0$ and no root vector is fixed by $h$.
Therefore,  $V^{<h>}_1\subseteq \CH$ as desired. 
\prend

Let $S^j$ be a $h^j$-twisted $V$-module for $j=0, \dots, K-1$. 

\begin{thm}\label{Htwist} 
We have  $S^1_1 \subset \tilde{\CH}$. 
\end{thm}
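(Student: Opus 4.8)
The plan is to repeat the arguments behind Theorems~\ref{twist} and~\ref{Thm:5.6}, with the inner element that defines $h$ in place of the $W$-element $\alpha$. By Lemma~\ref{hreg} (which uses $N/K=h_j^{\vee}/(k_j+h_j^{\vee})$, Proposition~\ref{dimV1}), $h=\exp(2\pi i\,\beta(0))$ with
\[
\beta=\tfrac{N}{K}\,\alpha=\sum_{j=1}^{t}\frac{\rho_j}{k_j+h_j^{\vee}}\in\CH,\qquad \tfrac{\langle\beta,\beta\rangle}{2}=\tfrac{N_0}{K_0},
\]
and by Li's $\Delta$-operator (Proposition~\ref{Delta_twist}) the $h$-twisted module $S^1$ is realized on $V$ with vertex operators $Y(\Delta(-\beta,z)\,\cdot\,,z)$; decomposing $V=\bigoplus L(\lambda_1,\dots,\lambda_t)$ over $\bigotimes_j L_{\CG_j}(k_j,0)$ gives $S^1=\bigoplus L(\lambda_1,\dots,\lambda_t)^{(-\beta)}$ with grading operator $L^{(-\beta)}(0)=L(0)-\beta(0)+\tfrac{N_0}{K_0}\mathrm{id}$.

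Next I would run the computation of Lemma~\ref{T101} with $h_j^{\vee}$ replaced by $k_j+h_j^{\vee}$. Since $|\langle\rho_j,\gamma\rangle|\le h_j^{\vee}-1<k_j+h_j^{\vee}$ for every root $\gamma$ of $\CG_j$, we have $|\langle\beta,\gamma\rangle|<1$ for all roots of $V_1$, so (as in the lemma preceding Theorem~\ref{twist}) a creation operator ${E_\gamma}_{(-n)}$ ($n\ge1$) strictly raises $L^{(-\beta)}(0)$-weight, and the minimal $L^{(-\beta)}(0)$-weight of $L(\lambda_1,\dots,\lambda_t)^{(-\beta)}$ sits at the highest-weight vector of the finite-dimensional bottom; using Lemma~\ref{minweight} and Proposition~\ref{dimV1} (so that $h_j^{\vee}/(k_j+h_j^{\vee})$ is independent of $j$), it equals
\[
w_\lambda=\frac{N_0}{K_0}+\sum_{j=1}^{t}\frac{(\lambda_j\,|\,\lambda_j)}{2(k_j+h_j^{\vee})}\ \ge\ \frac{N_0}{K_0},
\]
with equality iff all $\lambda_j=0$ (and the same estimate holds for $L(\lambda_1,\dots,\lambda_t)^{(\beta)}$, cf.\ Remark~\ref{ginverse}).

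Then I would transport $S^{1,0}=S^1_{\bZ}$ into $\widetilde V(h)\cong\widetilde V$ by the $V^{<h>}$-isomorphism $\phi_1$ of \S\ref{sec:3.3}, applied with $\beta$ in place of $\alpha$, so that $S^1_1=S^{1,0}_1$ becomes a subspace of $\widetilde V(h)_1$. Arguing as in the proof of Theorem~\ref{Thm:5.6} and using the formula for $w_\lambda$, I would show that every weight-one vector of $S^1$ centralizes $\CH$, hence by Proposition~\ref{abelian} (applied to $\widetilde V(h)$) lies in $\widetilde\CH$. Moreover a weight-one vector centralizing $\CH$ has $\beta(0)$-eigenvalue $\langle\beta,\beta\rangle=2N_0/K_0$, hence $L(0)$-weight $1+N_0/K_0$, which is not an integer since $K_0\ge2$ and $(K_0,N_0)=1$; so there are none, and in fact $S^1_1=0\subset\widetilde\CH$. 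This is the ``$K_0\neq1$'' counterpart of the clause ``$T^1_1=0$ if $N_0\neq1$'' in Theorem~\ref{Thm:5.6}.

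The main obstacle is the step that every weight-one vector of $S^1$ centralizes $\CH$. In the $g$-twisted case $T^1$ has minimal conformal weight exactly $1$, so its weight-one part is the bottom of the modules $L(\lambda_1,\dots,\lambda_t)^{(-\alpha)}$ with $\lambda_j=k_j\rho_j/h_j^{\vee}$, and centralization follows straight from Lemma~\ref{T101}. Here the bottom of $S^1$ occurs at weight $N_0/K_0<1$, so a weight-one vector can a priori be obtained from a lower-weight bottom vector of some $L(\lambda_1,\dots,\lambda_t)^{(-\beta)}$ with $w_\lambda\le1$ by applying a single creation operator ${E_\gamma}_{(-1)}$, and one must still rule out that such a vector fails to centralize $\CH$ (equivalently, descends to a root vector of $\widetilde V(h)_1$). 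This is where the precise value of $w_\lambda$, the sharp inequalities $0<\langle\beta,\alpha_i\rangle$ (for simple roots $\alpha_i$) and $\langle\beta,\gamma\rangle<1$, and presumably the identification $\widetilde V(h)\cong\widetilde V$ together with the weight-one statements for the $g$- and $g^{-1}$-twisted modules, all come in.
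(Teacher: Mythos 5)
Your setup is right as far as it goes: the formula $w_\lambda=\frac{N_0}{K_0}+\sum_j\frac{(\lambda_j|\lambda_j)}{2(k_j+h_j^\vee)}$ for the lowest conformal weight of $L(\lambda_1,\dots,\lambda_t)^{(-\beta)}$ is correct, and your closing observation (a weight-one vector of $S^1$ lying in $\tilde{\CH}$ would have untwisted $L(0)$-weight $1+N_0/K_0\notin\bZ$, so in fact $S^1_1=0$) is a valid consequence of the theorem once it is proved. But the proposal does not prove the theorem: as you yourself flag in your last paragraph, the entire content of the statement is that every weight-one vector of $S^1$ centralizes $\CH$, and this is exactly the step you leave open. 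Because the bottom of $S^1$ sits at weight $N_0/K_0<1$, weight-one vectors are no longer confined to the top levels of the modules $L(\lambda)^{(-\beta)}$ with $w_\lambda=1$; they can be produced from lower bottoms by creation operators ${E_\gamma}_{(-1)}$ and by Heisenberg modes, and the ``creation operators strictly raise the weight'' lemma gives no control over which such vectors survive at weight exactly one. So the analogue of Lemma \ref{T101} for $\beta$ simply does not close the argument.

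The paper's proof avoids this issue by never analyzing $S^1$ intrinsically. It writes the image of $S^1$ in $V^{[\frac{N}{K}\alpha]}$ as $\bigoplus_{k\in\bZ}X^{(N+kK)}\otimes e^{\frac{(-N+kK)\alpha}{2K}}\otimes M(\alpha)$ and compares each summand, character by character, with a summand of a module that is already understood: for $k=0$ with a piece of $V^{<g>}$ (whose weight-one space lies in $\CH$), for $k=\pm1$ with pieces of the $g$- and $g^{-1}$-twisted modules $T^{\pm1}$ (whose weight-one spaces centralize $\CH$ by Lemma \ref{T101} and Remark \ref{ginverse}), and for $|k|\ge 2$ it shows via the elementary inequality $(-N+kK)^2-(N+(k-2)K)^2=4(k-1)K(K-N)>0$ that the lowest conformal weight exceeds that of a piece of $T^{\pm1}$, hence is greater than one. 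Since the $X^{(N+kK)}$-factor is literally shared between the piece of $S^1$ and its comparison piece, the conclusion about centralizing $\CH$ transfers. If you want to salvage your approach, this character comparison with the already-established $g^{\pm1}$-twisted statements --- rather than a refined weight estimate on $L(\lambda)^{(-\beta)}$ --- is the missing ingredient.
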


\pr 
As we explained in Theorem \ref{tVd}, the submodule of $V^{[\frac{N}K \alpha]}$ from the $h$-twisted module $S^1$ is given by 
$$\oplus_{k\in \bZ} X^{(N+kK)}\otimes 
e^{\frac{(-N+kK)\alpha}{2K}}\otimes M(\alpha).$$ 

When $k=0$, the character of $X^{(N)}\otimes 
e^{\frac{-N\alpha}{2K}}$ is equal to that of 
$X^{(N)}\otimes 
e^{\frac{N\alpha}{2K}}$ and $e^{\frac{(0K+N)\alpha}{2K}}=e^{\frac{(-0K+N)\alpha}{2K}}$. Thus,  we have 
$X^{(0K+N)}\otimes 
e^{\frac{(-0K+N)\alpha}{K}}  \subseteq V^{<g>}$, which 
does not contain any  weight one elements. 
For $k=1$, the  weight of $e^{\frac{(-N+K)\alpha}{2K}}$ is equal to 
that of $e^{\frac{(N-K)\alpha}{2K}}$, and the character of 
$ X^{(N+K)}\otimes 
e^{\frac{(-N+K)\alpha}{2K}}$ is equal to that of 
$X^{(N+K)}\otimes 
e^{\frac{(N-K)\alpha}{2K}}$, which is a subspace of $g$-twisted module
and all weight one elements centralize $\CH$ by Lemma \ref{T101}. 
Similarly, for $k=-1$, 
the character of 
$X^{(N-K)}\otimes 
e^{\frac{(-N-K)\alpha}{2K}}$ is equal to that of 
$X^{(N-K)}\otimes 
e^{\frac{(N+K)\alpha}{2K}}$, which is a subspace of $g^{-1}$-twisted module.  
All weight one elements centralize $\CH$, also. 
The remaining cases are 
$k\geq 2$ or $k\leq -2$. 
We first assume $k\geq 2$, then since $K>N$, we have 
\[
(-N+kK)^2- (N+(k-2)K)^2 = 4(k-1)K(K-N) >0,
\]
which implies that the lowest conformal weight of 
$X^{(N+kK)}\otimes 
e^{\frac{(-N+kK)\alpha}{2K}}$ is strictly greater than 
that of 
$X^{(N+kK)}\otimes 
e^{\frac{(N+(k-2)K)\alpha}{2K}}$. 
On the other hand, since $\frac{(N+(k-2)K)\alpha}{2K} = \frac{(N+kK)\alpha}{2K} -\alpha$ and   
$X^{(s)}\otimes e^{\frac{s\alpha}{2K}-\alpha}$ is 
a subspace of $T^1$ for any $s\in \bZ$ by 
\eqref{twistmodule} in Remark \ref{lattice_twisted}, 
$X^{(N+kK)}\otimes e^{\frac{(N+(k-2)K)\alpha}{2K}}$ is a subspace of $g$-twisted module, which does not contain elements whose weights are less than one by Lemma \ref{T101}. So we have a contradiction.  
Similarly, for $k=-n\leq -2$,  
the lowest conformal weight of 
$\oplus_{k\in \bZ} X^{(N-nK)}\otimes 
e^{\frac{(-N-nK)\alpha}{2K}}$ is greater than 
that of $g^{-1}$-twisted module 
$\oplus_{k\in \bZ} X^{(N-nK)}\otimes 
e^{\frac{(N-(n-2)K)\alpha}{2K}}$ whose conformal weight 
is greater than or equal to one and we have a contradiction. 
\prend

By the same argument as in Theorem  \ref{Thm:5.6} and the discussion at the end of Section \ref{sec:4}, we also have the following result. 
 \begin{thm} 
 Let $S^1$ be a $h$-twisted $V$-module. Then 
 $S^1_1\subseteq \Comm(M(\CH),V^{[h]})_1\subseteq \tilde{\CH}$. 
 \end{thm}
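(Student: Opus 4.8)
The plan is to assemble the statement from ingredients already prepared for Theorem~\ref{Htwist} and Proposition~\ref{abelian}, the only genuinely new point being a more careful bookkeeping of the $\CH$-action. Recall that $h=\exp(2\pi i N\alpha(0)/K)$ has order $K$, that $\langle \frac{N\alpha}{K},\frac{N\alpha}{K}\rangle=\frac{2N_0}{K_0}$, and hence, by Lemma~\ref{Dual}, that the orbifold VOA $V^{[h]}=V^{[N\alpha/K]}$ is isomorphic to $V^{[\alpha]}=\tilde V$. As in the proof of Theorem~\ref{Htwist}, the $h$-twisted module $S^1$ is realized inside $V^{[h]}$ so that its integer-weight part $S^{1,0}=S^1_{\bZ}$ becomes $\bigoplus_{k\in\bZ}X^{(N+kK)}\otimes e^{(-N+kK)\alpha/2K}\otimes M(\alpha)$ (with the appropriate grading shift built in); since weight-one vectors have integer weight, $S^1_1=(S^{1,0})_1$ lies in this summand.

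First I would rerun the trichotomy from the proof of Theorem~\ref{Htwist}, but tracking the action of $M(\CH)$ rather than merely the conformal weight. For $k=0$ the summand has the same weight grading as $X^{(N)}\otimes e^{N\alpha/2K}\subseteq V^{<g>}$, which lies in the subspace of $V$ on which $\alpha(0)$ acts with eigenvalue $1$; by Lemma~\ref{inn-1} a weight-one vector of $V$ cannot be an eigenvector of $\alpha(0)$ with eigenvalue $\pm1$, so this summand contributes nothing to $S^1_1$. For $k=\pm1$ the summand has the same weight grading as a piece of the $g^{\pm1}$-twisted module $T^{\pm1}$, and by Lemma~\ref{T101} and Remark~\ref{ginverse} every weight-one vector of $T^{\pm1}$ centralizes $\CH$; transported through the realization, which only shifts the rank-one Heisenberg factor $M(\alpha)$, these land in $\Comm(M(\CH),V^{[h]})_1$. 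For $|k|\geq 2$ a conformal-weight comparison, exactly as in Theorem~\ref{Htwist}, shows the lowest conformal weight of the summand strictly exceeds that of a $g^{\pm1}$-twisted piece, which is already $\geq 1$, so there is no weight-one vector there. Combining the three cases gives $S^1_1\subseteq\Comm(M(\CH),V^{[h]})_1$. Conceptually the same conclusion is cleaner: by Lemma~\ref{hreg} and the abelianness of $\CH$ one has $\CH=V^{<h>}_1\subseteq (V^{[h]})^{<\tilde h>}_1$, while the reverse automorphism $\tilde h$ acts on $S^{1,0}$ by a scalar different from $1$, so the projection of $S^1_1$ onto $\CH$ must vanish.

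For the second inclusion I would apply Proposition~\ref{abelian} to $V^{[h]}=V^{[N\alpha/K]}$: $\Comm(M(\CH),V^{[h]})_1$ is an abelian Lie algebra and $\Comm(M(\CH),V^{[h]})_1\oplus\CH=\tilde\CH$ is the unique Cartan subalgebra of $V^{[h]}_1$ containing $\CH$, whence $\Comm(M(\CH),V^{[h]})_1\subseteq\tilde\CH$. The main obstacle is the bookkeeping in the first step: one has to check that the weight-matching identifications used to prove Theorem~\ref{Htwist} are compatible with the $\CH$-module structure, i.e.\ that ``centralizes $\CH$'' is genuinely preserved when a piece of $S^{1,0}$ is matched with a piece of $T^{\pm1}$ up to a grading shift and the replacement $\alpha\leftrightarrow\widehat{\alpha}$; this works because $\CH\subseteq V^{<g>}$ is common to all of the relevant sectors and Li's $\Delta$-operator alters only the factor $M(\alpha)$. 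Everything else is either a citation (Lemmas~\ref{inn-1},~\ref{T101},~\ref{Dual},~\ref{hreg}, Remark~\ref{ginverse}, Proposition~\ref{abelian}) or a routine conformal-weight inequality already recorded in the proof of Theorem~\ref{Htwist}.
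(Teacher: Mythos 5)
Your proposal is correct and takes essentially the same approach as the paper, whose proof of this statement is just the instruction to rerun the argument of Theorem \ref{Thm:5.6} (the Lorentzian realization of $S^{1,0}$ plus Lemma \ref{T101}) and combine it with the structure of $\Comm(M(\CH),\cdot)_1$ from Proposition \ref{abelian}; that is exactly what you do. The only point worth sharpening is the $k=\pm1$ step: the matched pieces of $T^{\pm1}$ lie in a nonzero $\widehat{\alpha}(0)$-eigenspace, so ``centralizes $\CH$'' there actually forces their weight-one parts to vanish rather than merely to land in the commutant, and your alternative argument via the reverse automorphism ($\tilde h$ fixes $\widehat{\CH}\subseteq S^{0,0}$ but acts by a nontrivial scalar on $S^{1,0}$) is the cleanest way to see that $S^1_1$ avoids $\CH$.
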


As we have shown, $V$ is realizable over $\bQ$ and so the character of 
$T^m$ depends only on $(m,N)$. 

\begin{thm}\label{Noweightzero}
If $(m,N)=1$, then $T^m_1\subseteq \tilde{\CH}$. 
Similarly, if $(n,K)=1$, then $S^n_1\subseteq \tilde{\CH}$. 
\end{thm}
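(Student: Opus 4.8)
The plan is to combine the already-established cases $m=1$ and $m=N-1$ (Theorem~\ref{Thm:5.6} and Remark~\ref{ginverse}) with a dimension count driven by the fact, recalled just above, that the character of $T^m$ depends only on $(m,N)$ because $V$ is realizable over $\bQ$. I will carry out the $g$-twisted statement; the $h$-twisted one is handled identically after replacing $\alpha$ by $\tfrac{N}{K}\alpha$, using Lemma~\ref{Dual} to identify $\tilde V(h)$ with $\tilde V=V^{[\alpha]}$ (compatibly with the embedded $\CH$), Theorem~\ref{Htwist} in place of Theorem~\ref{Thm:5.6}, and the computation $\langle \tfrac{N}{K}\alpha,\tfrac{N}{K}\alpha\rangle=\tfrac{2N_0}{K_0}$, so that $K=RK_0$ and $K_0$ now play the roles of $N$ and $N_0$, and $\tilde h$ that of $\tilde g$.

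If $\tilde V\cong V_\Lambda$ there is nothing to prove, since then $\tilde V_1=\tilde\CH$; so assume $\tilde V\not\cong V_\Lambda$. First I would set up the eigenspace picture. In the orbifold construction $\tilde V=\bigoplus_{m=0}^{N-1}T^{m,0}$ the reverse automorphism $\tilde g$ acts on $T^{m,0}$ by the scalar $\xi^m$, $\xi=\exp(2\pi i/N)$; since a weight-one vector has integral weight, $T^m_1=T^{m,0}_1$, and these spaces are exactly the eigenspaces of $\tilde g$ acting on $\tilde V_1$. By Proposition~\ref{abelian}, $\tilde\CH$ is the unique Cartan subalgebra of $\tilde V_1$ containing $\CH$, and by Theorem~\ref{Lalpha} and Lemma~\ref{Lalpha1} it is $\tilde g$-invariant, with $\tilde g$ fixing $\CH$ pointwise and scaling the summand $U(jN_0\alpha)_1$ of $\Comm(M(\CH),\tilde V)_1$ by $\xi^{jN_0}=\exp(2\pi i j/R)$ for $0\le j\le R-1$. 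Write $\tilde\CH_m=\tilde\CH\cap T^{m,0}_1$ for the $\xi^m$-eigenspace of $\tilde g$ inside $\tilde\CH$; then $\tilde\CH_m\subseteq T^{m,0}_1$ always, and Theorem~\ref{Thm:5.6} gives $T^1_1\subseteq\tilde\CH$, hence $T^{1,0}_1=\tilde\CH_1$.

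Now fix $m$ with $(m,N)=1$. The key claim is $\dim\tilde\CH_m=\dim\tilde\CH_1$. Every eigenvalue of $\tilde g$ on $\tilde\CH$ is an $R$-th root of unity, whereas $\xi^m$ (and $\xi^1$) is a primitive $N$-th root of unity; so if $N_0>1$, i.e. $R<N$, then $\tilde\CH_m=\tilde\CH_1=0$. If $N_0=1$ then $R=N$ and, modulo $N$, $\tilde\CH_m=U(m\alpha)_1$; here $\alpha\in L^*$ and the cosets $\{\,m\alpha+L:(m,N)=1\,\}$ lie in a single orbit of the cyclotomic Galois action on the irreducible modules of the lattice VOA $V_L=\Comm(\Comm(M(\CH),V),V)$, an action which respects the decomposition $V=\bigoplus_{\mu}V_{L+\mu}\otimes U(\mu)$ because $V$ is realizable over $\bQ$ (Theorem~\ref{OverQ}); hence $\dim U(m\alpha)_1=\dim U(\alpha)_1$. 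In either case $\dim\tilde\CH_m=\dim\tilde\CH_1$. Combining this with $\dim T^{m,0}_1=\dim T^{1,0}_1$ (the character of $T^m$ depends only on $(m,N)$) and $T^{1,0}_1=\tilde\CH_1$ yields $\dim T^{m,0}_1=\dim\tilde\CH_m$; since $\tilde\CH_m\subseteq T^{m,0}_1$, the equality of dimensions forces $T^m_1=T^{m,0}_1=\tilde\CH_m\subseteq\tilde\CH$, which is the assertion.

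The step I expect to be the main obstacle is the Galois input in the $N_0=1$ case: one must make precise that realizability of $V$ over $\bQ$ renders the coset decomposition of $V$ over $V_L$ stable under $\mathrm{Gal}(\overline\bQ/\bQ)$, with the induced action on $L^*/L$ being the cyclotomic one $\mu\mapsto a_\sigma\mu$, so that $U(\alpha)_1$ and $U(m\alpha)_1$ are Galois-conjugate (hence of equal dimension) whenever $(m,N)=1$; an equivalent but perhaps cleaner phrasing is that the characteristic polynomial of $\tilde g$ restricted to $\tilde\CH$ has rational coefficients, which again follows from $\dim T^{m,0}_1$ depending only on $(m,N)$ together with the eigenspace decomposition of $\tilde g$ being Galois-equivariant. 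Everything else is routine bookkeeping with the eigenspaces of $\tilde g$ (resp. $\tilde h$) and the already-proved base cases, using $S^1_1\subseteq\tilde\CH$ from Theorem~\ref{Htwist} in the second half of the statement.
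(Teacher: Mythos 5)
Your overall strategy (comparing $\dim T^m_1$ with $\dim(\tilde{\CH}\cap T^{m,0}_1)$ via the character identity $\mathrm{ch}\,T^m=\mathrm{ch}\,T^1$) is reasonable, and the case $N_0>1$ is complete: there $\xi^m$ is a primitive $N$-th root of unity while $\tilde g$ has only $R$-th roots of unity on $\tilde{\CH}$, and Theorem \ref{Thm:5.6} gives $T^1_1=0$, so the character identity finishes it. (The paper itself gives no proof beyond the sentence preceding the theorem, so there is little to compare against.) The genuine gap is in the case $N_0=1$, where your argument rests entirely on $\dim U(m\alpha)_1=\dim U(\alpha)_1$, and the mechanism you offer for it does not work. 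The Galois action $1\otimes\sigma$ on $V=V^{\bQ}\otimes\bC$ commutes semilinearly with $\beta(0)$ for $\beta$ in a rational spanning set of $\CH$ (e.g.\ $\beta\in L$), and the eigenvalues $\langle\beta,\mu\rangle$, $\mu\in L^{\ast}$, are \emph{rational}; hence $\sigma$ preserves each $\CH$-weight space and therefore fixes every component $V_{L+\mu}\otimes U(\mu)$. The induced action on $L^{\ast}/L$ is trivial, not cyclotomic, so it yields no relation between $U(\alpha)_1$ and $U(m\alpha)_1$. Your fallback (rationality of the characteristic polynomial of $\tilde g$ on $\tilde{\CH}$) is the right kind of statement, but your derivation of it is circular: it presupposes that the $\xi^m$-eigenspace of $\tilde g$ on $\tilde V_1$ meets $\tilde{\CH}$ in all of $T^{m,0}_1$, which is what is being proved.

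The repair is to apply Galois conjugation to the twisted modules rather than to the coset decomposition. If $\sigma(\xi_N)=\xi_N^{m}$ then $\sigma g\sigma^{-1}=g^{m}$ on $V$ (the eigenvalues of $g$ on the $\mu$-weight space are $e^{2\pi i\langle\alpha,\mu\rangle}$), so $(T^1)^{\sigma}$ is a $g^{m}$-twisted module and hence is $T^{m}$ by uniqueness; this identification preserves both the $L(0)$-grading and the $\CH$-grading, since all the relevant eigenvalues are rational. Theorem \ref{Thm:5.6} says $T^1_1$ is annihilated by $\CH$, hence so is $T^m_1$; and the zero-weight space of $\CH$ on $\tilde V_1$ is exactly $\CH\oplus\Comm(M(\CH),\tilde V)_1=\tilde{\CH}$. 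This handles $N_0=1$ and $N_0>1$ uniformly, avoids the dimension count on $U(m\alpha)_1$ altogether (that equality then follows as a corollary rather than serving as an input), and the same argument with Theorem \ref{Htwist} in place of Theorem \ref{Thm:5.6} gives the $h$-twisted statement.
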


\section{Eigenvalues of $\tilde{g}$ on $\tilde{\CH}$}

In this section, we will prove the following theorem, which is crucial for our arguments.

\begin{thm}\label{KeyThmD}
Let $V\in \CF^2$ and suppose $\rank( V_1) \lneq 24$. Let 
$\alpha$ be  a W-element and $\tilde{g}\in \Aut(V^{[\alpha]})$ the reverse automorphism of $g=\exp(2\pi i\alpha(0))$. Then  
$\tilde{g}$ has an eigenvalue on $\tilde{\CH}$ with order $R$. 
\end{thm}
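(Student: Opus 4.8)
The plan is to make the action of $\tilde{g}$ on $\tilde{\CH}$ completely explicit and reduce the statement to a non-vanishing assertion. By Proposition \ref{N0a} we have $N_0\alpha\in L^{*}$, so Theorem \ref{Lalpha} gives $\Comm(M(\CH),V^{[\alpha]})=\bigoplus_{m=0}^{R-1}U(mN_0\alpha)$, whence by Proposition \ref{abelian} the Cartan subalgebra of $V^{[\alpha]}_1$ is $\tilde{\CH}=\CH^{[\alpha]}=\CH\oplus\bigoplus_{m=1}^{R-1}U(mN_0\alpha)_1$, with $U(0)_1=0$ since $\CH$ is already a Cartan subalgebra of $V_1$. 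By Lemma \ref{Lalpha1}, $\tilde{g}$ is the identity on $\CH$ and acts on $U(mN_0\alpha)_1$ by the scalar $\xi^{mN_0}=\exp(2\pi im/R)$; hence the eigenvalues of $\tilde{g}$ on $\tilde{\CH}$ are precisely $1$ together with the $\exp(2\pi im/R)$ for which $U(mN_0\alpha)_1\neq 0$, and Theorem \ref{KeyThmD} is equivalent to: $U(mN_0\alpha)_1\neq 0$ for some $m$ with $(m,R)=1$. Since $\rank V_1<24$ forces $R\ge2$ (Proposition \ref{NoOne}), we have $V^{[\alpha]}>\!\!>V$, so $V^{[\alpha]}\in\CF^{1}$ because $V\in\CF^{2}$; thus $V^{[\alpha]}\cong V_{\Lambda}$ or $\widetilde{V^{[\alpha]}}\cong V_{\Lambda}$, and in both cases $\rank(V^{[\alpha]}_1)=\dim\tilde{\CH}$.

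Assume, for contradiction, that $U(mN_0\alpha)_1=0$ for all $m$ with $(m,R)=1$; equivalently, $\tilde{g}$ has no primitive $R$-th root of unity as an eigenvalue on $\tilde{\CH}$. (By the identification of weight one spaces of twisted modules with the $U(mN_0\alpha)_1$ in the proof of Theorem \ref{Thm:5.6} together with Lemma \ref{T101}, and using that $V$ is realizable over $\bQ$ by Theorem \ref{OverQ} so that the character of $T^{j}$ depends only on $(j,N)$, this could also be phrased as the non-occurrence of certain affine modules in the decomposition of $V$.) Passing from $g$ to the order-$R$ automorphism $g^{N_0}$, for which $\langle N_0\alpha,N_0\alpha\rangle\in2\bZ$, $N_0\alpha\in L^{*}$ and $\Comm(M(\CH),V^{[N_0\alpha]})=\bigoplus_{m=0}^{R-1}U(mN_0\alpha)$, Lemma \ref{order} applies whenever $\rank(V^{[\alpha]}_1)=24$ and already settles the case that $R$ is a prime power. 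For general $R$ one splits on $V^{[\alpha]}$. If $V^{[\alpha]}\cong V_{\Lambda}$, put $\tau=\tilde{g}_{|\bC\Lambda}\in Co_0$; then $\tau^{R}=1$, $\tau$ is fixed-point-free on $\CH^{\perp}$ (of dimension $24-\rank V_1$), and $\tau$ has no eigenvalue of order $R$. Proposition \ref{genus} (whose hypothesis $(R,N_0)=1$ or $(R,K_0)=1$ is available here) then produces a holomorphic VOA $V'$ lying in the genus of $V$ with $\rank V'_1=\rank V_1$ and all Coxeter numbers of $V'_1$ at most $R$, so $\dim V'_1\le(R+1)\rank V_1$; running through the finitely many conjugacy classes $\tau\in Co_0$ with these spectral properties, $\rank V_1=\dim\bC\Lambda^{\langle\tau\rangle}$ is determined and one checks $(R+1)\rank V_1<25$ in every case, contradicting $\dim V'_1\ge25$ (note $V'\not\cong V_{\Lambda}$ as $\rank V'_1<24$); this parallels the proof of Corollary \ref{No3C}. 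If $V^{[\alpha]}\not\cong V_{\Lambda}$, then $V^{[\alpha]}_1$ is semisimple and, by Lemma \ref{KeyLemma3}, $\tilde{g}$ permutes a positive root system of it, so $\tilde{g}$ is a product of a permutation of the simple summands, graph automorphisms of order $\le3$ of the individual summands, and an inner automorphism trivial on $\tilde{\CH}$; using that $\Comm(M(\CH),V^{[\alpha]})=\bigoplus_{m=0}^{R-1}U(mN_0\alpha)$ is a simple VOA on which $\tilde{g}$ acts faithfully of order $R$ by the single scalar $\exp(2\pi im/R)$ on its $m$-th summand — so that $\{m:U(mN_0\alpha)\neq0\}$ is all of $\bZ/R$ — one forces a cycle of length $R$ among the permuted summands (or a summand contributing an order-$R$ factor) and obtains the forbidden eigenvalue, again a contradiction.

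The main obstacle is exactly this last step when $R$ is \emph{not} a prime power: a fixed-point-free finite-order isometry of order $R$ need not have a primitive $R$-th root of unity among its eigenvalues, so no purely formal argument suffices. The proof must exploit the rigidity special to this situation — that $\tilde{g}$ acts on each $U(mN_0\alpha)_1$ by a single $m$-determined scalar, that $\tilde{g}$ has order exactly $R$ on the simple VOA $\Comm(M(\CH),V^{[\alpha]})$, the genus restriction of Proposition \ref{genus}, and, in the $V_{\Lambda}$ case, explicit knowledge of the conjugacy classes and fixed-point subspaces of $Co_0$. Making the $V_{\Lambda}$ and non-$V_{\Lambda}$ cases uniform and carrying out this case analysis is where the work concentrates.
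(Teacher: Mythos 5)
Your reduction is correct and matches the paper's: the claim is equivalent to $U(mN_0\alpha)_1\neq 0$ for some $m$ with $(m,R)=1$, and since $R\neq 1$ the VOA $V^{[N_0\alpha]}$ is strictly larger in the partial order, hence lies in $\CF^1$. But the proof of that non-vanishing statement is where all the content lies, and your proposal does not supply it. In the case $\rank(V^{[\alpha]}_1)=24$, your route through Proposition \ref{genus} is both unjustified and unnecessary: the hypothesis ``$(R,N_0)=1$ or $(R,K_0)=1$'' is not automatic for composite $R$ (nothing prevents, say, $R=6$ with $2\mid N_0$ and $3\mid K_0$), and the asserted inequality $(R+1)\rank V_1<25$ fails for most of the relevant conjugacy classes (e.g.\ a class with a $10$-dimensional fixed space and $R=4$ gives $50$). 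What actually closes this case in the paper is Lemma \ref{order} (which yields $|\tau|=R$ via the fixed-point-free action on $E_\tau$ and Lemma \ref{Fixedpf}) combined with the frame-shape observation that $m_{|\tau|}>0$ for every class of $Co_0$ with nontrivial fixed space, so $\tau$ automatically has a primitive $R$-th root of unity as an eigenvalue; you mention Lemma \ref{order} but only extract from it the prime-power case.

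The case $V^{[\alpha]}\not\cong V_\Lambda$ with $R$ composite is the heart of the theorem, and here your argument is a gesture rather than a proof, as you yourself acknowledge. ``Forcing a cycle of length $R$ among the permuted summands'' is precisely what cannot be done formally: a fixed-point-free automorphism of composite order $R$ of a root system need not have a primitive $R$-th root of unity among its eigenvalues on the Cartan subalgebra, and neither Lemma \ref{KeyLemma3} nor the simplicity and faithfulness claims you invoke (the simplicity of $\Comm(M(\CH),V^{[\alpha]})$ is itself unproved, and $U(mN_0\alpha)\neq 0$ does not give $U(mN_0\alpha)_1\neq 0$) rule this out. The paper's proof is structurally different here: it performs a \emph{second} orbifold construction by a W-element $\beta$ of $V^{[N_0\alpha]}$ to land on a Niemeier lattice VOA $V_E$, obtains a commuting pair $\tau,\sigma\in Co_0$ with $(\bC\Lambda)^{\langle\tau,\sigma\rangle}=\CH$ and $W=V_{\Lambda_G}$, proves that the cosets supporting weight-one vectors generate $B/L$ (Proposition \ref{KPropA}, via Lemma \ref{order2}), and then runs a case analysis over the classified rank-two abelian subgroups of $Co_0$ with fixed space of dimension at least $4$ (Appendix A), computing $\langle\theta,\chi\rangle\neq 0$ for the degree-$24$ character $\chi$ and a suitable linear character $\theta$ of $G$ to produce the order-$R$ eigenvalue inside $(\bC\Lambda)^{\langle\sigma\rangle}=\tilde\CH$. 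None of these ingredients --- the second orbifold step, Proposition \ref{KPropA}, the subgroup classification, the character computation --- appears in your proposal, so the theorem is not proved.
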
 

As we have shown (see Theorem \ref{Lalpha} and Proposition \ref{N0a}),
 $\Comm(M(\CH),V^{[\alpha]})=
\oplus_{m=0}^{R-1}U(mN _0\alpha)$ and $\CH^{\perp}=\oplus_{m=1}^{R-1} U(mN_0\alpha)_1$ and 
$\tilde{g}$ acts on $U(mN_0\alpha)$ as a multiple by a scalar $e^{2\pi im/R}$. 
Therefore, the above theorem holds if there is an $m$ with $(m,R)=1$ such that $U(mN_0\alpha)_1\not=0$.  Namely, the assertion depends only 
on the structure of $\oplus_{m=0}^{R-1}U(mN_0\alpha)$, but not on construction of $V^{[\alpha]}$. Therefore, we will use an easier orbifold construction $V^{[N_0\alpha]}$ 
associated with $g^{N_0}$, because $\Comm(M(\CH), V^{[N_0\alpha]})$ is also isomorphic to $\oplus_{m=0}^{R-1}U(mN_0\alpha)$ as a VOA and $N_0\alpha\in L^{\ast}$.

Let $g'$ be a reverse automorphism of $g^{N_0}$ in $\Aut(V^{[N_0\alpha]})$.  
Since $R\not=1$, 
\[
\tilde{U}:=\Comm(M(\CH),V^{[N_0\alpha]})=\oplus_{m=0}^{R-1}U(mN_0\alpha)
\] 
is  larger than $U(0)=\Comm(M(\CH),V)$ and so we have $V^{[N_0\alpha]}\in \CF^1$. 
As we have shown, $\tilde{\CH}=\oplus_{m=1}^{R-1} U(mN_0\alpha)_1+\CH$ is 
the unique Cartan subalgebra of $V^{[N_0\alpha]}_1$ containing $\CH$. 
Since $N_0\alpha\in L^{\ast}$, $V_L\subseteq V^{<g^{N_0}>}$ and so 
$\tilde{L}=L+\bZ N_0\alpha$ and we have 
$\Comm(\tilde{U},V^{[N_0\alpha]})=V_{\tilde{L}}$. 

Viewing $V^{[N_0\alpha]}$ as a $\tilde{U}\otimes V_{\tilde{L}}$-module, 
we have $V^{[N_0\alpha]}=
\oplus_{\delta\in (\tilde{L}^{\ast})/\tilde{L}}\tilde{U}(\delta)\otimes 
V_{\tilde{L}+\delta}$. 
We note $\tilde{U}(\mu)=\oplus_{m=0}^{R-1}U(\mu+mN_0\alpha)$.

Since $V^{[N_0\alpha]}\in \CF^1$, 
there is a $g'$-invariant $W$-element $\beta$ such that $(V^{[N_0\alpha]})^{[\beta]}\cong V_{\Lambda}$ by Lemma \ref{KeyLemma3}. 
We note $\beta\in \tilde{\CH}^{<\tilde{g}>}=\CH$. 
Let $\tilde{K}_0$ and $\tilde{N}_0$ be positive integers such that $(\tilde{K}_0, \tilde{N}_0)=1$ and $\langle \beta,\beta\rangle=\frac{2\tilde{K}_0}{\tilde{N}_0}$. 
Let $\tilde{R}$ be the smallest integer such that 
$\tilde{R}\tilde{N}_0 \beta \in \tilde{L}$.
Then as we showed in Proposition \ref{N0a}, $\tilde{N}_0\beta\in (\tilde{L})^{\ast}$ and so 
$\langle N_0\alpha, \tilde{N}_0\beta\rangle\in \bZ$. Furthermore, 
since $\rank((V^{[N_0\alpha]})^{[\tilde{N}_0\beta]})=\rank((V^{[N_0\alpha]})^{[\beta]})=24$, there is a Niemeier lattice 
$E$ such that $V^{[N_0\alpha, \tilde{N}_0\beta]}=V_E$.

Set $W=\Comm(M(\CH), V^{[N_0\alpha,\tilde{N}_0\beta]})$. 
Then 
$$W=\oplus_{n=0}^{\tilde{R}-1}\tilde{U}(n\tilde{N}_0\beta)
=\oplus_{m=0}^{R-1}\oplus_{n=0}^{\tilde{R}-1}U(mN_0\alpha+n\tilde{N}_0\beta).$$

Set $B=L+\bZ N_0\alpha+\bZ \tilde{N}_0\beta$, which is an even lattice and 
set $B_0=\{\mu\in B \mid U(\mu)_1\not=0\}$. We note $|B/L|=R\tilde{R}$. 

We can write $W=\oplus_{\mu\in B/L}U(\mu)$. Note that $U(\mu_1)\times U(\mu_2)\subseteq U(\mu_1+\mu_2)$. 
The statement in Theorem \ref{KeyThmD} is equivalent to $mN_0\alpha\in B_0$ for some $m$ with $(m,R)=1$. 

Since we take $\beta\in \CH$, $B\subseteq \CH$. 
Let $\tilde{f}\in \Aut(V^{[N_0\alpha,\beta]})$ be the reverse automorphism of $f=\exp(2\pi i \beta(0)) \in \Aut(V^{[N_0\alpha]})$.   
We note that $\tilde{f}$ acts on $$\tilde{U}(n\tilde{N}_0\beta)=\oplus_{m=0}^{R-1}U(mN_0\alpha+n\tilde{N}_0\beta)$$ as a 
multiple by a scalar $e^{2\pi in/\tilde{R}}$. 
\medskip

We will first show that $B_0+L$ generate $B$. 

\begin{prop} \label{KPropA}
$W={\rm VOA}(\{U(\mu)\mid \mu\in B_0+L\})$. Equivalently, $B=<B_0+L>$. 
\end{prop}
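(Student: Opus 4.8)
The plan is to prove the equivalent statement $B=\langle B_0+L\rangle$ by realizing $W$ as one member of a commutant pair inside the Niemeier lattice VOA $V_E=V^{[N_0\alpha,\tilde N_0\beta]}$ and invoking the double-commutant property of holomorphic vertex operator algebras. First I would set $W':={\rm VOA}(\{U(\mu)\mid\mu\in B_0+L\})$ and reduce the assertion to $W'=W$: since each $U(\mu)$ is a simple current $U$-module and the fusion $U(\mu_1)\boxtimes_U U(\mu_2)=U(\mu_1+\mu_2)$ is realized by the vertex operators of $W$, the subalgebra $W'$ is the sum of those $U(\mu)$ with $\mu\in\langle B_0+L\rangle$. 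Moreover $B_0\neq\emptyset$, because $\rank V_1<24=\rank (V_E)_1$ forces $W_1=\Comm(M(\CH),V_E)_1\neq 0$; and $B_0=-B_0$ since the contragredient module of $U(\mu)$ is $U(-\mu)$, so that $U=U(0)=U(\mu)\boxtimes_U U(-\mu)\subseteq W'$ for any $\mu\in B_0$. Hence $W'=\bigoplus_{\mu\in\langle B_0+L\rangle/L}U(\mu)$, and $W'=W$ is precisely $\langle B_0+L\rangle=B$.

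Next I would compute the relevant commutants inside the lattice VOA $V_E$. By the results on multiple constructions one has $\Comm(W,V_E)=V_B$, and since $M(\CH)\subseteq V_B$ this gives $\Comm(V_B,V_E)\subseteq\Comm(M(\CH),V_E)=W$, hence $\Comm(V_B,V_E)=W$. Now $W_1$ is the centralizer of $\CH$ in $(V_E)_1$ intersected with $\CH^{\perp}$; it is a reductive Lie algebra, and its Cartan subalgebra $\CH'$ together with $\CH$ spans a Cartan subalgebra $\bC E$ of $(V_E)_1$. Consequently ${\rm VOA}(W_1)\supseteq M(\CH')$ and $\Comm({\rm VOA}(W_1),V_E)\subseteq\Comm(M(\CH'),V_E)=V_B$. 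Because $W_1=\bigoplus_{\mu\in B_0/L}U(\mu)_1$ we have ${\rm VOA}(W_1)\subseteq W'\subseteq W$, and taking commutants (which reverses inclusions) yields
\[
V_B=\Comm(W,V_E)\subseteq\Comm(W',V_E)\subseteq\Comm({\rm VOA}(W_1),V_E)\subseteq V_B ,
\]
so $\Comm(W',V_E)=V_B$.

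Finally, $W'$ is a simple-current extension of the strongly regular vertex operator algebra $U$, hence is itself simple and strongly regular, and $V_E$ is holomorphic; therefore the double-commutant property gives $\Comm(\Comm(W',V_E),V_E)=W'$. Combining this with the preceding paragraph, $W'=\Comm(\Comm(W',V_E),V_E)=\Comm(V_B,V_E)=W$, which is the desired equality. The one non-routine ingredient is this last identity $\Comm(\Comm(W',V_E),V_E)=W'$, which relies on the strong regularity (and simplicity) of $W'$ — that is, on the fact that a simple-current extension of a strongly regular VOA is again strongly regular and simple — together with the holomorphy of $V_E$; the remaining steps, namely the simple-current fusion bookkeeping and the commutant identities for Heisenberg subalgebras of lattice VOAs, are elementary.
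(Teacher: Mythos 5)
Your reduction to showing $\Comm(W',V_E)=V_B$ is fine (and indeed $\Comm(W',V_E)=\Comm(W,V_E)=V_B$ holds more or less automatically, since both $W'$ and $W$ contain a Heisenberg algebra spanning $\CH^{\perp}\cap\bC E$), but the final step is a genuine gap: the ``double-commutant property'' $\Comm(\Comm(W',V_E),V_E)=W'$ is \emph{not} a consequence of $W'$ being simple and strongly regular and $V_E$ being holomorphic. The general fact is only $W'\subseteq\Comm(\Comm(W',V_E),V_E)$; equality holds automatically when $W'$ is itself a commutant, but $W'$ is defined here as a generated subalgebra, and that is exactly what is at stake. A concrete counterexample to the property you invoke: take $V_E=V_{E_8}$ and $W'=V_{\bZ 2\gamma}$ for a root $\gamma$ of $E_8$. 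Then $W'$ is a simple, strongly regular lattice subVOA, yet $\Comm(W',V_{E_8})=\Comm(V_{\bZ\gamma},V_{E_8})=V_{E_8\cap(\bC\gamma)^{\perp}}$ and hence $\Comm(\Comm(W',V_{E_8}),V_{E_8})=V_{\bZ\gamma}\supsetneq W'$. In your setting the situation is structurally identical: $W'=\bigoplus_{\mu\in\langle B_0+L\rangle/L}U(\mu)$ and $W=\bigoplus_{\mu\in B/L}U(\mu)$ are two simple-current extensions of $U$ with the \emph{same} commutant $V_B$, so the commutant computation cannot distinguish them; asserting the double-commutant identity for $W'$ is logically equivalent to asserting $W'=W$, i.e.\ to the proposition itself.

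What is actually needed, and what the paper supplies, is an input that forces enough cosets $U(\mu)$ with $\mu$ generating $B/L$ to have nonzero weight-one part. The paper does this by choosing splittings $B/L=(\bZ\alpha'+L)/L\oplus(\bZ\beta'+L)/L$ adapted to the elementary divisors $(s,t)$ of $B/L$ and applying Lemma \ref{order2} twice: since $V^{[\alpha',\beta']}\cong V_E$ is a Niemeier lattice VOA, the reverse automorphism $\tilde f'=\hat\sigma'\exp(\delta(0))$ restricted to $\bC E$ has \emph{full} order $R_2$ (this uses Lemma \ref{order}, hence the fixed-point-free standard-lift Lemma \ref{Fixedpf}), which yields some $m\alpha'+n\beta'\in\langle B_0+L\rangle$ with $(n,R_2)=1$; iterating produces generators of $B/L$ inside $\langle B_0+L\rangle$. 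This order-of-automorphism argument is the real content and is not recoverable from commutant formalism alone. If you want to salvage your approach you would have to prove, rather than quote, that $V_E$ decomposes over $W'\otimes V_B$ with $V_B$ paired only against $W'$ itself --- which again amounts to the statement being proved.
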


Since $B/L$ is a finite abelian group of at most rank $2$, there are $\alpha', \beta'\in B$ satisfying 
$$ B/L=(\bZ \alpha'+L)/L\oplus (\bZ\beta' +L)/L. \leqno{\mbox{\emph{Condition D:}}}$$
We first treat pairs $(\alpha',\beta')$ satisfying Condition D and set $R_1=|(\bZ \alpha'+L)/L|$ and $R_2=|(\bZ\alpha'+\bZ\beta'+L)/(\bZ\alpha'+L)|$.  
In this case, $(\bZ \alpha'+L)\cap (\bZ \beta' +L)=L$, which means that 
$n\beta'\in \bZ\alpha'+L$ if and only if $n\beta'\in L$ and so $R_2=|(\bZ\beta'+L)/L|$ and 
the orders of $\exp(2\pi i\beta'(0))$ on $V$ and on $V^{[\alpha']}$ are the same $R_2$.  
Then we can  construct VOAs $V^{[\alpha']}$ and $V^{[\alpha',\beta']}$. 
We note $V^{[\alpha',\beta']}\cong V^{[N_0\alpha,\tilde{N}_0\beta]}\cong V_E$ by 
Proposition \ref{abLindep}.

For $V^{[\alpha']}$, we use  ${\ast}'$ to denote the corresponding subVOAs and sublattices, e.g. 
$\tilde{U}'=\Comm(M(\CH),V^{[\alpha']})=\oplus_{m=0}^{R_1} U(m\alpha')$, 
$\tilde{\CH}'=\oplus_{m=0}^{R_1}U(m\alpha')_1+\CH$ 
is the unique Cartan subalgebra of $V^{[\alpha']}_1$ 
and $\Comm(\tilde{U}',V^{[\alpha']})=V_{\tilde{L}'}$. 
Then $$\Comm(M(\CH),V^{[\alpha',\beta']})
=\oplus_{m=0}^{R_1-1}\oplus_{n=0}^{R_2-1} U(m\alpha'+n\beta')=W.$$ 

\begin{ntn}
Let $\tilde{f'}\in \Aut(V^{[\alpha',\beta']})$ be the reverse automorphism 
of  $f'=\exp(2\pi i\beta'(0))\in \Aut(V^{[\alpha']})$. 
Then there is $\sigma'\in O(E)$ and $\delta\in \bC E$ such that 
$\tilde{f'}=\hat{\sigma}'\exp(\delta(0))$, where $\hat{\sigma}'$ denotes a standard 
lift of $\sigma'$. Since $\sigma'$ acts on 
$E_{\sigma'}$ fixed point freely, we may assume $\delta\in E^{<\sigma'>}$, 
where $E_{\sigma'}=E\cap (E^{<\sigma'>})^{\perp}$. We note $\bC E^{<\sigma'>}=\tilde{\CH}'$.
\end{ntn}

Since $U(m\alpha')\subseteq \Comm(M(\CH),V^{[\alpha']})\subseteq (V^{[\alpha']})^{<f'>}$, 
$\tilde{f}'$ acts on $U(m\alpha')$ trivially.  
Also, $U(m\alpha'+n\beta')\otimes e^{n\beta'}\subseteq V^{[\alpha']}$ and 
$U(m\alpha'+n\beta')=U(m\alpha'+n\beta')
\otimes e^{\frac{(nK'-(nK'_0)R_2)\beta'}{K'}}\subseteq V^{[\alpha',\beta']}$ by the construction. Therefore, $\tilde{f}'$ acts on $U(m\alpha'+n\beta')$ as a multiple by a scalar $e^{2\pi in/R_2}$, 
where $ \langle \beta',\beta'\rangle=2K'_0\in 2\bZ$ and $K'=R_2K'_0$ and $N'=R_2$. 
In particular, $|\tilde{f}'|_{W}|=R_2$. 


\begin{lmm}\label{order2}  For a pair $\alpha',\beta'\in B$ satisfying Condition D, 
there is $u\in <B_0+L>$ such that $u=m\alpha'+n\beta'$ with $(n,R_2)=1$.  
\end{lmm}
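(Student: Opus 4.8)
The plan is to replay, for the second‑stage orbifold, the argument that proved Lemma~\ref{order}. Since $\alpha',\beta'$ satisfy Condition D, every element of $B/L$ has a well‑defined $\beta'$‑coordinate in $\bZ/R_2$, and $R_2$ is the order of $f'=\exp(2\pi i\widehat{\beta'}(0))$ on $V^{[\alpha']}$; recall also that $V^{[\alpha',\beta']}=(V^{[\alpha']})^{[\widehat{\beta'}]}\cong V_E$, that $\tilde{f}'=\hat\sigma'\exp(\delta(0))$ is its reverse automorphism with $\delta\in\tilde{\CH}'=\bC E^{<\sigma'>}$, that $\sigma'$ acts fixed point freely on $E_{\sigma'}=E\cap(E^{<\sigma'>})^{\perp}$, and that $\tilde{f}'$ acts on $U(m\alpha'+n\beta')$ by the scalar $e^{2\pi in/R_2}$. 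Thus $W=\Comm(M(\CH),V_E)=\bigoplus_{n=0}^{R_2-1}W_n$ with $W_n=\bigoplus_m U(m\alpha'+n\beta')$ the $e^{2\pi in/R_2}$‑eigenspace of $\tilde{f}'$, and, writing $Y_n=\Comm(M(\tilde{\CH}'),V_E)\cap W_n$, we have $\Comm(M(\tilde{\CH}'),V_E)=\bigoplus_{n=0}^{R_2-1}Y_n$ with $Y_n\subseteq W_n=\bigoplus_m U(m\alpha'+n\beta')$.

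First I would identify $\Comm(M(\tilde{\CH}'),V_E)$ with the lattice VOA $V_{E_{\sigma'}}$ (using $\tilde{\CH}'=\bC E^{<\sigma'>}$, so that the commutant carries no leftover Heisenberg tensor factor) and show $|\sigma'_{|E_{\sigma'}}|=R_2$. Indeed $\delta\in\tilde{\CH}'$ is orthogonal to $E_{\sigma'}$, so $\exp(\delta(0))$ acts trivially on $V_{E_{\sigma'}}$ and $\tilde{f}'_{|V_{E_{\sigma'}}}=\hat\sigma'_{|V_{E_{\sigma'}}}$ is a standard lift of $\sigma'_{|E_{\sigma'}}$; by Lemma~\ref{Fixedpf} its order equals $|\sigma'_{|E_{\sigma'}}|$. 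On the other hand $V^{[\alpha']}$ is holomorphic, so each $Y_n\neq 0$ (cf.\ \cite{KM}), and $\tilde{f}'$ acts on $Y_1\neq 0$ by the primitive $R_2$‑th root $e^{2\pi i/R_2}$; hence $|\tilde{f}'_{|V_{E_{\sigma'}}}|=R_2$, giving $|\sigma'_{|E_{\sigma'}}|=R_2$.

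Next I would run the ``fixed point free'' part of the Lemma~\ref{order} argument: $\bC E_{\sigma'}\subseteq (V_{E_{\sigma'}})_1=\bigoplus_{n=0}^{R_2-1}(Y_n)_1$, and $\sigma'_{|E_{\sigma'}}$ is fixed point free of order $R_2$, so $\tilde{f}'$ has order $R_2$ on $\bC E_{\sigma'}$; since $\tilde{f}'$ acts on $(Y_n)_1$ by $e^{2\pi in/R_2}$, this forces $\mathrm{Span}_{\bZ}\{\,n\mid (Y_n)_1\neq 0\,\}+\bZ R_2=\bZ$. For each such $n$ we have $0\neq (Y_n)_1\subseteq (W_n)_1=\bigoplus_m U(m\alpha'+n\beta')_1$, so $U(m_n\alpha'+n\beta')_1\neq 0$, i.e.\ $m_n\alpha'+n\beta'\in B_0$, for some $m_n\in\bZ$. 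Choosing $c_n\in\bZ$ with $\sum_n c_n n\equiv 1\pmod{R_2}$ and setting $u=\sum_n c_n(m_n\alpha'+n\beta')=a\alpha'+b\beta'$ (so $b=\sum_n c_n n\equiv1\pmod{R_2}$), we obtain $u\in\langle B_0+L\rangle$ with $(b,R_2)=1$, which is the required element.

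The step I expect to be the main obstacle is the lattice/VOA bookkeeping in the first paragraph: carefully matching the triple $(V^{[\alpha']},\tilde{\CH}',\widehat{\beta'})$ to the hypotheses under which the Lemma~\ref{order} analysis applies — in particular verifying that $\widehat{\beta'}$ lies in the dual of the lattice $\Lambda'$ with $V_{\Lambda'}=\Comm(\Comm(M(\tilde{\CH}'),V^{[\alpha']}),V^{[\alpha']})$ and that $R_2$ is exactly the least positive integer with $R_2\widehat{\beta'}\in\Lambda'$, so that $\Comm(M(\tilde{\CH}'),V_E)$ genuinely decomposes into $R_2$ nonzero homogeneous pieces $Y_n$ — and making the inclusion $Y_n\subseteq\bigoplus_m U(m\alpha'+n\beta')$ precise. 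The relevant inputs are $\langle\widehat{\beta'},\widehat{\beta'}\rangle=2K'_0\in 2\bZ$, $\beta'\in L^{*}$, $\langle\alpha',\beta'\rangle\in\bZ$ and the orthogonality of $\CH$ with the remaining directions of $\tilde{\CH}'$, but assembling them correctly is the delicate point.
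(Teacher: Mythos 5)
Your proof is correct and takes essentially the same route as the paper: where you re-derive the fixed-point-free/standard-lift argument (Lemma \ref{Fixedpf}) for $\sigma'$ on $E_{\sigma'}$ inline, the paper simply cites Lemma \ref{order} applied to $V^{[\alpha',\beta']}=V_E$ viewed as the orbifold of $V^{[\alpha']}$ by $f'=\exp(2\pi i\beta'(0))$ to get $|\sigma'|=R_2$ and hence the span condition ${\rm Span}_{\bZ}\{n\mid \bC E\cap U(m\alpha'+n\beta')\not=0\}+\bZ R_2=\bZ$. Your explicit final step (choosing $c_n$ with $\sum c_n n\equiv 1 \pmod {R_2}$ and summing elements of $B_0$ inside the subgroup $\langle B_0+L\rangle$) is exactly what the paper leaves implicit in passing from that span condition to the existence of $u$.
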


\pr Recall that $\bC E\subseteq \oplus U(m\alpha'+n\beta')_1+\CH$ and thus $|\sigma'|=|\tilde{f}'_{|\bC E}|\leq R_2$.  Since $V^{[\alpha',\beta']}=V_E$, we have $|\sigma'|=R_2$ by Lemma \ref{order}, which 
means $$ {\rm Span}_{\bZ}\{n\mid \bC E\cap U(m\alpha'+n\beta')\not=0\}+\bZ R_2=\bZ$$ and so there is $u=m\alpha'+n\beta'\in <B_0+L>$ with 
$(n,R_2)=1$.
\prend

We now start the proof of Proposition \ref{KPropA}.  Let $(s,t)$ be elementary divisors of $B/L$, that is, $B/L\cong \bZ/s\bZ\oplus \bZ/t\bZ$ with $s|t$. 

We first choose $\alpha',\beta'\in B$ satisfying Condition (D) such that 
$|(\beta'+L)/L|=t$, that is, $R_2=t$.  
By Lemma \ref{order2}, there is $u=m\alpha'+n\beta'\in <B_0+L>$ such that 
$(n,R_2)=(n,t)=1$, which means that $|u|=t$ and so 
$<(u+L)/L>$ has a complement in $B/L$.

Without loss, we may choose $\alpha', \beta'\in B$ such that $\alpha'=u$ and $<\beta'+L>$ 
is its complement. In this case, $R_1=t$ and $R_2=s$. 
Again by Lemma \ref{order2}, there is 
$w=m\alpha'+n\beta'\in <B_0+L>$ such that $(n,R_2)=1$. 
Since $w, \alpha'\in <B_0+L>$, we have $n\beta'\in <B_0+L>$. 
Furthermore, since $(n,R_2)=(n,|(\beta'+L)/L|)=1$, $\beta'$ is also 
in $<B_0+L>$. Therefore, $$B=L+<\alpha',\beta'>\subseteq <B_0+L>.$$ 
This completes the proof of Proposition \ref{KPropA}.

\subsection{\bf The proof of Theorem \ref{KeyThmD} }

Next we come back to the proof of Theorem \ref{KeyThmD}. 

\begin{ntn}
Recall that $g'\in \Aut(V^{[N_0\alpha]})$ is 
a reverse automorphism of $\exp(2\pi iN_0\alpha(0))$. 
Since $V_{L+\bZ N_0\alpha}\subseteq (V^{[N_0\alpha]})^{<g'>}$, 
we can induce $g'$ to 
$\tilde{g}'\in \Aut(V^{[N_0\alpha,\beta]})$. 
Let $\tilde{f}\in \Aut(V^{[N_0\alpha,\beta]})$ be a reverse automorphism of $f=\exp(2\pi i\beta(0))\in \Aut(V^{[N_0\alpha]})$. 
Since $\tilde{g}', \tilde{f}\in \Aut(V_{\Lambda})$, there are 
$\tau,\sigma\in Co_0=O(\Lambda)$ and $\delta_1,\delta_2\in \bC \Lambda$ 
such that $\tilde{g}'=\hat{\tau}\exp(\delta_1(0))$ and $\tilde{f}=\hat{\sigma}\exp(
\delta_2(0))$.  
\end{ntn}

\begin{lmm}  $G=<\tau,\sigma>$ is abelian. 
\end{lmm}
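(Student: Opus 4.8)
The plan is to show that the automorphisms $\tau = \tilde g'_{|\bC\Lambda}$ and $\sigma = \tilde f_{|\bC\Lambda}$ commute by exhibiting them both as restrictions to $\bC\Lambda$ of genuinely commuting automorphisms of the big vertex algebra $\widehat V = V\otimes V_{\Pi_{1,1}}\otimes V_{\Pi_{1,1}}$ used in Proposition \ref{abcommute}. Recall from that construction and from Remark \ref{Reverseq} that a reverse automorphism is, up to an inner piece, the restriction of $\exp(-2\pi i q(0)/N)$ for the appropriate null vector $q$ of one of the two hyperbolic planes. Since $\tilde g'$ comes from the $N_0\alpha$-construction and $\tilde f$ from the $\beta$-construction, and these two Lorentzian constructions were carried out using the two \emph{different} copies of $\Pi_{1,1}$ (with mutually orthogonal null vectors $q_1, q_2$), the operators $\exp(-2\pi i q_1(0)/R)$ and $\exp(-2\pi i q_2(0)/\tilde R)$ commute on $\widehat V$, hence descend to commuting automorphisms of $J_{N_0\alpha,\hat\beta}/P_{N_0\alpha,\hat\beta}\cong V^{[N_0\alpha,\beta]}\cong V_\Lambda$.

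First I would set up the precise identification: by Proposition \ref{abcommute} (and the notation $V^{[\alpha,\beta]}$), $V^{[N_0\alpha,\beta]}$ is realized as $\Comm(M(\rho_{N_0\alpha},\rho_{\hat\beta}),\widehat V)/P$, where $\rho_{N_0\alpha}$ lives in $V\otimes V_{\Pi_{1,1}}^{(1)}$ and $\rho_{\hat\beta}$ in the span involving $V_{\Pi_{1,1}}^{(2)}$; the two null vectors $q_1\in\Pi_{1,1}^{(1)}$ and $q_2\in\Pi_{1,1}^{(2)}$ satisfy $\langle q_1,q_2\rangle=0$ and $q_1(0)$, $q_2(0)$ commute as operators on $\widehat V$. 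Then $\tilde g' = \hat\tau\exp(\delta_1(0))$ is, by Remark \ref{Reverseq} applied inside the first construction and then carried along through the second, the automorphism induced by $\exp(-2\pi i q_1(0)/R)$; similarly $\tilde f$ is induced by $\exp(-2\pi i q_2(0)/\tilde R)$ (here $\tilde R$ is the order of $f$, as in the discussion preceding the lemma). Since these two lift to commuting automorphisms of $\widehat V$ that both fix the $\rho$'s and the $P$-generators, they commute on $V^{[N_0\alpha,\beta]}=V_\Lambda$, and a fortiori their restrictions $\tau,\sigma$ to $\bC\Lambda$ commute in $Co_0=O(\Lambda)$. Therefore $G=\langle\tau,\sigma\rangle$ is abelian.

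The alternative, and perhaps cleaner, route is to invoke Theorem \ref{autocommute} directly: $\tilde f$ arises as $\hat f$ for $f=\exp(2\pi i\beta(0))\in\Aut(V^{[N_0\alpha]})$, while $\tilde g'$ is the reverse automorphism of the $N_0\alpha$-construction. One checks that $f$ acts trivially on $V_{\bZ R(N_0\alpha)}$ — indeed $\beta\in\CH=\tilde\CH^{<\tilde g>}$, so $\beta$ and $N_0\alpha$ lie in a common Cartan subalgebra and $f$ fixes $e^{R N_0\alpha}$ since $\langle\beta, R N_0\alpha\rangle\in\bZ$ — so Theorem \ref{autocommute} applies with "$\alpha$"$=N_0\alpha$ and gives $[\hat f,\tilde g']=1$ on $V^{[N_0\alpha]}$, hence (after inducing up to $V^{[N_0\alpha,\beta]}$ compatibly) $[\tilde f,\tilde g']=1$ on $V_\Lambda$. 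Restricting to $\bC\Lambda$ yields $[\sigma,\tau]=1$.

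The main obstacle I anticipate is purely bookkeeping: making sure the inner factors $\exp(\delta_i(0))$ do not spoil commutativity and that the two reverse automorphisms, which a priori are defined on different intermediate VOAs ($V^{[N_0\alpha]}$ versus its further orbifold), are being compared as automorphisms of one and the same $V_\Lambda$. This is handled by choosing $\delta_1,\delta_2\in\bC\Lambda$ (possible because each $\sigma',\tau$ acts fixed-point-freely on the relevant $E_\sigma$-part, as already noted in the excerpt) and observing $\delta_1,\delta_2$ lie in the common Cartan-type subspace $\tilde\CH\subseteq\bC\Lambda$, so $\exp(\delta_1(0))$ and $\exp(\delta_2(0))$ commute with each other and with both $\hat\tau$ and $\hat\sigma$ up to diagram-automorphism-of-lattice subtleties that are absorbed because $\tau,\sigma$ fix the relevant sublattices; one then reads off $[\tau,\sigma]=1$ from $[\tilde g',\tilde f]=1$ by projecting to the $O(\Lambda)$-component. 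I expect the symmetry-of-double-construction argument (Proposition \ref{abcommute} plus Remark \ref{Reverseq}) to do essentially all the real work.
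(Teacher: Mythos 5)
Your proposal is correct and matches the paper's proof, which is precisely the one-line appeal to Theorem \ref{autocommute} (whose own proof is the orthogonal-hyperbolic-planes argument you spell out in your first route): $[\tilde{g}',\tilde{f}]=1$, hence $[\tau,\sigma]=1$ on $\bC\Lambda$. One small mislabel in your second route: $\tilde{f}$ is the \emph{reverse} automorphism of $f=\exp(2\pi i\beta(0))$, not $\hat{f}$; the clean application of Theorem \ref{autocommute} takes the base VOA to be $V^{[N_0\alpha]}$ and the automorphism to be $g'$, which acts trivially on $V_{L+\bZ N_0\alpha}$ and in particular on $V_{\bZ\tilde{R}\tilde{N}_0\beta}$, so that its lift $\tilde{g}'$ commutes with the reverse automorphism $\tilde{f}$ of the $\beta$-construction.
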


\pr 
As we have explained in Theorem \ref{autocommute}, $[\tilde{g}', \tilde{f}]=1$. 
Since $\tau=\tilde{g}'_{|\bC\Lambda}$ and $\sigma=\tilde{f}_{|\bC\Lambda}$,
we have the desired result. 
\prend

Recall that $W=\Comm(M(\CH),V_E)\cong \Comm(M(\CH), V_{\Lambda})$. 
From the process of orbifold constructions,  we have:

\begin{lmm}
$(\bC \Lambda)^G=\CH$ and $W=V_{\Lambda_G}$, where $\Lambda^G=\{v\in \Lambda\mid g(v)=v \text{ for all }g\in G\}$ and $\Lambda_G=\Lambda\cap (\Lambda^G)^{\perp}$. 
\end{lmm}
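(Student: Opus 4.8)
The goal is to identify the fixed-space $(\bC\Lambda)^G$ with $\CH$ and the full commutant $W=\Comm(M(\CH),V_E)$ with the sublattice VOA $V_{\Lambda_G}$. The plan is to trace through the two successive Lorentzian (orbifold) constructions $V\to V^{[N_0\alpha]}\to V^{[N_0\alpha,\beta]}=V_E\cong V_\Lambda$ and keep track of how the Cartan subalgebra $\CH$ of $V_1$ and the reverse automorphisms $\tilde g',\tilde f$ interact with the lattice $\Lambda$. First I would recall that, by Proposition~\ref{abelian} applied twice, $\CH$ sits inside a Cartan subalgebra $\tilde\CH$ of $V^{[N_0\alpha]}_1$ and then inside a Cartan subalgebra $\CH_E$ of $(V_E)_1=\bC\Lambda$ which is uniquely determined by $\CH$; identifying $\widehat\alpha$ with $\alpha$ and $\widehat\beta$ with $\beta$ throughout (as the excerpt does), we view $\CH\subseteq\tilde\CH\subseteq\bC\Lambda$. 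Since $\CH$ is by construction fixed by both $g'$ (it lies in $(V^{[N_0\alpha]})^{<g'>}_1$, indeed in $V^{<g>}_1$) and by $f=\exp(2\pi i\beta(0))$ and hence by $\tilde f$, and since $\tau=\tilde g'_{|\bC\Lambda}$ and $\sigma=\tilde f_{|\bC\Lambda}$, we get $\CH\subseteq(\bC\Lambda)^G$.

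For the reverse inclusion $(\bC\Lambda)^G\subseteq\CH$ I would use the key annihilation statements. By Proposition~\ref{abelian}, $\tilde\CH=\Comm(M(\CH),V^{[N_0\alpha]})_1\oplus\CH=\CH^\perp\oplus\CH$, and $\tilde g'$ acts on $\CH^\perp=\oplus_{m=1}^{R-1}U(mN_0\alpha)_1$ with eigenvalues $e^{2\pi i m/R}$, all nontrivial; so $\CH^\perp$ contains no $G$-fixed vector. Similarly $\tilde f$ acts nontrivially on the part of $\bC\Lambda$ not already in $\tilde\CH$: the new directions produced by the second construction live in $\oplus_{n\ne 0}\tilde U(n\tilde N_0\beta)_1$, on which $\tilde f$ acts by $e^{2\pi i n/\tilde R}\ne 1$. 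Decomposing $\bC\Lambda=\CH\oplus\CH^\perp\oplus(\text{second-step directions})$ and noting $\tilde g'$ fixes $\CH$ pointwise while $\tilde f$ fixes $\CH$ pointwise, any $G$-fixed vector must lie in the intersection of the kernels of $(\tilde g'-1)$ and $(\tilde f-1)$, which by the above is exactly $\CH$. This gives $(\bC\Lambda)^G=\CH$.

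For the second assertion, $W=V_{\Lambda_G}$, I would argue as follows. We already have (from the construction, cf.\ Theorem~\ref{Lalpha} and the paragraph introducing $W$) that $W=\Comm(M(\CH),V_E)=\oplus_{m=0}^{R-1}\oplus_{n=0}^{\tilde R-1}U(mN_0\alpha+n\tilde N_0\beta)=\oplus_{\mu\in B/L}U(\mu)$ where $B=L+\bZ N_0\alpha+\bZ\tilde N_0\beta\subseteq\CH$. On the other hand $\Comm(W,V_E)$ contains $M(\CH)$ and is $C_2$-cofinite, hence is a lattice VOA $V_M$ for some even lattice $M$ with $\bC M=\CH=(\bC\Lambda)^G$, so $M\subseteq\Lambda^G$. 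Since $W\otimes V_M\subseteq V_E=V_\Lambda$ and $W$ commutes with $V_M$, $W$ must be built from those graded pieces of $V_\Lambda$ on which $M(\CH)=M(\bC M)$ acts and which commute with $V_M$; concretely $V_\Lambda=\oplus_{\mu\in\Lambda}M(\bC\Lambda)e^\mu$ and the commutant of $V_{\Lambda^G}$ (equivalently of $M(\CH)$, once one checks the lattice generated is $\Lambda^G$) is $V_{\Lambda_G}$ by the standard description of commutants of sublattice VOAs, where $\Lambda_G=\Lambda\cap(\Lambda^G)^\perp$. Matching dimensions/characters forces $W=V_{\Lambda_G}$; the only subtle point is to verify that $\Comm(W,V_\Lambda)$ is precisely $V_{\Lambda^G}$ and not a proper sublattice, which follows because $V^{<g>}=Z^{0,0}$ and the iterated constructions fix $V_{\Lambda^G}$-pieces, i.e.\ $V_{\Lambda^G}\subseteq\Comm(W,V_E)$ from the construction.

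\textbf{Main obstacle.} The delicate step is the bookkeeping that pins down $\Comm(W,V_E)$ exactly as $V_{\Lambda^G}$ (equivalently, that the lattice generated by $\CH\cap\Lambda$ together with the pieces fixed by $G$ is all of $\Lambda^G$); everything else is linear algebra on eigenspaces of commuting finite-order automorphisms plus the already-established structural results (Propositions~\ref{abelian}, \ref{N0a}, Theorem~\ref{Lalpha}). One cheap way around the obstacle is to invoke the uniqueness of the commutant of a sublattice VOA inside a lattice VOA: once $(\bC\Lambda)^G=\CH$ is known, $\Comm(\Comm(M(\CH),V_\Lambda),V_\Lambda)=V_{\Lambda^G}$ is forced, and then $W=\Comm(M(\CH),V_\Lambda)=V_{\Lambda_G}$ follows from the standard fact that for a primitive sublattice decomposition $\Lambda^G\perp\Lambda_G$ inside $\Lambda$ the two lattice subVOAs $V_{\Lambda^G}$ and $V_{\Lambda_G}$ are mutual commutants in $V_\Lambda$.
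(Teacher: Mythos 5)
Your proposal is correct, and it is essentially the argument the paper intends: the paper states this lemma with no proof beyond "from the process of orbifold constructions," and your unpacking — $\CH$ is fixed by both reverse automorphisms, while $\tilde g'$ acts with nontrivial eigenvalues $e^{2\pi i m/R}$ on $\CH^{\perp}=\oplus_{m=1}^{R-1}U(mN_0\alpha)_1$ and $\tilde f$ acts nontrivially off $\tilde\CH$, so the joint fixed space is exactly $\CH$ — is the intended one. Your "main obstacle" about pinning down $\Comm(W,V_E)$ is in fact a non-issue, as you note at the end: once $(\bC\Lambda)^G=\CH$ is established, $W=\Comm(M(\CH),V_\Lambda)=\oplus_{\mu\in\Lambda,\ \langle\mu,\CH\rangle=0}M(\CH^{\perp})e^{\mu}=V_{\Lambda_G}$ follows directly from the standard description of the commutant of a Heisenberg subVOA in a lattice VOA.
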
 

\begin{remark}
Since $G$ is cyclic or of rank $2$, we know 
$\dim \CH=4,6,8,10,12,16,24$ from the list of such subgroups in $Co_0$  
(see  Appendix A). 
\end{remark}

\noindent

\begin{prop} \label{KeyPropB} 
Let $V\in \CF^2$ and $\widetilde{V}\not=V_{\Lambda}$. Assume $\CH^{\perp}\cap T^{qN_0}\not=0$ for some $q$. Then there is $j$ such that $T^{j+sqN_0}_1 \not\subseteq \widetilde{\CH}$ for all $s\in \bZ$. Similarly, for $h=\exp(2\pi i \frac{N_0\alpha(0)}{K_0})$ and $h^j$-twisted modules $S^j$, if $\CH^{\perp}\cap S^{qK_0,0}\not=0$, then there is $k$ such that $S^{k+sqK_0,0}_1\not\subseteq \widetilde{\CH}$ for all $s\in \bZ$. In particular, since $T^0_1\subseteq \CH$ and $T^j_1\subseteq \CH$ for $(j,N)=1$, we have $N_0\!\not\!|j$ and $(j+sqN_0, N)\not=1$. 
\end{prop}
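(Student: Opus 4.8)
The plan is to show that the existence of a root vector in $\CH^{\perp}\cap T^{qN_0}$ forces, by the multi-construction machinery and Lemma~\ref{order2}, the existence of a twisted sector index $j$ that keeps producing roots outside $\widetilde{\CH}$ after adding any multiple of $qN_0$. First I would translate the hypothesis $\CH^{\perp}\cap T^{qN_0}\neq 0$ into lattice language: since $\widetilde{V}\neq V_{\Lambda}$, we have the decomposition $\Comm(M(\CH),V^{[N_0\alpha]}) = \oplus_{m=0}^{R-1} U(mN_0\alpha)$ and, by Theorem~\ref{tVd} and the identification of twisted sectors with the $Z^{m,0}$, a nonzero root in $\CH^{\perp}\cap T^{qN_0}$ means $U(\mu)_1\neq 0$ for some $\mu\in B$ whose $\bZ N_0\alpha$-component (mod $L$) is $qN_0\alpha$, i.e. $\mu = qN_0\alpha + (\text{something in } \bZ\tilde{N}_0\beta + L)$, and $\mu\notin\CH$. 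So $qN_0\alpha$ lies in the "shadow" of $B_0$ under the projection $B\to B/(L+\bZ\tilde{N}_0\beta)\cong \bZ/R\bZ$.

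Next I would invoke Proposition~\ref{KPropA}: $B = \langle B_0 + L\rangle$, so the roots in $W$ generate the full lattice $B$ over $L$. Combined with the previous step, I want to extract an index $j$ (a residue mod $N$, or equivalently a coset in $B/L$ with nontrivial $\bZ N_0\alpha$-part equal to $jN_0\alpha$ for suitable $j$ after rescaling) such that $U(\nu)_1\neq 0$ for some $\nu$ with $\bZ N_0\alpha$-component congruent to $j$, and moreover this persists under adding $sqN_0\alpha$ for every $s$. The point is that since $qN_0\alpha$ is realized in $B_0$ modulo $L+\bZ\tilde N_0\beta$ and $B_0+L$ generates $B$, the subgroup of $B/L$ generated by the "$\alpha$-shadows" of $B_0$ contains $qN_0\alpha$; I would then pick $j$ so that $jN_0\alpha$ together with all its translates $jN_0\alpha + sqN_0\alpha$ hit elements of $B_0$, using that $B_0$ generates and Lemma~\ref{order2} gives elements with the relevant component coprime to the relevant order. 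Concretely, choosing $\alpha',\beta'$ adapted to Condition D with the $\beta'$-order equal to the appropriate factor, Lemma~\ref{order2} produces $u=m\alpha'+n\beta'\in\langle B_0+L\rangle$ with $(n,R_2)=1$, and translating back gives the index $j$ together with the fact that $U$ has weight-one elements in every coset of the form $j + sqN_0$.

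Then the final sentence is a clean consequence. We have $T^0_1\subseteq\CH\subseteq\widetilde{\CH}$ by construction of $V^{<g>}$, and by Theorem~\ref{Noweightzero}, if $(j+sqN_0,N)=1$ then $T^{j+sqN_0}_1\subseteq\widetilde{\CH}$. Since the index $j$ we produced has $T^{j+sqN_0}_1\not\subseteq\widetilde{\CH}$ for all $s$, neither condition can hold for that $j$: thus $j+sqN_0\not\equiv 0\pmod{N}$ (for all $s$, in particular $N_0\nmid j$ would fail only if some translate were $\equiv0$, so $N_0\nmid j$), and $(j+sqN_0,N)\neq 1$ for all $s$. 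The statement for the $h$-twisted modules $S^j$ is formally identical with the roles of $K$ and $N$ (and $K_0$, $N_0$) interchanged, using $h=\exp(2\pi i N_0\alpha(0)/K_0)$ (note $|h|=K_0\cdot R = K$ after rescaling, consistent with $\tilde V(h)\cong\widetilde V$ from Lemma~\ref{Dual}), and the $S^j$-analogue of Theorem~\ref{Noweightzero}; I would simply run the same argument replacing $N_0\alpha$ by the $h$-analogue and $B$ by the lattice built from $\bZ K_0\alpha_h + \bZ\tilde N_0\beta + L$.

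The main obstacle I expect is the bookkeeping in the middle step: making precise the dictionary between "twisted sector index $j$ mod $N$", "coset in $B/L$ with a prescribed $\bZ N_0\alpha$-component", and "weight-one space of a $U$-coset $U(\mu)$", and then verifying that Lemma~\ref{order2}'s output — which controls the $\beta'$-component modulo its order — can be steered so that the $\alpha$-component picks out exactly the residue $j$ we want \emph{and} survives translation by all multiples of $qN_0$. The subtlety is that $B/L$ has rank at most $2$, so one must choose the basis $\alpha',\beta'$ of Condition D carefully (as in the proof of Proposition~\ref{KPropA}, handling the elementary divisors $(s,t)$) so that the coprimality furnished by Lemma~\ref{order2} lands in the coordinate relevant to the hypothesis $\CH^{\perp}\cap T^{qN_0}\neq 0$ rather than the other coordinate. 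Once that alignment is arranged, the rest is a direct application of Theorem~\ref{Noweightzero} and the fact that $T^0_1\subseteq\CH$.
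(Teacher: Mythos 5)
Your proposal does not follow the paper's argument, and the route you choose has a genuine gap in the middle step that you yourself flag as ``bookkeeping'' but which is in fact the whole content of the proposition. The machinery you invoke --- the lattice $B$, the set $B_0=\{\mu\in B\mid U(\mu)_1\neq 0\}$, Proposition~\ref{KPropA} and Lemma~\ref{order2} --- only keeps track of which twisted sectors contain \emph{Cartan} elements: each $U(\mu)_1$ lies in $\Comm(M(\CH),\cdot)_1$, which by Proposition~\ref{abelian} is abelian and is part of $\widetilde{\CH}$. The conclusion you must reach, $T^{j+sqN_0}_1\not\subseteq\widetilde{\CH}$, is about sectors containing \emph{root vectors}, i.e.\ weight-one elements outside the Cartan. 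No amount of information about which cosets $U(\mu)$ have nonzero weight-one part can, by itself, produce a root vector in a prescribed sector; so the step where you ``extract an index $j$'' from $B=\langle B_0+L\rangle$ does not connect the hypothesis to the conclusion. (Proposition~\ref{KPropA} and Lemma~\ref{order2} are used in the paper for Theorem~\ref{KeyThmD}, which is the complementary statement; Proposition~\ref{KeyPropB} is not derived from them.)

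The missing idea is an adjoint-action/eigenvector argument, which is how the paper proves it in three lines. The hypothesis gives a nonzero $\beta\in U(qN_0\alpha)_1\subseteq\CH^{\perp}\subseteq\widetilde{\CH}$. Since $\widetilde{V}\neq V_{\Lambda}$, the Lie algebra $\widetilde{V}_1$ is semisimple, so $\beta$ pairs nontrivially with some root: there is a root vector $e^{\gamma}$ with $\beta(0)e^{\gamma}=\lambda e^{\gamma}$, $\lambda\neq 0$. Now decompose $e^{\gamma}=\sum_i e^{\gamma}_i$ along $\widetilde{V}=\oplus_i T^{i,0}$. Because $\beta$ lives in the sector $qN_0$, the operator $\beta(0)$ shifts the $\bZ/N\bZ$-grading by $qN_0$, so $\lambda e^{\gamma}_{i+qN_0}=\beta(0)e^{\gamma}_i$; iterating around the finite cyclic orbit shows that if $e^{\gamma}_j\neq 0$ for some $j$ then $e^{\gamma}_{j+sqN_0}\neq 0$ for all $s$, whence $T^{j+sqN_0}_1\not\subseteq\widetilde{\CH}$. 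The $h$-twisted case is identical, and the final ``in particular'' clause then follows from $T^0_1\subseteq\CH$ and Theorem~\ref{Noweightzero} exactly as you say --- that closing part of your write-up is fine, but the core of the proof needs to be replaced by the eigenvector argument above.
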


\pr  By the assumption, there is $0\not=\beta\in U(qN_0\alpha)_1$. Therefore, there is a root vector  $e^{\gamma}\in \widetilde{V}_1$ such that $\beta(0)e^{\gamma}=\lambda e^{\gamma}$ with $\lambda\not=0$. Express $e^{\gamma}=\oplus e^{\gamma}_i \in \oplus_j T^{i,0}_1$ and $e^{\gamma}_j\not=0$ for some $j$. In this case, since $\beta(0)e^{\gamma}=\lambda e^{\gamma}$, $e^{\gamma}_{j+qN_0}\not=0$. Namely, $T^{j+sqN_0}_1 \not\subseteq \widetilde{\CH}$ for any $s\in \bN$ as we desired. For $h$-twisted module, we can prove it similarly. \\

\noindent
{\bf Lemma}\quad If $\sigma=1$, i.e. $\rank(V^{[N_0\alpha]}_1)=24$, then Theorem \ref{KeyThmD} holds. \\

\pr  If $\sigma=1$, then $\widetilde{\CH}=\bC\Lambda$. Since 
$\dim (\bC\Lambda)^{<\tau>} \geq 4$, the order $|\tau|\leq 15$ (see Theorem \ref{thm:A1}). 
In this case, the list of elements in $Co_0$ shows that $m_{|\tau|} >0$ in the frame shape of $\tau$ and so $\tau$ has an eigenvalue of order 
$|\tau|$ on $\bC\Lambda$, which proves Theorem \ref{KeyThmD}. \prend

So, from now on, we will treat the case where $\sigma\not=1$ and 
$\rank(V^{[N_0\alpha]}_1)<24$.

\subsubsection{The case $\dim \CH=4$.}  \label{s:6.11}
We first prepare some information about $R$ for the case $\rank(V_1)=4$.
Since $\dim \CH=4$ and $\dim V_1>24$, $V_1$ is one of $B_4,C_4,D_4,F_4,G_2^{\oplus 2}$. 
Since its dimensions are $36, 36, 28, 52, 28$, respectively, 
we have $\langle \alpha,\alpha\rangle=\frac{2\dim V_1}{\dim V_1-24}=
6,6,14,26/7,14$ and so $N_0=1,1,1,7,1$ and $N$ is a multiple of $14,10,6,18,12$ and $R$ is a multiple of $14,10,6,18/7,12$, respectively. 
On the other hand, since $\dim(\bC\Lambda^{\tau})\geq 4$, 
$R=|\tau|\leq 15$ by Theorem A.1.  Hence $V_1\not\cong F_4$ and 
$R\in \{6,10,12,14\}$ if $V\in \CF^2$ with $\rank V_1=4$.

\subsubsection{Cyclic case }
Assume that $G$ is cyclic, say $G=<\xi>$ with $\xi\in Co_0$. 
Then  $\dim\bC\Lambda^{\xi}\geq 4$ and hence $|\xi|\leq 15$. 
Clearly, we may assume $\tilde{R}=|\sigma|\not=1$ and 
$|G/\langle\sigma\rangle|=R\not=1$ and so $|\xi|$ is a composite number, that is, 
$4,6,8,9,10,12,14,15$ and we may assume $\sigma=\xi^R$. 
For general cases,  
if we prove that $\xi$ has an eigenvalue of order $R$ 
on $\bC \Lambda$, 
then its eigenvectors have to be in $\bC\Lambda^{<\xi^R>}=\tilde{\CH}$ since $\sigma=\xi^R$ and $\tilde{\CH}=(\bC\Lambda)^{<\xi^R>}$.  

Since $\sigma\not=-2A,3C,5C$, the above statement follows from the following statement: 
if $\xi\in Co_0$ has an order of a composite number $n$ and 
$\dim \bC\Lambda^{<\xi>}\geq 4$ and has no eigenvalue on $\bC\Lambda$ of order $m$ for 
$m|n$, then $\xi^m$ belongs to  $-2A$, $3C$, or $5C$.

We will check all elements in $Co_0$. See the data for subgroups of Conway group 
$Co_0$ in Appendix A. Since $\dim \bC\Lambda^{<\xi>}\geq 4$, it is enough to check $\xi$ with $|\xi|\leq 15$. 
First we will explain the notion of frame shape.  An element $\xi\in Co_0$ has the frame shape  $\prod_{n} n^{m_n}$ means that the characteristic polynomial of $\xi$ on $\bC \Lambda$ is given by  $\prod_{n} (x^n-1)^{m_n}$. 
Therefore, if $\xi$ has a frame shape $\prod_n n^{m_n}$, then 
the multiplicity of the eigenvalue $e^{2\pi i/R}$ of $\xi$ on $\bC\Lambda$ 
is given by $\sum_{h}{m_{Rh}}$. 
Set $k=|\xi|$. It is easy to check that  $m_{k} >0$ for every element $\xi\in Co_0$ (except for $30D$) which has non-trivial fixed points. If $\xi$ has positive frame shape, i.e., none of $m_n$ is negative, then $\xi$ has an eigenvalue of any order that divides $|\xi|$.
By the same reason, if the denominator of the frame shape is just $1^k$, then 
it also has eigenvalue of order $R$. 

Therefore, we assume $m_n<0$ for some $n>1$. We first treat the cases $\dim(\bC\Lambda)^{<\xi>}>4$.  There are only $3$ conjugacy classes of $Co_0$ which satisfy these conditions. They are  
$-4A,6C$, and $-6D$ (see Theorem A.1 in \S A). Next we compute the values of  
$\sum_h m_{Rh}$  and $\xi^R$ with  $m_R<0$ and $R>1$. The results are follows:
$$\begin{array}{rlllll}
\xi&\mbox{frame shape}&\dim \bC\Lambda^{<\xi>}&R \mbox{ with }m_{R}<0& 
\sum_h m_{hR} &\xi^{R}   \cr
\hline 
  -4A&1^84^8/2^8                &8        &2&8-8=0&(-4A)^2=-2A \\
   6C&1^42^16^5/3^4            &6        &3& 5-4=1& \\
 -6D&1^53^16^4/2^4             & 6       &2&4-4=0&(-6D)^2=3C \\
\end{array}$$
So, $\xi$ has also an eigenvalue of order $R$ or $\xi^R\not=\sigma$. 

We next treat the case $\bC\Lambda^{<\xi>}=4$. 
In this case, $R=6,10,12,14$ as we showed in \S 6.1.1. 
It is enough to check 
only $\xi\in Co_0$ which has $R>1$ in the denominator of its frame shape such that 
$\sum_h m_{hR}=0$. By Theorem \ref{thm:A1}, there is only one such case: $\xi=-12E$, which has a frame shape $1^23^24^212^2/2^26^2$ and $R=6$. However, $(-12E)^6=-2A$, which contradicts the fact $\sigma\not= -2A$. 
Therefore, we have proved that if $\xi^R=\sigma$, then $\xi$ has also an eigenvalue of 
order $R$ even if $\dim \CH=4$. 

As a consequence, Theorem \ref{KeyThmD} holds if $G$ is cyclic.

\subsubsection{Non cyclic case}
\noindent \textbf{Part 1: The case $\dim \CH=4$.}  
\medskip

We first note  $G\cong \bZ_2\times \bZ_2, \bZ_2\times \bZ_4, 
\bZ_3\times \bZ_3, \bZ_2\times\bZ_6, \bZ_4\times \bZ_4, \bZ_2\times \bZ_8, \bZ_3\times \bZ_6, \bZ_2\times \bZ_{12}, \bZ_5\times\bZ_5, \bZ_3\times \bZ_9$ by Theorem \ref{Conway}.  As we explained in \S \ref{s:6.11},  
$V_1$ is isomorphic to $B_4,C_4,D_4$,or $G_2^{\oplus 2}$. 
Since $G$ is not cyclic, 
$G$ is one of the above 10 groups in Theorem \ref{Conway}, 
that is, $|G|=4,8,9,12,16,18,24,25,27$. Since $R$ divides $|G|$, 
$B_{4}$ with $R=14$ and $C_{4}$ with $R=10$ don't appear in this case. 
So the possible cases are $D_{4}$ and $G_{2}+G_{2}$. \\

\noindent
{\bf Case $V_1\cong D_{4}$.} \quad  In this case, $V_1\cong D_{4,36}$. Then $N_0=1$, $|\tau|=|g|=6$ and $G\cong \bZ_2\times \bZ_6$ or $\bZ_3\times \bZ_6$. 
In either case, $\bC\Lambda^{\sigma}=\tilde{\CH}> \CH$ and so 
there is  $ 0<m,n<6$ such that $\delta\in (T^m)_1\cap \CH^{\perp}$ and 
$v\in (T^n)_1-\CH^{\perp}$ such that $\delta(0)v\not=0$.  
If $(m,6)=1$, then $\tau$ has an eigenvalue of order $6$ on $\CH^{\perp}$ 
and so we may assume $(m,6)\not=1$, that is, 
$T^1\cap \tilde{\CH}=0$ and so we also have $(n,6)\not=1$. 
If $m=2$ or $4$, then one of $n, n+2$, or $n+4$ is congruent to $0$ or $\pm 1 \mod 6$. 
However, for such a number $t$, $T^{t}_1-\tilde{\CH}=0$ by Proposition \ref{KeyPropB}, which is a contradiction. 
If $m=3$, then one of $n, n+3 \equiv 0, \pm 1 \mod 6$ and we also have a contradiction by Proposition \ref{KeyPropB}.  

\medskip

\noindent
{\bf Case $V_1\cong G_{2}^{\oplus 2}$.} \quad 
In this case, $V_1\cong G_{2,24}\oplus G_{2,24}$. 
Since $|\tau|=12$ and the rank of $\Lambda^{\tau}\leq 4$ for any element of order $12$, 
$\mathrm{rank}\,(V^{[\beta]}_1)=4$ and $\bC\Lambda^{<\tau>}=\CH$. 
The possible choice for $\tau$ is 12I or 12J; otherwise $<\tau>$ contains $-2A$ or $3C$.  
From Harada-Lang's list, the Gram matrix of $\Lambda^{<\tau>}$ is either 
\[
\begin{pmatrix}
6&2&2&2\\
2&4&0&0\\
2&0&4&0\\
2&0&0&4
\end{pmatrix}
\quad \text{ or }\quad 
\begin{pmatrix}
4&2&0&0\\
2&4&0&0\\
0&0&8&4\\
0&0&4&8 
\end{pmatrix}
\]
and their determinant is 
$2^6\times 3$ or $2^6\times 3^2$.  On the other hand, since $L=\sqrt{24}(A_2+A_2)$ and $\tilde{L}=L+\bZ\beta$, 
the discriminant of $\tilde{L}$ is $(24)^4\times 3^2/2^2$, which is none of the above. 

\medskip
\noindent \textbf{Part 2. The case dim $\CH>4$.}  
By Theorem \ref{Conway}, $G$ is one of 
$$\begin{array}{llll}
&\dim \CH &   &\cr
(1)&6&\bZ_2\times \bZ_2=\{1A,2A,-2A, 2C \}& \cr
(2)&6&\bZ_2\times \bZ_2=\{1A,2C, 2C,2C \}&\cr
(3)&6&\bZ_2\times \bZ_4=\{1A,2A,2A,-2A, 4C, 4C, 4C, 4C \}& \cr
(4)&6&\bZ_2\times \bZ_4=\{1A,2A,2C,2C,4C,4C,4C,4C\}& \cr
(5)&6&\bZ_2\times \bZ_4=\{1A,2A,2A,2C,4C,4C,4D,4D \}& \cr
(6)&6&\bZ_2\times \bZ_4=\{1A,2A,2A,2A,4C,4C,-4C,-4C \}& \cr
(7)&6&\bZ_2\times \bZ_4=\{1A,2A,2A,2A,4D,4D,4D,4D \}&\cr
(8)&6&\bZ_3\times \bZ_3=\{1A,3B,3B,3B,3B,3B,3B,3C,3C\}&\cr
(9)&6&\bZ_2\times \bZ_6=\{1A,2A,2A,2A,3B,3B,\mbox{6 elts. of }6E\}&\cr 
(10)&6&\bZ_4\times \bZ_4=\{1A,2A,2A,2A,\mbox{12 elts. of }4C \}&\cr
(11)&8&\bZ_2\times \bZ_2=\{1A,2A,2C,2C\}&\cr
(12)&8&\bZ_2\times \bZ_2=\{1A,2A,2A,-2A\}&\cr
(13)&8&\bZ_2\times \bZ_4=\{1A,2A,2A,2A,4C,4C,4C,4C\}&\cr
(14)&8&\bZ_2\times \bZ_4=\{1A,2A,2A,-2A,\mbox{4 elts. of }-4A \}& \cr
(15)&8&\bZ_3\times \bZ_3=\{1A, 3B, 3B, 3B, 3B, 3B, 3B, 3B, 3B \}&,\cr
(16)&10&\bZ_2\times \bZ_2=\{1A,2A,2A,2C\}&\cr
(17)&12&\bZ_2\times \bZ_2=\{1A,2A,2A,2A\}&\cr
\end{array}$$

\noindent
\subsubsection{Character}
In order to check that $\tau$ has an eigenvalue of order $R$ on 
$\bC\Lambda$, we will use the character theory. 
Let $\theta$ be a linear character of $G$ defined by 
$\theta(\tau^m\sigma^n)=\exp(2\pi im/R)$. Then $\ker \theta=< \sigma> $ and $\theta$ is  a faithful 
character of $G/\langle \sigma\rangle$ of order $R$. 
In order to prove that $\tau$ has an eigenvalue of order $R$ on 
$\tilde{\CH}=(\bC\Lambda)^{<\sigma>}$, 
it is enough to show that $G$ has a character $\theta$ on $\bC\Lambda$. Equivalently, it is enough to show 
$0\not=\langle \chi,\theta\rangle$, which is calculated by 
$\frac{1}{|G|}\sum_{g\in G}\theta(g)\chi(g^{-1})$, 
where $\chi$ is the irreducible character of $Co_0$ of degree $24$ 
afforded by the action on $\bC\Lambda$. 
We note that if $g\in Co_0$ has a frame shape $\prod_{n=1}^{\infty} n^{m_n}$, 
then $\chi(g)=m_1$.

\subsubsection{Case 1: $R$ is a prime number}
If $R$ is a prime, then $R=2$ or $3$ since $R||G|$. 
If $\sigma$ satisfies the condition 
$$(S): \qquad \bC\Lambda^{\sigma}\not=\bC\Lambda^G=\CH,$$ 
then clearly $\tau$ acts on $\CH^{\perp}$ non-trivially and so 
$\tau$ has an eigenvalue of order $R$. 
We note $\sigma\not=-2A$ nor $3C$.  So we first check the condition $(S)$ 
for the above 17 groups and $<\sigma>$ with prime index $R$:\\
Since $\dim(\bC \Lambda)^{2A}=16$, $\dim(\bC\Lambda)^{2C}=12$, $\dim(\bC\Lambda)^{4C}=10$, $\dim(\bC\Lambda)^{4D}=8$, $\dim(\bC\Lambda)^{3B}=12$, 
(1),(2),(3),(4),(5),(7),(8),(9),(10),(11),(12),(13),(15),(16),(17) satisfy the condition (S) and so 
they don't appear in the case if $R$ is a prime number.
The remaining cases are (6) and (14), that is,  
$$\begin{array}{llll}
(6)&\dim \CH=6&\bZ_2\times \bZ_4=\{1A,2A,2A,2A,4C,4C,-4C,-4C \}& \sigma=-4C  \cr
(14)&\dim \CH=8&\bZ_2\times \bZ_4=\{1A,2A,2A,-2A,\mbox{4 elts of }-4A \}&\sigma=-4A \end{array}
$$
Since $\LCM(\{\tilde{r}_j\tilde{h}_j^{\vee}/\tilde{N}_0\mid j\})=|\sigma|=4$,   $\tilde{r}_j\tilde{h}_j^{\vee}=4\tilde{N}_0$ for some $j$.
Then $2| \tilde{h}_j^\vee$ and there exist roots $\mu^s$, $s=1,2$
with  $\langle \alpha,\mu^s\rangle \cong s/N\, \mathrm{mod}\,\mathbb{Z}$.
For $-4C$ or $-2A=(-4A)^2$, twisted module of $V_\Lambda$ has top
weight $1$. Then
$\tilde{f}$ (resp., $\tilde{f}^2$) -twisted module has no weight one
element if $\sigma=-4C$ (resp., -$4A$).
As the root vector of $\mu^s$ is in $\tilde{f}^s$-twisted module,
$\sigma \neq -4C, -4A$.  


\subsubsection{Case 2: $R$ is a composite number}
Therefore, the possible cases are 
$G=\bZ_2\times \bZ_4, \bZ_4\times \bZ_4, \bZ_2\times \bZ_6$. \\
(i) If $G=\bZ_2\times \bZ_4$, i.e. cases (3),(4),(5),(6),(7),(13),(14), then $|\tau|=4$ and 
$|\sigma|=2$. Then the set of elements of order $4$ is $\{\tau,\tau^{-1}, \tau\sigma, 
\tau^{-1}\sigma\}$ and $\theta(\tau)=\theta(\tau\sigma)=\sqrt{-1}$ and 
$\theta(\tau^3)=\theta(\tau^3\sigma)=-\sqrt{-1}$. 
Since if $g^{-1}$ is conjugate to $g$ in $Co_0$ for all element $g\in Co_0$, 
we have $\chi(\tau)=\chi(\tau^3)$ and $\chi(\tau\sigma)=\chi(\tau^3\sigma)$. 
Hence we have 
$$\begin{array}{rl}
8\langle \theta,\chi\rangle=&i \chi(\tau)-\chi(\tau^2)-i\chi(\tau^3)+\chi(\tau^4)+i\chi(\tau\sigma)-\chi(\tau^2\sigma)-i\chi(\tau^3\sigma)+\chi(\tau^4\sigma)\cr
=&24-\chi(\tau^2)-\chi(\tau^2\sigma)+\chi(\sigma). \end{array}$$ 
Since $|\tau^2|=|\tau^2\sigma|=|\sigma|=2$, $\tau^2,\tau^2\sigma, \sigma\in \{2A,-2A,2C\}$. 
We note $\chi(-2A)=-8, \chi(2A)=8, \chi(2C)=0$. Hence  
$\chi(\tau^2), \chi(\tau^2\sigma)\geq -8$, but 
$\chi(\sigma)\geq 0$ since $\sigma\not=-2A$.
Therefore, 
$8\langle \theta,\chi\rangle=24-\chi(\tau^2)-\chi(\tau^2\sigma)+\chi(\sigma)>0$. \\
(ii) If $A=\bZ_4\times \bZ_4$, i.e. case (10), then all elements in $G$ of order $4$ is $-4C$ and all involutions are $2A$. In this case, we have 
$$\begin{array}{rl}
16\langle \theta, \chi\rangle=&\!\! i \chi(\tau)-\!\chi(\tau^2)-\!i\chi(\tau^3)+\chi(1)+\!i\chi(\tau\sigma)-\!\chi(\tau^2\sigma)-\!i\chi(\tau^3\sigma) +\!\chi(\sigma)+i\chi(\tau\sigma^2)\cr
&-\chi(\tau^2\sigma^2)-\!i\chi(\tau^3\sigma^2)+\chi(\sigma^2)+i\chi(\tau\sigma^3)-\chi(\tau^2\sigma^3)-i\chi(\tau^3\sigma^3) +\chi(\sigma^3) \cr
=&-\chi(\tau^2)+\chi(1)-\chi(\tau^2\sigma)+\chi(\sigma)-\chi(\tau^2\sigma^2)+\chi(\sigma^2)-\chi(\tau^2\sigma^3)+\chi(\sigma^3)\cr
=&-8+24-4+4+8-8-4+4=16\not=0. \end{array}$$
(iii) The last case (9), that is, $G\cong \bZ_2\times \bZ_6$. In this case, 
$|\sigma|=2$, or $3$. However, if $|\sigma|=3$, then $G/<\sigma>$ is not cyclic, which contradicts the choice of $\tau$ and $\sigma$. 
Therefore, $|\sigma|=2$ and so 
$\tau=6E$ and $\sigma=2A$. 
We note $\chi(6E)=2, \chi(3B)=6$ and $\chi(2A)=8$. 
In this case, $\theta(\sigma^i\tau^j)=(-\omega)^j$ where $\omega=e^{2\pi i/3}$.
 
$$\begin{array}{l}
12\langle \theta, \chi\rangle=\!-\omega \chi(\tau)+\!\omega^2\chi(\tau^2)-\!\chi(\tau^3)+\!\omega \chi(\tau^4)-\omega^2\chi(\tau^5)+\chi(1)-\!\omega \chi(\tau\sigma)+\omega^2\chi(\tau^2\sigma)\cr
\mbox{}\qquad \qquad-\chi(\tau^3\sigma)+\omega \chi(\tau^4\sigma)-\omega^2\chi(\tau^5\sigma)+\chi(\sigma) \cr
\mbox{}\quad=
\chi(6E)(-\omega-\omega^2-\omega+\omega^2+\omega-\omega^2)+\!\chi(3B)(\omega^2+\omega) +\!\chi(2A)(-1-1+1)+\!\chi(1)\cr
\mbox{}\quad=2-6-8+24=12\not=0.\end{array}$$
This completes the proof of Theorem \ref{KeyThmD}.




\section{Proof of the main Theorem} 
In this section, we will complete the proof of our main theorem. 
As a corollary of Theorem \ref{KeyThmD} and Proposition \ref{KeyPropB}, we have the following theorem.

\begin{thm}\label{ThmE} 
If $V\in \CF^2$ and $\widetilde{V}\not=V_{\Lambda}$, then there is $j\not\in N_0\bZ$ such that $T^{j+sN_0,0}_1\not=0$ for all $s=0,...,R-1$. Similarly, for $h=\exp(2\pi i \frac{N_0\alpha(0)}{K_0})$ and $h$-twisted parts 
$S^{j,0}$, there is $k\not\in K_0\bZ$ such that $S^{k+sK_0,0}_1\not=0$ for all $s$. 
\end{thm}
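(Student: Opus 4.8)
The plan is to obtain Theorem \ref{ThmE} as a direct corollary of Theorem \ref{KeyThmD} and Proposition \ref{KeyPropB}, using the dictionary established around Theorem \ref{Lalpha} and Lemma \ref{Lalpha1} between the cosets $U(mN_0\alpha)$ and the twisted pieces of $\widetilde{V}$. First I would rule out the degenerate cases: since $V\in\CF^2$ and $\widetilde{V}\not\cong V_{\Lambda}$, we cannot have $\rank(V_1)=24$, for then $V$ would be a Niemeier lattice VOA by Proposition \ref{prop:2.2}(3) and hence $\widetilde{V}\cong V_{\Lambda}$ by Proposition \ref{Niemeier}; equivalently $R>1$, since $R=1$ would force $V=V_E$ for some Niemeier lattice $E$ by Proposition \ref{NoOne}. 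Thus $\rank(V_1)\lneq 24$ and Theorem \ref{KeyThmD} applies, so $\widetilde{g}$ has an eigenvalue of order $R$ on $\widetilde{\CH}$. Since $\widetilde{\CH}=\CH\oplus\bigl(\bigoplus_{m=1}^{R-1}U(mN_0\alpha)_1\bigr)$, since $\widetilde{g}$ acts trivially on $\CH$, and since $\widetilde{g}$ acts on $U(mN_0\alpha)$ as the scalar $e^{2\pi i m/R}$ by Lemma \ref{Lalpha1}, this eigenvalue statement says precisely that $U(mN_0\alpha)_1\neq 0$ for some $m$ with $(m,R)=1$.

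Next I would feed this into Proposition \ref{KeyPropB}. Recall from the proof of Theorem \ref{Lalpha} that $U(mN_0\alpha)$ occupies the twisted piece $T^{mN_0,0}$ of $\widetilde{V}$, and that $U(mN_0\alpha)_1\subseteq\CH^{\perp}$; hence $\CH^{\perp}\cap T^{mN_0,0}\neq 0$. Applying Proposition \ref{KeyPropB} with $q=m$ produces a $j$ with $N_0\nmid j$ such that $T^{j+smN_0,0}_1\not\subseteq\widetilde{\CH}$, and in particular $T^{j+smN_0,0}_1\neq 0$, for every $s\in\bZ$. Since $(m,R)=1$, multiplication by $m$ permutes $\bZ/R\bZ$, so $smN_0\bmod N$ runs over $\{0,N_0,2N_0,\dots,(R-1)N_0\}\bmod N$ as $s$ runs over $0,\dots,R-1$; therefore $T^{j+s'N_0,0}_1\neq 0$ for all $s'=0,\dots,R-1$, with $j\notin N_0\bZ$, which is the first assertion. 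For $h=\exp(2\pi i N_0\alpha(0)/K_0)$ the argument is verbatim: $\Comm(M(\CH),V^{[h]})\cong\Comm(M(\CH),\widetilde{V})=\bigoplus_{m=0}^{R-1}U(mN_0\alpha)$, with the reverse automorphism $\widetilde{h}$ again acting on $U(mN_0\alpha)$ by $e^{2\pi i m/R}$, so the same $m$ works; here $U(mN_0\alpha)$ lies in the $h$-twisted piece $S^{mK_0,0}$, and the $S$-half of Proposition \ref{KeyPropB} with $q=m$ yields a $k$ with $K_0\nmid k$ and $S^{k+smK_0,0}_1\neq 0$ for all $s$, which after the same reindexing gives $S^{k+s'K_0,0}_1\neq 0$ for $s'=0,\dots,R-1$.

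Since everything is already packaged in Theorems \ref{KeyThmD}, \ref{Lalpha} and Proposition \ref{KeyPropB}, there is no serious obstacle here; the argument is essentially bookkeeping. The two points that need care are (i) the translation in the first paragraph between the assertion that $\widetilde{g}$ has an eigenvalue of order $R$ on $\widetilde{\CH}$ and the assertion that $U(mN_0\alpha)_1\neq 0$ for some $m$ coprime to $R$, which rests on the explicit description of $\widetilde{\CH}$ together with Lemma \ref{Lalpha1}; and (ii) remembering that $U(mN_0\alpha)$ sits in the twisted label $mN_0$ for $g$ but $mK_0$ for $h$, so that the coprimality of $m$ with $R$ sweeps out exactly the full progressions $j+N_0\bZ$ modulo $N$ and $k+K_0\bZ$ modulo $K$ in the two statements.
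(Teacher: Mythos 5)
Your proposal is correct and follows exactly the route the paper intends: the paper states this theorem as an immediate corollary of Theorem \ref{KeyThmD} and Proposition \ref{KeyPropB}, and your argument simply supplies the omitted bookkeeping (the translation of the order-$R$ eigenvalue into $U(mN_0\alpha)_1\neq 0$ with $(m,R)=1$ via Lemma \ref{Lalpha1}, and the reindexing of the progression $j+smN_0$ to $j+s'N_0$ modulo $N$). The reduction to $\rank(V_1)<24$ and $R>1$ via Propositions \ref{prop:2.2}, \ref{Niemeier} and \ref{NoOne} is also as the paper's setup requires.
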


We also have:

\begin{lmm}\label{Noroot} 
If $R$ is a prime number, then there is no root of $\tilde{V}_1$ in $\alpha^{\perp}$. 
\end{lmm}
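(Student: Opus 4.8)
The plan is to argue by contradiction. Suppose $e^{\beta}$ is a root vector of $\tilde{V}_1$ with $\langle\beta,\alpha\rangle=0$. Since root vectors are orthogonal to the Cartan under $\langle\,,\,\rangle$ and $\widehat{\alpha}(0)e^{\beta}=\langle\beta,\alpha\rangle e^{\beta}=0$, the vector $e^{\beta}$ is annihilated by $M(\widehat{\alpha})$, so $e^{\beta}\in\Comm(M(\widehat{\alpha}),\tilde{V})_1$. By Remark~\ref{rem:3.10}, Theorem~\ref{tVd} and Proposition~\ref{N0a} this space equals $\bigoplus_{t=0}^{R-1}X(tN_0\alpha)_1$, and by Lemma~\ref{Lalpha1} the reverse automorphism $\tilde{g}$ acts on the $t$-th summand $X(tN_0\alpha)_1\subseteq T^{tN_0,0}_1$ as the scalar $e^{2\pi it/R}$. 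Note also $X(0)_1=\Comm(M(\alpha),V)_1\subseteq\CH$, since $\alpha$ is a W-element and hence $\langle\alpha,\gamma\rangle\neq0$ for every root $\gamma$ of $V_1$ (Lemma~\ref{inn-1}), so $X(0)_1$ contains no root vector. Write $e^{\beta}=\sum_{t=0}^{R-1}v_t$ with $v_t\in X(tN_0\alpha)_1$.

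The first reductions do not use primality of $R$. If $\beta|_{\CH}=0$, then $e^{\beta}\in\Comm(M(\CH),\tilde{V})_1=\bigoplus_{t}U(tN_0\alpha)_1$ (Theorem~\ref{Lalpha}), which by Proposition~\ref{abelian} is an abelian Lie algebra — a contradiction. So $\beta|_{\CH}\neq0$; since $\CH$ acts on each $X(tN_0\alpha)_1$ preserving it, comparing $\CH$-weights in $h(0)e^{\beta}=\langle\beta,h\rangle e^{\beta}$ shows each nonzero $v_t$ has $\CH$-weight $\beta|_{\CH}$, and as $v_0\in X(0)_1\subseteq\CH$ lies in the abelian $\CH$ this forces $v_0=0$. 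Now primality enters. Set $\CH^{\perp}=\bigoplus_{m=1}^{R-1}U(mN_0\alpha)_1$, which is nonzero by Theorem~\ref{KeyThmD} because $R$ is prime (so $R\neq1$ and $\tilde{g}$ has an eigenvalue of order $R$ on $\tilde{\CH}$). Suppose $\langle\beta,h'\rangle\neq0$ for some $h'\in U(mN_0\alpha)_1$ with $1\le m\le R-1$. Since $h'\in T^{mN_0,0}$, the operator $h'(0)$ maps $X(tN_0\alpha)_1$ into $X((t+m)N_0\alpha)_1$, so comparing components of $h'(0)e^{\beta}=\langle\beta,h'\rangle e^{\beta}$ gives $v_{t+m}=\langle\beta,h'\rangle^{-1}h'(0)v_t$ for all $t$. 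Because $(m,R)=1$, this recursion propagates $v_0=0$ to all $v_t$, forcing $e^{\beta}=0$ — a contradiction.

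It remains to treat the case $\beta\perp\CH^{\perp}$. As $\tilde{g}$ acts on $\CH^{\perp}$ invertibly and without the eigenvalue $1$, this is equivalent to $\tilde{g}$ fixing the root $\beta$; hence $\tilde{g}$ stabilises the one-dimensional root space $\tilde{V}_{\beta}$, so $e^{\beta}$ lies in a single summand $X(t_0N_0\alpha)_1\subseteq T^{t_0N_0,0}_1$ with $t_0\in\{1,\dots,R-1\}$, i.e. $e^{\beta}$ is a weight one vector of the $g^{t_0N_0}$-twisted module of $V$. To finish, one reduces to the case $N_0=1$: by Proposition~\ref{N0a} we have $N_0\alpha\in L^{\ast}$ and $\langle N_0\alpha,N_0\alpha\rangle=2N_0K_0\in2\bZ$, the inner automorphism $g^{N_0}$ of $V$ has order $R$, and by Remark~\ref{rem:3.10} $\Comm(M(\alpha),V^{[N_0\alpha]})_1=\bigoplus_{t}X(tN_0\alpha)_1=\Comm(M(\alpha),V^{[\alpha]})_1$ while $\Comm(M(\CH),V^{[N_0\alpha]})_1=\bigoplus_{t}U(tN_0\alpha)_1=\Comm(M(\CH),V^{[\alpha]})_1$; thus the assertion ``$\tilde{V}_1$ has no root in $\alpha^{\perp}$'' is unchanged if we replace $(\alpha,g,\tilde{V})$ by $(N_0\alpha,g^{N_0},V^{[N_0\alpha]})$. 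In the resulting situation $N=R$ is prime and $(t_0,N)=1$, so Theorem~\ref{Noweightzero} shows that the weight one space of the $g^{t_0}$-twisted module lies in the Cartan $\tilde{\CH}$, contradicting that $e^{\beta}$ is a root vector.

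The main obstacle is the last paragraph: justifying the passage to $N_0=1$, i.e. that the twisted-module analysis of Theorems~\ref{Thm:5.6} and~\ref{Noweightzero} (which was set up for the W-element $\alpha$ via Lemma~\ref{inn-1}) may be applied to the orbifold of $V$ by $g^{N_0}$. The symmetric argument with $h=\exp(2\pi iN\alpha(0)/K)$ in place of $g$ (Lemma~\ref{Dual}, together with Theorem~\ref{Noweightzero} for $h$-twisted modules) handles the case $K_0=1$ as well; for the residual configurations with $N_0,K_0\ge2$ one expects to need the arithmetic constraints of Proposition~\ref{dimV1} together with the list of admissible values of $\dim\CH$ and conjugacy classes $\tau\in Co_0$ of prime order (Appendix A, Corollary~\ref{No3C}), in the spirit of Section~\ref{s:6.11}.
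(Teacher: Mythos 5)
Your first case --- a root $\beta$ with $\langle \beta,\CH^{\perp}\rangle\neq 0$ --- is correct and is essentially the paper's own argument: the paper likewise decomposes the root vector over the summands $X(sN_0\alpha)_1\subseteq T^{sN_0,0}_1$, takes $\mu\in U(tN_0\alpha)_1$ with $0\neq\mu(0)e^{\delta}\in\bC e^{\delta}$, and uses primality of $R$ to move a nonzero component into $T^{0,0}_1=V^{<g>}_1\subseteq\CH$; you run the same recursion in the opposite direction, propagating $v_0=0$ to all components, which is equivalent and if anything cleaner. The preliminary reductions ($v_0=0$, $\beta|_{\CH}\neq 0$ via Proposition \ref{abelian}) are also fine.

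The genuine gap is exactly where you flag it: the case $\beta\perp\CH^{\perp}$, i.e.\ $\beta\in\CH$, where $e^{\beta}$ lies in a single summand $X(t_0N_0\alpha)_1\subseteq T^{t_0N_0,0}_1$ with $(t_0,R)=1$ but $(t_0N_0,N)=N_0$. Your proposed resolution --- replacing $(\alpha,g,\tilde V)$ by $(N_0\alpha,g^{N_0},V^{[N_0\alpha]})$ and then invoking Theorem \ref{Noweightzero} --- does not go through: Theorems \ref{Thm:5.6} and \ref{Noweightzero} are proved only for the W-element $\alpha$, since their proofs rest on Lemma \ref{T101}, i.e.\ on the conformal weights of $L(\lambda_1,\dots,\lambda_t)^{(-\alpha)}$ computed from the strange formula, and $N_0\alpha$ is not a W-element of $V_1$ when $N_0>1$. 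So no statement of the form $T^{t_0N_0}_1\subseteq\tilde\CH$ is available, and the closing paragraph (``one expects to need\dots'') is a plan rather than a proof; note also that the lemma is actually invoked in Section 7.3 precisely when $N_0$ and $K_0$ are both composite, so the $h$-twisted symmetric argument and Theorem \ref{KeyThmF} do not dispose of the residual configurations. For comparison, the paper does not give a separate treatment of this subcase either: its proof simply asserts the existence of $\mu\in U(tN_0\alpha)$ with $0\neq\mu(0)e^{\delta}\in\bC e^{\delta}$, which is exactly the hypothesis of your first case. Your case split therefore correctly isolates the point that needs justification, but as submitted the proof of the lemma is incomplete.
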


\pr Suppose false and let $\delta\in \alpha^{\perp}$ be a positive root with 
$e^{\delta}\in \tilde{V}_1$.  
Since 
\[
V^{[\alpha]}=\bigoplus_{m=0}^{N-1}\bigoplus_{n\in \bZ}X(\frac{(mK_0+nN_0)\alpha}{2K_0})
\otimes M(\alpha)\otimes e^{\frac{(-mK_0+nN_0)\alpha}{2K_0}}\text{  and } 
\tilde{\CH} \subseteq \bigoplus_{s=0}^{R-1} X(sN_0\alpha)+\bC \alpha,
\]  
there are $m,n$ such that the image $\pi_{m,n}(e^{\delta})$ of the  projection 
$$\pi_{m,n}:V^{[\alpha]}\to \bigoplus_{s=0}^{R-1} X(\frac{(mK_0+nN_0)\alpha}{2K_0}+sN_0\alpha)\otimes M(\alpha)\otimes e^{\frac{(-mK_0+nN_0)\alpha}{2K_0}}$$
given by (3.9) is nonzero.  
Since $\langle \delta,\alpha\rangle=0$, $\alpha(0)e^{\delta}=0$ and so 
we have $-mK_0+nN_0=0$, which means $N_0|m$ and $\frac{mK_0+nN_0}{2K_0}
=m=sN_0$ for some $s\in \bZ$.   
Then $$X(\frac{(mK_0+nN_0)\alpha}{2K_0})\otimes M(\alpha)=X(sN_0\alpha)\otimes M(\alpha)\subseteq T^{sN_0,0}.$$ 
In particular, $T^{sN_0,0}_1\not\subseteq \tilde{\CH}$. Furthermore, 
there are $t\in \bZ$ and $\mu\in U(tN_0\alpha)$ such that 
$0\not=\mu(0)e^{\delta}\in \bC e^{\delta}$. Hence $T^{sN_0+atN_0,0}\not\subseteq 
\tilde{\CH}$ for all $a\in \bZ$. 
Since $R=N/N_0$ is a prime number by the assumption,
 we have $V^{<g>}_1=T^{0,0}_1\not\subseteq \CH$, which contradicts the property of the W-element $\alpha$. 

\prend

\begin{lmm}
Let $p$ be a prime. Let $j$ be a positive integer such that $(j,p)=1$. Then there is a $0 \leq t \leq R$ such that $(j+tp, R)=1$. 
\end{lmm}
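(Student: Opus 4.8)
The plan is a straightforward sieving argument. First I would split off the $p$-part of $R$: write $R=p^aR_0$ with $a\ge 0$ and $(R_0,p)=1$. The reason for this splitting is that $p$ is then a unit modulo $R_0$, so the congruence $j+tp\equiv 1\pmod{R_0}$ is solvable; I would take the solution $t$ with $0\le t\le R_0-1$. Since $R_0\mid R$ and both are positive integers, $R_0\le R$, so automatically $0\le t\le R_0-1\le R$, which already places $t$ in the required range.

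Next I would check that this $t$ works, i.e.\ that $(j+tp,R)=1$. From $j+tp\equiv 1\pmod{R_0}$ it follows that any common divisor of $j+tp$ and $R_0$ divides $1$, so $(j+tp,R_0)=1$. If $a=0$ this is already the claim. If $a\ge 1$, then $j+tp\equiv j\pmod p$, and the hypothesis $(j,p)=1$ forces $p\nmid j+tp$, hence $(j+tp,p^a)=1$; combining this with $(j+tp,R_0)=1$ and $R=p^aR_0$ gives $(j+tp,R)=1$.

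I do not expect any genuine obstacle: the statement is elementary number theory. The only points requiring a little attention are the choice of the solution of the congruence inside $\{0,\dots,R_0-1\}$, and the remark that $R_0\le R$ (because $R_0\mid R$), which together keep $t$ within $[0,R]$. Note that primality of $p$ enters only through the implication $(j,p)=1\Rightarrow p\nmid j+tp$.
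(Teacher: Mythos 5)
Your proof is correct and follows essentially the same route as the paper: both split $R$ into its $p$-part and its prime-to-$p$ part, use invertibility of $p$ modulo the latter to control $j+tp$ there (the paper via surjectivity of $t\mapsto j+tp$, you via solving $j+tp\equiv 1$ explicitly), and then combine with $(j,p)=1$ via the Chinese Remainder Theorem. No gaps.
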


\pr  Denote $R=p^kq$ with $(p,q)=1$. Then 
$j+tp\equiv j+t'p \mod q$ if and only if $t-t'\equiv 0 \mod q$. That means the map 
$\phi:\bZ \to \bZ/q\bZ$ with $t\mapsto j+tp$ is surjective. Therefore, there is a $t$ such that $j+tp$ is invertible in $\bZ/q\bZ$. On the other hand,  $(j, p)=1$ implies 
$j+tp$ is invertible in $ \bZ/p^k\bZ$, also. Hence,  $j+tp$ is invertible in 
$\bZ/q\bZ\times \bZ/p^k\bZ \cong \bZ/R\bZ$ as desired. 
\prend

Next, we will prove the following key theorem: 

\begin{thm}\label{KeyThmF}  
If $V\in \CF^2$ and $N_0$ or $K_0$ are $1$ or a prime number, then $\widetilde{V}$ is isomorphic to the Leech lattice VOA.
\end{thm}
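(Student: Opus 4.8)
The plan is to argue by contradiction, assuming $\widetilde{V}=V^{[\alpha]}\not\cong V_\Lambda$ for a $W$-element $\alpha$ with $\langle\alpha,\alpha\rangle = 2K_0/N_0$, where (WLOG, after possibly passing to the dual construction via Lemma \ref{Dual} and $h=\exp(2\pi i N\alpha(0)/K)$) at least one of $N_0,K_0$ is $1$ or prime. Since we work inside $\CF^2$, the induction hypothesis is available: any $V'>\!\!>V$ already lies in $\CF^1$, so in particular $V^{[N_0\alpha]}\in \CF^1$ (Proposition \ref{N0a} gives $\Comm(M(\CH),V^{[N_0\alpha]})=\oplus_{m=0}^{R-1}U(mN_0\alpha)$, which is strictly larger than $U(0)$ when $R\neq 1$; and the case $R=1$ is disposed of by Proposition \ref{NoOne}, which forces $V=V_E$ for a Niemeier lattice, whence $\widetilde V\cong V_\Lambda$ by Proposition \ref{Niemeier}). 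So we may assume $R\geq 2$.

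The central tool is Theorem \ref{KeyThmD}: the reverse automorphism $\tilde g$ has an eigenvalue of order exactly $R$ on $\tilde{\CH}$, equivalently (by the discussion after the statement of Theorem \ref{KeyThmD}) there is an $m$ with $(m,R)=1$ and $U(mN_0\alpha)_1\neq 0$. Combined with Theorem \ref{ThmE} / Proposition \ref{KeyPropB}, this produces a root vector $e^\gamma\in \widetilde V_1$ whose component $e^\gamma_j$ in $T^{j,0}_1$ is nonzero for some $j\notin N_0\bZ$, and moreover $e^\gamma_{j+sqN_0}\neq 0$ for all $s$ where $qN_0$ records the twisted part containing a nonzero element of $\CH^\perp$. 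The strategy is now to play the twisted grading against the known vanishing theorems. By Theorem \ref{Thm:5.6}, $T^{j,0}_1\subseteq \widehat{\CH}^{\perp}$ whenever $(j,N)=1$ and $N_0=1$; by Theorem \ref{Noweightzero}, $T^m_1\subseteq \tilde{\CH}$ for $(m,N)=1$ and $S^n_1\subseteq \tilde\CH$ for $(n,K)=1$. When $N_0$ (resp.\ $K_0$) is $1$ or prime, one uses the elementary number-theoretic lemma just proved (for any $j$ with $(j,p)=1$ there is $t$ with $(j+tp,R)=1$, applied with $p=N_0$ or $p=K_0$) to slide the index $j$ inside its arithmetic progression mod $N_0$ until it becomes coprime to $N$; then the corresponding twisted module $T^{j,0}$ (or $S^{j,0}$) has its weight one part inside $\tilde\CH$, contradicting $e^\gamma_{j'}\notin\tilde\CH$. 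The prime/one hypothesis on $N_0$ or $K_0$ is exactly what makes this sliding argument possible: it guarantees that the progression $\{j+tN_0\}$ (or $\{j+tK_0\}$) hits a residue coprime to $N$ (resp.\ $K$) while staying in the fixed coset mod $N_0$ that supports the offending root, since the root vector's support is $N_0$-periodic.

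I expect the main obstacle to be the careful bookkeeping of when $R$ is itself composite versus prime, and making the "sliding" compatible with the periodicity constraints coming from both $\tilde g$ (period $R$, via Theorem \ref{KeyThmD}) and the $\alpha(0)$-action (which moves the twisted index by multiples of $qN_0$): one must ensure the index chosen lies simultaneously in the right coset mod $N_0$ to carry a root and in a class coprime to $N$ (or $K$) to trigger Theorem \ref{Noweightzero}. When $R$ is prime this is handled essentially as in Lemma \ref{Noroot} (the support of the root forces $T^{0,0}_1\not\subseteq\CH$, contradicting the defining property of the $W$-element $\alpha$, namely $V^{<g>}_1\subseteq \CH$ by Theorem \ref{T101}); when $R$ is composite one iterates, passing to $V^{[N_0\alpha]}$ and its reverse automorphism, and invokes Theorem \ref{KeyThmD} again together with the Conway-group eigenvalue analysis already completed, so that the composite case reduces to accumulated prime-index information. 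The delicate point throughout is that we only control \emph{characters} of twisted modules, not their full $V$-module structure, but since the vanishing statements (Theorems \ref{Thm:5.6}, \ref{Htwist}, \ref{Noweightzero}) are all phrased at the level of weight-one subspaces — which are detected by characters — this suffices.
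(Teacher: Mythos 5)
Your proposal follows essentially the same route as the paper: take the arithmetic progression $j+N_0\bZ$ of twisted sectors carrying roots outside $\tilde\CH$ supplied by Theorem \ref{ThmE}, use the number-theoretic lemma (with $p=N_0$ or $K_0$ prime) to slide to an index coprime to $N$ (resp. $K$), and contradict Theorem \ref{Noweightzero}. The only inaccuracy is your worry about iterating over composite $R$ via $V^{[N_0\alpha]}$ and Lemma \ref{Noroot} — that machinery lives inside the proof of Theorem \ref{KeyThmD}, not here; the sliding lemma already handles arbitrary $R$ uniformly, so no case split on $R$ is needed in this theorem.
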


\pr
We will prove the first case. The second case is similar.  Suppose $\widetilde{V}_1$ semisimple and $N_0$ is a prime number. Then by the above proposition, there is $j\in \bZ-\bZ N_0$ such that 
$T^{j+tN_0,0}_1\not\subseteq \widetilde{\CH}$ for all $t$. 
If $(j,N)=1$, then we have already shown 
$T^{j,0}_1\subseteq \widetilde{\CH}$, which contradicts the choice of $j$. 
Since $j\not\in \bZ N_0$ and $N_0$ is a prime number, 
we have $(j,R)\not=1$. Then there is $0\leq t\leq R-1$ such that $(j+tN_0,R)=1$ by Lemma \ref{KeyThmF}.  Since $(j,N_0)=1$ and 
$N_0$ is a prime number, $(j+tN_0,N_0R)=1$, which contradicts $
T^{j+tN_0,0}_1\not\subseteq \widetilde{\CH}$. \prend

We continue the proof of the main theorem. 
By Theorem \ref{KeyThmF}, we may assume that $K_0$ and $N_0$ are both composite numbers. 
We may also assume $\rank V_1=16,12,10,8,6,4$.

\subsection{The case where $N_0$ is even} 
Then $K_0$ is odd. Since $(K_0-N_0)|24$, we have  $K_0-N_0=1$ or $K_0-N_0=3$. 
\subsubsection{$K_0-N_0=1$}
In this case, $K_0=N_0+1$ is an odd composite number. Therefore,  $K_0=9,15,21,25,27$ or $K_0\geq 33$ and $\dim V_1=24K_0$. 
The possible simple component  $\CG_j$ with $N_0|h^{\vee}$ and $\rank( \CG_j) \leq 16$ are 
$$\begin{array}{lll}
N_0=9-1=8 &\dim V_1=216& A_7, A_{15}, C_7, C_{15}, D_5, D_9, D_{13} \cr
N_0=15-1=14 &\dim V_1=360& A_{13}, C_{13}, D_8, D_{15} \cr
N_0=21-1=20 & &D_{11} \cr
N_0=25-1=24 & &D_{13} \cr
N_0=27-1=26 & &D_{14} \cr
N_0\geq 33-1=32 & &\emptyset \cr
\end{array}$$
Since $\dim \CH=4,6,8, 10,12,16$ and $V_1$ is a sum of the above components 
in each row, the possible cases are $N_0=8$ and 
$V_1\cong D_5+A_7, D_5+C_7, D_9+A_7, D_9+C_7$, 
or $N_0=14$ and $V_1\cong D_8, D_8+D_8$.  However, 
their dimensions are $108$, $150$, $216$, $258$ and $120$, $240$. 
Therefore, the possible case is $V_1\cong D_9+A_7$ with $K_0=9$ and $N_0=8$.

\subsubsection{$K_0-N_0=3$}
{In this case, $(3,N_0)=1$ and $(3,K_0)=1$. By a direct calculation, it is easy to check that for even composite number $N_0$, $K_0=N_0+3\in \{ 2+3,4+3,8+3,10+3,14+3,16+3,20+3\}$ are prime numbers if $N_0\leq 20$. We hence have $N_0\geq 22$. 
Since $N_0|h^{\vee}_j$ and $N_0$ even and $(N_0,3)=1$, 
the possible component $\CG_j$ with $N_0|h^{\vee}$ and rank in $\{16,12,10,8,6,4\}$ is only $D_{12}$ and so $\dim V_1=12\times 23=276$. 
However, in this case, $\langle\alpha,\alpha\rangle=\frac{2K_0}{N_0}=
\frac{2\times 256}{21}$, which is a contradiction.

\subsection{The case where $N_0$ is odd}
Since $N_0$ is an odd composite number, $3|N_0$ or $N_0\geq 25$. If $N_0\geq 25$, then since 
$N_0|h^{\vee}_j$ and $\rank(V_1)\leq 16$, 
the possible components of $V_1$ are $B_{n}$ with $n\geq 13$. 
Namely, $V_1\cong B_{16}$ and $\dim V_1=528$ and $N_0|31$. 
Hence $\frac{K_0-N_0}{N_0}=\frac{24}{528-24}=\frac{2}{42}$ and so 
we have $21|N_0$, which contradicts $N_0|31$. 
Therefore, we may assume $3|N_0$ and $N_0\leq 24$, which means 
$N_0=9, 15, 21$ and $(K_0,3)=1$. 
 
Since $(K_0-N_0)|24$ and $(K_0-N_0,3)=1$, 
we have $K_0-N_0=1,2,4,8$ and so \\
$$\begin{array}{rl}
(N_0,K_0)=&(9,10),(9,11),(9,13),(9,17),(15,16),(15,17),(15,19),(15,23),\cr 
&(21,22),(21,23),(21,25),(21,29).\end{array}$$ 
Furthermore, since $K_0$ is also a composite number, the possible cases are 
$$\begin{array}{lccccc}
(N_0,K_0)&=&(9,10), &(15,16),  &(21,22), &(21,25)\cr
\dim V_1=\frac{24K_0}{K_0-N_0}&=&240 & 384 & 528& 150 
\end{array}$$

\subsubsection{The case where $N_0=9$}
By Proposition \ref{dimV1}, $N_0|h_i^{\vee}$, i.e. $9|h_i^{\vee}$. If $h_i^{\vee}=9$, then the possible components for $(\CG_i, \dim \CG_i)$ are $(A_8,80), (B_5,55), (C_8,136), (F_4,52)$. If there is $j$ such that $h_j^{\vee}>9$, then $h_j^{\vee}=18,27,36,45,..$. Since 
$\dim \CG_j\leq 240$, $(D_{10},190),(E_7,133)$ are the only possible pairs for 
$(\CG_j, \dim \CG_j)$. Therefore, since the total dimension is 240, 
the possible cases of $V_1$ are 
$V_1\cong C_{8,1}+F_{4,1}^2$,  $E_{7,2}+B_{5,1}+F_{4,1}$.

\subsubsection{The case where $N_0=15$, } 
We note $N_0|h^{\vee}_j$. The possible components $(\CG_j, \dim \CG_j)$ 
are $(A_{14},224), (B_8,136),(C_{14},406)$ with $h^{\vee}=15$ or 
$(E_8,248),(D_{16},496)$ with $h^{\vee}=30$ or $(B_{23},1081)$ 
with $h^{\vee}=45$. Therefore, the possible case of $V_1$ with 
$\dim V_1= 384$ is $V_1\cong E_{8,2}+B_{8,1}$ and $K_0-N_0=1$. 

\subsubsection{The case where $N_0=21$}
The possible components for 
$(\CG_j, \dim \CG_j\leq 528)$ are $(A_{20},440),(B_{11},253)$ with $h^{\vee}=21$, 
but none of their combinations gives  the desired rank $16,12,10,8,6$ or $4$.  

\subsection{Final proof}
There are remaining only four possible cases for $V_1$: 
$$V_1\cong D_{9,2}+A_{7,1}, \quad C_{8,1}+F_{4,1}^2,\quad E_{7,2}+B_{5,1}+F_{4,1},\quad \text{ or } 
\quad E_{8,2}+B_{8,1}.$$ 
In particular, $\rank V_1=16$, $K_0-N_0=1$, $R=2$ and $N_0=8,9,9,15$. 
As we have shown, $R=|\tau|\not=1$ and 
$\tau$ acts on $\CH^{\perp}$ faithfully. That means $\mathrm{rank}(\tilde{V}_1) \gneq \mathrm{rank}(V_1)=16$ and we have $\rank(\tilde{V}_1)=24$. Therefore, 
there is a Niemeier lattice $E$ such that $\tilde{V}\cong V_E$. 
Suppose $E\not=\Lambda$. 
Since $R=2$ is a prime number, by Lemma \ref{Noroot}, 
no root in $\tilde{V}_1$ is orthogonal to $\alpha$, we can define a 
root $\beta\in \tilde{V}_1$ to be positive if $\langle \beta,\alpha\rangle>0$. 
Let $x_1,...,x_{24}$ be a set of simple roots for  $\tilde{V}_1=(V_E)_1$, i.e., 
the set of all indecomposable positive roots.  
We can also express $\alpha=\sum a_ix_i$ with $a_i\in \bR$. 
Since $R=2$,  $\tilde{\CH}=\bC E\subseteq  
T^0\oplus T^{N_0}$. 
Let $e^{x_i}\in \tilde{V}_1$ be a root vector associated with the root $x_i$. 
Then $e^{x_i}\in \cup_{m=1}^{R-1} (T^m+T^{m+N_0})$.  
Recall that $T^m=\oplus_{n} P(m,n)$, where 
$P(m,n):=X(\frac{(mK_0+nN_0)\alpha}{2K_0})\otimes 
e^{\frac{(-mK_0+nN_0)\alpha}{2K_0}}$. 
Note that $P(n,m)$ is also isomorphic to a subspace of 
$\exp(2\pi i \frac{nN_0\alpha(0)}{K_0})$-twisted module $S^n$. 
Therefore, if $P(m,n)_1\not\subseteq \tilde{\CH}$, then $(m,N_0)\not=1$, 
$(n,K_0)\not=1$, $m\not\in N_0\bZ$ and $n\not\in K_0\bZ$.

\subsubsection{Case $N_0=8$, $K_0=9$, $R=2$}\label{caseN0=8}
In this case,  we have $P(m,n)= X(\frac{(9m+8n)\alpha}{18})\otimes 
e^{\frac{(-9m+8n)\alpha}{18}}$.  
For a simple root $x_i$, we write $x_i= k_i\alpha +x_i'$, where $x_i'\in (\bQ \alpha)^\perp$.  If $P(m,n)_1\not\subseteq \tilde{\CH}$, then $(n,9)\not=1$ and $(m,8)\not=1$, that is, $n=3n_0$ and $m=2m_0$ and  
$\frac{(-9m+8n)\alpha}{18}=\frac{(-3m_0+4n_0)\alpha}{3}$. Therefore, $k_i= \frac{(-3m_0+4n_0)}{3}$ for some $m_0, n_0$.  Since $x_i$ are positive roots, $\frac{(-3m_0+4n_0)}{3} >0$ and we have 
$\langle x_i,\alpha\rangle\geq \langle \frac{\alpha}{3},\alpha\rangle
=\frac{18}{24}=
\frac{3}{4}$. 

Let $\{\varpi_1, \dots, \varpi_{24}\}$ be the set of fundamental weights. 
Since the irreducible root systems in $\tilde{V}_1$ are all simple laced, $\{\varpi_1, \dots, \varpi_{24}\}$ is the dual basis of $x_1,...,x_{24}$. Let $\alpha =\sum_{i=1}^{24} a_i \varpi_i$. Then $a_i= \langle x_i, \alpha\rangle \geq 3/4$. 
Since $\tilde{h}^{\vee}\geq 2$ and $\dim \tilde{V}_1>24$, we have
$$\frac{18}{8}=\langle \alpha,\alpha\rangle\geq \langle \frac{3}{4}\rho,\frac{3}{4}\rho\rangle=\frac{9}{16}\times \frac{\tilde{h}^{\vee}\dim \tilde{V}_1}{12}\gneq \frac{9}{16}\times 4=\frac{9}{4},$$ 
which is a contradiction.  Note that $\rho = \sum_{i=1}^{24} \varpi_i$. 

\subsubsection{Case $N_0=9$, $K_0=10$, $R=2$} 
In this case, we have $P(m,n)= X(\frac{(10m+9n)\alpha}{20})\otimes 
e^{\frac{(-10m+9n)\alpha}{20}}$.  Then all root vectors $e^{x_i}$ are contained in 
$(T^3+T^{12})\cup(T^6+T^{15})$ because  $\mathrm{GCD}(m, 9)\neq 1$ and $m\notin 9\bZ$. 

Let $x$ be a positive root and set $x= k\alpha + x'$ where $x'\in (\bQ \alpha)^\perp$. Then $k=  \frac{(-10m+9n)\alpha}{20}$ for some $m,n\in bZ$. 
Since $\wt(\frac{(-10m+9n)\alpha}{20})\leq 1$, we also know $|-10m+9n| \lneq 20$. 
Therefore, the possible choices of $k=\frac{-10m+9n}{20}\gneq 0$  
are $\frac{6}{20},\frac{15}{20}$, and  $\frac{12}{20}$.
For $k=\frac{6}{20},\frac{15}{20}$, $e^{x}$ is in $T^3+T^{12}$. For $k=\frac{12}{20}$, $e^{x}$  is  in $T^6+T^{15}$.
Suppose $\tilde{V}_1$ is not abelian and let $\theta=\sum a_i x_i$ be a highest root of an irreducible component. Then  $\sum a_i \leq 2$ because $\frac{15}{20} \geq \frac{6}{20} \sum a_i$. Therefore, the dual Coxeter number is $2$ or $3$ and $\tilde{V}_1$ has the type  $(A_q)^{\oplus 24/q}$ with $q=1$ or $2$.

Let $\{x_1,...,x_{24}\}$ be a set of simple roots. By the discussion above, 
$\langle x_i,\alpha \rangle\geq \frac{6}{20} \times \frac{20}{9}=\frac{2}{3}$. 
Using the similar arguments as in Section \ref{caseN0=8},  
$$
\frac{20}{9}=\langle \alpha,\alpha\rangle\geq \langle \frac{2\rho}{3},\frac{2\rho}{3}\rangle=\frac{4}{9} \langle \rho,\rho \rangle=\frac{4}{9}\times \frac{24}{q} \times \frac{(q+1)q(q+2)}{12}=\frac{8(q+1)(q+2)}{9}\geq \frac{48}{9},
$$ 
which is a contradiction, where $\rho$ denotes a Weyl vector for $A_q^{\oplus 24/q}$.

\subsubsection{Case $N_0=15$, $K_0=16$, $R=2$} 
In this case, since $(n,15)\not=1$ and 
$\tilde{\CH}\subseteq T^{0,0}+T^{15,0}$, 
all root vectors $e^{x_i}$ are in the union of $T^{n,0}+T^{n+15,0}$ with $n=3,6,9,12,5,10$. 
Since $\{3m+5n \mid m=1,2,3,4, n=1,2\}\cap \{3,6,9,12,5,10\}=\emptyset$, 
a sum $\delta+\mu$ of a root $\delta$ for $(\sum_t T^{5t,0}+T^{5t+15,0})_1$ and a root 
$\mu$ for $(\sum T^{3s,0}+T^{3s+15,0})_1$ is not a root, that is, they are orthogonal. 
As we did in the previous subsection, we consider the possible choice for $(m,n)$ with $\frac{-16n+15m}{32}>0$ and $n=3,6,9,12,5,10$. 
Let $x= \frac{s}{32}\alpha + x'$ be a positive root.  Then $\langle \frac{s\alpha}{32},\frac{s\alpha}{32}\rangle=\frac{s^2}{32\times 15}\leq 2$ and 
we have $|-16n+15m|\leq 30$. Therefore, the possible values for $\frac{-16n+15m}{32}$ associated  with positive roots are 
$$\begin{array}{ccc}
T^n+T^{n+15} & \frac{-16n+15m}{32} &\langle \alpha,\frac{(-16n+15m)\alpha}{32}\rangle \cr 
T^9+T^{24} &  6/32 & 2/5  \cr  
T^3+T^{18}& 12/32 & 4/5 \cr  
T^{12}+T^{18} & 18/32 & 6/5 \cr  
T^6+T^{21} & 24/32 & 8/5 \cr  
T^5+T^{20} & 10/32 & 2/3 \cr
T^{10}+T^{25} & 20/32 & 4/3 
\end{array}$$ 
From the above, the Coxeter number of $\tilde{V}_1$ is 
$\leq 5$ and so the possible structure of 
$\tilde{V}_1\cong A_1^{24}, A_2^{12}, A_3^{8}, A_4^{6}$, $(A_q)^{\oplus 24/q}$ with $q=1,2,3,4$. Let 
$\rho$ be a Weyl-vector of $(A_q)^{\oplus 24/q}$. 
Since $\langle x_i,\alpha\rangle\geq 2/5$,  we have 
$$ 
\frac{32}{15}=\langle \alpha,\alpha\rangle\geq 
\langle \frac{2}{5}\rho,\frac{2}{5}\rho\rangle 
=\frac{4}{25}\times\frac{24}{q}\times \frac{q(q+1)(q+2)}{12}
=\frac{8(q+1)(q+2)}{25}, $$
by the similar arguments as in Section \ref{caseN0=8}. 
On the other hand, 
$\frac{8(q+1)(q+2)}{25} > \frac{32}{15}$ when $q\geq 2$. 
Therefore, $q=1$ is the only possible solution and 
$\tilde{V}_1\cong (A_1)^{\oplus 24}$.  
In this case, the set of positive roots forms a basis of the root lattice and the Weyl vector 
$\rho=\frac{1}{2} \sum_{i=1}^{24} x_i$, where 
$\{x_1, \dots, x_{24}\}$ is a set of positive roots.

Note that $\dim T^3_1=\dim T^{21}_1=\dim T^9_1=\dim T^{27}_1$ since 
$(3,30)=(21,30)=(9,30)=(27,30)$. 
Then the number of positive root vectors in $(T^9+T^{24})\cup (T^{6}+T^{21})$ and that in $(T^3+T^{18})\cup (T^{12}+T^{27})$
are the same, say $j$.  
Then we have 
$$\frac{32}{15}=\langle \alpha,\alpha\rangle \geq \langle \alpha,\frac{2}5 \rho \rangle=\frac{1}5\sum_{i=1}^{24} \langle \alpha,x_i\rangle\geq \frac{1}5 \left( \frac{2j}{5}+\frac{4j}{5}+(24-2j)\frac{2}{3}\right) =\frac{240-2j}{5\times 15} \geq \frac{216}{75},$$ 
since $j\leq 12$. It is a contradiction and this completes the proof of the main theorem. 
\qed 

\medskip

Since $\rank(V^{[N_0\alpha]}_1) =\rank(V^{[\alpha]}_1)$, we also have the following corollary. 
\begin{cry}
Suppose $N_0\neq 1$. Then $V^{[N_0\alpha]}$ is isomorphic to a Niemeier lattice VOA. 
\end{cry}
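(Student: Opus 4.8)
The plan is to recognize $V^{[N_0\alpha]}$ as a strongly regular holomorphic VOA of CFT-type and central charge $24$ whose weight one space has rank $24$, and then to invoke Proposition \ref{prop:2.2}(3), which identifies any such VOA with a Niemeier lattice VOA.

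First I would record that $V^{[N_0\alpha]}$ is, by Theorem \ref{cosetVOA}, the orbifold construction $\tilde V(g^{N_0})$ from $V$ by the inner automorphism $g^{N_0}=\exp(2\pi i N_0\alpha(0))$, which has finite order $R=N/N_0$. Since $\langle N_0\alpha,N_0\alpha\rangle=N_0^{2}\langle\alpha,\alpha\rangle=2N_0K_0$, the integer $K'=\tfrac{R}{2}\langle N_0\alpha,N_0\alpha\rangle=R N_0K_0$ required for the construction is an integer, so the orbifold datum $g^{N_0}$ satisfies the integrality condition of \S\ref{sec:3}; hence $V^{[N_0\alpha]}$ is again strongly regular, holomorphic, of CFT-type and of central charge $24$, with $V^{[N_0\alpha]}_1\supseteq\CH\neq 0$.

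Next I would compute its rank. Because $N_0\alpha\in L^{*}$ by Proposition \ref{N0a} and $\langle N_0\alpha,N_0\alpha\rangle\in 2\bZ$, Theorem \ref{Lalpha}, applied to the construction $V^{[N_0\alpha]}$, gives $\Comm(M(\CH),V^{[N_0\alpha]})\cong\bigoplus_{m=0}^{R-1}U(mN_0\alpha)$ as a VOA, which is the very same VOA that, by Theorem \ref{Lalpha} and Proposition \ref{N0a}, equals $\Comm(M(\CH),V^{[\alpha]})$. By Proposition \ref{abelian}, for a Cartan subalgebra $\CH$ of $V_1$ the unique Cartan subalgebra of $V^{[N_0\alpha]}_1$ containing $\CH$ is $\CH\oplus\Comm(M(\CH),V^{[N_0\alpha]})_1$, and similarly for $V^{[\alpha]}_1$; since $\Comm(M(\CH),V)_1=0$, this yields $\rank(V^{[N_0\alpha]}_1)=\dim\CH+\sum_{m=1}^{R-1}\dim U(mN_0\alpha)_1=\rank(V^{[\alpha]}_1)$. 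By Theorem \ref{Main}, just proved, $V^{[\alpha]}=\tilde V(g)\cong V_{\Lambda}$, whose weight one space is $24$-dimensional; therefore $\rank(V^{[\alpha]}_1)=24$ and so $\rank(V^{[N_0\alpha]}_1)=24$.

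Finally, applying Proposition \ref{prop:2.2}(3) to $V^{[N_0\alpha]}$ finishes the proof. There is essentially no obstacle: the only point requiring care is the verification in the second step that the orbifold construction with $g^{N_0}$ really produces a strongly regular holomorphic VOA of the right type, but this is an instance of the construction already set up in \S\ref{sec:3}, and the remaining reasoning is just a matching of identifications already established (note the argument is uniform, the degenerate case $R=1$ being covered since then $\Comm(M(\CH),V^{[N_0\alpha]})=\Comm(M(\CH),V)$ and $\rank(V_1)=\rank(V^{[\alpha]}_1)=24$).
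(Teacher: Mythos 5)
Your proposal is correct and follows essentially the same route as the paper: the paper's one-line justification is precisely that $\rank(V^{[N_0\alpha]}_1)=\rank(V^{[\alpha]}_1)=24$, which it obtains from the identification $\Comm(M(\CH),V^{[N_0\alpha]})\cong\bigoplus_{m=0}^{R-1}U(mN_0\alpha)\cong\Comm(M(\CH),V^{[\alpha]})$ (Theorem \ref{Lalpha} and Proposition \ref{N0a}) together with Proposition \ref{abelian}, the Main Theorem $V^{[\alpha]}\cong V_\Lambda$, and Proposition \ref{prop:2.2}(3). Your verification of the integrality condition for $g^{N_0}$ and your handling of the degenerate case are consistent with the paper's setup, so no further comment is needed.
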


\newpage

\appendix 
\section{Some properties of Conway group $Co_0$}
In this appendix, we recall some properties of the group $Co_0$. We use the notation as in \cite[Table 1]{HaLa}.

\begin{thm}\label{thm:A1}
The list of conjugacy classes of Conway group $Co_0$ with nonzero fixed subspace \\
\begin{tabular}{rcc}
class&frame shape&fixed $\dim$\\
\hline
$2A$&$1^{8}2^{8}$&16\\
$-2A$&$2^{16}/1^{8}$&8\\
$2C$&$2^{12}$&12 \\
\hline
$3B$&$1^{6}3^{6}$&12\\
$3C$&$3^{9}/1^{3}$&6\\
$3D$&$3^{8}$&8 \\
\hline
$-4A$&$1^{8}4^{8}/2^{8}$&8
\\
$4B$&$4^{8}/2^{4}$&4
\\
$4C$&$1^{4}2^{2}4^{4}$&10
\\
$-4C$&$2^{6}4^{4}/1^{4}$&6
\\
$4D$&$2^{4}4^{4}$&8
\\
$4F$&$4^{6}$&6 \\
\hline
$5B$&$1^{4}5^{4}$&8
\\
$5C$&$5^{5}/1^{1}$&4 \\
\hline
$6C$&$1^{4}2^{1}6^{5}/3^{4}$&6
\\
$-6C$&$2^{5}3^{4}6^{1}/1^{4}$&6
\\
$-6D$&$1^{5}3^{1}6^{4}/2^{4}$&6
\\
$6E$&$1^{2}2^{2}3^{2}6^{2}$&8
\\
$-6E$&$2^{4}6^{4}/1^{2}3^{2}$&4
\\
$6F$&$3^{3}6^{3}/1^{1}2^{1}$&4
\\
$-6F$&$1^{1}6^{6}/2^{2}3^{3}$&2
\\
$6G$&$2^{3}6^{3}$&6
\\
$6I$&$6^{4}$&4 \\
\hline
$7B$&$1^{3}7^{3}$&6 \\
\hline
$8B$&$2^{4}8^{4}/4^{4}$&4
\\
$-8C$&$1^{4}8^{4}/2^{2}4^{2}$&4
\\
$8D$&$8^{4}/4^{2}$&2
\\
$8E$&$1^{2}2^{1}4^{1}8^{2}$&6
\\
$-8E$&$2^{3}4^{1}8^{2}/1^{2}$&4
\\
$8F$&$4^{2}8^{2}$&4 \\
\hline
$9B$&$9^{3}/3^{1}$&2
\\
$9C$&$1^{3}9^{3}/3^{2}$&4 \\
\hline
$10D$&$1^{2}2^{1}10^{3}/5^{2}$&4
\\
$-10D$&$2^{3}5^{2}10^{1}/1^{2}$&4
\\
$-10E$&$1^{3}5^{1}10^{2}/2^{2}$&4
\\
$10F$&$2^{2}10^{2}$&4 \\
\hline

\end{tabular}\qquad \qquad 
\begin{tabular}{rcc}
class&frame shape&fixed $\dim$\\
\hline
$11A$&$1^{2}11^{2}$&4 \\
\hline
$-12D$&$2^{1}3^{3}12^{3}/1^{1}4^{1}6^{3}$&2
\\
$-12E$&$1^{2}3^{2}4^{2}12^{2}/2^{2}6^{2}$&4
\\
$12G$&$4^{2}12^{2}/2^{1}6^{1}$&2
\\
$12H$&$2^{3}6^{1}12^{2}/1^{1}3^{1}4^{2}$&2
\\
$-12H$&$1^{1}2^{2}3^{1}12^{2}/4^{2}$&4
\\
$12I$&$1^{2}4^{1}6^{2}12^{1}/3^{2}$&4
\\
$-12I$&$2^{2}3^{2}4^{1}12^{1}/1^{2}$&4
\\
$12J$&$2^{1}4^{1}6^{1}12^{1}$&4
\\
$-12K$&$1^{3}12^{3}/2^{1}3^{1}4^{1}6^{1}$&2
\\
$12M$&$12^{2}$&2  \\
\hline
$14B$&$1^{1}2^{1}7^{1}14^{1}$&4
\\
$-14B$&$2^{2}14^{2}/1^{1}7^{1}$&2  \\
\hline
$15D$&$1^{1}3^{1}5^{1}15^{1}$&4
\\
$15E$&$1^{2}15^{2}/3^{1}5^{1}$&2 \\
\hline
$16A$&$2^{2}16^{2}/4^{1}8^{1}$&2
\\
$-16B$&$1^{2}16^{2}/2^{1}8^{1}$&2  \\
\hline
$-18B$&$1^{2}9^{1}18^{1}/2^{1}3^{1}$&2
\\
$18C$&$1^{1}2^{1}18^{2}/6^{1}9^{1}$&2
\\
$-18C$&$2^{2}9^{1}18^{1}/1^{1}6^{1}$&2 \\
\hline
20B&$4^{1}20^{1}$&2
\\
$20C$&$1^{1}2^{1}10^{1}20^{1}/4^{1}5^{1}$&2
\\
$-20C$&$2^{2}5^{1}20^{1}/1^{1}4^{1}$&2  \\
\hline
$21C$&$3^{1}21^{1}$&2  \\
\hline
$22A$&$2^{1}22^{1}$&2
\\
\hline
$23A$&$1^{1}23^{1}$&2
\\
$B**$&$1^{1}23^{1}$&2  \\
\hline
$24E$&$2^{1}6^{1}8^{1}24^{1}/4^{1}12^{1}$&2
\\
$24F$&$1^{1}4^{1}6^{1}24^{1}/3^{1}8^{1}$&2
\\
$-24F$&$2^{1}3^{1}4^{1}24^{1}/1^{1}8^{1}$&2  \\
\hline
$-28A$&$1^{1}4^{1}7^{1}28^{1}/2^{1}14^{1}$&2 \\
\hline
$30D$&$1^{1}6^{1}10^{1}15^{1}/3^{1}5^{1}$&2
\\
$-30D$&$2^{1}3^{1}5^{1}30^{1}/1^{1}15^{1}$&2
\\
$-30E$&$2^{1}3^{1}5^{1}30^{1}/6^{1}10^{1}$&2\\
\hline
\end{tabular}
\end{thm}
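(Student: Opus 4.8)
The plan is to derive Theorem~\ref{thm:A1} from the ordinary character table of $Co_0$ together with its power maps. Recall that $Co_0 = \mathrm{Aut}(\Lambda)$ acts faithfully on $\Lambda\otimes_{\bZ}\bC\cong\bC^{24}$, and that this is the irreducible representation $\rho$ of degree $24$ afforded by the action on $\Lambda$, whose character $\chi$ is faithful and rational-valued. For $g\in Co_0$ of order $n$, every eigenvalue of $\rho(g)$ is a root of unity, so the characteristic polynomial factors uniquely as $\det(xI-\rho(g))=\prod_{d\mid n}(x^{d}-1)^{m_d}$ with $m_d\in\bZ$ and $\sum_{d} d\,m_d = 24$; the multiset $\{d^{\,m_d}\}$ is the frame shape of $g$. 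Since the sum of all $d$-th roots of unity equals $d$ when $d\mid k$ and $0$ otherwise, we get $\chi(g^{k})=\mathrm{tr}\,\rho(g^{k})=\sum_{d\mid k} d\,m_d$ for every $k$; in particular $\chi(g)=m_1$, and M\"obius inversion gives $k\,m_k=\sum_{d\mid k}\mu(k/d)\,\chi(g^{d})$, which recovers the whole frame shape from the values of $\chi$ on $\langle g\rangle$. Finally, since $x-1$ divides each $x^{d}-1$ exactly once, the fixed subspace $\Lambda^{\langle g\rangle}$ has dimension equal to the multiplicity of the eigenvalue $1$, namely $\sum_{d}m_d=\tfrac1n\sum_{k=0}^{n-1}\chi(g^{k})$.

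With these formulas in hand I would proceed as follows. Identify $\chi$ in the character table of $Co_0$ as the faithful degree-$24$ character. Then enumerate the (finitely many) conjugacy classes of $Co_0$, reading $\chi$ and the power maps off the ATLAS or a computer algebra system such as \texttt{GAP}; for each class $g$ solve the triangular system above for the exponents $m_d$, obtaining the frame shape $\prod_d d^{\,m_d}$ and the fixed dimension $\sum_d m_d$. Every class with $\sum_d m_d = 0$ acts without nonzero fixed vectors and is discarded; what remains is exactly the list in the statement. As a final check I would match the resulting frame shapes and fixed dimensions against the tabulation in \cite{HaLa} and against the $\eta$-product frame shapes of $Co_0$-elements recorded in the moonshine literature, which must agree line by line.

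This is essentially a finite verification, and the only genuinely delicate point is the central involution $-1\in Co_0$: since $Co_0=2\cdot Co_1$, many conjugacy classes occur in pairs $\{g,-g\}$ whose frame shapes are obtained from one another by negating all eigenvalues, so that for instance $2A$ and $-2A$, or $4C$ and $-4C$, have quite different frame shapes and fixed dimensions, and the sign conventions in the power maps have to be tracked carefully. I would guard against errors by using the two independent relations $\chi(g^{k})=\sum_{d\mid k}d\,m_d$ and $\sum_d m_d=\tfrac1n\sum_{k=0}^{n-1}\chi(g^{k})$ as mutual consistency checks, together with the known class list for $Co_1$. Once the data is assembled correctly, Theorem~\ref{thm:A1} follows by direct inspection; no step needs more than the character table of $Co_0$ and elementary arithmetic of roots of unity.
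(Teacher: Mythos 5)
Your proposal is correct. Note, however, that the paper does not actually prove Theorem~\ref{thm:A1}: the appendix explicitly recalls the table from Harada--Lang \cite{HaLa} and treats it as known data (supplemented elsewhere by a MAGMA computation for the subgroup list). Your route --- writing $\det(xI-\rho(g))=\prod_{d\mid n}(x^{d}-1)^{m_d}$, recovering the exponents by M\"obius inversion from $\chi(g^{k})=\sum_{d\mid k}d\,m_d$, and reading off the fixed dimension as $\sum_d m_d=\tfrac1n\sum_{k=0}^{n-1}\chi(g^{k})$ --- is sound and is essentially how such frame-shape tables are produced in the first place; the key hypotheses you need (that $\rho$ is afforded by the integral lattice $\Lambda$, so Galois-conjugate eigenvalues have equal multiplicity and the $m_d$ are integers, and that $\rho(g)$ is diagonalizable, so the fixed subspace has dimension equal to the multiplicity of the eigenvalue $1$) are all correctly in place. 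What your derivation buys is an independent, checkable verification of the cited table, including the sign bookkeeping for the paired classes $g$ and $-g$ coming from the center of $Co_0=2\cdot Co_1$, together with the useful internal consistency checks you mention; what it does not eliminate is the dependence on the character table and power maps of $Co_0$ as external input, which is exactly the same dependence the paper accepts by citing \cite{HaLa}. One small caveat for the final matching step: as the paper's Remark following the theorem points out, the frame shape does not quite determine the conjugacy class (the classes $23A$ and $23B$ share the frame shape $1^{1}23^{1}$), so the class labels themselves must be taken from the ATLAS/Harada--Lang conventions rather than reconstructed from the frame shapes alone.
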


As a group theoretical knowledge, we can get the following information 
from the frame shapes:

\begin{remark}\label{frameshape}
\begin{enumerate}
\item If $g$ has a frame shape $g=\prod_{n\in \bN}n^{m_n}$, then 
the dimension of fixed point subspace is given by $\dim \bC\Lambda^{<g>}=\sum_n m_n$. 

\item Let $\chi$ be an irreducible character of $Co_0$ of degree 24. 
If $g\in Co_0$ has a frame shape $\prod_{n\in \bN} n^{m_n}$ 
with $m_n\in \bZ$, then $\chi(g)=m_1$.  

\item Except for $23A$ and $23B$, frame shape determines a unique conjugacy class. Therefore, from a frame shape of $g$, 
it is easy to see conjugacy classes of powers of $g$. 
\end{enumerate}
\end{remark}

For our purpose, we will only use  elements with $\dim \bC\Lambda^{<g>}\geq 4$. 
Using a computer program MAGMA, we have the following list for 
abelian subgroups of rank $2$.

\begin{thm}\label{Conway}
The following is the list of abelian subgroups of rank $2$
which fix at least $4$-dimensional subspace. In the table,  $\#$ denotes the number of conjugacy classes 
but we won't use this number.

$\dim =4$ (total $\#=58$)

\begin{center}
\begin{tabular}{ll}
Structure&\#\\
\hline
$\bZ_{2}\times \bZ_{2}$&3\\
$\bZ_{2}\times \bZ_{4}$&18\\
$\bZ_{3}\times \bZ_{3}$&2\\
$\bZ_{2}\times \bZ_{6}$&8\\
$\bZ_{4}\times \bZ_{4}$&10\\
\end{tabular}\qquad \qquad 
\begin{tabular}{ll}
Structure&\#\\
\hline
$\bZ_{2}\times \bZ_{8}$&8\\
$\bZ_{3}\times \bZ_{6}$&6\\
$\bZ_{2}\times \bZ_{12}$&1\\
$\bZ_{5}\times \bZ_{5}$&1\\
$\bZ_{3}\times \bZ_{9}$&1\\
\end{tabular}
\end{center} 

$\dim =6$ (total $\#=15$)
\begin{center}
\begin{tabular}{lll}
Structure&$\#$&Conjugacy class of elements\\
\hline
$\bZ_{2}\times \bZ_{2}$&1&$\{1A,2A,-2A,2C\}$\\
$\bZ_{2}\times \bZ_{2}$&3&$\{1A,2C,2C,2C\}$\\
$\bZ_{2}\times \bZ_{4}$&1&$\{1A,2A,2A,-2A,4C,4C,4C,4C\}$\\
$\bZ_{2}\times \bZ_{4}$&2&$\{1A,2A,2C,2C,4C,4C,4C,4C\}$\\
$\bZ_{2}\times \bZ_{4}$&1&$\{1A,2A,2A,2C,4C,4C,4D,4D\}$\\
$\bZ_{2}\times \bZ_{4}$&1&$\{1A,2A,2A,2A,4C,4C,-4C,-4C\}$\\
$\bZ_{2}\times \bZ_{4}$&1&$\{1A,2A,2A,2A,4D,4D,4D,4D\}$\\
$\bZ_{3}\times \bZ_{3}$&1&$\{1A,3B,3B,3B,3B,3B,3B,3C,3C\}$\\
$\bZ_{2}\times \bZ_{6}$&2&$\{1A,2A,2A,2A,3B,3B,6E,6E,6E,6E,6E,6E\}$\\
$\bZ_{4}\times \bZ_{4}$&2&$\{1A,2A,2A,2A,4C,4C,4C,4C,4C,4C,4C,4C,4C,4C,4C,4C\}$\\
\end{tabular}
\end{center}

$\dim =8$  ($\#=6$)
\begin{center}
\begin{tabular}{lll}
Structure&$\#$&Conjugacy class of elements\\
\hline
$\bZ_{2}\times \bZ_{2}$&2&$\{1A,2A,2C,2C\}$\\
$\bZ_{2}\times \bZ_{2}$&1&$\{1A,2A,2A,-2A\}$\\
$\bZ_{2}\times \bZ_{4}$&1&$\{1A,2A,2A,2A,4C,4C,4C,4C\}$\\
$\bZ_{2}\times \bZ_{4}$&1&$\{1A,2A,2A,-2A,-4A,-4A,-4A,-4A\}$\\
$\bZ_{3}\times \bZ_{3}$&1&$\{1A,3B,3B,3B,3B,3B,3B,3B,3B\}$\\
\end{tabular}
\end{center}

$\dim =10$ (total $\#=1$)
\begin{center}
\begin{tabular}{lll}
Structure&$\#$&Conjugacy class of elements\\
\hline
$\bZ_{2}\times \bZ_{2}$&1&$\{1A,2A,2A,2C\}$\\
\end{tabular}
\end{center}

$\dim =12$ (total $\#=1$)
\begin{center}
\begin{tabular}{lll}
Structure&$\#$&Conjugacy class of elements\\
\hline
$\bZ_{2}\times \bZ_{2}$&1&$\{1A,2A,2A,2A\}$\\
\end{tabular}
\end{center}
\end{thm}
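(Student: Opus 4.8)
The final statement to prove is Theorem \ref{thm:A1}, the classification list of conjugacy classes of $Co_0$ with nonzero fixed subspace, together with the tables of rank-2 abelian subgroups in Theorem \ref{Conway}. This is a computational/group-theoretic fact rather than something with a genuine proof obstacle, so the proposal below reflects that.

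\bigskip

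\textbf{Proof proposal.} The plan is to treat this as a finite computation inside the Conway group $Co_0 = 2.Co_1$, acting on the $24$-dimensional representation $\bC\Lambda \otimes \bC$. First I would recall that the conjugacy classes of $Co_0$ and their cycle/frame shapes on $\Lambda$ are completely tabulated in the literature (Conway--Norton, and in the precise form used here, \cite[Table 1]{HaLa}); the ``frame shape'' $\prod_n n^{m_n}$ is by definition the factorization of the characteristic polynomial of the element on $\bC\Lambda$ as $\prod_n (x^n-1)^{m_n}$. From the frame shape one reads off $\dim \bC\Lambda^{\langle g\rangle} = \sum_n m_n$ (the multiplicity of the eigenvalue $1$), so the list of classes with $\dim \bC\Lambda^{\langle g\rangle}\ge 4$ is obtained simply by scanning the full class list of $Co_0$ and retaining those whose frame shapes have $\sum_n m_n \ge 4$. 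I would present the resulting data in the two-column table exactly as displayed, and remark (as in Remark \ref{frameshape}) that $\chi(g) = m_1$ for the degree-$24$ character $\chi$, and that the frame shape determines the class uniquely except for the pair $23A/23B$ (which are algebraically conjugate and share a frame shape), so powers of any element can be identified from its frame shape alone.

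\bigskip

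For the rank-2 abelian subgroup tables (Theorem \ref{Conway}), the approach is an explicit enumeration carried out with a computer algebra system: I would realize $Co_0 = O(\Lambda)$ as a matrix group over $\bZ$ (or over $\bF_p$ for a suitable prime for the conjugacy-testing), enumerate conjugacy classes of abelian subgroups of rank exactly $2$, and for each representative compute the dimension of the common fixed subspace $\bC\Lambda^{G} = \Lambda^G \otimes \bC$ by intersecting the fixed lattices of generators. Retaining only those with $\dim \ge 4$ and recording the isomorphism type $\bZ_a \times \bZ_b$ together with the multiset of $Co_0$-classes of the nonidentity elements yields precisely the displayed tables. The stratification by $\dim\bC\Lambda^G \in \{4,6,8,10,12\}$ is then just sorting the output. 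I would state that the computation was performed in MAGMA (as the text already indicates) and that the cross-checks are: (i) for each group, $\dim \bC\Lambda^G$ equals the number of trivial constituents of $\chi|_G$, computable from the class multiset via character orthogonality, and (ii) the class multiset is closed under taking powers and is consistent with the frame-shape data of Theorem \ref{thm:A1}.

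\bigskip

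The only genuine subtlety — the ``hard part'' — is ensuring the \emph{completeness} of the enumeration: one must be certain that every rank-2 abelian subgroup fixing a $\ge 4$-dimensional subspace appears, up to $Co_0$-conjugacy, and that no two listed subgroups are conjugate. For this I would rely on the standard subgroup-lattice algorithms (e.g.\ via the maximal subgroups of $Co_0$ and $Co_1$, whose structure is classical, combined with conjugacy testing of subgroups), and I would sanity-check the count $\#$ in each row against an independent tally. A secondary point worth noting explicitly is why $\dim \bC\Lambda^G \in \{4,6,8,10,12,16,24\}$ only (no odd values, nothing between $12$ and $24$): this follows a posteriori from the tables themselves, but can also be seen from the fact that nontrivial fixed subspaces of the relevant elements come in the small list above and their pairwise intersections are constrained. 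Since all of this is routine finite computation backed by the cited references, I would keep the write-up to a statement that the tables are obtained by the procedure described, with the consistency checks indicated, rather than reproducing the computation.
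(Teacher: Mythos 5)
Your proposal matches the paper's treatment: the paper offers no written proof of this theorem, justifying the tables solely by the sentence ``Using a computer program MAGMA, we have the following list,'' together with the frame-shape data of Theorem \ref{thm:A1} drawn from \cite{HaLa}. Your description of the enumeration, the fixed-space computation via frame shapes, and the consistency checks is exactly the intended (computational) argument, so there is nothing to add.
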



\begin{thebibliography}{99}
\bibitem{B} R.E.~Borcherds, Vertex algebras, Kac-Moody algebras, and the Monster, Proc.
Natl. Acad. Sci. USA, 83 (1986), 3068-3071. \\[-9mm]

\bibitem{Bo92} R.E.~Borcherds, Monstrous moonshine and monstrous Lie superalgebras, Invent. math, Vol. 109 (1992), 405-- 444. \\[-9mm]

\bibitem{CMi}
S.~Carnahan and M.~Miyamoto,  
{\it Rationality of fixed-point vertex operator algebras};  arXiv:1603.05645. \\[-9mm]

\bibitem{ConSl}
J.H.~Conway, N.J.A.~Sloane, 
{\it Sphere packings, lattices and groups. Third edition} Springer-Verlag, New York, 1999. \\[-9mm]



\bibitem{DLM}
C.\ Dong, H.\ Li,  and G.\ Mason, Simple Currents and Extensions of Vertex Operator
Algebras, {\it Comm. Math. Phys.} \textbf{180} (1996), 671-707. \\[-9mm]

\bibitem{DLM2}
C.\ Dong, H.\ Li, and G.\ Mason,
Modular-invariance of trace functions
in orbifold theory and generalized Moonshine,
{\it Comm. Math. Phys.} 214 (2000), 1--56. \\[-9mm]


\bibitem{DM1}
C.~Dong, G.~Mason. Rational vertex operator algebras and the effective central
charge. Int. Math. Res. Not., 56 (2004), 2989--3008. \\[-9mm]

\bibitem{DM2}
C.~Dong, G.~Mason,  
Holomorphic vertex operator algebras of small central charge,
Pacific J. Math., vol 213, No.2, (2004). \\[-9mm]

\bibitem{ELMS}
J. van Ekeren,  C.H.~Lam, S.~M\"oller and  H.~Shimakura, Schellekens' list and the very strange formula, \emph{Adv. Math.} \textbf{380} (2021), 107567.   \\[-9mm]


\bibitem{EMS}
J. van Ekeren, S.\ M\"oller and N.\ Scheithauer, Construction and Classification 
of Holomorphic Vertex Operator Algebras, 
\emph{J. Reine Angew. Math.},  759 (2020), 61--99. \\[-9mm] 

\bibitem{FLM} I.B.~Frenkel, J.~Lepowsky and A.~Meurman, Vertex Operator Algebras and the
Monster, Pure and Applied Math., 134, Academic Press, 1988. \\[-9mm]

\bibitem{HaLa}
K. Harada and M.\,L. Lang, 
On some sublattices of the Leech lattice, \emph{Hokkaido Mathematical Journal}, \textbf{19} (1990),  435--446.   \\[-9mm] 

\bibitem{kac}
V.G.\ Kac, Infinite-dimensional Lie algebras, Third edition, Cambridge University Press, Cambridge, 1990. \\[-9mm]

\bibitem{KM}
M.\ Krauel and M.\ Miyamoto, A modular invariance property of multivariable trace functions for 
regular vertex operator algebras, {\it J. Algebra} {\bf 444} (2015), 124--142.\\[-9mm]

\bibitem{LS}
C.H.~Lam, H.~Shimakura, Orbifold construction of holomorphic vertex operator algebras associated to inner automorphisms, Comm. Math. Phys. 342 (2016), 803--841. \\[-9mm]



\bibitem{Li}
H.~Li, Local systems of twisted vertex operators, vertex operator superalgebras and twisted modules, 
in Moonshine, the Monster, and related topics, 203--236, Contemp. Math. 193, Amer. Math. Soc., Providence, RI, 1996.\\[-9mm]

\bibitem{M}
M.~Miyamoto, 
{\it A $C_2$-cofiniteness of cyclic orbifold models},  Comm. Math. Phys. 335 no.3 (2015), 1279-1286. \\[-9mm]

\bibitem{MoScheit}
S.~Moller, N.R.~Scheithauer, 
Dimension formulae and generalised deep holes of the Leech lattice vertex operator algebra;  arXiv:1910.04947. \\[-9mm] 

\bibitem{Mw}
Y. Moriwaki, Genus of vertex algebras and mass formula, arXiv:2004.01441. 
 \\[-9mm]

\bibitem{Nie}
H.V.~Niemeier, Definite quadratische Formen der Dimension 24 und Diskriminante 1, J. Number Theory, 5 (1973), 142--178. \\[-9mm]

\bibitem{Schel}
A.N.~Schellekens, Meromorphic $c=24$ conformal field theories, Comm. Math. Phys., 153 (1993), 159--185. \\[-9mm]

\bibitem{Zha} J. Zha,  Fixed-point-free Automorphisms of Lie Algebras, \emph{Acta Mathematica Sinica}, New Series
1989, Vol.5, No.l, pp. 95--96.
\end{thebibliography}
\end{document}